\newtheorem{theorem}{Theorem}[section]
\newtheorem{proposition}[theorem]{Proposition}
\newtheorem{lemma}[theorem]{Lemma}
\newtheorem{corollary}[theorem]{Corollary}
\newtheorem{definition}[theorem]{Definition}
\newtheorem{remark}[theorem]{Remark}
\newtheorem{question}{Question}
\newcommand{\new}[1]{{\bf#1}}
\newcommand{\occult}[1]{}
\newcommand\Diff{\operatorname{Diff}}
\newcommand\cF{\mathcal F}
\newcommand\Fs{\mathcal F^s}
\newcommand\Fu{\mathcal F^{u}}
\newcommand\supp{\operatorname{supp}}
\begin{document}

\title[Unstable entropy in smooth ergodic theory]{Unstable entropy in smooth ergodic theory
}

\begin{abstract}
In this survey we  recall basic notions of disintegration of measures and entropy along unstable laminations. We  review some  roles of  unstable entropy in smooth ergodic theory including the so-called invariance principle, Margulis construction of measures of maximal entropy, physical measures and rigidity. We also give  some new examples and pose some open problems. 
\end{abstract}

\author{ Ali Tahzibi}

\address{A.~Tahzibi, Instituto de Ci\^{e}ncias Matem\'aticas e de Computa\c{c}\~ao, Universidade de S\~ao Paulo (USP), \emph{E-mail address:} \tt{tahzibi@icmc.usp.br}}

\thanks{A.T is supported by FAPESP 2017/06463-3 and CNPq (PQ) 303025/2015-8. This work was finalized when the author visited Peking University and Université Paris-Sud (financial support: project ANR-16-CE40-0013 ISDEEC).  The conversations with  Sylvain Crovisier, Jiagang Yang, Mauricio Poletti and J\'{e}r\^{o}me Buzzi, were very usefull for improving the presentation and some of the new results and I thank them.
I would like to thank  Yi Shi, Jinhua Zhang and the whole Dynamical system group in Beijing and Suzhou for discussions during the minicourse and seminars. The comments of an anonymous referee were also very usefull for this survey.}

\maketitle

\tableofcontents

\section{Introduction}
The notion of entropy is central in the study of complexity of dynamical systems. It is worthy of attention that entropy, with origins in statistical mechanics and later in information theory, has many roles in the ergodic properties of smooth dynamics and corollaries in several areas of mathematics.
Kolmogorov and Sinai introduced this notion into the field of dynamical systems. 

In this paper we concentrate on some results about the entropy along the expanding direction of a dynamical system. Although ``the whole entropy of a dynamical system is present in its expanding directions", the precise analysis of unstable entropy  is a non trivial task. Sinai (\cite{Sin}) (in uniformly hyperbolic setting), Ledrappier \cite{Ledrappier84} and Ledrappier-Strelcyn \cite{LST}  for the first time constructed measurable partitions and calculated unstable entropy. To define the notion of metric entropy along unstable bundle, one uses the existence of lamination tangent to unstable bundle and  calculate the entropy along such lamination. See Subsection \ref{entropyalong} for general foliations and \ref{3.1} in the partial hyperbolic setting where a simpler way is proposed using \cite{HHW}.

One of the most important references where the notion of unstable entropy is explored deeply  is the celebrated work of Ledrappier and Young (\cite{LY} and \cite{LYii}). In their work, they relate the three fundamental notions of entropy, dimension and Lyapunov exponents of probability invariant measures for smooth dynamics in compact manifolds (Theorem \ref{expdiment}). 

In this survey, all the diffeomorphisms are at least $C^1.$ When we need more regularity, we make it precise. By $C^{1+}$ we mean $\cup_{\alpha > 0} C^{1+\alpha}.$
\\\\
\noindent {\bf Unstable entropy and Physical measures.}
Partially hyperbolic dynamics constitute an important class of systems beyond uniform hyperbolicity. The existence of central bundle  makes the statistical properties of partially hyperbolic dynamics a rich topic of research.  Studying entropy along unstable foliation is a very natural approach to push some techniques applied in uniformly hyperbolic case for the partially hyperbolic diffeomorphisms. 

Pesin and Sinai \cite{PesinSinai} constructed $u-$Gibbs measures for $C^{1+}$ partially hyperbolic attractors and by definition these measures are smooth along unstable leaves. Smoothness of measure along unstable leaves is equivalent to the equality between unstable entropy and sum of the Lyapunov exponents along unstable bundle (See chapter 11 of the book \cite{BDV}). This smoothness is relevant to provide physically meaningful informations on the statistical behaviour of orbits close to the attractor. Indeed, if $f$ is a $C^{1+\alpha}$ dynamics admitting a partially hyperbolic attractor, if a $u-$Gibbs measure is ergodic with negative center Lyapunov exponents, then it is physical in the following sense:
An $f$-invariant probability $\mu$ is physical if its basin has positive Lebesgue measure where the basin of $\mu$ is the following subset:
$$
\{x \in M: \lim_{n \rightarrow \infty} \frac{1}{n} \delta_{f^i(x)} \rightarrow \mu\}.
$$
The proof of the above fact uses absolute continuity of stable laminations and it heavily uses $C^{1+\alpha}, \alpha> 0$ regularity of dynamics.
 The techniques for construction of $u-$Gibbs measures need distortion argument which also uses $C^{1+\alpha}$ regularity.
 
  The notion of unstable entropy is particularly helpful to define appropriately $u-$Gibbs measures for $C^1$ diffeomorphisms (\cite{CYZ}, \cite{HFY}). A $u-$Gibbs measure in $C^1$ setting may be defined as a measure $\mu$ such that unstable entropy coincides with the sum of the Lyapunov exponents along the unstable direction: 
 $$
  h_{\mu}(f, \mathcal{F}^u) = \int \log |\det (Df|E^u) | d\mu.
 $$

  Crovisier-D.Yang-Zhang \cite{CYZ} and T.Hua-F.Yang-J.Yang \cite{HFY} proved that if $\Lambda$ is a partially hyperbolic attractor with decomposition $E^{cs} \oplus E^u$ for a $C^1$ diffeomorphism, then for Lebesgue almost every point $x$ close to attractor, any invariant probability accumulated by empirical measures along the obrit of $x$ is a $u-$Gibbs measure (Theorem \ref{generic1}). As a corollary, if there exists a unique $u-$Gibbs measure then it is physical. 
 \\\\
   \noindent{\bf Unstable entropy and Maximal entropy measures.}
Maximal entropy measures (m.m.e) constitute another natural class of invariant measures.  Bowen \cite{Bowen} and Margulis \cite{Margulis0} obtained measures of maximal entropy for $C^1$ uniformly hyperbolic systems. While Bowen used approximation by periodic measures, Margulis idea was to construct Radon measures  on leaves of invariant foliations which will serve as conditional measures for the measure of maximal entropy. Here we recall the approach introduced by Margulis where  unstable entropy appears in an essential way. We mention that in the uniformly hyperbolic context, the whole entropy of a measure is equal to the unstable entropy and the measure of maximal entropy is obtained locally as product of two Margulis measures defined on leaves of stable and unstable foliations (See Section \ref{s.margulis} for description of Margulis measures.) If $f: M \rightarrow M$ is a topologically transitive Anosov diffeomorphism, there are two families of measures $\{m^u_x\}_{x \in M}, \{m^s_x\}_{x \in M}$ where $m^u_x$ (respectively $m^s_x$) is a Radon measure supported on the unstable leaf (resp. stable leaf) passing through $x$. We have that $m^u_x = m^u_y$ if $y$ and $x$ are in the same unstable leaf. Similarly we have constancy of $m^s_x$ on stable leaves. The family $m^u_x$ (respectively $m^s_x$) is invariant under stable (resp. unstable) holonomy. Up to here, these measures are similar to transverse measures in foliation theory (See Plante \cite{plante75} and Ruelle-Sullivan\cite{RS}.)
However, Margulis measures satisfy an additional dynamical property which makes them right candidates for conditional measures for measures of maximal entropy: the action of $f$ on the family of measures $\{m^u_x\}$ and $\{m^s_x\}$ is given just by a universally constant dilation:
 More precisely, in the Anosov case $$f_* m^u_x = e^{-h_{top}(f)} m^u_{f(x)} \quad \text{and} \quad f_* m^s_x = e^{h_{top}(f)} m^s_{f(x)}.$$ 

For a general partially hyperbolic diffeomorphism $f: M \rightarrow M$ with unstable foliation $\mathcal{F}^u$, Margulis measures  on unstable leaves can be defined as measurable family $\{m^u_x\}_{x \in M}$ of non-trivial Radon measures on unstable leaves where the dynamics acts by a dilation: here exists a dilation factor $D > 0$ such that $f_* m_x = D^{-1} m_{f(x)}.$ See Definition \ref{def.margulisfamily}.

These family of measures can be  used to find invariant measures maximizing entropy along unstable direction or even m.m.e's in some cases.   Buzzi-Tahzibi-Fisher \cite{BFT} constructed Margulis measures for a large class of partially hyperbolic dynamics called flow type diffeomorphisms and gave a dichotomy on measures of maximal entropy for this class.  In their paper, a general convexity result (Proposition \ref{prop-Ledrappier}) shows that if $f$ is a $C^2$ partially hyperbolic diffeomorphism with a measurable Margulis family of measures $\{m^u_x\}_{x \in M}$   with dilation factor $D_u,$ i.e $f_* m^u_x = D_u^{-1} m^u_{f(x)}$ and $m^u_x$ is fully supported on the unstable leaf passing through $x$ then
$$
 h_{\mu}(f, \mathcal{F}^u) \leq \log (D_u).
$$
Moreover, the equality holds if and only if the disintegration of $\mu$ along $\mathcal{F}^u$ coincides with $m^u.$ This is a key argument to prove uniqueness of hyperbolic measure of maximal entropy (one m.m.e with positive center exponent and one with negative exponent) for flow type diffeomorphisms in \cite{BFT}.
 
 We mention that the Carath\'{e}odory constructions developped in (\cite{CPZ1} and \cite{CPZ2}, 
\cite{H89}) 
also provide Margulis measures.
\\\\
\noindent{\bf Unstable entropy and Invariance Principle.}
In this paper we also recall  a celebrated theorem of Furstenberg \cite{Furstenberg} and its generalization by Ledrappier \cite{L} about Lyapunov exponents of random product of matrices and linear cocycles. Avila and Viana \cite{AV} made an essential contribution in this theory by generalizing Ledrappier's ideas to non-linear cocycles and they called it invariance principle. 

They considered smooth cocycles, that is, fiber bundle morphisms acting by diffeomorphisms on the fibers. 
 Linear cocycles
can be studied through their induced action on the projective bundle associated to the
underlying vector bundle. 

The extremal Lyapunov exponents measure the smallest and largest exponential rates of growth of the derivative along the fibers. The Invariance Principle states that if these two numbers coincide then ``the fibers carry some amount of structure which is transversely invariant, that is, invariant under certain canonical families of homeomorphisms between fibers." 

Following the works of Avila-Viana \cite{AV} and Avila-Santamaria-Viana-Wilkinson \cite{ASVW}  we divide invariance principle results into two parts:

 First, let $f: M \rightarrow M$ be an Anosov or partially hyperbolic diffeomorphism and $F: \mathcal{E} \rightarrow \mathcal{E}$ be fibered dynamics over $f$ and $\pi: \mathcal{E} \rightarrow M$ the fiber projection.  Under suitable assumptions, the invariant (stable and unstable) foliations
of the base dynamics (uniformly hyperbolic or partially hyperbolic) lift to invariant foliations of the fibered system which we call by stable and unstable foliations of $F$.

Avila-Viana \cite{AV} generalized Ledrappier result and proved that if $\mu$ is an $F-$invariant measure with non-positive Lyapunov exponents along fibers then $\mu$ disintegrated into $\{\mu_x\}$ defined almost everywhere and $\mu_x$ is invariant under unstable holonomy.  This can be encapsulated into the following: 
{\it ``non-positive Lyapunov exponents yields determinism."} Clearly if all Lyapunov exponents are non-negative we obtain stable holonomy invariance almost everywhere. 

The second part of the invariance principle results is technically similar to classical Hopf argument (in the uniformly hyperbolic case) and Julienne density argument (in partially hyperbolic context \cite{BW}, \cite{GPS}) in the  proof of ergodicity. 

Now we assume that all Lyapunov exponents along fiber vanish. So by the first part we have both 
invariance under stable and unstable holonomies.  

If the base dynamics $f$ is uniformly hyperbolic and $\pi_* \mu$ has local product structure or $f$ is partially hyperbolic, volume preserving and center bunched  any measurable section which is essentially (i.e., almost everywhere) saturated under
the lifted stable foliation and essentially saturated under the lifted unstable foliation
coincides, almost everywhere, with some section that is saturated by both lifted foliations.

 Moreover, if the base diffeomorphism is accessible then such a bi-saturated
section may be chosen to be continuous. So, in this second part of invariance principle result, the disintegration of $\mu$ is defined {\it everywhere} and continuous.

Invariance principle had been used extensively in the analysis of measures of maximal entropy, physical measure and continuity of Lyapunov exponents of cocycles. See for some examples the results in
\cite{UVY}, \cite{AV}, \cite{Karina}, \cite{OP}, \cite{BBB}.

In Subsection \ref{general} we describe a  result of Tahzibi-Yang \cite{TY} (written slightly more general than in the original paper) which shed light on (first part of) nonlinear invariance principle using unstable entropy. Our result provides an invariance principle formulated in terms of unstable entropy. So, in particular we may get the previous results (first part of invariance principle) which are formulated in terms of Lyapunov exponents. 

In particular we give some applications of this invariance principle to understand measures of maximal entropy and hyperbolicity of high entropy measures for partially hyperbolic diffeomorphisms with one dimensional compact center leaves. In general it seems interesting to understand the hyperbolicity of high entropy measures of a partially hyperbolic dynamics whenever all ergodic m.m.e's are hyperbolic. We also propose to undestand a variational principle type result for non-hyperbolic measures (Question \ref{q.vp}). See \cite{DGR} for similar results in the iterated function systems setting. 

In Subsection \ref{bochi}  we show briefly how the ideas from this invariance principle can be used to prove that Bochi-Ma\~{n}\'{e} result is not correct in non-invertible setting.

The above mentioned invariance principle results require a global fiber bundle structure. 
A large class of partially hyperbolic diffeomorphism with compact center leaves can be seen as  fibered maps. This makes invariance principle a powerfull tool to study smooth ergodic theory of fibered partially hyperbolic systems.

Recently, S. Crovisier and M. Poletti announced an invariance principle result which deals with partially hyperbolic diffeomorphisms which act quasi-isometrically on center leaves.  Similar to \cite{TY}, this invariance principle is also formulated in terms of unstable entropy. Their result in particular has the advantage of being adapted to work with non-smooth measures and partially hyperbolic dynamics without compact center leaves. 
\\\\
\noindent{ \bf Unstable entropy and Rigidity.}
We mention that the idea of considering entropy along unstable foliation (or more generally expanding foliations) is used in the proof of rigidity results too.  If $f$ is a $C^1$ diffeomorphism and $\mathcal{F}$ is an invariant expanding foliation, i.e. there exists $\lambda > 1$ such that $\forall x \in M, \|Df^{-1}|T_x(\mathcal{F})\| \leq \lambda^{-1},$ then the notion of entropy along $\mathcal{F}$ is well defined and we call an invariant measure $\mu$ as Gibbs expanding state along $\mathcal{F}$ if it satisfies ``Pesin's entropy formula": 
$$
 h_{\mu}(f, \mathcal{F}) = \int \log |det (Df | T_x\mathcal{F})| d\mu(x).
$$
If $f$ is $C^r, r > 1$ then this definition is equivalent to say that $\mu$ has absolutely continuous disintegration along leaves of $\mathcal{F}$ (See Theorem \ref{SaYtheorem}).

A topological conjugacy between two Anosov diffeomorphism is in general H\"{o}lder continuous. If the conjugacy is $C^1$, then smooth invariants, for instance  the periodic data  (eigenvalues of the return map at the periodic points), must be preserved by the conjugacy. While coincidence of periodic data for conjugate Anosov diffeomorphisms of $\mathbb{T}^2$ implies smooth conjugacy (de La Llave and M. Moriy\'{o}n \cite{LlM}), the same kind of conclusion is not correct in higher dimensions. There is a vast number of references about smooth conjugacy and we refer the reader to the introduction of the paper by R. Saghin and J. Yang \cite{SaY} to see many of them.  In three dimensional case, See \cite{Gogu}, \cite{SaY}, \cite{MiTa} for some of the results in Rigidity of  Anosov diffeomorphisms on $\mathbb{T}^3$.

 A core idea in the proof of rigidity results for Anosov diffeomorphisms is the following (See Theorem \ref{SaYtheorem2}): Suppose that  
$f, g \in \Diff^k(M), k \geq 2$ and $\mathcal{F}^f, \mathcal{F}^g$ expanding foliations invariant by $f$ and $g$ with uniform $C^r, r > 1$ leaves. An $f-$invariant measure is called Gibbs state along $\mathcal{F}^f$ if its disintegration along $\mathcal{F}^f$ is absolutely continuous with respect to Lebesgue measure on leaves of $\mathcal{F}^f.$

Suppose $f$ and $g$ are conjugate by a homeomorphism $h$ which maps $\mathcal{F}^f$ to $\mathcal{F}^g$. If $\mu$ is Gibbs expanding state with respect to $\mathcal{F}^f$ then $h_{*}\mu$ is Gibbs expanding state along $\mathcal{F}^g$  if and only if the sum of Lyapunov exponents of $\mu$ along $\mathcal{F}^f$ coincides with the sum of exponents of $\nu$ along $\mathcal{F}^g:$
$$
\int \log (\det (Df|T\mathcal{F}^{f})) d\mu =  \int \log (\det (Dg|T\mathcal{F}^{g})) d\nu.
$$
If $\mathcal{F}^f$ and $\mathcal{F}^g$ are one dimensional then the above if and only if statement is equivalent to: $h$ restricted on each $\mathcal{F}^f$ leaf within the support of $\mu$ is uniformly $C^r.$
In theorems \ref{SaY}, \ref{MiTa} a main step of the proof is to prove differentiability of conjugacy between an Anosov diffeomorphism and its linearization along invariant one dimensional foliations and the  one dimensional argument is crucial.

\section*{ Structure of the paper}

In Section \ref{measurabletoolbox} we provide elementary results in measure disintegration. In Subsection \ref{sub.disintegration} we recall the celebrated Rokhlin disintegraiton theorem and we prove an easy but fundamental lemma (Lemma \ref{product}) which will be used in the following sections dealing with holonomy invariance of conditional measures. 
In Subsection \ref{entropyalong} the metric entropy along foliations and measurable subordinated partitions are discussed.

In Section \ref{ph} we study the role of unstable entropy in understanding better partially hyperbolic dynamics.  
Firstly in Subsections \ref{3.1} and \ref{3.2} we review variational principle result, relation between unstable entropy and Lyapunov exponents. In particular Pesin formula and Margulis-Ruelle inequality for unstable entropy  are discussed.
 We 
mention the relation between maximal entropy measures and those which maximize unstable entropy ($u-$maximal measures) and propose some directions to follow to obtain new results. See Question \ref{rigidity} and comments after it.

 We  postpone the construction of an ergodic measure of maximal entropy which is not $u-$maximal to the Section \ref{invprincple}. Indeed, our argument envolved in such example uses invariance principle which is discussed in Section \ref{invprincple}.
In Subsection \ref{3.3} we study $u-$Gibbs measures and their relation with empirical measures.  The results in this subsection are in particular fundamental to understand statistical properties of partially hyperbolic attractors with just $C^1$ regularity.  
Finally in Subsection \ref{3.4} we review an argument by J. Yang to prove topological transitivity using techniques from unstable entropy.

In Section \ref{invprincple} we begin with the model of random product of matrices, introduced by Furstenberg and prove Furstenberg's result using non linear version of invariance principle proved by Avila-Viana. In Subsection \ref{general} we introduce a general setting of non linear cocycles and prove an invariance principle. This invariance principle is formulated in terms of unstable entropy.

In Subsection \ref{applicationinv} we give some applications of the invariance principle which is formulated in terms of entropy. More precisely, in \ref{mmenotu} we proved example of a partially hyperbolic diffeomorphism with an ergodic measure of maximal entropy which does not maximize unstable entropy. In \ref{highentropy} we recall a dichotomy theorem proved by Hertz-Hertz-Tahzibi-Ures (Theorem \ref{dichotomy})  and in the setting of their theorem we prove that for generic diffeomorphisms high entropy measures are hyperbolic. Finally in \ref{bochi} we see that the so called Bochi-Ma\~{n}\'{e} result in non-invertible setting does not hold.

In Subsection \ref{cropol} we announce a recent result of Crovisier-Poletti on an invariance principle for partially hyperbolic dynamics without compact center leaves which act quasi-isometrically on center leaves.

In Section \ref{s.margulis} we introduce the notion of Margulis measures and their properties related to maximal measures using convexity arguments. In \ref{margulishyperbolic} we see how to see Margulis family of measures as conditional measures of maximal entropy measures in uniformly hyperbolic case. Finally in \ref{margulisph} we extend some of the results to partially hyperbolic setting and pose some questions and directions for further work.

In Section \ref{s.rigidity} we apply entropy arguments to some rigidity problems. In Subsection \ref{rigidity-gibbs} we recall a rigidity result envolving invariant expanding foliations and we apply it in \ref{rigidityanosov} to obtain rigidity of Anosov diffeomorphisms in $\mathbb{T}^3.$
Finally in \ref{measurerigidity} we give a very sketchy proof of a simple example (containing main ideas) of a  Katok-Spatzier theorem on measure rigidity of actions.

\section{Preliminaries} \label{measurabletoolbox}

\subsection{Disintegration of measures} \label{sub.disintegration}

In this section we develop an abstract measurable toolbox which deals with disintegration of measures and their properties.

Let $(M, \mathcal{B}, \mu)$ be a probability measure space and $\mathcal{P} = \{P_i\}_{i \in \mathcal{I}}$ a partition of $M$ into measurable subsets, i.e $M = \bigcup_{P_i \in \mathcal{P}} P_i$ such that $P_i \in \mathcal{B},  \mu(P_i \cap P_j) = 0, \mu(\bigcup P_i) = \mu(M) =1.$ We denote by $\sigma(\mathcal{P})$ the smallest $\sigma-$algebra containing all $P_i \in \mathcal{P}.$

Given two partitions $\mathcal{P}$ and $\mathcal{Q}$ we use the usual notation $\mathcal{P} \leq \mathcal{Q}$ if and only if $\mathcal{B}(\mathcal{P}) \subseteq \mathcal{B}(\mathcal{Q})$ or in other words any $P_i = \bigcup_{j \in \mathcal{I}_i} Q_j$. 

In what follows we recall the definition of {\it measurable partitions}. We emphasize that a partition into measurable subsets is not necessarily a measurable partition. Although the notation is a bit confusing, it is folkloric in this area to reserve the name of measurable partition  for a ``countably generated" partition into measurable subsets, as following:
\begin{definition} \label{def:mensuravel}
We say that a partition $\mathcal P$ is measurable (or countably generated) with respect to $\mu$ if there exist a measurable family $\{A_i\}_{i \in \mathbb N}$ and a measurable set $F$ of full measure such that
if $B \in \mathcal P$, then there exists a sequence $\{B_i\}$, where $B_i \in \{A_i, A_i^c \}$ such that $B \cap F = \bigcap_i B_i \cap F$.
\end{definition}

There is another equivalent way to define measurable partitions (See chapter 5 of Einsiedler and Ward's book \cite{EW}.) where firstly we correspond to a $\sigma-$algebra $\mathcal{A} \subseteq \mathcal{B}$ a partition $\mathbb{P}(\mathcal{A})$ and then a partition $\mathcal{P}$ into measurable subsets is called measurable iff 
$$
\mathbb{P}(\sigma (\mathcal{P})) = \mathcal{P}.
$$
where $\sigma(\mathcal{P})$ is the $\sigma-$algebra generated by the partition $\mathcal{P}.$

Example: Partition into the orbits of irrational flow in $\mathbb{T}^2$ is not measurable. Here $\mathbb{P}(\sigma (\mathcal{P}))$ is the trivial partition of the whole space as one atom. Indeed, $\sigma(\mathcal{P})$ is the trivial $\sigma-$algebra $\{\emptyset, \mathbb{T}^2\}.$

 Let $\mathcal P$ be a measurable partition of a compact metric space $M$ and $\mu$ a borelian probability. Let $\pi : M \rightarrow M / \mathcal{P}$ the canonical projection from $M$ to quotient space defined by the atoms of the partition. There is a natural measure structure on $M/\mathcal{P}$ defined by the measure $\tilde{\mu} = \pi_* \mu$ on the corresponding pushed $\sigma-$algebra. We denote by $\mathcal{P}(x)$ the element of partition which contains $x.$

 \begin{definition} \label{definition:conditionalmeasure}
 Given a partition $\mathcal P$, a family $\{\mu_P\}_{P \in \mathcal P} $ is a \textit{system of conditional measures} for $\mu$ (with respect to $\mathcal P$) if for given $\phi \in C^0(M):$
\begin{itemize}
 \item[i)] $P \mapsto \int \phi \mu_P$ is $\widetilde \mu$- measurable;
\item[ii)] $\mu_P(P)=1$ $\widetilde \mu$-a.e.;
\item[iii)]
$\displaystyle{ \int_M \phi d\mu = \int_{M / \mathcal P}\left(\int_P \phi d\mu_P \right)d\widetilde \mu } = \int_M \int_{\mathcal{P}(x)} \phi d\mu_{\mathcal{P}(x)} d\mu.$
\end{itemize}
\end{definition}
  By Rokhlin's theorem \cite{Rok49},  there exists a unique disintegration by conditional probabilities for $\mu$. Clearly, uniqueness should be understood almost everywhere.
 \begin{theorem}[Rokhlin's disintegration \cite{Ro52}] \label{teo:rokhlin} 
 Let $\mathcal P$ be a measurable partition of a compact metric space $M$ and $\mu$ a borelian probability. Then there exists a unique disintegration by conditional probability measures for $\mu$.
\end{theorem}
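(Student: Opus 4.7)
The plan is to realize the conditional measures as limits of elementary conditional measures coming from finite partitions, and then to use separability of $C^0(M)$ to extract a single family of Borel probabilities that works simultaneously for all test functions.

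First I would exploit the countably generated structure. By Definition \ref{def:mensuravel}, there is a sequence $\{A_i\}_{i\in\NN}$ of measurable sets and a full measure set $F$ such that each atom of $\mathcal{P}$ is, intersected with $F$, an intersection of sets of the form $A_i$ or $A_i^c$. Let $\mathcal{P}_n$ be the finite partition generated by $\{A_1,\dots,A_n\}$, so $\mathcal{P}_n$ is increasing and $\sigma(\bigcup_n \mathcal{P}_n)=\sigma(\mathcal{P})$ up to a $\mu$-null set. For each $\mathcal{P}_n$ the disintegration is trivial: on an atom $P$ with $\mu(P)>0$, put $\mu^n_{P}=\mu|_P/\mu(P)$, and ignore the null atoms.

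Next I would pass to the limit by martingale convergence. Fix a countable dense $\QQ$-algebra $\{\phi_k\}\subset C^0(M)$. For each $k$, the functions $F_{n,k}(x):=\int \phi_k\,d\mu^n_{\mathcal{P}_n(x)}$ are precisely $E_\mu[\phi_k\mid \sigma(\mathcal{P}_n)]$, so by the martingale convergence theorem $F_{n,k}\to F_k$ $\mu$-a.e.\ and in $L^1(\mu)$, where $F_k=E_\mu[\phi_k\mid \sigma(\mathcal{P})]$. Let $N_k$ be the corresponding null exceptional set and put $N:=\bigcup_k N_k\cup F^c$, still $\mu$-null. For $x\notin N$, the functional $\phi_k\mapsto F_k(x)$ is linear, positive, sends the constant function $1$ to $1$, and extends by density to a positive normalized linear functional $\Lambda_x$ on $C^0(M)$. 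By Riesz--Markov there is a unique Borel probability $\mu_{\mathcal{P}(x)}$ on $M$ with $\int \phi\,d\mu_{\mathcal{P}(x)}=\Lambda_x(\phi)$ for every $\phi\in C^0(M)$.

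Then I would check the three defining properties. Measurability of $P\mapsto \int \phi\,d\mu_P$ reduces to measurability of each $F_k$ (a conditional expectation), extended to all $\phi\in C^0(M)$ by uniform approximation. Property (iii) follows by integrating $F_{n,k}\to F_k$ and using that, by the tower property, $\int F_k\,d\mu=\int \phi_k\,d\mu$. The one nontrivial clause is (ii), namely that $\mu_{\mathcal{P}(x)}(\mathcal{P}(x))=1$ for $\widetilde\mu$-a.e.\ atom: I would prove this by taking indicator functions of the generating sets $A_i$, showing via the martingale convergence for $\mathbf{1}_{A_i}$ that $\mu_{\mathcal{P}(x)}(A_i)\in\{0,1\}$ outside a null set, and then, using that $\mathcal{P}(x)\cap F$ is the intersection of the chosen $A_i$'s or their complements, concluding that $\mu_{\mathcal{P}(x)}$ is concentrated on $\mathcal{P}(x)$. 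Finally, uniqueness: if $\{\mu'_P\}$ is another disintegration, apply property (iii) to $\phi_k\cdot \mathbf{1}_B$ for $B\in \sigma(\mathcal{P})$ to see that $P\mapsto \int \phi_k\,d\mu'_P$ is also a version of $E_\mu[\phi_k\mid \sigma(\mathcal{P})]$; countability of $\{\phi_k\}$ then forces $\mu_P=\mu'_P$ off a $\widetilde\mu$-null set.

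The main obstacle I anticipate is property (ii), i.e.\ showing that the limit measures live on the correct atoms. This is exactly where the hypothesis that $\mathcal{P}$ is \emph{measurable} (rather than merely a partition into measurable sets) is used, and it must be handled through the countable generating family $\{A_i\}$ rather than for a general measurable set. All the remaining steps are essentially bookkeeping around a countable dense set of continuous functions to avoid an uncountable union of null sets.
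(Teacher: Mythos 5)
The paper does not actually prove this theorem: it is quoted from Rokhlin \cite{Rok49}, \cite{Ro52}, so there is no internal proof to compare against. Your argument is the classical one (finite partitions generated by the countable family $\{A_i\}$, martingale convergence of the conditional expectations of a countable dense set of continuous functions, Riesz--Markov to produce the fiber measures, and a separate argument for concentration on atoms and for uniqueness), and it is correct in outline; this is essentially the proof found in the cited literature and in standard textbooks.

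Three points deserve to be made explicit when you write it up. First, you implicitly use that each $A_i$ is, modulo $F^c$, a union of atoms of $\mathcal P$: this does follow from Definition \ref{def:mensuravel} (if an atom $B$ meets $A_i\cap F$, the choice $B_i$ at index $i$ cannot be $A_i^c$, hence $B\cap F\subseteq A_i$), but it is exactly what justifies both the identification $\sigma\bigl(\bigcup_n\mathcal P_n\bigr)=\sigma(\mathcal P)$ modulo null sets and the identity $E_\mu[\mathbf 1_{A_i}\mid\sigma(\mathcal P)]=\mathbf 1_{A_i}$ a.e.\ that your step (ii) relies on, so state it as a lemma. Second, the functional $\Lambda_x$ only controls integrals of continuous functions, whereas step (ii) needs the value $\mu_{\mathcal P(x)}(A_i)$ for the Borel sets $A_i$; to transfer the martingale information to these values you must approximate each $A_i$ from inside by compacts and from outside by open sets (Urysohn functions), for the countably many $i$ simultaneously, so that the exceptional null sets remain countable in number --- this regularity step is the genuinely fiddly part and should not be left implicit. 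Third, since the atoms are only identified with $\bigcap_i B_i$ up to $F$, you must also verify $\mu_{\mathcal P(x)}(F)=1$ for a.e.\ $x$ (e.g.\ because $x\mapsto\mu_{\mathcal P(x)}(F^c)$ integrates to $\mu(F^c)=0$ once property (iii) is extended from $C^0(M)$ to bounded Borel functions by a monotone class argument); the same extension of (iii) is what legitimizes your uniqueness step, where you test against $\phi_k\cdot\mathbf 1_B$ with $B$ a saturated measurable set, which is not continuous. None of these is a gap in the strategy, but each is a place where the a.e.\ bookkeeping must be done carefully.
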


\subsubsection{Product structure}
Let $(A:=[0, 1]^{m+n}, \mu)$ be the unit square equipped with a probability measure and $\mathcal{F} , \mathcal{G}$ be a pair of  transverse foliations of $A$ with compact leaves and dimension of leaves respectively $m$ and $n$. 

We assume the following topological  product structure: For some $x_0 \in A,$ there exists a continuous injective and surjective map $Q(. , .) : \mathcal{F}(x_0) \times \mathcal{G}(x_0) \rightarrow [0, 1]^{m+n}$ such that $Q(x, y) = \mathcal{G}(x) \cap \mathcal{F}(y).$

 The foliations $\mathcal{F}, \mathcal{G}$ can be considered as measurable partitions. This can be seen as we are dealing with a product (of two separable metric spaces) structure.

We denote by $\{\mu_{x}^{\mathcal{G}}\}$ and $\{\mu^{\mathcal{F}}_{x}\}$ the system of conditional probability measures along  $\mathcal{G}$ and $\mathcal{F}:$
$$\mu= \int_{A} \mu^{\mathcal{F}}_{x} d\mu (x) = \int_{A} \mu^{\mathcal{G}}_x d \mu(x)$$
where $\mu_ {x}^{\mathcal{F}}$ (resp. $\mu^{\mathcal{G}}_x$) is a probability measure supported on the leaf $\mathcal{F} (x)$ (resp. $\mathcal{G}(x)$).

Another equivalent way to write the disintegration equation (along $\mathcal{G}$) above  is to consider the quotient space $A/ \mathcal{G}$ equipped with the quotient measure $\tilde{\mu}:= \pi_*(\mu)$ where $\pi: A \rightarrow A/\mathcal{G}$ is the canonical projection. Clearly $A /\mathcal{G}$ can be identified with $[0,1]^m.$ We can write
$$
 \mu =  \int_{A/\mathcal{G}} \mu_{P}^{\mathcal{G}} d \tilde{\mu} (P)
$$
where $\mu^{\mathcal{G}}_{P}$ is the conditional probability measure on $P$, a leaf of $\mathcal{G}$. By definition,  for any integrable function $\phi: M\to \mathbb{R}$, we have
$$\int_{A} \phi d \mu = \int_{A/\mathcal{G}} \int_{P} \phi(x) d \mu_{P}^{\mathcal{G}} (x) d \tilde{\mu}(P).  $$

The product structure of the pair of foliations above, permits us to define holonomy maps $H^{\mathcal{F}}$ and $H^{\mathcal{G}}$ respectively between leaves of $\mathcal{G}$ and $\mathcal{F}.$

We say a system of disintegration $\mu^{\mathcal{G}}$ is invariant by holonomy of $\mathcal{F}$, or \emph{$\mathcal{F}-$invariant} if $$\mu_y^{\mathcal{G}} = (H^{\mathcal{F}}_{x, y})_{*} \mu_x^{\mathcal{G}}$$ for
 $x,y$ belonging to a $\mu-$ full measure subset, where $H_{x, y}^{\mathcal{F}}$ is the $\mathcal{F}-$holonomy map between $\mathcal{G}(x)$ and $\mathcal{G}(y)$ induced by the foliation $\mathcal{F}$. Similarly we define \emph{$\mathcal{G}-$invariance} of $\{\mu_{y}^{\mathcal{F}}\}$ by $\mu_{y}^{\mathcal{F}} = (H^{\mathcal{G}}_{x, y})_{*} \mu^{\mathcal{F}}_{x}.$
\begin{lemma} \label{product}
If $\{\mu_{x}^{\mathcal{G}}\}$ is $\mathcal{F}-$invariant then  $\{\mu_x^{\mathcal{F}}\}$ is $\mathcal{G}-$invariant and $\mu = Q_* (\mu^{\mathcal{F}}_{x_0} \times \mu^{\mathcal{G}}_{x_0})$ for $\mu-$almost every $x_0.$  Similarly, if $\{\mu^{\mathcal{F}}_x\}$ is $\mathcal{G}-$invariant  then $\{\mu^{\mathcal{G}}_x\}$ is $\mathcal{F}-$invariant.
\end{lemma}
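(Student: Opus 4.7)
The plan is to first establish the product representation $\mu = Q_\ast(\nu \times \mu_{x_0}^{\mathcal{G}})$ for some probability $\nu$ on $\mathcal{F}(x_0)$, and then derive $\mathcal{G}$-invariance of $\{\mu_x^{\mathcal{F}}\}$ together with the identification $\nu = \mu_{x_0}^{\mathcal{F}}$ from Rokhlin uniqueness applied to the transverse disintegration.

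First, I would use the product structure to identify the quotient $A/\mathcal{G}$ with the transversal leaf $\mathcal{F}(x_0)$ via $P \mapsto P \cap \mathcal{F}(x_0)$ and push the quotient measure $\tilde{\mu}$ to a Borel probability $\nu$ on $\mathcal{F}(x_0)$. Choose $x_0$ inside the intersection of the full $\mu$-measure set where $\mathcal{F}$-invariance of $\{\mu_x^{\mathcal{G}}\}$ holds and the standard full measure set where the Rokhlin disintegration is defined. Then, for $\nu$-a.e.\ $z \in \mathcal{F}(x_0)$, one has $\mu_{\mathcal{G}(z)}^{\mathcal{G}} = (H^{\mathcal{F}}_{x_0,z})_\ast\mu_{x_0}^{\mathcal{G}}$. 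Feeding this into the defining identity (iii) of Definition~\ref{definition:conditionalmeasure} and doing a change of variables along the holonomy gives, for every $\phi \in C^0(A)$,
$$
\int_A \phi\,d\mu = \int_{\mathcal{F}(x_0)}\!\int_{\mathcal{G}(x_0)} \phi(Q(z,w))\,d\mu_{x_0}^{\mathcal{G}}(w)\,d\nu(z)
= \int_A \phi\,d[Q_\ast(\nu\times\mu_{x_0}^{\mathcal{G}})],
$$
which by Riesz representation yields $\mu = Q_\ast(\nu\times\mu_{x_0}^{\mathcal{G}})$.

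Next I would run Fubini in the other order on this product representation. For any $\phi$,
$$
\int_A \phi\,d\mu = \int_{\mathcal{G}(x_0)}\!\int_{\mathcal{F}(x_0)} \phi(Q(z,w))\,d\nu(z)\,d\mu_{x_0}^{\mathcal{G}}(w),
$$
and after transporting via the $\mathcal{G}$-holonomy $H^{\mathcal{G}}_{x_0,Q(x_0,w)}:\mathcal{F}(x_0)\to\mathcal{F}(Q(x_0,w))$ this exhibits a disintegration of $\mu$ along $\mathcal{F}$: the conditional on the leaf $\mathcal{F}(Q(x_0,w))$ equals $(H^{\mathcal{G}}_{x_0,Q(x_0,w)})_\ast\nu$, and the quotient measure on $A/\mathcal{F}$ (identified with $\mathcal{G}(x_0)$) equals $\mu_{x_0}^{\mathcal{G}}$. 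By the uniqueness clause of Rokhlin's theorem \ref{teo:rokhlin}, this disintegration must coincide $\mu$-almost everywhere with $\{\mu_x^{\mathcal{F}}\}$, and in particular $\mu_y^{\mathcal{F}} = (H^{\mathcal{G}}_{x_0,y})_\ast \nu$ for $\mu$-a.e.\ $y$, which is the $\mathcal{G}$-invariance sought. Evaluating at $y = x_0$ yields $\mu_{x_0}^{\mathcal{F}} = \nu$ and hence the stated product formula; the symmetric implication ($\mathcal{G}$-invariance of $\{\mu_x^{\mathcal{F}}\}$ implies $\mathcal{F}$-invariance of $\{\mu_x^{\mathcal{G}}\}$) is identical with the roles swapped.

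The only real obstacle is bookkeeping of null sets: the hypothesis gives $\mathcal{F}$-invariance outside a null set, so one must ensure that a reference point $x_0$ can be picked so that simultaneously (a) $\mu_{x_0}^{\mathcal{G}}$ is defined, (b) holonomy-invariance with respect to $x_0$ holds for $\nu$-a.e.\ leaf of $\mathcal{G}$, and (c) the corresponding full measure set on the transversal is suitable for the Fubini computation. All three conditions are compatible since each failure set has zero $\mu$-measure and $\mu$ disintegrates as a (measurable) average of the conditional measures; once $x_0$ is chosen in the full-measure intersection, the calculations above go through verbatim and the measurability of the holonomy maps, inherited from the continuity of $Q$, ensures the integrals above make sense.
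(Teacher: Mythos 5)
Your proof is correct and follows essentially the same route as the paper's: identify the quotient measure with a measure on the transversal leaf $\mathcal{F}(x_0)$, use $\mathcal{F}$-invariance and the holonomy identity $H^{\mathcal{F}}_{x_0,z}(w)=H^{\mathcal{G}}_{x_0,w}(z)=Q(z,w)$ to swap the order of integration via Fubini, and then invoke essential uniqueness in Rokhlin's theorem to identify $\{\mu^{\mathcal{F}}_x\}$ with the holonomy pushforwards, which gives $\mathcal{G}$-invariance and the product formula. Your write-up merely makes explicit two points the paper leaves implicit, namely the identification $\nu=\mu^{\mathcal{F}}_{x_0}$ and the null-set bookkeeping for the choice of $x_0$.
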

\begin{proof}
We prove that if $\mu^{\mathcal{G}}$ is $\mathcal{F}-$invariant then $\mu^{\mathcal{F}}$ is $\mathcal{G}-$invariant.  
By the definition of conditional measures and $\mathcal{F}-$invariance of $\mu^{\mathcal{G}}$ we have
$$ \int \phi d \mu = \int_{\mathcal{F}(x_0)} \int_{\mathcal{G}(x_0)} \phi \circ H^{\mathcal{F}}_{x_0, z}(x) d \mu^{\mathcal{G}}_{x_0}(x) d \tilde{\mu} (z)
$$
for any continuous function $\phi$ where $\tilde{\mu}$ is the quotient measure on the quotient space identified with $\mathcal{F}(x_0).$
Using Fubini's theorem and the fact that $H^{\mathcal{F}}_{x_0, z}(x) = H^{\mathcal{G}}_{x_0, x} (z)$ for any $x \in \mathcal{G}(x_0) , z \in \mathcal{F}(x_0)$ we obtain
$$
\int \phi d \mu = \int_{\mathcal{G}(x_0)} \int_{\mathcal{F}(x_0)} \phi \circ H^{\mathcal{G}}_{x_0, x}(z) d \tilde{\mu}(z) d \mu^{\mathcal{G}}_{x_0}(x).
$$
We see that by the above equation, the quotient measure (quotien by $\mathcal{F}$ leaves) can be identified by $\mu^{\mathcal{G}}_{x_0}$ and
by essential uniqueness of disintegration,  the system of conditional measures $\{ \mu^{\mathcal{F}}_x\}$ satisfies $\mu^{\mathcal{F}}_{x} = (H^{\mathcal{G}}_{x_0, x})_{*} \tilde{\mu}$. This implies $\mathcal{G}-$invariance of conditional measures along $\mathcal{F}$.
The last claim of the lemma is direct from definition and Fubini theorem.
\end{proof}

In general for two transverse foliations $\mathcal{F}_1, \mathcal{F}_2$ defined on a compact manifold $M$ and $\mu$ a probability measure on $M$ we say that $\mu$ has local product structure with respect to $\mathcal{F}_1$ and $\mathcal{F}_2$ if for almost every $x \in M$ there is a local chart $B_x$ which trivializes both foliations and the normalized restriction of $\mu$ on $B_x$  satisfies invariance by holonomies of both of foliations in $B_x$ as in the above lemma. 
Sometimes, it is convenient to consider a relaxed form of local product structure for measures. We prefer to call this weaker definition by quasi-product structure.  In \cite{BFT} we called $\mu$ has quasi-product structure if $\mu$ is equivalent to a product measure. In the literature, many authors call this relaxed version by local product structure. However, as there are some subtle differences between these two definitions, we prefer to separate them.

\subsection{Metric Entropy along foliations} \label{entropyalong}
Let $f: M \rightarrow M$ be a diffeomorphism of a compact manifold preserving a probability measure $\mu.$ Let $\mathcal{P} = \{P_1, P_2, \cdots P_n\}$ be a partition into measurable subsets. Then 
\begin{equation}
H_{\mu}(\mathcal{P})= \sum_{i=1}^{n} - \mu(P_i) \log(\mu(P_i)) = \int_M - \log(\mu(\mathcal{P}(x))) d \mu 
\end{equation}
  where $\mathcal{P}(x) = P_i$ for $x \in P_i$ and, 
\begin{equation} \label{defentropy}
h_{\mu}(\mathcal{P}, f) = \lim_{n \rightarrow \infty} \frac{1}{n} H_{\mu}(\mathcal{P}^n) = \lim_{n \rightarrow \infty} H_{\mu}(\mathcal{P}^n)-H_{\mu}(\mathcal{P}^{n-1})
\end{equation}
where $\mathcal{P}^n = \mathcal{P} \vee f^{-1}(\mathcal{P}) \vee \cdots \vee f^{-n+1}(\mathcal{P})$ is a refinement of $\mathcal{P}$ by the action of $f$.

 Let $\mathcal{F}$ be an  invariant foliation (lamination), i.e, $f(\mathcal{F}(x)) = \mathcal{F}(f(x)).$  One way to define the entropy of an invariant measure along an invariant foliation is to use measurable partitions subordinated to $\mathcal{F}$:
 
 A measurable partition (see definition \ref{def:mensuravel}) $\xi$ is called increasing and subordinated to $\mathcal{F}$ if it satisfies:
\begin{itemize}
\item (a) $\xi(x) \subseteq \mathcal{F}(x)$ for $\mu-$almost every $x,$
\item (b)$f^{-1}(\xi) \geq \xi$ (increasing property),
\item (c) $\xi(x)$ contains an open neighbourhood of $x$ in $\mathcal{F}( x)$ for $\mu-$almost every $x$.     
\end{itemize}

The existence of a measurable increasing partition subordinated to an invariant  lamination in general is a delicate problem. It requires a Markov type property (item (b)). For a uniformly expanding foliation invariant by a diffeomorphism, we always have such a partition (See \cite{Yang}.) We say an invariant foliation $\mathcal{F}$ is uniformly expanding if there exists $ \lambda > 1$ such that $\forall x \in M, \|Df^{-1}|T_x(\mathcal{F}(x))\| < \lambda^{-1}.$
Observe that if $\mathcal{F}$ is an expanding foliation, we have more usefull properties for a partition $\xi$ which is increasing and subordinated:
\begin{itemize} 
 \item (d) $\bigvee_{n=0}^{\infty} f^{-n}\xi $ is the partition into points;
  \item (e) the largest $\sigma-$algebra contained in $\bigcap_{n=0}^{\infty} f^n(\xi)$ is $\mathcal{B}_{\mathcal{F}}$
where $\mathcal{B}_{\mathcal{F}}$ is the $\sigma-$algebra of $\mathcal{F}-$saturated measurable subsets (union of entire leaves).
\end{itemize}

Given a measure $\mu$ and two measurable partitions $\alpha, \beta$ of $M$, let 
$$ H_{\mu}(\alpha | \beta) := \int_{M} \log (\mu^{\beta}_x(\alpha(x))) d\mu(x)$$ denote the conditional entropy of $\alpha$ given $\beta.$ Here $\mu^{\beta}_x$ is the conditional measure of $\mu$ with respect to $\beta$ (See Section \ref{measurabletoolbox} for a preliminary on conditional measures.) 

For any invariant lamination $\mathcal{F}$ with a measurable subordinated increasing partition satisfying all the properties including (d) and (e) above, we define 
$$h_{\mu}(f, \mathcal{F}) := h_{\mu}(f, \xi) = H_{\mu} (f^{-1} \xi | \xi) = H_{\mu}(\xi | f \xi).$$

It can be proved that the above definition is independent of $\xi$. Indeed let $\xi_1$ and $\xi_2$ be two such measurable partitions then for any $n \in \mathbb{N}$:
$$h_{\mu}(f, \xi_1 \vee \xi_2) = h_{\mu}(f, \xi_1 \vee f^n(\xi_2)).$$
The above equation comes from the definition of the entropy (\ref{defentropy}) and the fact that $\xi_2$ is increasing.
 Now, 

\begin{equation}
\begin{split}
h_{\mu}(f, \xi_1 \vee f^n\xi_2) =& H_{\mu}( f\xi_1 \vee f^{n+1}\xi_2 |\xi_1 \vee f^{n}\xi_2 ) \\
= & H_{\mu}(f\xi_1 | \xi_1 \vee f^{n}\xi_2 ) + H_{\mu}(f^{n+1}\xi_2 | f\xi_1 \vee \xi_1 \vee f^{n}\xi_2  ) \\
= &   H_{\mu}(f\xi_1 | \xi_1 \vee f^{n}\xi_2 ) + H_{\mu}(f^{n+1}\xi_2 |  \xi_1 \vee f^{n}\xi_2  )\\
=&H_{\mu}(f\xi_1 | \xi_1 \vee f^{n}\xi_2 ) + H_{\mu}(f\xi_2 | f^{-n} \xi_1 \vee \xi_2  )
\end{split}
\end{equation}
As $n \rightarrow \infty$ ,  $H_{\mu}(f\xi_1 | \xi_1 \vee f^{n}\xi_2 ) \rightarrow H_{\mu}(f\xi_1 | \xi_1) = h_{\mu}(f, \xi_1)$ and $H_{\mu}(f\xi_2 | f^{-n} \xi_1 \vee \xi_2  ) \rightarrow 0.$
The first convergence can be proved using property (e) and the second one is a corollary of the fact that $\xi_1$ is generating (property (d)). See Lemma 3.1.2 in \cite{LY} for more details. By a similar argument changing $\xi_1$ and $\xi_2$  we conclude that 
$$
h_{\mu}(f, \xi_2) = h_{\mu}(f, \xi_1 \vee \xi_2) = h_{\mu}(f, \xi_1). 
$$

\section{Partial hyperbolicity and non-uniformly hyperbolic setting} \label{ph}
In a more general  than uniformly expanding setting,
Ledrappier and Young \cite{LYii} defined a hierarchy of unstable entropies for smooth systems with unstable lamination (in the sense of Pesin). Let $f$ be a $C^{1+\alpha}$ diffeomorphism of a compact manifold and $\mu$ be any ergodic  invariant probability measure with $u$ positive Lyapunov exponents $\lambda_1 > \lambda_2 > \cdots > \lambda_u > 0$. There exists a filtration of laminations $\mathcal{F}_i$ where $\mathcal{F}_1(x) \subset \mathcal
{F}_2(x) \cdots \subset \mathcal{F}_{u}(x)$ and for $y \in \mathcal{F}_i(x):$
$$
 \limsup_{n \rightarrow -\infty} \frac{1}{n} \log d(f^n(x), f^n(y)) \leq -\lambda_i.
$$ 
The above laminations are tangent to Oseledets' filtration of tangent space defined  $\mu-$almost everywhere.
They constructed a nested family of partitions $\xi_1 \geq \xi_2 \geq \cdots \geq \xi_u$ where each $\xi_i$ is generator and increasingly subordinated to $\mathcal{F}_i.$
By means of these partitions it is possible to define an increasing sequence of unstable entropies $h_i$ corresponding to the nested family of laminations $\mathcal{F}_i:$
$$
 h_i = h_{\mu} (f, \mathcal{F}_i) := H_{\mu}(f^{-1}\xi_i | \xi_i).
$$ 
It is also proved that the above entropies do not depend on the choice of $\xi_i$.

Ledrappier and Young proved:

\begin{theorem} {\normalfont (\cite{LYii}) } \label{expdiment}
Let $f: M \rightarrow M$ be a $C^2$ diffeomorphis of a compact manifold and $\mu$ be an ergodic Borel probability invariant measure. Let $\lambda_1 > \cdots > \lambda_u$ denote the distinct positive Lyapunov exponents of $f$,  and let $\delta_i$ be the dimension of $\mu$ on $\mathcal{F}_i.$ Then for $1 \leq i \leq u$ there are numbers $\gamma_i$ with $0 \leq \gamma_i \leq \dim(E_i)$ such that $\delta_i = \sum_{j \leq i} \gamma_i$ and
\begin{itemize}
\item $h_1 = \lambda_1 \delta_1$
\item $h_i  - h_{i-1} = \lambda_i (\delta_i - \delta_{i-1})$ for $i=2, \cdots, u,$ and
\item $h_u = h_{\mu}(f).$
\end{itemize}
\end{theorem}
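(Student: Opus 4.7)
The plan is to follow the Ledrappier--Young strategy, organized around three pillars: (i) existence of pointwise dimensions of the conditional measures on each Pesin sublamination $\mathcal{F}_i$; (ii) additivity of ``transverse'' pointwise dimensions $\gamma_i$; and (iii) matching each entropy jump $h_i - h_{i-1}$ to the product $\lambda_i \gamma_i$ via Shannon--McMillan--Breiman applied to the generating partition $\xi_i$.

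First I would work on a Lusin set $\Lambda$ where the Pesin local unstable manifolds tangent to each $E_j$ have uniform size and bounded curvature, so that Besicovitch-type covering lemmas are available. A standard sub/super-additivity argument should yield, for $\mu$-a.e.\ $x$, the existence of
\[
\delta_i(x) := \lim_{r \to 0} \frac{\log \mu^{\mathcal{F}_i}_x(B(x,r))}{\log r},
\]
which is constant by ergodicity. I would then define $\gamma_i$ as the pointwise dimension of the quotient measure obtained by disintegrating $\mu^{\mathcal{F}_i}_x$ along $\mathcal{F}_{i-1}$-plaques inside $\mathcal{F}_i(x)$. The bound $0\le \gamma_i \le \dim E_i$ is immediate since the transverse measure lives on a manifold of that dimension, and a Fubini-type identity for the tower $\mathcal{F}_{i-1}\subset\mathcal{F}_i$ gives the additivity $\delta_i = \delta_{i-1}+\gamma_i$.

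For the entropy identities I would apply Shannon--McMillan--Breiman to each subordinate partition $\xi_i$, writing $h_i = -\lim_{n\to\infty} \tfrac{1}{n}\log \mu^{\xi_i}_x\bigl((f^{-n}\xi_i)(x)\bigr)$. The $C^2$ hypothesis enters essentially here: bounded distortion forces $(f^{-n}\xi_i)(x)$ to be comparable, up to a fixed factor, to an ellipsoidal plaque in $\mathcal{F}_i(x)$ whose half-axes in direction $E_j$ have length $\asymp e^{-n\lambda_j}$ for $j\le i$. Measuring such an anisotropic cell with the conditional measure whose pointwise dimensions on the tower $\mathcal{F}_1 \subset \cdots \subset \mathcal{F}_i$ are the $\delta_j$, and using the additivity $\delta_j=\sum_{k\le j}\gamma_k$ axis by axis, should give
\[
-\log \mu^{\xi_i}_x\bigl((f^{-n}\xi_i)(x)\bigr) \;=\; n\sum_{j\le i}\lambda_j\gamma_j \;+\; o(n),
\]
hence $h_i = \sum_{j\le i}\lambda_j\gamma_j$. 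The base case $h_1=\lambda_1\delta_1$ and the telescoping formula $h_i-h_{i-1}=\lambda_i\gamma_i = \lambda_i(\delta_i-\delta_{i-1})$ then follow. For $h_u=h_\mu(f)$, I would argue that entropy along the Pesin stable lamination vanishes (by applying Margulis--Ruelle to $f^{-1}$ on the Pesin stable side, combined with absolute continuity), so a two-sided generator built from $\xi_u$ and a decreasing stable-subordinate partition assigns all of $h_\mu(f)$ to the unstable side.

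The main obstacle I expect is step (iii): the iterated plaques $(f^{-n}\xi_i)(x)$ become wildly anisotropic, and pointwise dimensions defined via round balls do not control such ellipsoids directly. The solution is precisely to use the whole nested hierarchy $\mathcal{F}_1\subset\cdots\subset\mathcal{F}_u$ and the corresponding additive $\gamma_j$'s to calibrate the measure of each axis at its own rate — but this calibration is delicate because the Pesin foliations $\mathcal{F}_i$ are only measurable, so ordinary Fubini must be replaced by a careful ergodic-theoretic disintegration aligned with the covering lemma used to define the $\delta_j$. Distortion control from $C^2$ regularity is also essential: weaker regularity would at best yield inequalities instead of equalities, and the sharp identification of the entropy jump with $\lambda_i\gamma_i$ would break down.
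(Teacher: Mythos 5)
This statement is quoted in the survey from Ledrappier--Young \cite{LYii} without proof, so the only meaningful comparison is with the original argument, whose broad architecture (conditional measures along the nested laminations $\mathcal{F}_i$, transverse increments $\gamma_i$, entropies of the nested subordinate partitions $\xi_i$) your outline does follow. However, several steps you treat as routine are in fact the substance of the theorem. The existence of the pointwise dimension $\delta_i(x)$ of the conditionals along $\mathcal{F}_i$ is not obtained by a ``standard sub/super-additivity argument'' on a Lusin set: it is one of the main results of \cite{LYii}, and its proof is carried out jointly, by induction on $i$, with the entropy identities themselves (recall that even for $\mu$ itself the existence of pointwise dimension resisted elementary covering arguments for a long time). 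Likewise the additivity $\delta_i=\delta_{i-1}+\gamma_i$ is not a ``Fubini-type identity'': for a general disintegration the dimension of a measure is not the sum of the dimension of the quotient and of the conditionals (without extra structure one only gets inequalities), and Ledrappier--Young obtain additivity precisely by exploiting the dynamics, i.e.\ the increasing subordinate partitions and the entropy increments, not by measure-theoretic generalities.

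The step you yourself flag as the main obstacle --- calibrating the anisotropic cells $(f^{-n}\xi_i)(x)$ axis by axis --- is indeed the crux, and your proposal does not resolve it: saying that the hierarchy ``should'' measure each axis at its own rate restates the difficulty rather than overcoming it. In \cite{LYii} this is not done by comparing a single Shannon--McMillan--Breiman limit with an ellipsoid; the inequalities $h_i-h_{i-1}\le \lambda_i\gamma_i$ and $h_i-h_{i-1}\ge \lambda_i\gamma_i$ are established separately, by different covering and counting arguments relating the transverse (quotient) measures of $\mathcal{F}_{i-1}$-plaques inside $\mathcal{F}_i$-leaves to the conditional entropies $H_\mu(f^{-1}\xi_i\mid\xi_i)$, and the numbers $\gamma_i$ together with the bound $\gamma_i\le\dim E_i$ emerge from that analysis. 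Finally, $h_u=h_\mu(f)$ does not follow from ``Margulis--Ruelle applied to $f^{-1}$ on the stable side plus absolute continuity'': vanishing of entropy along contracted directions does not by itself transfer all of $h_\mu(f)$ to $\xi_u$; this identity is a separate theorem of the first Ledrappier--Young paper \cite{LY}, requiring its own argument that $\xi_u$ (suitably refined by a finite partition) computes the full entropy. In short, the skeleton is the right one, but the three load-bearing steps --- existence of $\delta_i$, additivity of the $\gamma_j$, and the entropy-increment identity --- are asserted rather than proved, and the shortcuts proposed for them would fail as stated.
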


By a result of Ledrappier and Xie \cite{LX}: in the setting of the above theorem if the transverse entropy vanishes, i.e, $h_i = h_{i-1}$ then for $\mu-$almost every $x$ the conditional measure along $\mathcal{F}_i (x)$ is carried by a single leaf of $\mathcal{F}_{i-1}.$

\begin{remark}
The above theorem of Ledrappier and Young needs $C^2$ regularity. However, Aaron Brown \cite{brownl} announced a proof using just $C^{1+\alpha}$ regularity.
\end{remark}

\subsection{Partially hyperbolic setting } \label{3.1}

$f: M \rightarrow M$ is \emph{partially hyperbolic} if
there is a $Df$-invariant splitting of the tangent
bundle $TM = E^s\oplus E^c \oplus E^u$ such that for all unit vectors
$v^{*}\in E^{*}_x\setminus \{0\}$ ($*= s, c, u$) with $x\in M$ we have:
$$\|D_xfv^s\| < \|D_xfv^c\| < \|D_xfv^u\| $$
for some suitable Riemannian metric.  Furthermore $f$ satisfies:
$\|Df|_{E^s}\| < 1$ and $\|Df^{-1}|_{E^u}\| < 1$.  For partially hyperbolic diffeomorphisms, it is a well-known fact that there are invariant foliations $\cF^{*}$ tangent to the distributions $E^{*}$
for $*=s,u$ . The leaf of $\cF^{*}$
containing $x$ will be called $\cF^{*}(x)$, for $*=s,u$. \par In general the central bundle $E^c$ may not be tangent to an invariant foliation (See \cite{RHU}). However, whenever it exists we denote it by $\mathcal{F}^c.$
 
 One says that $f$ is {\it accessible} if every $x, y \in M$ can be joint by a path which is union of finitely many $C^1$-curves inside $\mathcal{F}^s$ or $\mathcal{F}^u.$ We say $f$ is {\it dynamically coherent} if there exist invariant foliations $\mathcal{F}^{cs}$ and $\mathcal{F}^{cu}$ tangent to the bundles $E^s \oplus E^c$ and $E^u \oplus E^c.$ The intersection of these two foliations define a center foliation $\mathcal{F}^c.$ For every $x \in M, \mathcal{F}^c(x):= \mathcal{F}^{cs}(x) \cap \mathcal{F}^{cu}(x).$
 
 For a general dynamically coherent partially hyperbolic diffeomorphism, if $x, y$ are in the same unstable leaf, then there is a well  defined local holonomy between neighbourhoods of $x$ and $y$ in $\mathcal{F}^c(x)$ and $\mathcal{F}^c(y).$ However, this local holonomy may not be extended to a global holonomy. 


For the partially hyperbolic diffeomorphisms Hu, Hua and Wu (\cite{HHW}) proved a simpler formula for the unstable entropy which coincides with the  definition using increasing subordinated partitions. Fix a small $\epsilon > 0$ and let $\mathcal{P}$ be the set of all finite measurable partitions of $M$ with diameter less than $\epsilon$. Recall that diameter of a partition is the supremum of the diameter of its elements. Given any partition $\alpha \in \mathcal{P}$ one may define a finer partition $\beta$ such that $\beta(x) := \mathcal{F}_{loc}^u(x) \cap \alpha(x)$. Using  notations from \cite{HHW}, we define $\mathcal{P}^u$ as the set of partitions obtained in this way from partitions in $\mathcal{P}$. Let $\mathcal{Q}^u$ be the set of  increasing partitions subordinated to the unstable foliation $\mathcal{F}^u$ defined as in \ref{entropyalong}.

Given a partition $\alpha$ and $m \leq n \in \mathbb{Z}$, let us denote by $\alpha_{m}^{n} := \bigvee_{i= m}^{n} f^{-i} \alpha$ and  define the dynamical conditional entropy of $\alpha$ given $\beta$:
$$
h_{\mu}(f, \alpha | \beta) = \limsup_{n \rightarrow \infty} \frac{1}{n} H_{\mu} (\alpha_{0}^{n-1} | \beta).
$$
The conditional entropy of $f$ given $\eta \in \mathcal{P}^u$ is defined as 
$$h_{\mu}(f| \eta) = \sup_{\alpha \in \mathcal{P}} h_{\mu}(f, \alpha | \eta)$$
and finally unstable metric entropy of $f$ is defined as 
$$ h^u_{\mu}(f):= \sup_{\eta \in \mathcal{P}^u} h_{\mu}(f|\eta).$$ In fact the above definition is independent of $\eta$ as long as $\eta$ is chosen in $\mathcal{P}^u.$
 
A  main result of \cite{HHW} is the following equivalence of definitions for unstable entropy.
\begin{theorem} \cite{HHW}
Suppose $\mu$ is an ergodic measure. Then for any $\mathcal{\alpha} \in \mathcal{P}, \eta \in \mathcal{P}^u$ and $\xi \in \mathcal{Q}^u$,
$$
h_{\mu}(f, \alpha | \eta) = h_{\mu}(f, \xi)
$$ 
\end{theorem}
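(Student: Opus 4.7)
\medskip
\noindent\textbf{Proof proposal.}
The plan is to prove the equality by sandwiching $h_\mu(f,\alpha\mid\eta)$ between upper and lower bounds, both equal to $h_\mu(f,\xi)=H_\mu(f^{-1}\xi\mid\xi)$. The geometric engine is the uniform contraction of $f^{-1}$ on $E^u$: if $\alpha$ has diameter at most $\epsilon$, each atom of $\bigvee_{i=0}^{n-1}f^{-i}\alpha$ intersected with an unstable leaf has unstable radius of order $\epsilon\lambda^{-n}$, comparable to the unstable radius of an atom of $f^{-n}\xi$, up to a multiplicative constant depending only on $\alpha$, $\xi$, and $f$.

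First I would reduce to a convenient frame. Given $\eta\in\mathcal{P}^u$ and $\xi\in\mathcal{Q}^u$, I replace $\xi$ by $f^{-k}\xi$ for $k$ large so that the atoms of $\xi$ are contained in those of $\eta$; this is legitimate since $f^{-k}\xi\in\mathcal{Q}^u$ and, by the Ledrappier--Young invariance recalled before the statement, $h_\mu(f,f^{-k}\xi)=h_\mu(f,\xi)$. Hence $\xi\geq\eta$. For the upper bound I then use
\begin{align*}
H_\mu(\alpha_0^{n-1}\mid\eta)
 &\leq H_\mu(\alpha_0^{n-1}\mid\xi)+H_\mu(\xi\mid\eta)\\
 &\leq H_\mu(f^{-n}\xi\mid\xi)+H_\mu(\alpha_0^{n-1}\mid f^{-n}\xi)+H_\mu(\xi\mid\eta),
\end{align*}
where the second line comes from the chain rule together with $\xi\vee f^{-n}\xi=f^{-n}\xi$. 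Since $H_\mu(f^{-n}\xi\mid\xi)=n\,h_\mu(f,\xi)$ by telescoping, and the other two terms are $O(1)$ in $n$ (see below), dividing by $n$ and taking $\limsup$ yields $h_\mu(f,\alpha\mid\eta)\leq h_\mu(f,\xi)$.

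For the lower bound I would use the symmetric estimate
$$H_\mu(f^{-n}\xi\mid\xi)\leq H_\mu(\alpha_0^{n-1}\mid\xi)+H_\mu(f^{-n}\xi\mid\alpha_0^{n-1}\vee\xi)\leq H_\mu(\alpha_0^{n-1}\mid\eta)+O(1),$$
where the middle term is again $O(1)$ and the last step uses $H_\mu(\alpha_0^{n-1}\mid\xi)\leq H_\mu(\alpha_0^{n-1}\mid\eta)$ because $\xi\geq\eta$. Dividing by $n$ gives $h_\mu(f,\xi)\leq h_\mu(f,\alpha\mid\eta)$, closing the sandwich.

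The main obstacle is the Bowen-ball type estimates $H_\mu(\alpha_0^{n-1}\mid f^{-n}\xi)=O(1)$ and $H_\mu(f^{-n}\xi\mid\alpha_0^{n-1}\vee\xi)=O(1)$. Both reduce to counting: within each atom of $f^{-n}\xi$, a piece of unstable leaf of diameter $\asymp\lambda^{-n}$, the number of atoms of $\alpha_0^{n-1}$ is bounded independently of $n$ because along an unstable leaf $\alpha_0^{n-1}$ has atoms of diameter $\asymp\epsilon\lambda^{-n}$; the reverse counting, with $\xi$ used to pin down the unstable leaf, handles the companion bound. One also has to verify $H_\mu(\xi\mid\eta)<\infty$, which follows from the finite diameter of $\alpha$ and local product structure of $\eta\in\mathcal{P}^u$. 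Once these estimates are in place, independence of the common value from the choice of $\alpha\in\mathcal{P}$, $\eta\in\mathcal{P}^u$, and $\xi\in\mathcal{Q}^u$ is automatic from the double inequality.
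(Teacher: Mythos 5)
The survey does not actually prove this statement: it is quoted from \cite{HHW}, and the only related argument reproduced in the text is the Ledrappier--Young computation showing independence of the choice of $\xi\in\mathcal{Q}^u$, which is carried out by conditional-entropy and martingale-type limits ($H_\mu(f\xi_2\mid f^{-n}\xi_1\vee\xi_2)\to 0$ using the generating property (d) and property (e)), not by geometric counting. Your skeleton is unobjectionable as far as it goes: the chain-rule inequalities and the telescoping identity $H_\mu(f^{-n}\xi\mid\xi)=n\,H_\mu(f^{-1}\xi\mid\xi)$ are valid. But everything you defer to ``the main obstacle'' --- the claims $H_\mu(\xi\mid\eta)<\infty$, $H_\mu(\alpha_0^{n-1}\mid f^{-n}\xi)=O(1)$ and $H_\mu(f^{-n}\xi\mid\alpha_0^{n-1}\vee\xi)=O(1)$, plus the reduction to $\xi\geq\eta$ --- is precisely the content of the theorem, and the mechanism you propose for these steps does not work.

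Concretely: (i) replacing $\xi$ by $f^{-k}\xi$ does not make it refine $\eta$; making atoms small does not prevent them from straddling the boundaries of the $\eta$-atoms, and for every fixed $k$ a set of positive measure of atoms of $f^{-k}\xi$ does so, so the inequality $H_\mu(\alpha_0^{n-1}\mid\xi)\leq H_\mu(\alpha_0^{n-1}\mid\eta)$ is unjustified. (ii) Your two ``$\asymp$'' statements are only one-sided: subordination gives no uniform lower bound on the size of $\xi$-atoms (they merely contain \emph{some} leafwise neighborhood), and atoms of $\alpha_0^{n-1}$ along a leaf satisfy only an upper diameter bound of order $\epsilon\lambda^{-n}$ --- they can be arbitrarily thin slivers near the boundaries of $\alpha$. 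An upper bound on diameters gives no bound on the \emph{number} of atoms meeting a small plaque, hence no $O(1)$ bound on the conditional entropies; in fact, if the orbit visits a neighborhood of the boundary of $\alpha$ with positive frequency, the number of times $f^{-n}\xi(x)$ is cut by some $f^{-i}\alpha$ grows linearly in $n$. (iii) The theorem is stated for arbitrary finite \emph{measurable} partitions $\alpha\in\mathcal{P}$, whose atoms need not be topologically nice, so a Bowen-ball counting argument does not even apply; likewise the asserted finiteness of $H_\mu(\xi\mid\eta)$ is a nontrivial lemma, not a consequence of ``finite diameter plus product structure.'' What the actual proofs (in \cite{HHW}, and in the analogous Lemma 3.1.2 of \cite{LY}) use in place of counting is measure-theoretic: conditional-entropy identities combined with increasing/decreasing martingale convergence, where the error terms are shown to tend to zero after passing to the limit, rather than being bounded uniformly atom by atom. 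As it stands, your proposal reduces the theorem to unproved estimates whose suggested justification fails, so there is a genuine gap.
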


\begin{remark}
It is not clear whether a similar result holds also for the unstable entropies in the non-uniformly hyperbolic case.
\end{remark}

By Yang result \cite{Yang}, if $\mathcal{F}$ is an expanding foliation invariant by a $C^1$ diffeomorphism $f : M \rightarrow M$ then for any invariant measure $\mu,$ there is a measurable partition subordinated   to $\mathcal{F}$ and $h_{\mu}(f, \mathcal{F})$, the entropy along $\mathcal{F}$ is well defined. By Oseledets' theorem, one can consider Lyapunov exponents along the tangent bundle of $\mathcal{F}$ and the integrated sum of the Lyapunov exponents along $\mathcal{F}$ is denoted by $\lambda^{\mathcal{F}}(f, \mu)$ which is also equal to $\int \log (\det (Df|T\mathcal{F}^{f})) d\mu.$ 

We recall that Saghin-Yang \cite{SaY} proved Margulis-Ruelle inequality and Pesin entropy formula for general expanding foliations. 

\begin{definition} \label{e-gibbs}
Suppose $\mathcal{F}$ is an expanding foliation for a $C^1$ diffeomorphism $f.$ An $f-$invariant probability $\mu$  is called {\it Gibbs expanding state along $\mathcal{F}$} if the entropy of $\mu$ along $\mathcal{F}$ coincides with the sum of exponents along the tangent to $\mathcal{F}$:
$$
 h_{\mu}(f, \mathcal{F}) = \int \log |\det(Df | T_x \mathcal{F})| d\mu(x).
$$

 The collection of all Gibbs e-state along $\mathcal{F}$ is denoted by $Gibbs^e(f , \mathcal{F}).$

\end{definition}
\begin{remark}
If $f$ is $C^r, r > 1$ the above definition is the same as absolute continuity (with respect to Lebesgue) of disintegration of $\mu$ along the leaves of $\mathcal{F}.$ This is justified in the following theorem. 
\end{remark}

\begin{theorem} \label{SaYtheorem}
Let $\mathcal{F}$ be an expanding foliation with $C^r$ leaves of a $C^r$ diffeomorphism $f, r \geq 1$ and $\mu$ an invariant measure for $f.$ Let $\lambda^{\mathcal{F}}(f, \mu)$ be the integrated sum of Lyapunov exponents along the tangent bundle of $\mathcal{F}.$ Then:
$$
 h_{\mu} (f, \mathcal{F}) \leq \lambda^{\mathcal{F}}(f, \mu).
$$
Furthermore, if $r > 1,$ then $\mu$ is a Gibbs e-state along $\mathcal{F}$ if and only if its conditional measures along $\mathcal{F}$ are absolutely continuous with respect to  Lebesgue measure on leaves.
\end{theorem}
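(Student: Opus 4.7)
The plan is to follow the philosophy of the Margulis-Ruelle inequality and Pesin's entropy formula, adapted from the full unstable bundle to the invariant expanding foliation $\mathcal{F}$. The starting point is a measurable partition $\xi$ subordinate to $\mathcal{F}$, increasing in the sense $f^{-1}\xi \geq \xi$ and generating along leaves; such a $\xi$ exists by Yang's theorem \cite{Yang}, which applies because $\mathcal{F}$ is uniformly expanding and $f$ is only required to be $C^1$. With such a $\xi$ one has $h_\mu(f,\mathcal{F}) = H_\mu(f^{-1}\xi|\xi) = H_\mu(\xi|f\xi)$, the second identity using $f$-invariance of $\mu$.

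For the inequality $h_\mu(f,\mathcal{F}) \leq \lambda^{\mathcal{F}}(f,\mu)$, I would rewrite $H_\mu(\xi|f\xi) = \int -\log \mu^{f\xi}_x(\xi(x))\, d\mu(x)$ and use the transformation rule $\mu^{f\xi}_{f(y)} = f_* \mu^\xi_y$ to express the integrand in terms of the conditional measure $\mu^\xi_{f^{-1}x}$ evaluated on the sub-atom $f^{-1}\xi(f^{-1}x) \subset \xi(f^{-1}x)$. Refining $\xi$ by a global partition of small diameter and applying a Jensen-type comparison between the conditional measure and leafwise Lebesgue volume $\mathrm{vol}_\mathcal{F}$, the ratio of masses is bounded by the inverse leafwise Jacobian $|\det(Df|T\mathcal{F})|^{-1}$ evaluated at the appropriate point. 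After integrating and using $f$-invariance, the leafwise-Jacobian contribution becomes exactly $\lambda^\mathcal{F}(f,\mu)$. Equivalently, one may run a coverings argument: the number of $\xi$-atoms meeting $f^{-1}\xi(x)$ grows like $|\det(Df|T\mathcal{F})|$ along Birkhoff averages, and $H_\mu \leq \log \#(\text{atoms})$ yields the same bound.

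For the equivalence in the $C^{1+}$ case, the ``if'' direction (absolute continuity implies the Gibbs $e$-state property) is a direct computation: assuming the conditional measure $\mu^\xi_x$ has density $\rho_x$ against $\mathrm{vol}_\mathcal{F}$, $f$-invariance forces $\rho$ to satisfy a cocycle relation with the leafwise Jacobian, and substituting into $H_\mu(\xi|f\xi)$ turns the inequality above into an equality. The main obstacle is the converse. From the assumed equality $h_\mu(f,\mathcal{F}) = \lambda^\mathcal{F}(f,\mu)$, one must produce the density. Here the $C^{1+}$ hypothesis becomes essential: bounded distortion of $f^n$ along $\mathcal{F}$-leaves lets one compare the normalized pushforwards $Z_n^{-1}(f^n)_*\bigl(\mathrm{vol}_\mathcal{F}|_{\xi(f^{-n}x)}\bigr)$ with $\mu^\xi_x$ with uniform constants. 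Following the Ledrappier-Young scheme, the saturation of the Jensen inequality in the first step (which is equivalent to the assumed equality) prevents loss of mass to a singular component in the weak-$\ast$ limit, so $\mu^\xi_x$ turns out to be absolutely continuous with density the limit in question.
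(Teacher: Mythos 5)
Your overall strategy is the standard Ledrappier/Ledrappier--Young one, and it is worth noting that the survey itself does not prove Theorem \ref{SaYtheorem}: it is quoted from Saghin--Yang \cite{SaY}, and the closest argument actually carried out in the paper is Proposition \ref{prop-Ledrappier}, whose ``Jensen inequality plus analysis of the equality case along $f^{-n}\xi$'' mechanism is exactly what the hard direction of this theorem needs. Your treatment of the inequality and of the direction ``absolutely continuous conditionals $\Rightarrow$ Gibbs e-state'' follows the expected route (increasing subordinate partition from \cite{Yang}, comparison of $-\log\mu^{\xi}_x\bigl((f^{-1}\xi)(x)\bigr)$ with the same quantity for normalized leaf volume, concavity, and Ledrappier's coboundary lemma to handle the possibly non-integrable term $\log \mathrm{vol}(\xi(x))$). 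One point you gloss over in the $C^1$ case: the leafwise Jacobian is only continuous, so the mass ratio is \emph{not} ``the inverse Jacobian at the appropriate point'' but only that up to a distortion factor over the atom; you must apply the estimate to $f^n$, observe that the distortion error is controlled by $\sum_{k\le n}\omega(\lambda^{-k}D)=o(n)$ ($\omega$ the modulus of continuity of $\log|\det(Df|T\mathcal{F})|$), and divide by $n$. That is a repairable omission of a sketch.

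The genuine gap is the converse for $r>1$: equality $h_\mu(f,\mathcal{F})=\lambda^{\mathcal{F}}(f,\mu)$ implies absolute continuity. As written, your argument does not work: the claim that $\mu^{\xi}_x$ can be compared ``with uniform constants'' to $Z_n^{-1}(f^n)_*(\mathrm{vol}_{\mathcal F}|_{\xi(f^{-n}x)})$ is false for a general invariant measure (it would make every invariant measure leafwise absolutely continuous); and ``saturation of Jensen prevents loss of mass to a singular component in the weak-$*$ limit'' is a slogan, not a step. The mechanism that actually closes this direction, and which mirrors the equality analysis in Proposition \ref{prop-Ledrappier}, is: use the H\"older continuity of the leafwise Jacobian (this is where $r>1$ enters) to build the exact reference conditionals $\rho_{\xi(x)}$ with density proportional to $\Delta(x,y)=\prod_{i\ge 1}\det\bigl(Df|T_{f^{-i}x}\mathcal{F}\bigr)\big/\det\bigl(Df|T_{f^{-i}y}\mathcal{F}\bigr)$; these transform under $f$ by the exact Jacobian, so $\int-\log\rho_{\xi(x)}\bigl((f^{-1}\xi)(x)\bigr)\,d\mu=\lambda^{\mathcal{F}}(f,\mu)$ with no distortion error. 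Then the Gibbs/Jensen inequality becomes $h_\mu(f,\mathcal F)\le \lambda^{\mathcal F}(f,\mu)$ with equality iff $\mu_{\xi(x)}$ and $\rho_{\xi(x)}$ agree on the atoms of $f^{-1}\xi$; applying the same argument to $f^{n}$ (or replacing $\xi$ by $f^{-n}\xi$) and using that $\xi$ is generating identifies $\mu_{\xi(x)}=\rho_{\xi(x)}$ a.e., which is the asserted absolute continuity. Without constructing this exact reference family, the equality case of Jensen gives you nothing to compare the conditional measures to, so your converse remains unproved.
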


\begin{remark}
In this paper, we mostly work with expanding foliations which are strong unstable foliation for a partially hyperbolic diffeomorphism. However, there are some places (like in Section 6) where we consider  weak unstable foliation. For example, if $f: \mathbb{T}^3 \rightarrow \mathbb{T}^3$ is an Anosov automorphism with three distinct eigenvalues $\lambda_1 > \lambda_2 > 1 > \lambda_3$, then the weak unstable foliation is the expanding foliation tangent to eigenspace of $\lambda_2.$ 
\end{remark}

\subsection{Variational principle and $u-$maximal measures} \label{3.2}

Similar to the definition of topological entropy for continuous maps on compact metric spaces, one may define the topological entropy along unstable foliation by means of the notion of spanning (and separated) sets in a family of  compact unstable plaques. A subset $E \subset \mathcal{F}^u(x)$ is called $(n, \epsilon)-$spanning set of $\overline{\mathcal{F}^u(x, \delta)}$ if $\overline{\mathcal{F}^u(x, \delta)} \subset \bigcup_{y \in E} B^u_n(y, \epsilon).$
 Here $B^u_n(y, \epsilon)$ denotes the $u-$Bowen ball around $y$: $ \{z \in \mathcal{F}^u(y) : d^u(f^i(z), f^i(y) \leq \epsilon, 0  \leq i \leq n\}.$ Let $S^u(f, \epsilon, n, x, \delta)$ be the minimal cardinality of $(n, \epsilon)-$spanning set for $\overline{\mathcal{F}^u(x, \delta)}.$
 
 The unstable topological entropy of $f$ is defined by
$$
h^u_{top}(f) = \lim_{\delta \rightarrow 0} \sup_{x \in M} h^u_{top}(f, \overline{\mathcal{F}^u(x, \delta)}) 
$$
where 
$$ h^u_{top}(f, \overline{\mathcal{F}^u(x, \delta)}) = \lim_{\epsilon \rightarrow 0} \limsup_{n \rightarrow \infty} \frac{1}{n} \log S^u(f, \epsilon, n, x, \delta).  $$

It can be proved that  the value of the unstable topological entropy is independent of $\delta.$ That is, it is not necessary to take limit over $\delta.$

\begin{definition} \cite{HSX}
For a partially hyperbolic diffeomorphism $f$ we define volume growth along unstable foliation:
$$ \chi_u(f) = \sup_{x \in M} \chi_u(x, \delta)$$
where $$\chi_u(x, \delta) = \limsup_{n \rightarrow \infty} \frac{1}{n} \log(Vol(f^n(\mathcal{F}^u(x, \delta)))) $$
\end{definition}

It is interesting to note that $h^u_{top}(f) = \chi_u(f).$ This is proved in \cite{HHW} and uses the classical argument of estimating number of separating and spanning sets in the unstable manifolds.

It is also well worth to mention the relation between the volume growth along unstable foliation and the Lyapunov exponents (See Lemma 3.1 in \cite{SX}.): Using the absolute continuity of the unstable foliations (here we need $C^{1+\alpha}$ regularity of $f$), for any absolutely continuous with respect to Lebesgue measure $\mu$, we have that:
\begin{equation} \label{volexp}
\int_M  \log (det Df | E^u(x)) d\mu(x) \leq \chi_u(f).
\end{equation}

By Margulis-Ruelle inequality (along unstable direction, see \cite{WWZ} for unstable entropy version) we have that for any $C^1-$partially hyperbolic diffeomorphism:
\begin{equation} 
\label{entexp}
h^u_{\mu}(f) \leq \int_M \log |det Df|E^u(x)| d \mu(x)
\end{equation}
with equality (in the case of $f \in C^2$) if and only if $\mu$ has absolutely continuous (with respect to Lebesgue) conditional measures along unstable foliation. 
All these together imply that for any $C^{1+\alpha}$ partially hyperbolic diffeomorphism and $\mu$ absolutely continuous with respect to Lebesgue (or just $u-$Gibbs):
\begin{equation} \label{kol}
 h^u_{\mu}(f) \leq \int_M \log |det Df|E^u(x)| d \mu(x) \leq \chi_u(f) = h^u_{top}(f).
\end{equation}

Observe that by Birkhoff ergodic theorem applied to the additive cocycle $x \rightarrow \log \|\wedge^{d_u} D_x f^j| \wedge^{d_u}E^u(x)\|,$ for $d^u = dim(E^u),$ we obtain:
\begin{equation}
\begin{split}
\int \log |det Df|E^u(x)| d \mu(x) =& \int_M \lim_{n\rightarrow \infty} \frac{1}{n} \log |det(Df^n|E^u(x))| d\mu(x)  \\
=& \int_M \lim_{n\rightarrow \infty} \frac{1}{n} \log \| \wedge^{d_u} D_xf^n |\wedge^{d_u}E^u(x) \| d\mu(x)
\end{split}
\end{equation}
where $d_u$ is the dimension of unstable bundle $E^u.$

\begin{question}
Is $C^1-$regularity of $f$ enough to obtain (\ref{volexp}) above?
\end{question}
Finally we recall that Kozlovski \cite{koz} had proved the following relation for the entropy of $C^{\infty}$ diffeomorphisms of a $d-$dimensional compact manifold $M:$
\begin{equation}
 h_{top}(f) = \lim_{n \rightarrow \infty} \frac{1}{n} \log \int_M \|\wedge^d Df_x^n\| dLeb
\end{equation}
where $\wedge^d Df$ stands for the action of $f$ on the full exterior power.

For uniformly expanding maps on compact $d-$dimensional manifolds we have 
\begin{equation}
h_{top}(f) = \limsup_{n \rightarrow \infty} \frac{1}{n} \log \int \|\wedge^d Df^n\| d Leb
\end{equation}

 The celebrated variational principle in ergodic theory, states that for any continuous transformation in a compact metric space, the topological entropy coincides with the supremum over all metric entropies. By the way, in general it may not exist any invariant measure whose  entropy attains the supremum in the variational principle (see \cite{misiu}, \cite{Buzzi}). If $\mu \rightarrow h_{\mu}(f)$ is (upper) semi-continuous function, then  $\mu \rightarrow h_{\mu}$ attains its maximum at some measure(s).  Observe that metric entropy is usually not lower semi-continuous. Indeed, by ergodic closing lemma of Ma\~{n}\'{e} there is a $C^1-$residual subset  of $\Diff(M)$ such that every ergodic measure is approximated by measures supported on periodic orbits.
 
 Both compactness of the phase space and continuity of the dynamics are crucial in the variational principle. However, a variational principle for the entropy along unstable foliation of partially hyperbolic diffeomorphisms can be proved as follows.
\begin{theorem} [\cite{HHW}, Theorem D]
Let $f : M \rightarrow M$ be a $C^1-$partially hyperbolic diffeomorphism. Then
$$
h_{top}^u(f) = \sup \{h^u_{\mu}(f) : \mu \in \mathcal{M}_e(M)\}$$
where $\mathcal{M}_e(f)$ is the set of ergodic invariant measures of $f.$
\end{theorem}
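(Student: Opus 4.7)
The plan is to prove both inequalities separately, following the classical Misiurewicz scheme for the variational principle but localized along unstable plaques. The preceding theorem of Hu--Hua--Wu is the key technical input: it lets me compute $h_\mu^u(f)$ as $h_\mu(f,\alpha\mid\eta)$ for any $\alpha\in\mathcal{P}$ of small diameter and any $\eta\in\mathcal{P}^u$, which is precisely the form that interacts well with $(n,\epsilon)$-spanning/separated sets on unstable plaques.

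For the upper bound $h_\mu^u(f)\le h_{\mathrm{top}}^u(f)$, fix ergodic $\mu$, pick $\epsilon>0$ small and choose $\alpha\in\mathcal{P}$ with $\operatorname{diam}(\alpha)<\epsilon$, then let $\eta\in\mathcal{P}^u$ be obtained from $\alpha$ by intersecting with local unstable plaques. The point is that whenever two points of $\eta(x)\subset \mathcal{F}^u_{\mathrm{loc}}(x)$ lie in different atoms of $\alpha_0^{n-1}$, they are $(n,\epsilon)$-separated inside $\eta(x)$. Hence
\[
H_\mu\bigl(\alpha_0^{n-1}\mid\eta\bigr)\;\le\;\int_M \log N_n^u(x,\epsilon,\delta)\,d\mu(x),
\]
where $N_n^u(x,\epsilon,\delta)$ is a maximal $(n,\epsilon)$-separated cardinality in $\overline{\mathcal{F}^u(x,\delta)}$ and $\delta$ is chosen so that $\eta(x)\subset \overline{\mathcal{F}^u(x,\delta)}$. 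Dividing by $n$, letting $n\to\infty$ and applying the Hu--Hua--Wu identity gives $h_\mu^u(f)\le \sup_x h_{\mathrm{top}}^u(f,\overline{\mathcal{F}^u(x,\delta)})=h_{\mathrm{top}}^u(f)$.

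For the lower bound I adapt the Katok--Misiurewicz construction. Choose $x_*$ almost realizing $h_{\mathrm{top}}^u(f)$; for each $n$ take a maximal $(n,\epsilon)$-separated set $E_n\subset\overline{\mathcal{F}^u(x_*,\delta)}$ and form the empirical measures
\[
\nu_n=\frac{1}{\#E_n}\sum_{y\in E_n}\delta_y,\qquad \mu_n=\frac{1}{n}\sum_{k=0}^{n-1}f_*^k\nu_n.
\]
Pass to a weak-$*$ accumulation point $\mu$; it is $f$-invariant. Choose a partition $\alpha$ of diameter less than $\epsilon$ whose boundary has $\mu$-measure zero, together with an associated $\eta\in\mathcal{P}^u$. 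Because the points of $E_n$ are $(n,\epsilon)$-separated on an unstable plaque, they lie in distinct atoms of $\alpha_0^{n-1}\mid\eta$, so $H_{\nu_n}(\alpha_0^{n-1}\mid\eta)=\log\#E_n$. The standard concavity/subadditivity argument transferring this to the averaged measures, together with the weak-$*$ continuity of $H_{\cdot}(\alpha_0^{q-1}\mid\eta)$ at $\mu$ (which uses the zero boundary condition), yields $h_\mu(f,\alpha\mid\eta)\ge\limsup_n\tfrac{1}{n}\log\#E_n$. By Hu--Hua--Wu again, $h_\mu^u(f)\ge h_{\mathrm{top}}^u(f,\overline{\mathcal{F}^u(x_*,\delta)})$. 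Finally, ergodic decomposition of $\mu$ produces an ergodic component with the same or greater unstable entropy, since $\mu\mapsto h_\mu^u(f)$ is affine on the simplex of invariant measures.

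The main obstacle is the lower bound: one must verify that the conditional-entropy inequality $H_{\mu_n}(\alpha_0^{n-1}\mid\eta)\ge \log\#E_n - o(n)$ survives the passage to the weak-$*$ limit and the $\tfrac{1}{n}\sum f_*^k$ averaging. The delicate point is that $\eta\in\mathcal{P}^u$ is \emph{not} a priori continuous in $\mu$, so one must replace $\eta$ by a partition whose atoms are products (a base partition times plaques) with $\mu$-null boundary, and then use the Rokhlin formula $H_\mu(\alpha_0^{n-1}\mid\eta)=H_\mu(\alpha_0^{n-1}\vee\eta)-H_\mu(\eta)$ to reduce everything to partitions of $M$ itself, where the classical weak-$*$ semicontinuity applies. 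The ergodic decomposition step at the end is routine because $h^u_\mu$ is defined via conditional entropies of generators and inherits affineness from the usual Kolmogorov--Sinai entropy.
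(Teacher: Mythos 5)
The paper itself offers no proof of this statement: it is quoted verbatim from Hu--Hua--Wu \cite{HHW} (their Theorem D), so the only comparison I can make is with the strategy of that reference, which your proposal essentially follows (Misiurewicz-type arguments localized on unstable plaques, combined with the equivalence $h^u_\mu(f)=h_\mu(f,\alpha\mid\eta)$ and upper semicontinuity/affineness of $\mu\mapsto h^u_\mu$). The lower bound half of your sketch (maximal $(n,\epsilon)$-separated sets $E_n$ on a fixed plaque, empirical averages, weak-$*$ limit, partitions with $\mu$-null boundary, then ergodic decomposition via affineness) is indeed the right scheme, modulo the usual care that separation is taken in the leafwise metric $d^u$ while $\operatorname{diam}(\alpha)$ is an ambient diameter; since $d^u\geq d$, you need the local comparability of $d^u$ and $d$ on unstable plaques at small scales to conclude that $(n,\epsilon)$-$d^u$-separated points really lie in distinct atoms of $\alpha_0^{n-1}$. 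That is a fixable technical point.

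The genuine gap is in your upper bound. The assertion that ``whenever two points of $\eta(x)$ lie in different atoms of $\alpha_0^{n-1}$, they are $(n,\epsilon)$-separated'' is false: two points just across an atom boundary of a small-diameter partition lie in different atoms of $\alpha$ (hence of $\alpha_0^{n-1}$) while being arbitrarily close, so membership in distinct atoms gives no separation whatsoever. The correct implication goes the other way (points in the \emph{same} atom of $\alpha_0^{n-1}$ are $(n,\epsilon)$-close), and that implication only yields the inequality you need for the \emph{lower} bound (number of atoms meeting a plaque dominates the spanning number), not the upper bound. Consequently the displayed inequality $H_\mu(\alpha_0^{n-1}\mid\eta)\leq\int\log N^u_n(x,\epsilon,\delta)\,d\mu$ is unjustified: the number of atoms of $\alpha_0^{n-1}$ with positive conditional measure on a plaque can a priori exceed any separated-set count. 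This is exactly the harder (Goodwyn/Misiurewicz) half of a variational principle, and repairing it requires real input: either the classical trick of replacing atoms by compact subsets and controlling how many elements of the modified cover a leafwise Bowen ball can meet (producing the absorbable $\log 2$-type correction), or an argument through conditional measures along $\eta$, e.g.\ a leafwise Shannon--McMillan--Breiman/Brin--Katok estimate bounding $-\frac1n\log\mu^\eta_x\bigl(B^u_n(x,\epsilon)\bigr)$ by the growth of $(n,\epsilon)$-spanning sets in $\overline{\mathcal{F}^u(x,\delta)}$, which is essentially how \cite{HHW} proceed. As written, your upper bound does not go through.
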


\begin{theorem} [\cite{Yang}, \cite{HHW}]
Let $f \in \Diff^1(M)$ be a partially hyperbolic diffeomorphism and $\{\mu_n\}$ be a sequence of $f-$invariant measures. Assume that $\mu_n$ converge to $\mu$ in weak-* topology, then 
$$
 \limsup_{n \rightarrow \infty} h_{\mu_n} (f, \mathcal{F}^u) \leq h_{\mu}(f, \mathcal{F}^u).
$$
\end{theorem}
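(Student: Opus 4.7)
The plan is to combine the Hu--Hua--Wu partition-based characterization of unstable entropy (recalled in the previous theorem) with an upper semi-continuity argument for conditional entropies with respect to a fixed finite partition having $\mu$-null boundary. By the ergodic decomposition and affinity of both sides, one may assume $\mu$ and each $\mu_n$ is ergodic; then HHW gives $h_\mu(f,\mathcal{F}^u)=h_\mu(f,\alpha\mid\eta)$ for any $\alpha\in\mathcal{P}$ of sufficiently small diameter and any $\eta\in\mathcal{P}^u$.

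Fix $\epsilon>0$. Using regularity of $\mu$, pick a finite Borel partition $\alpha$ of $M$ of small diameter with $\mu(\partial\alpha)=0$, so that $\mu_n(\partial\alpha_0^{k-1})\to 0$ for every fixed $k$; set $\eta(x):=\alpha(x)\cap\mathcal{F}^u_{\mathrm{loc}}(x)\in\mathcal{P}^u$. The central inequality, valid for every $k\geq 1$ by $f$-invariance of $\mu_n$ together with the subadditive decomposition $\alpha_0^{mk-1}=\bigvee_{j=0}^{m-1}f^{-jk}\alpha_0^{k-1}$, reads $h_{\mu_n}(f,\alpha\mid\eta)\leq \tfrac{1}{k}H_{\mu_n}(\alpha_0^{k-1}\mid\eta)$. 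Granted upper semi-continuity of the functional $\nu\mapsto H_\nu(\alpha_0^{k-1}\mid\eta)$ at $\mu$ along the sequence, passing to $\limsup_n$ gives $\limsup_n h_{\mu_n}(f,\mathcal{F}^u)\leq \tfrac{1}{k}H_\mu(\alpha_0^{k-1}\mid\eta)$, and then letting $k\to\infty$ yields $\limsup_n h_{\mu_n}(f,\mathcal{F}^u)\leq h_\mu(f,\alpha\mid\eta)=h_\mu(f,\mathcal{F}^u)$, as desired.

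The principal obstacle is precisely this last upper semi-continuity: the conditioning partition $\eta$ has a continuum of atoms since it refines $\mathcal{F}^u$, so the classical continuity of $\nu\mapsto H_\nu(\xi)$ for a finite Jordan-measurable $\xi$ does not apply directly. One circumvents this by writing $H_\nu(\alpha_0^{k-1}\mid\eta)=\int \bigl(-\sum_A \nu^\eta_x(A)\log\nu^\eta_x(A)\bigr)\,d\nu(x)$, exploiting that each atom $A$ of $\alpha_0^{k-1}$ is a $\mu$-continuity set, and pushing the weak-$\ast$ limit through the disintegration using concavity of $-t\log t$ together with Jensen's inequality and the $f$-invariance of $\mathcal{F}^u$. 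This is essentially the line of argument implemented by Yang \cite{Yang}, and it is the only genuinely delicate ingredient; the rest of the proof is bookkeeping around the HHW formula.
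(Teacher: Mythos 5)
The survey states this theorem without proof (it is quoted from \cite{Yang} and \cite{HHW}), so your proposal has to be measured against those arguments, and as written it outsources precisely the two steps that constitute them. First, your ``central inequality'' $h_{\mu_n}(f,\alpha\mid\eta)\leq \tfrac1k H_{\mu_n}(\alpha_0^{k-1}\mid\eta)$ does not follow from the usual subadditivity trick: after applying $f$-invariance you must compare $H_{\nu}(\alpha_0^{k-1}\mid f^{jk}\eta)$ with $H_{\nu}(\alpha_0^{k-1}\mid\eta)$, and the partitions $f^{jk}\eta$ and $\eta$ are not nested in either direction --- an atom of $f^{jk}\eta$ is a long unstable plaque chopped by $f^{jk}\alpha$, not by $\alpha$ --- so monotonicity of conditional entropy in the conditioning partition cannot be invoked; controlling exactly this non-nestedness (using expansion along $\mathcal{F}^u$ and a careful choice of $\eta$) is what the lemmas of Hu--Hua--Wu are for. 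Second, the upper semi-continuity of $\nu\mapsto H_{\nu}(\alpha_0^{k-1}\mid\eta)$, which you introduce with ``granted'' and later describe as the only delicate ingredient, \emph{is} the theorem: weak-$*$ convergence gives no control whatsoever on the disintegrations $x\mapsto\nu^{\eta}_x$, and $\eta$ has uncountably many atoms, so ``pushing the weak-$*$ limit through the disintegration using concavity and Jensen'' is a gesture, not an argument. A genuine proof (and this is how the cited works proceed) replaces $\eta$ by an exhausting family of finite partitions $\beta_m\leq\eta$ with $\mu$-null boundaries, obtained by transversally subdividing finitely many foliation boxes, uses the decreasing martingale convergence $H_{\nu}(\alpha_0^{k-1}\mid\eta)=\inf_m H_{\nu}(\alpha_0^{k-1}\mid\beta_m)$ valid for every Borel probability $\nu$, and then gets upper semi-continuity at $\mu$ as an infimum of functionals continuous at $\mu$; none of this is in your text.

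There is also a structural flaw at the start: the reduction ``by ergodic decomposition and affinity of both sides, one may assume $\mu$ and each $\mu_n$ is ergodic'' is not legitimate for a semi-continuity statement along a sequence. Knowing the inequality for sequences of ergodic measures does not imply it in general, because ergodic components of $\mu_n$ carrying most of the unstable entropy need not converge to $\mu$ at all (the ergodic decompositions of $\mu_n$ do not converge to that of $\mu$). Fortunately the reduction is unnecessary: the finite-scale bounds one needs hold for arbitrary invariant measures, and the HHW identification of $h_{\mu}(f,\mathcal{F}^u)$ with the partition-based quantity extends beyond the ergodic case. So the overall strategy (null-boundary partitions, a finite-scale bound, semi-continuity of a conditional entropy, then $k\to\infty$) is the right skeleton and is indeed the one used in \cite{Yang} and \cite{HHW}, but as submitted the proposal is a plan whose two load-bearing steps are asserted rather than proved.
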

By the above semi-continuity result and compactness of $\mathcal{M}(f)$ we conclude that there exists at least one probability measure which maximizes the unstable entropy.
\begin{corollary}
For any partially hyperbolic diffeomorphism there exists an invariant measure which maximizes $u-$entropy.
\end{corollary}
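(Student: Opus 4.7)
The plan is to deduce this corollary directly from the two results stated immediately above it: the variational principle $h^u_{\mathrm{top}}(f)=\sup\{h^u_\mu(f):\mu\in\mathcal M_e(f)\}$, and the upper semi-continuity of $\mu\mapsto h_\mu(f,\mathcal F^u)$ in the weak-$*$ topology. The standard recipe for such existence statements is: upper semi-continuous function on a compact set attains its supremum. So the whole argument reduces to checking that this abstract principle applies in our setting.

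First I would take a maximizing sequence of invariant probabilities. By the variational principle we may choose ergodic measures $\mu_n\in\mathcal M_e(f)$ with
\begin{equation*}
h^u_{\mu_n}(f)\longrightarrow h^u_{\mathrm{top}}(f)\qquad\text{as } n\to\infty.
\end{equation*}
The space $\mathcal M(f)$ of $f$-invariant Borel probability measures on the compact manifold $M$ is weak-$*$ compact (Banach–Alaoglu plus the fact that being $f$-invariant is a closed condition), so after passing to a subsequence we may assume $\mu_n\to\mu$ weakly for some $\mu\in\mathcal M(f)$. Applying the semi-continuity theorem to this convergent sequence yields
\begin{equation*}
h^u_{\mathrm{top}}(f)=\limsup_{n\to\infty} h^u_{\mu_n}(f)\le h^u_\mu(f),
\end{equation*}
while the variational principle gives the reverse inequality $h^u_\mu(f)\le h^u_{\mathrm{top}}(f)$. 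Hence $h^u_\mu(f)=h^u_{\mathrm{top}}(f)$, which is exactly the statement that $\mu$ maximizes the unstable entropy.

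There is essentially no obstacle here beyond being careful about which objects we are taking limits of: the only mild subtlety is that the limit $\mu$ of a sequence of ergodic measures need not itself be ergodic, so the maximizer we produce lies a priori only in $\mathcal M(f)$, not necessarily in $\mathcal M_e(f)$. If one wants an \emph{ergodic} $u$-maximal measure, one can subsequently apply the ergodic decomposition together with affinity of $\mu\mapsto h^u_\mu(f)$ (which follows from the corresponding property of ordinary metric entropy via the formulation through subordinated partitions) to pick out an ergodic component of $\mu$ that still realizes the maximum; but the statement as written only requires an invariant measure, so the argument above suffices.
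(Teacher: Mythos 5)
Your argument is correct and is essentially the paper's own: the author also deduces the corollary from weak-$*$ compactness of $\mathcal{M}(f)$ together with the upper semi-continuity of $\mu\mapsto h_\mu(f,\mathcal{F}^u)$, i.e.\ an upper semi-continuous function on a compact set attains its supremum. Your extra detour through ergodic measures and the variational principle, and the closing remark on ergodicity of the maximizer, are harmless additions but not needed for the statement.
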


It is well worth to mention that a notion of unstable pressure and unstable equilibrium measures (which is a generalization of $u-$maximal measure) had been defined in \cite{HWZ2}. 
Let us define: $$u-M.M.E (f) = \{\mu ; h^u_{\mu} (f) = h^u_{top}(f)\} $$ and recall that $M.M.E (f)$ is the set of measures of maximal entropy. Here we propose to study the relation between $u-MME(f)$ and $M.M.E(f)$ both subsets of invariant probability measures $f$.

 If $f$ is a partially hyperbolic dynamics and $\mu$ is a $u-$maximizing entropy measure then, $\mu$ may not be a measure of maximal entropy. The trivial example is to consider $f := A \times A^2$ where $A$ is a linear Anosov automorphism of $\mathbb{T}^2$. Clearly, $f$ can be considered as a partially hyperbolic diffeomorphism where the unstable (stable) bundle is the unstable (stable) bundle of $A^2$ and center bundle is two dimensional. Take a fixed point $A(p)=p$ then $\delta_{p} \times Leb_2$ is a measure of maximal $u-$entropy. However, $\mu$ is not a measure of maximal entropy for $f.$

It is interesting to find ``non-trivial" examples of partially hyperbolic diffeomorphisms for which (some) measures of $u-$maximal entropy are not  maximal entropy and $M.M.E (f) \neq \emptyset.$ To find such examples, one may need to understand the distribution of unstable leaves in the manifold. 

\begin{question} \label{rigidity}
Let $f$ be an Anosov diffeomorphism with decomposition $TM = E^s \oplus E^u \oplus E^{uu}$. Is it true that any measure maximizing entropy along strong unstable foliation is a maximal entropy measure for $f$?
\end{question}

We mention that after finishing this survey, R. Ures- M. Viana- F. Yang and J. Yang \cite{UVYY} constructed  examples of partially hyperbolic sets (modified topological solenoid)  where measure of $u-$maximal entropy fails to be measure of maximal entropy. The partially hyperbolic set in their example has $E^u \oplus E^{cs}$ decomposition. However, the diffeomorphism case and the above question is still open.

On the other hand, it is also very interesting to find examples of maximal entropy measures which are not $u-$maximal. In \ref{mmenotu} we give  example of invariant maximal entropy measures which are not $u-$maximal entropy measures.

For a partially hyperbolic diffeomorphism with one dimensional center bundle, an interesting quantity to be understood is the following:
$$
 \sup \{h^u_{\mu} (f), \mu \in m.m.e, \, \, \lambda^c(\mu) > 0\}.
$$

\subsubsection{Entropy of Non-compact sets and Carathéodory dimension}

Similar to Bowen approach, G. Ponce \cite{ponce} and independently  Tian-Wu \cite{TW}  have developed the notion of unstable entropy for non-compact subsets.  As these definitions are in the framework of Carath\'{e}odory dimension, we refer to works of Pesin-Pitskel \cite{PP1} and the reference book of Pesin \cite{Pesin1} for more general definition of Carathéodory construction.  Let $f: M \rightarrow M$ be a $C^1-$partially hyperbolic diffeomorphism. For each $x \in M$ let 
$$M_x:= \bigcup_{j=-\infty}^{\infty} \mathcal{F}^u(f^j(x)).$$

One may define a notion similar to Hausdorff dimension using the dynamics. First, for any open cover $\mathcal{A}$ of $M$ we define a ``diameter"  for any subset $E$ of $ M_x$ by $D_{\mathcal{A}} (E) := e^{-n_{f, \mathcal{A}}(E)}$ where 
$n_{f, \mathcal{A}}(E)$ is the smallest $n$ such that $f^n(E)$ is not a subset of any element of  $\mathcal{A}$.

So, similar to Hausdorff measures we define 
$$
m^x_{\mathcal{A}, \lambda}(Y) := \lim_{\epsilon \rightarrow 0} \inf \{ \sum D_{\mathcal{A}}(E_i)^{\lambda} :    D_{\mathcal{A}}(E_i) \leq \epsilon,  \}
$$
where the infimum is over all possible  families $\{E_i\}_{i=1}^{\infty}$ such that  $Y \subset \bigcup E_i.$

It can be proved that there exists a unique critical value:
$$
h^u_{H, \mathcal{A}}(f, Y) := \inf \{\lambda : m^x_{\mathcal{A}, \lambda}(Y) =0\} ,\,\, Y \subset M_x.
$$  
For $ x \in M , Y \subset M$ define 
$$
h^u_{H}(f, Y, x) := \sup_{\mathcal{A}} h^u_{H, \mathcal{A}}(f, Y \cap M_x)
$$
and finally define $h^u_{H}(f, Y) := \sup_{x \in Y} h^u_H(f, Y, x).$

A main result in \cite{ponce} is that for any ergodic measure $\mu$ and $Y \subset M$ with positive measure then:
$$
h^u_{\mu}(f) \leq h^u_{H}(f, Y)
$$
and $h^u_{H}(f, M) = h^u_{top}(f).$

In \cite{TW} the authors make a detailed analysis of dimension theory too. We recall here the entropy distribution principle for unstable entropy (they call it Bowen unstable entropy and denote by $h_{B} (f, Y)$ for any subset $Y \subset M$). 

\begin{definition}
Let $\mu$ be a Borel probability measure on $M$ and $\xi \in \mathcal{P}^u$ subordinated to unstable foliation of $f: M \rightarrow M$ which is partially hyperbolic. Define:
$$
 \underline{h}^u_{\mu} (f) = \int_M  \underline{h}^u_{\mu} (f, x) d\mu, \,\,  \overline{h}^u_{\mu} (f) = \int_M  \overline{h}^u_{\mu} (f, x) d\mu
$$  
where 
$$ \underline{h}^u_{\mu} (f, x) = \lim_{\epsilon \rightarrow 0} \liminf_{n \rightarrow \infty} -\frac{1}{n} \log \mu_{\eta(x)} (B_n^u(x, \epsilon))$$
$$
\overline{h}^u_{\mu} (f, x) = \lim_{\epsilon \rightarrow 0} \limsup_{n \rightarrow \infty} -\frac{1}{n} \log \mu_{\eta(x)} (B_n^u(x, \epsilon)).
$$
\end{definition}
If $\mu$ is $f-$invariant then 
$$
 h^u_{\mu}(f) = \underline{h}^u_{\mu} (f) = \overline{h}^u_{\mu} (f).
$$
\begin{theorem}
Let $f: M \rightarrow M$ be a $C^1$ partially hyperbolic diffeomorphism. $Y \subset M$ a Borel subset and $\mu$ an $f-$invariant measure, $0 < s < \infty$,
\begin{itemize}
\item If $\underline{h}^u_{\mu} (f, x) \leq s $ for every $x \in Y$ then $h_B^u(f, Y) \leq s$
\item If $\underline{h}^u_{\mu} (f, x) \geq s $ for every $x \in Y$ and $\mu(Y) > 0$ then $h_B^u(f, Y) \geq s.$
\end{itemize} 
\end{theorem}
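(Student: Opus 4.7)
The plan is to adapt the classical mass distribution principle from fractal dimension theory to the Bowen unstable entropy setting. Fix a measurable partition $\eta\in\mathcal{P}^u$ subordinated to $\mathcal{F}^u$, so that $\mu_{\eta(x)}$ denotes the conditional measure of $\mu$ on the atom $\eta(x)\subset\mathcal{F}^u(x)$. Both inequalities are then established one atom at a time: on each atom $P$ the measure $\mu_P$ is a Radon measure supported in a single unstable leaf, and the $u$-Bowen balls $B_n^u(\cdot,\epsilon)\cap P$ are metric balls with respect to the Bowen pseudometric restricted to the smooth manifold $P$, so the $5r$-covering (Vitali) lemma applies within $P$.

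For the upper bound, fix $t>s$ and $\epsilon>0$ small. The hypothesis $\underline{h}^u_\mu(f,x)\leq s$, combined with the monotonicity in $\epsilon$ of the liminf defining $\underline{h}^u_\mu(f,x)$, yields for every $x\in Y$ infinitely many $n$ with $\mu_{\eta(x)}(B_n^u(x,\epsilon))\geq e^{-nt}$. Given $N\in\mathbb{N}$, pick such $n_x\geq N$ for each $x\in Y$ and restrict attention to a fixed atom $P$. The $5r$-covering lemma applied to $\{B_{n_x}^u(x,\epsilon):x\in Y\cap P\}$ extracts a disjoint subfamily $\{B_{n_i}^u(x_i,\epsilon)\}_i$ whose $5\epsilon$-enlargements cover $Y\cap P$. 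Disjointness then gives
$$\sum_i e^{-n_i t}\leq\sum_i\mu_P(B_{n_i}^u(x_i,\epsilon))\leq 1,$$
so the Carathéodory sum satisfies $M(Y\cap P,t,N,5\epsilon)\leq 1$. For any $t'>t$ the trivial bound $M(Y\cap P,t',N,5\epsilon)\leq e^{-N(t'-t)}M(Y\cap P,t,N,5\epsilon)\to 0$ as $N\to\infty$ gives $h_B^u(f,Y\cap P,5\epsilon)\leq t$; letting $\epsilon\to 0$, $t\downarrow s$, and taking the supremum over atoms yields $h_B^u(f,Y)\leq s$.

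For the lower bound, fix $t<s$ and $\delta>0$ with $s-\delta>t$. The hypothesis provides, for each $x\in Y$, some $\epsilon(x)>0$ and $N(x)$ with $\mu_{\eta(x)}(B_n^u(x,\epsilon(x)))\leq e^{-n(s-\delta)}$ for all $n\geq N(x)$. An Egorov-type argument selects uniform constants $\epsilon_0,N_0$ and a subset $Y'\subset Y$ with $\mu(Y')>0$ on which these bounds hold simultaneously; by Rokhlin disintegration some atom $P$ satisfies $\mu_P(Y'\cap P)>0$. For any cover of $Y'\cap P$ by $u$-Bowen balls $B_{n_i}^u(x_i,\epsilon_0)$ with $n_i\geq N_0$ (after replacing each center by a point of $Y'\cap P$ inside the ball at the cost of at most doubling the radius), the uniform upper bound yields
$$0<\mu_P(Y'\cap P)\leq\sum_i\mu_P(B_{n_i}^u(x_i,\epsilon_0))\leq\sum_i e^{-n_i(s-\delta)}.$$
Hence $M(Y'\cap P,s-\delta,N_0,\epsilon_0)\geq\mu_P(Y'\cap P)>0$, so $h_B^u(f,Y)\geq h_B^u(f,Y'\cap P)\geq s-\delta>t$; letting $t\uparrow s$ finishes the argument.

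The principal obstacle is geometric: $u$-Bowen balls are not ordinary metric balls in $M$, so one must justify a Vitali-type $5r$-covering for them. The reduction to a single atom $P$ of $\eta$ is what makes this work, since $P$ lies in a single unstable leaf and $B_n^u(x,\epsilon)\cap P$ is the $\epsilon$-ball of the Bowen pseudometric on the smooth Riemannian manifold $P$. A secondary technical point is the Egorov step in the lower bound, which requires measurability of $x\mapsto\mu_{\eta(x)}(B_n^u(x,\epsilon))$ for each fixed $n,\epsilon$, guaranteed by the measurability of the Rokhlin disintegration together with the continuous dependence of Bowen balls on their center within an unstable leaf.
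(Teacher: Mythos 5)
A preliminary remark on the comparison: the survey states this theorem only as a recalled result of Tian--Wu \cite{TW} and gives no proof, so the relevant benchmark is the standard entropy distribution principle argument from that reference, and your proposal follows exactly that route: fix a subordinated partition, use the conditional measure $\mu_P$ on an atom as the mass distribution, run a covering argument for the first bullet and a covering-sum estimate for the second. The lower-bound half is essentially complete; the only bookkeeping to fix is that the uniform estimate $\mu_P(B^u_n(x,\epsilon_0))\le e^{-n(s-\delta)}$ is only available at scale $\epsilon_0$, so you must take the covers at scale $\epsilon'\le\epsilon_0/2$ and then recenter (doubling $\epsilon'$ stays below $\epsilon_0$); as written you cover at scale $\epsilon_0$ and double past the scale where the estimate holds. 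Since the Bowen unstable entropy is a limit as the scale goes to $0$, this costs nothing.

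The one step whose justification genuinely fails as written is the covering lemma in the upper bound. Restricting to a single atom $P$ does \emph{not} turn $\{B^u_{n_x}(x,\epsilon)\}_{x\in Y\cap P}$ into a family of balls of one metric: $B^u_n(x,\epsilon)$ is the $\epsilon$-ball of the metric $d^u_n(x,y)=\max_{0\le i\le n} d^u(f^i x,f^i y)$ on the leaf, and this metric changes with $n$, so the classical $5r$ Vitali lemma (one metric, variable radii) does not apply; the obstacle you yourself identify as ``principal'' is not removed by passing to the leaf, which is needed only so that $d^u$ and $\mu_P$ make sense. What replaces Vitali in the standard proof is the elementary cross-order containment: if $B^u_n(x,\epsilon)\cap B^u_m(y,\epsilon)\neq\emptyset$ with $n\le m$, then $B^u_m(y,\epsilon)\subset B^u_n(x,3\epsilon)$, by the triangle inequality applied to the first $n$ iterates. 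With this, a greedy selection by increasing order (for each $n\ge N$ choose a maximal family of order-$n$ balls disjoint from each other and from all previously chosen ones; countability is automatic because each chosen ball has $\mu_P$-mass at least $e^{-nt}$) yields a pairwise disjoint subfamily $\{B^u_{n_i}(x_i,\epsilon)\}$ whose $3\epsilon$-enlargements cover $Y\cap P$, after which your estimate $\sum_i e^{-n_i t}\le\sum_i \mu_P(B^u_{n_i}(x_i,\epsilon))\le 1$ goes through verbatim. Finally, when you pass from atoms to $h^u_B(f,Y)$ you should say explicitly that the Bowen-type Carath\'eodory quantity is countably stable and that a local unstable disc meets at most countably many atoms of $\eta$ (together with the implicit standing assumption that $\mu_{\eta(x)}$ is defined at every $x\in Y$), since in \cite{TW} the unstable Bowen entropy is defined leafwise, not atomwise.
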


\subsection{Empirical measures and $u-$Gibbs} \label{3.3}

The notion of $u-$entropy plays a fundamental role in the study of $u-$Gibbs and physical measures. Let $f: M \rightarrow M$ be a homeomorphism of a compact manifold $M.$ Given a point $x\in M$ we denote by $\mathcal{M}(x)$ the set of accumulation points of the empirical measures $\frac{1}{n} \sum_{i=0}^{n-1} \delta_{f^i(x)}$ as $n \rightarrow \infty.$

\begin{theorem} {\normalfont (\cite{HFY} and \cite{CYZ})} \label{generic1}
For any $C^1$ diffeomorphism $f,$ if $\Lambda$ is an attracting set with a partially hyperbolic splitting $E^{cs} \oplus E^u,$ then there exists a small neighborhood $U$ of $\Lambda$ such that for  Lebesgue almost every point $x \in U,$ any limit measure $\mu \in \mathcal{M}(x)$  satisfies
$$
h_{\mu}(f, \mathcal{F}^u) = \int_{\Lambda} \log |det(Df|E^u)| d\mu,
$$
where $\mathcal{F}^u$ is the strong lamination tangent to $E^u.$
\end{theorem}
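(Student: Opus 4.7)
The plan is to pair the unstable Margulis--Ruelle inequality (for the upper bound) with a matching lower bound coming from the exponential volume growth of unstable plaques under forward iteration, and then to transfer the conclusion from plaque-averaged empirical measures to Lebesgue-typical starting points via a Fubini argument. Since $E^u$ integrates uniquely to a lamination $\cF^u$ with $C^1$ leaves and $\Lambda$ is attracting, choose $U$ with $\overline{f(U)}\subset U$ and cover it by finitely many foliation boxes; inside each one, Lebesgue measure disintegrates into a transversal measure and leafwise measures equivalent to the intrinsic Riemannian volume on each unstable plaque. It is therefore enough to prove: for every strong unstable plaque $D\subset U$, Lebesgue-a.e.\ $x\in D$ has the property that every $\mu\in\mathcal{M}(x)$ satisfies the entropy formula.

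The upper bound $h_\mu(f,\cF^u)\le\int\log|\det(Df|E^u)|\,d\mu$ is the unstable Margulis--Ruelle inequality recalled in (\ref{entexp}), valid for any invariant $\mu$ with $f\in C^1$. All the real work is in the reverse inequality. I would first analyze the plaque-averaged empirical measures
$$
\bar\nu^D_n \;=\; \frac{1}{n}\sum_{i=0}^{n-1} f^i_\ast\!\left(\frac{\operatorname{Leb}_D}{\operatorname{Leb}(D)}\right),
$$
whose weak-$\ast$ limits are $f$-invariant. Fix such a limit $\mu=\lim_k \bar\nu^D_{n_k}$. Writing $\log\operatorname{Vol}(f^nD)=\log\int_D|\det(Df^n|E^u(z))|\,d\operatorname{Leb}_D(z)$, expanding as a Birkhoff sum along the orbit of $z$, and applying Jensen's inequality together with the weak-$\ast$ convergence of $\bar\nu^D_{n_k}$ against the continuous function $\log|\det(Df|E^u)|$ yields
$$
\liminf_{k\to\infty}\frac{1}{n_k}\log\operatorname{Vol}(f^{n_k}D)\;\ge\; \int\log|\det(Df|E^u)|\,d\mu.
$$
Covering $f^{n_k}D$ by $u$-Bowen balls of scale $\epsilon$ (each of bounded ambient unstable volume) forces the $(n_k,\epsilon)$-separated cardinality along $\cF^u$ to be at least of order $\operatorname{Vol}(f^{n_k}D)$; translating this through the Hu--Hua--Wu definition of $h^u_\mu(f)$ from \S\ref{3.1} together with a Brin--Katok-type argument on a partition subordinate to $\cF^u$ produces the matching lower bound $h_\mu(f,\cF^u)\ge \int\log|\det(Df|E^u)|\,d\mu$. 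Hence every plaque-averaged limit $\mu$ is a $u$-Gibbs state.

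The final step, and the main obstacle, is promoting this from plaque-averaged limits to the pointwise statement for Lebesgue-a.e.\ $x$. Using upper semicontinuity of $\mu\mapsto h^u_\mu(f)$ (\cite{Yang},\cite{HHW}) and continuity of the Jacobian integral $\mu\mapsto\int\log|\det(Df|E^u)|\,d\mu$, the set $K$ of $u$-Gibbs measures is compact in the weak-$\ast$ topology. I would argue by contradiction: if the forward-invariant Borel set $E:=\{x\in U:\mathcal{M}(x)\not\subset K\}$ had positive Lebesgue measure, Fubini would produce a plaque $D$ with $\operatorname{Leb}_D(D\cap E)>0$; applying the volume-growth argument above to $\operatorname{Leb}_D|_{D\cap E}$ would yield a weak-$\ast$ limit $\mu_0\in K$ essentially concentrated on $E$, and then Birkhoff's theorem applied to an ergodic component of $\mu_0$ would force a positive $\mu_0$-measure set of points in $E$ whose own empirical accumulation is $\{\mu_0\}\subset K$, contradicting their membership in $E$. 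The technical heart of this step, and the reason that the $C^1$ case is strictly harder than $C^{1+\alpha}$, is performing the volume-growth comparison without leafwise bounded distortion; this is where the Pliss-type hyperbolic-time selection of \cite{CYZ} and \cite{HFY} enters, providing the necessary uniform estimates along a positive-density subsequence of iterates and underwriting both the Birkhoff-sum expansion in paragraph two and the measurability bookkeeping in the contradiction argument.
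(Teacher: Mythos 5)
Your upper bound (the unstable Margulis--Ruelle inequality (\ref{entexp})) and your Jensen/Birkhoff computation for the plaque-averaged measures $\bar\nu^D_n$ are in the spirit of the actual arguments, but the proposal has a genuine gap exactly at the point where the theorem becomes hard: passing from weak-$*$ limits of $\frac1n\sum_i f^i_\ast(\operatorname{Leb}_D)$ to the \emph{pointwise} empirical measures $\frac1n\sum_i\delta_{f^i(x)}$ for Lebesgue-a.e.\ $x$. Your contradiction argument does not work as stated: if $E=\{x:\mathcal{M}(x)\not\subset K\}$ has positive Lebesgue measure and you average the pushforwards of $\operatorname{Leb}_D|_{D\cap E}$, the limit $\mu_0$ is only carried by $\overline{E}\cap\Lambda$, and there is no way to conclude $\mu_0(E)>0$ (invariant measures typically give no mass to such invariant sets of Lebesgue-positive measure), so Birkhoff's theorem applied to an ergodic component produces no point of $E$ and hence no contradiction. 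In other words, control of averaged pushforwards of Lebesgue simply does not control the orbitwise empirical measures of Lebesgue-typical points; this is precisely why the $C^1$ pointwise statement required new work. The route indicated in the paper (following \cite{CCE}, \cite{CYZ}, \cite{HFY}) is a direct pointwise Misiurewicz--Katok-type estimate: using Lebesgue density points and Pliss/hyperbolic times for the dominated splitting, one bounds the Lebesgue measure of dynamical balls along the orbit of $x$ by $|\det(Df^n|E^u)|^{-1}$ up to uniform constants, and deduces that \emph{every} limit of the empirical measures of $x$ satisfies the entropy lower bound (this is Theorem \ref{generic2}, singled out in the text as the fundamental step), which is then upgraded to the partial entropy along $\mathcal{F}^u$ and combined with the unstable Ruelle inequality.

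A secondary weak point is in your plaque-level lower bound itself: covering $f^{n_k}D$ by $u$-Bowen balls and counting separated points bounds the unstable \emph{volume growth} (hence unstable topological entropy) from below, but it does not by itself give $h_{\mu}(f,\mathcal{F}^u)\ge\int\log|\det(Df|E^u)|\,d\mu$ for the particular limit measure $\mu$; a Brin--Katok argument runs the wrong way here, since one controls the Lebesgue (not the $\mu$-) measure of Bowen balls. Turning the volume estimate into a metric-entropy estimate for $\mu$ requires a Misiurewicz-type partition argument (or the dynamical-ball counting of \cite{CCE}), which is the technical core of the cited proofs and is missing from the sketch.
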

The equality in the above theorem is the celebrated property of $u-$Gibbs measures for $C^{1+\alpha}, \alpha >0$ partially hyperbolic diffeomorphisms, as defined by Pesin and Sinai (See chapter 11 of \cite{BDV}). We can define $u-$Gibbs measures in the $C^1-$setting, as those which satisfy the equation in the above theorem. A corollary of  theorem \ref{generic1} (mentioned in the same references) is that a unique $u-$Gibbs measure is a physical measure. A measure $\mu$ is called physical if the empirical measures of points in a  Lebesgue positive subset converge to $\mu.$
\begin{corollary}
Let $f$ be a $C^1$ diffeomorphism and $\Lambda$ be an attracting set with a partially hyperbolic splitting $E^{cs} \oplus E^{u}$. Assume that there exists a unique $u-$Gibbs measure $\mu$ on $\Lambda$, then $\mu$ is a physical measure; moreover, its basin has full Lebesgue measure in the topological basin of $\Lambda.$
\end{corollary}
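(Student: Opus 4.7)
The plan is to show that for Lebesgue almost every point $x$ in the topological basin of $\Lambda$, the empirical measures $\frac{1}{n}\sum_{i=0}^{n-1}\delta_{f^i(x)}$ converge, and that the limit must be $\mu$. Once this is established, $x$ lies in the basin of $\mu$, which gives that this basin has full Lebesgue measure in the topological basin.

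First, I would fix a trapping neighborhood $U$ of $\Lambda$ as in Theorem \ref{generic1}, chosen so that $f(\overline{U}) \subset U$ and $\Lambda = \bigcap_{n \geq 0} f^n(U)$. For Lebesgue almost every $x \in U$, Theorem \ref{generic1} tells us that every $\nu \in \mathcal{M}(x)$ satisfies the entropy formula $h_\nu(f,\mathcal{F}^u) = \int \log |\det(Df|E^u)|\,d\nu$; that is, every such $\nu$ is a $u$-Gibbs measure in the $C^1$ sense. Moreover, since the forward orbit of $x \in U$ is eventually trapped inside arbitrarily small neighborhoods of $\Lambda$, any weak-$\ast$ accumulation point of the empirical measures is supported on $\Lambda$. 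Combined with the hypothesis that $\mu$ is the unique $u$-Gibbs measure on $\Lambda$, this forces $\mathcal{M}(x) \subseteq \{\mu\}$.

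Next, I would invoke the compactness of the space of probability measures on $M$ in the weak-$\ast$ topology: the sequence of empirical measures is tight, so it has at least one accumulation point, and having a unique accumulation point equal to $\mu$ is equivalent to convergence of the whole sequence to $\mu$. Thus every such $x$ lies in the basin of $\mu$, and the basin of $\mu$ has full Lebesgue measure in $U$.

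Finally, to upgrade from $U$ to the full topological basin $B(\Lambda) = \bigcup_{n \geq 0} f^{-n}(U)$, I would use that $f$ is a $C^1$ diffeomorphism and hence each $f^{-n}$ is absolutely continuous with respect to Lebesgue measure. Consequently, the Lebesgue null subset of $U$ on which the above argument fails pulls back under $f^{-n}$ to a Lebesgue null subset of $f^{-n}(U)$. Since basins are $f^{-1}$-invariant (an orbit is in the basin of $\mu$ iff any of its iterates is), we obtain that the basin of $\mu$ has full Lebesgue measure in the countable union $B(\Lambda)$. The only mild obstacle is bookkeeping the trapping structure and the null sets, but no new ingredient beyond Theorem \ref{generic1}, uniqueness of the $u$-Gibbs measure, and absolute continuity of $f$ is required.
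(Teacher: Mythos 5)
Your proof is correct and follows essentially the same route the paper intends for this corollary: combine Theorem \ref{generic1} with uniqueness of the $u$-Gibbs measure to force a.e.\ empirical sequence in a trapping neighborhood to converge to $\mu$, then saturate by pre-images (using that a $C^1$ diffeomorphism preserves Lebesgue-null sets) to cover the whole topological basin. Nothing further is needed.
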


It is well-worth to mention that in \cite{CYZ}, they also construct an example of partially hyperbolic diffeomorphism transitive $f \in \Diff^{\infty}(\mathbb{T}^3)$ with $T\mathbb{T}^3 = E^s \oplus E^c \oplus E^u, \dim(E^c)=1$ such that the Lebesgue almost every point $x \in \mathbb{T}^3$ has dense orbit and its sequence of empirical measures $\frac{1}{n} \sum_{j=0}^{n-1} \delta_{f^j(x)}$ does not converge in weak-* topology.

The following result is a fundamental step to obtain the above results.

\begin{theorem} \label{generic2}
For any $C^1$ diffeomorphism  $f$, for any compact invariant set $\Lambda$ admitting a dominated splitting $E \oplus F$ and for Lebesgue almost every point $x \in M,$ if $\omega(x) \subset \Lambda,$ then each limit measure $\mu \in \mathcal{M}(x)$ satisfies 
$$
h_{\mu}(f) \geq \int \log |det Df|F| d\mu.
$$
\end{theorem}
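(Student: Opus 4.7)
\medbreak
\noindent\emph{Proof plan.} The plan is to convert volume growth along plaques tangent to $F$ into an entropy lower bound via a Katok-style counting argument. The three ingredients are the Hirsch--Pugh--Shub plaque family theorem for the dominated splitting, a $C^0$ distortion argument giving volume growth along an $F$-disk through a typical point, and a separated-set argument inside a $\dim F$-dimensional disk.

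First I would invoke the Hirsch--Pugh--Shub plaque family theorem: the dominated splitting $E\oplus F$ over $\Lambda$ produces a locally invariant family of $C^1$-plaques $\{W^F(y)\}_{y\in U}$ tangent to a narrow cone around $F$, of uniform inner radius, defined on a small neighborhood $U$ of $\Lambda$. Extending the continuous function $\phi(y)=\log|\det(Df|F(y))|$ to $U$ (using continuity of $F$ in the dominated splitting) and using the standard consequence of domination that the tangent plane to $f^n(W^F(y))$ converges to $F(f^n(y))$ exponentially fast along orbits that remain near $\Lambda$, I obtain, for each small $F$-disk $D\ni x$ and each $\eta>0$,
\[
\operatorname{vol}(f^n(D))\;\geq\;c(D)\,\exp\!\Bigl(\sum_{i=0}^{n-1}\phi(f^i(x))-n\eta\Bigr)
\]
for $n$ large, through a $C^0$ distortion bound carried along the orbit of $x$ with $D$ sufficiently small.

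Next, for Lebesgue-a.e.\ $x\in M$ with $\omega(x)\subset\Lambda$, cover $U$ by finitely many plaque boxes and invoke Fubini to reduce the Lebesgue statement on $M$ to a statement along individual plaques $D$ (whose union carries full Lebesgue measure in $U$, and every relevant orbit eventually enters $U$). Fix a limit measure $\mu\in\mathcal M(x)$ realized along $n_k\to\infty$. Since $\phi$ is continuous on $U$, the Birkhoff sums $\frac{1}{n_k}\sum_{i=0}^{n_k-1}\phi(f^i(x))$ converge to $\int\phi\,d\mu=\int\log|\det Df|F|\,d\mu$, so the previous inequality yields
\[
\frac{1}{n_k}\log\operatorname{vol}(f^{n_k}(D))\;\longrightarrow\;\int\log|\det Df|F|\,d\mu.
\]
Because $f^{n_k}(D)$ is a $C^1$-disk of dimension $\dim F$, each $(n_k,\varepsilon)$-Bowen ball intersects it in $\dim F$-volume at most $C\varepsilon^{\dim F}$, so at least $\operatorname{vol}(f^{n_k}(D))/(C\varepsilon^{\dim F})$ Bowen balls are needed to cover it. Pulling back through $f^{n_k}$, this exhibits an $(n_k,\varepsilon)$-separated subset of $D$ of cardinality $\gtrsim \exp\bigl(n_k(\int\log|\det Df|F|\,d\mu-2\eta)\bigr)$.

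Finally, since $D$ is a small disk around $x$ and, by domination, every orbit starting in $D$ shadows the orbit of $x$ (expansion along $F$, weak contraction along $E$), the empirical measures of the separated orbits asymptotically concentrate on $\mu$. Applying Katok's entropy formula (in a form that covers not-necessarily-ergodic measures, via the ergodic decomposition) then gives $h_\mu(f)\geq\int\log|\det Df|F|\,d\mu-2\eta$, and letting $\eta\to 0$ closes the argument. The main obstacle is twofold: the $C^1$ regularity forces the distortion bound in the volume-growth step to rely purely on uniform $C^0$-continuity of $Df$ and exponential convergence of plaque tangents to $F$, so $D$ must be chosen very small and uniformly so for Lebesgue-a.e.\ $x$; and the extension of Katok's lower bound to a non-ergodic $\mu$ requires matching the separated orbits to the ergodic components of $\mu$, which is delicate because they all originate from a single initial disk $D$ and their empirical distributions need not individually converge to a single ergodic component.
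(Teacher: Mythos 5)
First, note that the survey does not prove Theorem \ref{generic2}; it refers to the appendix of \cite{CYZ}, to \cite{CCE} and to \cite{CaYa}, so your attempt can only be measured against those arguments, and it diverges from them at the two places where your plan has genuine gaps. The first gap is the volume-growth step. For a fixed plaque $D$ tangent to the $F$-cone you claim $\operatorname{vol}(f^n(D))\geq c(D)\exp(S_n\phi(x)-n\eta)$ with $\phi=\log|\det(Df|F)|$, via ``$C^0$ distortion carried along the orbit of $x$''. But uniform continuity of $Df$ only compares Jacobians at points that remain close along the whole orbit segment, and the points of $D$ do not stay near the orbit of $x$: they are spread apart along $F$ (this spreading is exactly what your later counting step needs), and $E$ is merely dominated, not contracted, so there is no backward contraction inside the plaque either. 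Hence $S_n\phi(y)$ for $y\in D$ is not comparable to $S_n\phi(x)$, and $\operatorname{vol}(f^n(D))=\int_D\operatorname{Jac}(f^n|_D)$ need not grow at the rate $\int\phi\,d\mu$ dictated by the single point $x$; bounded distortion along disks is available only for genuinely invariant, uniformly expanded leaves of a $C^{1+\alpha}$ map, neither of which you have. The cited proofs use the closeness of orbits exactly where it does hold --- inside the Bowen ball $B_n(x,\varepsilon)$ --- and in the opposite direction: they fiber $B_n(x,\varepsilon)$ by smooth disks tangent to the $F$-cone, observe that $f^n$ multiplies the leafwise volume by at least $\exp(S_n\phi(x)-n\eta)$ while the images stay of size $\varepsilon$, and conclude that $\operatorname{Leb}(B_n(x,\varepsilon))\leq C\exp(-S_n\phi(x)+n\eta)$.

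The second gap is the passage from separated sets in $D$ to the entropy of the specific limit measure $\mu$. You assert that orbits starting in $D$ shadow the orbit of $x$, so that the separated orbits have empirical measures concentrating on $\mu$; this is false for the same reason as above (divergence along $F$, no contraction along $E$), and indeed points of $D$ may generate completely different limit measures. A Katok- or Misiurewicz-type count based on these separated points would at best bound the entropy of some invariant measure built from them, not of $\mu$. The arguments in \cite{CCE} and in the appendix of \cite{CYZ} avoid this by staying at the point $x$ whose empirical measures converge to $\mu$: the exponential upper bound on $\operatorname{Leb}$ of Bowen balls (equivalently, of atoms of $\bigvee_{i=0}^{n-1}f^{-i}\mathcal{P}$ for a partition with $\mu$-negligible boundary) is converted, Misiurewicz-style, into a lower bound for $H_\mu(\bigvee_{i=0}^{n-1}f^{-i}\mathcal{P})$, which also dispenses with ergodicity of $\mu$. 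A smaller point: the Hirsch--Pugh--Shub plaques do not form a foliation, so the Fubini reduction as you state it is not legitimate; one should instead use an auxiliary smooth foliation by disks tangent to the extended $F$-cone (this is how the leafwise estimates are organized in the cited proofs). In short, the correct mechanism is an upper bound on the Lebesgue measure of dynamical balls at $x$, not a lower bound on the volume growth of a fixed $F$-disk, and your route would need to be essentially inverted to close these gaps.
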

This theorem in this version has been proved in the appendix (A) of \cite{CYZ}.  It appeared firstly in the work of \cite{CCE}, Theorem 1. See also \cite{CaYa} Theorem 4.1.

It is also well-worth to mention a result of Sun and Tian where a Pesin entropy formula is obtained in $C^1-$setting (see also \cite{Ta} for the two dimensional case):
\begin{theorem}
Let $f: M \rightarrow M$ be a  $C^1$ diffeomorphism on a compact Riemannian manifold. Let $f$ preserve an invariant probability measure $\mu$ which is absolutely continuous with respect to Lebesgue measure. For $\mu$ a.e. $x \in M$ denote by $$\lambda_1(x) \geq \lambda_2(x) \geq \cdots \geq \lambda_{dim(M)}(x)$$
the Lyapunov exponents at $x$. If for $\mu$ a.e. $x \in M$ there is a dominated splitting $T_{orb(x)}M = E_{orb(x)} \oplus F_{orb(x)},$ then 
$$ h_{\mu} (f) \geq  \int \log |det Df|F(x)| d \mu(x).$$
\end{theorem}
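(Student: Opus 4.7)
My plan is to reduce the statement directly to the preceding Theorem \ref{generic2} from \cite{CYZ}, exploiting the absolute continuity $\mu \ll \mathrm{Leb}$ to transfer the Lebesgue-a.e. conclusion into a $\mu$-a.e. one, and then integrating using Jacobs' theorem on affinity of entropy.

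First I would produce the compact invariant set carrying a dominated splitting required to invoke Theorem \ref{generic2}. The hypothesis only provides the splitting $E_{\mathrm{orb}(x)} \oplus F_{\mathrm{orb}(x)}$ along orbits of $\mu$-typical points, but since domination is a closed condition, it extends continuously to a dominated splitting $E \oplus F$ on the compact invariant set $\Lambda := \supp(\mu)$. For any $f$-invariant probability measure, $\mu$-a.e. $x$ lies in $\supp(\mu)$, which is $f$-invariant, so $\omega(x) \subset \supp(\mu) = \Lambda$.

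Next, since $\mu \ll \mathrm{Leb}$, the Lebesgue-full set of $x$ at which the conclusion of Theorem \ref{generic2} holds is also $\mu$-full. Meanwhile, Birkhoff's ergodic theorem combined with the ergodic decomposition $\mu = \int \mu_x\, d\mu(x)$ gives $\mathcal{M}(x) = \{\mu_x\}$ for $\mu$-a.e. $x$, where $\mu_x$ denotes the ergodic component at $x$. Combining, for $\mu$-a.e. $x$ the unique limit measure $\mu_x$ satisfies
$$h_{\mu_x}(f) \geq \int \log |\det Df|F(y)|\, d\mu_x(y).$$
Integrating with respect to $\mu$ and invoking the Jacobs affinity formula $h_\mu(f) = \int h_{\mu_x}(f)\, d\mu(x)$ then yields
$$h_\mu(f) \geq \int\!\!\int \log |\det Df|F(y)|\, d\mu_x(y)\, d\mu(x) = \int \log |\det Df|F(x)|\, d\mu(x),$$
which is the desired inequality. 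The only nontrivial ingredient is the extension of the dominated splitting from $\mu$-typical orbits to $\supp(\mu)$, which is the standard observation that dominated splittings are closed under taking closures; beyond this, the argument is a direct transcription of Theorem \ref{generic2} through absolute continuity and affinity of entropy, with no need for a delicate $C^{1+\alpha}$-type absolute-continuity-of-holonomies argument.
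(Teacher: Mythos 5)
Your reduction strategy is viable, and it is worth noting that the survey itself contains no proof of this statement: it is quoted as a theorem of Sun and Tian, whose original argument is a direct Ma\~{n}\'{e}-style proof of the entropy inequality using absolute continuity of $\mu$ and the dominated splitting, not a passage through limit measures of Lebesgue-generic points. So your route through Theorem \ref{generic2} (transfer the Lebesgue-a.e. statement to a $\mu$-a.e. one, identify $\mathcal{M}(x)=\{\mu_x\}$ by Birkhoff, integrate with Jacobs' affinity) is a genuinely different, and in principle shorter, derivation given Theorem \ref{generic2} as a black box; those three steps are all correct as you state them.

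The genuine gap is the sentence ``since domination is a closed condition, it extends continuously to a dominated splitting $E\oplus F$ on $\Lambda:=\supp(\mu)$.'' The closure property you are invoking says: if an invariant set $\Gamma$ carries a splitting of \emph{constant index} which is dominated with \emph{uniform} constants on all of $\Gamma$, then it extends to $\overline{\Gamma}$. The hypothesis here only provides, for $\mu$-a.e.\ $x$, a dominated splitting along $orb(x)$; both the domination constants and the index $\dim E_{orb(x)}$ are a priori orbit-dependent, and the closure of a union of orbits whose constants are not uniformly bounded need not carry any dominated splitting. As written, your step therefore does not produce the compact invariant set with dominated splitting that Theorem \ref{generic2} requires. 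The repair is standard but must be carried out: stratify the full-measure invariant set of good points into countably many invariant measurable pieces $\Gamma_{k,m}$ on which the index equals $k$ and the constants are at most $m$ (measurability follows since domination of a fixed index with fixed constants is a countable family of conditions along the orbit, expressible through gaps of singular values of $Df^n$), set $\Lambda_{k,m}:=\overline{\Gamma_{k,m}}$, run your argument for the normalized restrictions $\mu_{k,m}$ of $\mu$ to $\Gamma_{k,m}$ (these are still invariant and absolutely continuous), and sum the resulting inequalities using affinity of entropy once more. One further point you should make explicit: the extended bundle $F$ on $\Lambda_{k,m}$ must agree $\mu_{k,m}$-a.e.\ with the hypothesized $F(x)$, which follows from uniqueness of dominated splittings of a given index over an invariant set; without this remark the integrand in the conclusion is not identified with the one in the statement. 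With these additions your reduction is complete.
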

\subsection{Robust transitivity mechanism} \label{3.4}
We would like to  mention a new mechanism (due to J. Yang \cite{Yang}) to obtain robust transitivity of diffeomorphism. As it uses $u-$entropy and is interesting by itself we recall some main points of this construction.

Let
 \begin{equation} G^u(f) := \{\mu \in \mathcal{M}_{inv}(f) : h_{\mu}(f, \mathcal{F}_f^u) \geq \int \log(\det(Df|E^u(x))) d\mu(x)\}
 \end{equation}
 and 
 \begin{equation}
 G^{cu}(f) := \{\mu \in \mathcal{M}_{inv}(f) : h_{\mu}(f) \geq \int \log(\det(Df|E^{cu}(x))) d\mu(x)\}
 \end{equation}

By Ledrappier \cite{Ledrappier84}, if $f$ is $C^{1+}$ we have $G^u(f)$ coincides with the set of $u-$Gibbs measures, i.e; invariant measures with absolutely continuous with respect to Lebesgue conditional measure along unstable foliation.

\begin{theorem}
let $f$ be a $C^{1+}$ volume preserving partially hyperbolic diffeomorphism, accessible and with one dimensional center bundle and non-vanishing center Lyapunov exponent. Then, $f$ is $C^1$ robustly transitive. That is, every $C^1$ close diffeomorphism to $f$ is topologically transitive.
\end{theorem}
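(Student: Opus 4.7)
The plan is to reduce $C^1$-robust transitivity of $f$ to the following $C^1$-robust feature: every $g$ in some $C^1$-neighborhood $\mathcal U$ of $f$ admits a \emph{unique} $u$-Gibbs state, and this state has full support. Granted this, uniqueness gives that the $u$-Gibbs state is physical (by the corollary to Theorem \ref{generic1}), and full support then provides a Lebesgue-positive set of points with dense forward orbit in $M$, so $g$ is topologically transitive.

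First I would establish the statement for $f$ itself. Since $f$ is volume preserving and $C^{1+\alpha}$, Lebesgue measure $m$ belongs to $G^u(f)$ and has full support. Accessibility, $\dim E^c=1$, and non-vanishing of the center Lyapunov exponent (making $m$ hyperbolic) yield ergodicity of $m$ via the Pugh--Shub / Burns--Wilkinson machinery; center bunching is automatic in the one-dim center case. A Hopf-style argument using absolute continuity of the stable foliation then forces any $u$-Gibbs state, being absolutely continuous along $\mathcal F^u$, to coincide with $m$, so $f$ has a unique $u$-Gibbs state.

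Next I would propagate this to $g\in\mathcal U$. Existence of a $u$-Gibbs state $\mu_g$ for any $g$ follows from compactness of $\mathcal M_{\mathrm{inv}}(g)$ plus upper semi-continuity of $\mu\mapsto h_\mu(g,\mathcal F^u_g)$ (Section \ref{3.2}). Given $g_n\to f$ in $C^1$ and $u$-Gibbs states $\mu_n$ for $g_n$, any weak-$*$ accumulation point $\mu$ of $\{\mu_n\}$ is $f$-invariant; using upper semi-continuity of $u$-entropy in the version where $g$ also varies (Yang's theorem, in its natural joint form), together with continuity of $(g,\mu)\mapsto \int \log \det(Dg|E^u_g)\,d\mu$, one shows $\mu\in G^u(f)$, hence $\mu=m$. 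By the Portmanteau inequality, $\liminf_n \mu_n(U)\geq m(U)>0$ for every non-empty open $U\subset M$, so $\mathrm{supp}(\mu_n)$ becomes $\varepsilon$-dense for every $\varepsilon>0$.

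The principal obstacle is to upgrade $\varepsilon$-density into $\mathrm{supp}(\mu_n)=M$ together with uniqueness of $\mu_n$; this is delicate because nearby $g_n$ need not preserve a smooth volume, so one cannot directly invoke the Hopf argument for $g_n$. Here I would appeal to robustness of accessibility in the one-dim center setting (Rodriguez Hertz--Rodriguez Hertz--Ures) and to upper semi-continuity of the center Lyapunov exponent along $u$-Gibbs states, which transfers hyperbolicity of $m$ to hyperbolicity of $\mu_n$ for large $n$. Accessibility plus hyperbolicity of $\mu_n$, together with the absolutely continuous disintegration of $\mu_n$ along $\mathcal F^u_{g_n}$, are precisely the ingredients for a Hopf-type argument that excludes proper closed $u$-saturated $g_n$-invariant subsets and establishes uniqueness of $\mu_n$. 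Once $\mathrm{supp}(\mu_n)=M$ and uniqueness hold for large $n$, transitivity of $g_n$ follows, contradicting any hypothetical failure of $C^1$-robust transitivity of $f$.
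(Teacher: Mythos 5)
There is a genuine gap, and it occurs at both ends of your reduction. First, the uniqueness of the $u$-Gibbs state for $f$ itself does not follow from the Hopf argument you invoke: after normalizing so that $\lambda^c(vol)>0$, an ergodic $u$-Gibbs measure $\mu$ with \emph{positive} center exponent has absolutely continuous conditionals only along the strong unstable leaves, not along its full Pesin unstable manifolds (which contain the center direction), so it need not be an SRB measure and nothing forces $\mu=vol$; ergodicity of volume only controls Lebesgue-typical points, which say nothing about such a $\mu$. Uniqueness of $u$-Gibbs states in this setting is precisely the kind of statement that is not available, and the paper's proof never uses or claims it. Instead it works with the smaller set $G(g)=G^u(g)\cap G^{cu}(g)$: the extra condition $h_\mu(g)\geq\int\log\det(Dg|E^{cu})\,d\mu$ is what pins down $G(f)=\{vol\}$, and upper semi-continuity of $g\mapsto G(g)$ then yields only a uniform lower bound $\lambda^c_\mu(g)>b>0$ for all $\mu\in G(g)$, not uniqueness or full support of any invariant measure of $g$.

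Second, your propagation step to $C^1$-perturbations $g_n$ cannot run as written: for merely $C^1$ diffeomorphisms the stable and unstable foliations are not absolutely continuous and Pesin theory is unavailable, and a $u$-Gibbs state in the $C^1$ category is \emph{defined} by the entropy identity, not by absolutely continuous disintegration along $\mathcal F^u_{g_n}$ (the equivalence in Theorem \ref{SaYtheorem} requires $r>1$), so the "Hopf-type argument excluding proper closed $u$-saturated sets" has no absolutely continuous holonomies or conditional densities to work with. The paper's proof bypasses uniqueness and support questions entirely: from $\lambda^c_\mu(g)>b$ for all $\mu\in G(g)$ and Theorems \ref{generic1} and \ref{generic2} it gets, for Lebesgue-a.e.\ $x$, positive-density times at which a $cu$-disk through $x$ in any open set $U$ grows to uniform size; saturating by local stable plaques produces a box $B_g$ with $vol(B_g)\geq\alpha_0$; then the decomposition lemma for measures in $G(f^{-1})$ (a definite proportion $p_0$ of $vol$ plus $u$-Gibbs components of $f^{-1}$) together with backward empirical measures shows that Lebesgue-a.e.\ point of any open set $V$ has some backward iterate in $B_g$, so $g^{-m}(V)\cap g^{n}(U)\neq\emptyset$. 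If you want to keep a route through physical measures you would have to replace your uniqueness claim by an argument at this level of robustness in the $C^1$ category, which is exactly what the $G^u\cap G^{cu}$ machinery is designed to provide.
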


\begin{remark}
Observe that in particular using perturbation given by Baraviera-Bonatti \cite{babo} for removing vanishing exponent, one can perturb time-one map  of a mixing volume preserving Anosov flow to obtain $f$ satisfying the above theorem. In this way, we obtain robustly transitive diffeomorphisms accumulating on the time-one map of the flow. 

\end{remark}
We give a sketch of the proof of the above theorem.
\begin{proof}
We known that $f$ is ergodic (\cite{HHU1}, \cite{BW1}). Recall that ergodicity of volume measure (or any fully supported measure) implies topological transitivity. However we do not need to prove ergodicity, as any volume preserving partially hyperbolic diffeomorphism with accessibility property is topologically transitive. Even more, almost every obrit is dense. 
Without loss of generality we assume that center exponent of $f$ with respect to volume is positive. 

We define a new subset of invariant measures $$G(g): = G^u(g) \bigcap  G^{cu}(g)$$ for $g$ in a small neighbourhood $\mathcal{U}$ of $f$ which plays a crucial toolbox role in the proof of topological transitivity. In fact the result gives slightly stronger than transitivity property. The main property used here is that, 
$$g \in \mathcal{U} \rightarrow G(g),$$
is an upper semi-continuous function. It needs a proof that we refer to the original paper \cite{Yang}. It is also proved that  for any $C^{1+}$ diffeomorphism in a $C^1$ neighborhood of $f$, $G(g)$ is the  unique physical measure. As $f$ is volume preserving, $G(f) = \{vol\}$. 

By semi-continuity we know that $G(g)$ is close to $\{vol\}.$ As center exponent of $f$ is positive, this will imply (not immediately) that for  some constant $b >0$, any $g \in C^1$ close enough to $f$ in $C^1$ topology and $\mu \in G(g)$
$$
\lambda^c_{\mu}(g) > b. 
$$
 Using theorems \ref{generic1} and \ref{generic2} (intersecting two full Lebesgue measure sets of the two theorems) we obtain that for a full Lebesgue measure subset $\Gamma_{g}$ and $x \in \Gamma_g:$
$$
 \liminf_{n \rightarrow \infty} \frac{1}{n} \sum_{i=0}^{n-1} \log \|Df|E^c(g^i(x))\| > b >0.
$$
This lower bound is used to show that for a positive lower density sequence of numbers $n \in \mathbb{N}$, for any $x \in \Gamma_g$ ``the disks based on $x$ expand". 

Let $U, V$ be two arbitrary open subsets of the ambient manifold. So, taking $x \in \Gamma_g \cap U$ and a disk $D \subset U$ tangent to center-unstable cone-field and centered at $x$ for infinitely many $n$ (positive density) $D_n:=g^n(D)$ contains a uniformly large disk. Saturating this large disk with local stable leaves we obtain a cylinder called $B_g$ with uniformly (in $g$) positive volume measure, i.e $vol(B_g) \geq \alpha_0$. 

Now the next step is to show that 
for almost every point  $y \in V,$ and for infinitely many $m \in \mathbb{N}, g^{-m}(y) \in B_g$. As the negative iterates expand the stable foliation we conclude that for a small stable plaque  $K^s \in V$ containing $y$ for large $m, g^{-m}(K^s) \cap D_n \neq \emptyset$. We have proved that some negative iterate of $V$ intersects some positive iterate of $U$ which yields topological transitivity of $g.$

A fundamental tricky lemma (to prove the above step) by Yang  is that  there exists $p_0 > 0$ such that any $\mu \in G(f^{-1})$ can be written as $p vol + (1-p)\mu^+$ where $p \gneqq p_0$ and all ergodic components of $\mu^+$ are $u-$Gibbs for $f^{-1}$ with positive center Lyapunov exponent. 
Cover the ambient manifold with finitely many balls $\{B_1, \cdots, B_k\}$ with fixed small radius such that for every $g,$ close enough to $f$ there exists $B_i$ such that $B_i \subset B_g.$ In particular for any $\mu \in G(f^{-1})$ we have $\mu(B_i) > p_0 \alpha_0$ where $\alpha_0= Min \{ vol(B_i), i=1, \cdots,k\}.$ By continuity argument the same holds for any $\mu \in G(g^{-1})$ for $C^1-$close diffeomorphisms $g.$ There is also a neighbourhood $\mathcal{V}$ of probabilities (not necessarily invariant for f) of $G(f^{-1})$ such that for any $\nu \in \mathcal{V}$ and any $1 \leq i \leq k$, 
$$ 
\nu(B_i) > p_0 \alpha_0.$$

Using Theorems \ref{generic1} and \ref{generic2} (for $g^{-1}$) we have a full measurable subset $\Gamma(g^{-1})$ such that for any $x \in \Gamma_{g^{-1}}$ accumulation points of empirical measures of $x$ are inside $G(g^{-1})$ and so there exists $n = n_x$ such that:
$$
 \frac{1}{n} \sum_{i=0}^{n-1} \delta_{(g^{-i}(x))} \in \mathcal{V}.
$$
Consequently, for every $B_i$ we have 
$$
\frac{1}{n} \sum_{i=0}^{n-1} \delta_{(g^{-i}(x))} (B_i) > p_0 \alpha_0
$$
So, taking  $x \in \Gamma(g^{-1}) \cap V$ the orbit of $x$ by $g^{-1}$ enters all $B_i$  and in particular enters $B_g$. 

\end{proof}

\section{Invariance Principle} \label{invprincple}
In this section we review a celebrated theorem of Furstenberg about the Lyapunov exponents of random product of matrices. This is a particular case of linear cocycles which itself can be seen as an special case of random walk on the group of diffeomorphisms of the projective space.

In the first subsection we give some details about the Furstenberg result and its generalization by Ledrappier \cite{L} and address the non-linear version of all these results by Avila-Viana \cite{AV}.  

In the second subsection, we prove a  result  joint with J. Yang \cite{TY} where we give a new criterion for the invariance principle using the notion of unstable entropy.  This criterion shed light on the proof of  previous results in invariance principle and has several applications.

\subsection{Linear cocycles}The simplest and best model to discuss the invariance principle is the setting of random product of matrices. This is a special case of linear cocycles. Let us state the Furstenberg theorem. Let $(M, \mathcal{B}, m)$ be a probability space and $f: M \rightarrow M$ a measure preserving map. Let $A: M \rightarrow SL(d)$ be a measurable function with values in the linear special group. The linear cocycle defined by $A$ is 
$$F: M \times \mathbb{R}^d \rightarrow M \times \mathbb{R}^d, \, \, F(x,v) = (f(x), A(x)v).$$

For any $n > 0$ we have
$$
F^n(x, v) = (f^n(x), A^n(x)(v)),
$$
where $A^n(x) := A(f^{n-1}(x)) \cdots A(f(x))A(x).$

If $f$ is invertible then $F$ is also invertible and $F^{-n}(x, v) = (f^{-n}(x), A^{-n}(x)v)$ where $A^{-n}(x):= A(f^{-n}(x))^{-1} \cdots A(f^{-1}(x))^{-1}.$

By Furstenberg-Kesten theorem, if $\log \|A^{\pm}(x)\| \in L^1(\mu)$ then the top Lyapunov exponents exist almost everywhere:
$$
\lambda^{+} = \lim_{n \rightarrow \infty} \frac{1}{n} \log(\|A^n(x)\|) \quad \text{and} \quad \lambda^{-} = \lim_{n \rightarrow \infty} \frac{1}{n} \log(\|A^n(x)^{-1}\|^{-1})
$$
By definition $\lambda^+ \geq \lambda^{-}$.
In the special case of random product of matrices, $(M, m) = (SL(d)^{\mathbb{Z}}, \nu^{\mathbb{Z}})$ where $\nu$ is a probability measure on the group of special matrices $SL(d),$  we define $A(x) =x_0$ where $x = (x_i)_{i \in \mathbb{Z}}.$ In the non-invertible case, one puts $M =  SL(d)^{\mathbb{N}}$. It is clear from the definition that 
$$
 F^n(x, v) = (f^n(x), x_{n-1}\cdots x_1 x_0 v).
$$ 
Observe that the second coordinate of the right-hand side of the above formula is the action of the product of $n$ random matrices chosen with the distribution $\nu$ on the vector $v.$

A fundamental project in the theory of random matrices is to understand whether $\lambda^+ >0$ or not.  Let us concentrate on the case of $SL(2)$ where $\lambda^{+} + \lambda^{-} =0$ and consequently either $\lambda^+>0>\lambda^{-}$ or $\lambda^+ = \lambda^- = 0.$   
The following theorem is due to Furstenberg:
\begin{theorem} \label{furst}
Consider the random product of matrices in $SL(2)$ as above and assume that
\begin{enumerate}
\item the support of $\nu$ is not contained in a compact subgroup of $SL(2)$ and 
\item there is no non-empty finite subset $L \subset \mathbb{P}\mathbb{R}^2$ invariant by every $A$ ($A(L) =L$) in the support of $\nu,$
\end{enumerate}
then $\lambda^+ > 0 > \lambda^{-}.$

\end{theorem}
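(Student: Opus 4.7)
The plan is to deduce Theorem~\ref{furst} from the non-linear invariance principle of Avila--Viana applied to the induced projective cocycle, following the strategy sketched in the introduction. Consider the bundle map
\[
\hat F : M \times \mathbb{P}\mathbb{R}^2 \to M \times \mathbb{P}\mathbb{R}^2, \qquad \hat F(x,[v]) = (f(x),[A(x)v]),
\]
over the Bernoulli shift $(M,f,m) = (SL(2)^{\mathbb{Z}}, f, \nu^{\mathbb{Z}})$. Because $\lambda^+ + \lambda^- = 0$ in $SL(2)$, the only alternative to $\lambda^+ > 0 > \lambda^-$ is $\lambda^+ = \lambda^- = 0$, and the goal is to derive from this a contradiction with hypotheses (1) and (2).

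Assume therefore $\lambda^+ = 0$. First I would pick any $\hat F$-invariant Borel probability $\mu$ on $M \times \mathbb{P}\mathbb{R}^2$ projecting to $m$; existence is guaranteed by compactness of the fiber. The fiber Lyapunov exponents of $\hat F$ are $\pm 2 \lambda^+ = 0$ (the derivative of the action of $A \in SL(2)$ on $\mathbb{P}\mathbb{R}^2$ at the eigendirection of its largest singular value has norm $\|A\|^{-2}$). Since all fiber exponents vanish, the Avila--Viana invariance principle applies in both directions and yields that the disintegration $\{\mu_x\}_{x \in M}$ of $\mu$ along fibers is invariant under both stable and unstable holonomies of the projective cocycle. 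For the Bernoulli shift, the stable (respectively unstable) leaf of $x = (x_n)$ consists of sequences agreeing with $x$ in all sufficiently far future (respectively past) coordinates, so $x \mapsto \mu_x$ is measurable with respect to both the $\sigma$-algebra generated by the past coordinates and the one generated by the future coordinates. By independence of past and future under $\nu^{\mathbb{Z}}$ (Kolmogorov's 0-1 law), $\mu_x \equiv \eta$ for $m$-a.e.\ $x$ and some fixed probability $\eta$ on $\mathbb{P}\mathbb{R}^2$.

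To close the argument, I would translate the $\hat F$-invariance of the product measure $m \times \eta$ into $(x_0)_*\eta = \eta$ for $m$-a.e.\ $x$, equivalently $A_*\eta = \eta$ for every $A \in \supp(\nu)$. At this point one invokes the classical Furstenberg dichotomy for probability measures on $\mathbb{P}\mathbb{R}^2$ stabilized by a subset $\Sigma \subset SL(2)$: either $\eta$ is supported on a finite $\Sigma$-invariant subset $L \subset \mathbb{P}\mathbb{R}^2$, contradicting hypothesis (2); or the stabilizer of $\eta$ in $SL(2)$ is compact, forcing $\supp(\nu)$ into a compact subgroup and contradicting hypothesis (1). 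Either way we reach a contradiction, so $\lambda^+ > 0$, whence $\lambda^- = -\lambda^+ < 0$.

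The main obstacle is the invariance principle step: one must exhibit the stable and unstable holonomies for the projective cocycle over the Bernoulli shift, verify the integrability and regularity hypotheses of the Avila--Viana theorem in this non-smooth base setting, and justify carefully the $0$-$1$ law collapse of $\mu_x$ to a single measure $\eta$. The Furstenberg dichotomy invoked at the end, while classical, is itself nontrivial and would be imported as a separate lemma, its proof relying on a case analysis of hyperbolic, parabolic, and elliptic elements in $\supp(\nu)$ acting on $\mathbb{P}\mathbb{R}^2$.
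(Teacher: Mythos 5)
Your proposal is correct and follows essentially the same route as the paper: reduce to the non-existence of a measure on $\mathbb{P}\mathbb{R}^2$ fixed by every matrix in $\supp(\nu)$ (atoms give a finite invariant set contradicting (2), a diffuse stabilized measure forces compactness contradicting (1)), and obtain such a measure from $\lambda^+=\lambda^-$ via the Avila--Viana invariance principle applied to the projectivized cocycle over the Bernoulli shift, where vanishing fiber exponents yield both stable and unstable invariance of the disintegration and hence, by the triviality of the intersection of the past and future $\sigma$-algebras together with the product structure of $m$, a constant family $\mu_x\equiv\eta$ with $A_*\eta=\eta$ for all $A\in\supp(\nu)$. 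This is exactly the argument given in the paper through Proposition \ref{invmeasure}.
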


To prove the above theorem we observe that the hypotheses of the theorem imply that there is no probability measure $\eta$ on $\mathbb{P}\mathbb{R}^2$ and invariant by all $A \in \supp(\nu).$ Then applying an ``invariance principle" (proposition \ref{invmeasure}) the proof is complete.
Indeed, if there exists an invariant measure $\eta$ either it has atoms or it is a diffuse measure (without atoms). If the measure is diffuse then an elementary linear algebra exercise shows for any $A \in \supp(\nu)$  $\|A\| = 1$  and consequently the support of $\nu$ is in a compact subset of $SL(2).$ In fact it is possible to show that the support is inside the rotation subgroup after a conjugacy.
If $\eta$ has an atom then the set of atoms (which is finite) is an invariant set by all $A \in \supp(\nu).$

 We may substitute the second condition with: There is no $L$ such that $L$ is invariant by every element $B$ in the smallest sub-group generated by $\supp(\nu).$

\subsubsection{Algebraic comment}
Observe that in the above theorem one can substitute the hypotheses equivalently with strong irreducibility and proximality condition on the semi-group generated by the $\supp(\nu)$.  We say that a matrix $g$ in $GL(\mathbb{R}^n)$ is proximal, if there exists a vector $v \in \mathbb{R}^n, g(v) = \lambda v$ and $\mathbb{R}^n = \mathbb{R}v \oplus W$ such that $g(W) \subseteq W$ and $r(g|W) < \lambda$ where $r(.)$ represent spectral radius. 
A semi-group is called proximal if it contains a proximal element. In fact under strong irreducibility condition, the proximality of semi-group and group is the same, as long as we are considering $\mathbb{R}$ as the field where the matrices are defined.

So, the proof of the Furstenberg theorem is reduced to an ``invariance principle":

\begin{proposition}  \label{invmeasure}
\label{furstinv} In the setting of Theorem \ref{furst}  if $\lambda^+ =  \lambda^{-}$ then there exists a probability measure $\eta$ invariant by all elements in $\supp(\nu).$
 \end{proposition}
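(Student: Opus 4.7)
The plan is to transfer the statement into an invariance property of a measure on the projective bundle and then exploit the product structure of the Bernoulli base $(M,m)=(SL(2)^{\mathbb Z},\nu^{\mathbb Z})$. First I would replace the linear cocycle $F$ by its projectivization $\hat F:M\times\mathbb P\mathbb R^2\to M\times\mathbb P\mathbb R^2$, $\hat F(x,[v])=(f(x),[A(x)v])$. Since the fiber $\mathbb P\mathbb R^2$ is compact, a Krylov--Bogolyubov / Markov--Kakutani argument applied to the convex compact set of probabilities on $M\times\mathbb P\mathbb R^2$ projecting to $m$ produces an $\hat F$-invariant probability $\hat m$ with $\pi_*\hat m=m$. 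Disintegrating, one obtains a measurable family $\{\hat m_x\}_{x\in M}$ of probabilities on the fibers.

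Next I would extract from $\lambda^+=\lambda^-$ the hypothesis of the invariance principle recalled earlier in the survey. Because every $A(x)\in SL(2)$, the two fiber Lyapunov exponents of the projective cocycle sum to zero, so the equality $\lambda^+=\lambda^-$ forces both to vanish. The non-linear invariance principle of Ledrappier (later Avila--Viana) then guarantees that the disintegration $x\mapsto\hat m_x$ is simultaneously invariant under the stable and unstable holonomies of the base shift $(f,m)$.

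Now comes the key observation, which is specific to the Bernoulli base. The stable class of $x$ in $M$ consists of sequences that eventually agree with $x$ in positive coordinates, while the unstable class consists of sequences eventually agreeing in negative coordinates. Hence stable-holonomy invariance forces $x\mapsto\hat m_x$ to be measurable with respect to the $\sigma$-algebra $\mathcal F^-$ generated by $\{x_i:i<0\}$, and unstable-holonomy invariance forces measurability with respect to the $\sigma$-algebra $\mathcal F^+$ generated by $\{x_i:i\geq 0\}$. Under the product measure $m=\nu^{\mathbb Z}$ the $\sigma$-algebras $\mathcal F^-$ and $\mathcal F^+$ are independent, so any function measurable for both is $m$-a.e. constant (for any $E\in\mathcal F^-\cap\mathcal F^+$ one has $m(E)=m(E)^2$). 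Therefore there exists a probability $\eta$ on $\mathbb P\mathbb R^2$ with $\hat m_x=\eta$ for $m$-a.e. $x$.

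Finally, $\hat F$-invariance of $\hat m$ gives $A(x)_*\hat m_x=\hat m_{f(x)}$ for $m$-a.e. $x$; combined with the constancy just obtained this reads $A(x)_*\eta=\eta$ for $m$-a.e. $x$. Since $A(x)=x_0$ and $m=\nu^{\mathbb Z}$, we get $B_*\eta=\eta$ for $\nu$-a.e. $B\in SL(2)$, and the weak-$*$ continuity of $B\mapsto B_*\eta$ (the action of $SL(2)$ on $\mathbb P\mathbb R^2$ is continuous) upgrades this to every $B\in\supp(\nu)$, as required. The main obstacle I anticipate is making rigorous use of the invariance principle over a symbolic rather than smooth base: the version quoted in the survey is stated for Anosov or partially hyperbolic diffeomorphisms of manifolds, and one has to either invoke the symbolic form (which is Ledrappier's original setting) or realize the Bernoulli shift as a factor/extension of a hyperbolic system to which the smooth statement applies; once this is done, the remaining steps (constancy from bi-measurability, continuous extension to $\supp(\nu)$) are routine.
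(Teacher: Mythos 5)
Your argument is correct and follows essentially the same route as the paper: projectivize the cocycle, take an invariant lift of $m$ by compactness, deduce vanishing fiber exponents from $\lambda^+=\lambda^-$ (together with $\det=1$), invoke the Ledrappier/Avila--Viana invariance principle to get measurability of $x\mapsto\hat m_x$ with respect to both the past and future coordinate $\sigma$-algebras, conclude almost everywhere constancy from the triviality (independence) of their intersection, and finish with $A(x)_*\eta=\eta$ for $\nu$-almost every matrix plus continuity of the action on measures to cover all of $\supp(\nu)$. Two harmless remarks: your labels are swapped (for a locally constant cocycle, invariance under the \emph{stable} holonomy means constancy on local stable sets, hence measurability with respect to the coordinates $i\ge 0$, while the unstable holonomy gives the coordinates $i<0$), and the obstacle you anticipate is not one---Ledrappier's theorem as quoted in the survey is stated for an abstract probability space with a decreasing generating sub-$\sigma$-algebra, and Avila--Viana's result applies over shifts with local product structure, which is exactly how the paper argues.
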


The above proposition is  corollary of the following result of Ledrappier.
\begin{theorem} \cite{L}
Let $(M, \mathcal{B}, m)$ be a probability space and $f$ is $m-$preserving. Let $A: M \rightarrow SL(d)$ be a measurable function. Suppose $\mathcal{B}_0$ is a generating decreasing ($f^{-1}(\mathcal{B}_0) \subseteq \mathcal{B}_0$) sub-$\sigma-$algebra. Let $\mu \in \mathcal{M}(F)$ which projects on $m.$  Suppose that $\lambda^+ = \lambda^{-}$. If $A$ is $\mathcal{B}_0$ measurable then
$$
x  \rightarrow \mu_{x} ; \, \, x \in M
$$ is also $\mathcal{B}_0$ measurable where $\{\mu_x\}$ is the disintegration of $\mu$ along the fibers.
\end{theorem}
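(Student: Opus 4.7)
The plan is to execute Ledrappier's invariance principle by first reducing $\mathcal{B}_0$-measurability of the fiber disintegration to vanishing of the fiber entropy of the projectivized cocycle, and then using martingale convergence along the decreasing sequence $\mathcal{B}_n := f^{-n}\mathcal{B}_0$ to conclude.

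First I would pass to the projective bundle. Any $F$-invariant probability with finite Lyapunov exponents lifts to a probability on $M \times \mathbb{P}^{d-1}$, so I view $\mu$ there and disintegrate $\mu = \int \mu_x\,dm(x)$, with the cocycle identity $A(x)_* \mu_x = \mu_{f(x)}$ holding $m$-a.e. Since $A$ is $\mathcal{B}_0$-measurable and $f^{-1}\mathcal{B}_0 \subseteq \mathcal{B}_0$, the lifted $\sigma$-algebra $\hat{\mathcal{B}}_0 := \mathcal{B}_0 \otimes \mathcal{B}_{\mathbb{P}^{d-1}}$ is $F^{-1}$-decreasing, and $F^{-n}\hat{\mathcal{B}}_0 = \mathcal{B}_n \otimes \mathcal{B}_{\mathbb{P}^{d-1}}$. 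The goal reduces to proving $\mu_x = \bar\mu_x$ for $m$-a.e.\ $x$, where $\bar\mu_x := E_m[\mu_\bullet\,|\,\mathcal{B}_0](x)$ is the conditional average of the fiber-measure map over the $\mathcal{B}_0$-atom of $x$. Note that each $A^n(x) = A(f^{n-1}x)\cdots A(x)$ is $\mathcal{B}_0$-measurable because every factor $A \circ f^k$ is $\mathcal{B}_k$-measurable with $\mathcal{B}_k \subseteq \mathcal{B}_0$.

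Next I would invoke the Lyapunov hypothesis. In $SL(d)$, $\lambda^+ = \lambda^-$ forces all Lyapunov exponents of $A$ to coincide (hence to vanish), so every projective Lyapunov exponent $\lambda_i - \lambda_j$ of the induced cocycle on $\mathbb{P}^{d-1}$ equals zero. By a Margulis--Ruelle-type inequality for measurable cocycles, the fiber entropy $h_\mu(F\,|\,\pi^{-1}\mathcal{B})$ is bounded by the sum of positive projective Lyapunov exponents, which now vanishes, so the fiber dynamics is essentially deterministic relative to the base. To translate this into $\mathcal{B}_0$-measurability of $x \mapsto \mu_x$, introduce the reverse-martingale conditional averages $\bar\mu_x^{(n)} := E_m[\mu_\bullet\,|\,\mathcal{B}_n](x)$ converging $m$-a.s. to $\bar\mu_x^{(\infty)} := E_m[\mu_\bullet\,|\,\bigcap_n \mathcal{B}_n]$. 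Pushing forward the cocycle identity and taking conditional expectation yields a propagation identity relating $A^n(x)_*\bar\mu_x^{(n)}$ and $\bar\mu_{f^n(x)}^{(0)}$ up to a Jensen convexity defect whose magnitude is controlled by the iterated projective Lyapunov growth; in the regime $\lambda^+ = \lambda^-$ this defect vanishes, and combined with the martingale convergence it forces $\mu_x = \bar\mu_x^{(0)}$ $m$-a.e.

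The main obstacle lies in this last step: quantifying precisely how the relative entropy $H(\mu_x\,|\,\bar\mu_x^{(n)})$ drifts under iteration, and showing that this drift is controlled by $\lambda^+ - \lambda^-$. The asymmetry between the $\mathcal{B}_0$-measurability of $A^n$ and the $\mathcal{B}_n$-measurability of $\bar\mu^{(n)}$ is precisely the source of the entropy drop in the hyperbolic regime; the algebraic heart of Ledrappier's argument is to show that this asymmetry collapses to equality when all Lyapunov exponents coincide, thereby propagating measurability from $A$ to $x \mapsto \mu_x$.
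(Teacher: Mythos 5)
Your proposal is a strategy outline rather than a proof: the step you yourself flag as ``the main obstacle'' --- controlling the drift of the relative entropy $H(\mu_x \mid \bar\mu_x^{(n)})$ under iteration by $\lambda^+-\lambda^-$ and showing the Jensen defect collapses when $\lambda^+=\lambda^-$ --- is precisely the content of Ledrappier's theorem, and it is nowhere carried out. In particular, the ``Margulis--Ruelle-type inequality for measurable cocycles'' cannot be invoked as a black box: over a purely measurable base, with $A$ only measurable and $\mathcal{B}_0$ only assumed decreasing and generating, the bound of the fiber (Kullback--Leibler) information by the projective Lyapunov exponents is exactly Ledrappier's key proposition in \cite{L} (and its nonlinear extension in \cite{AV}); citing it amounts to assuming the theorem. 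Everything you do carry out --- all exponents vanish in $SL(d)$ when $\lambda^+=\lambda^-$, $A^n$ is $\mathcal{B}_0$-measurable, the reduction of $\mathcal{B}_0$-measurability to $\mu_x=E_m[\mu_\bullet\mid\mathcal{B}_0](x)$ a.e., reverse-martingale convergence along $\mathcal{B}_n=f^{-n}\mathcal{B}_0$ --- is the routine part. There is also a technical slip in the setup: $F^{-n}\hat{\mathcal{B}}_0$ is not $\mathcal{B}_n\otimes\mathcal{B}_{\mathbb{P}^{d-1}}$, because the fiber coordinate is twisted by $A^n$, which is $\mathcal{B}_0$- but not $\mathcal{B}_n$-measurable; your ``propagation identity'' would have to be reformulated before the martingale argument can even be stated, and the conversion of ``zero relative entropy'' into measurability of $x\mapsto\mu_x$ (via an equality case and the generating hypothesis) is also left implicit.

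For comparison, the survey does not reprove Ledrappier's statement either: it verifies that $\lambda^+=\lambda^-$ forces the fiber Lyapunov exponents of the projectivized cocycle to vanish and then quotes the nonlinear invariance principle of Avila--Viana. The only complete mechanism it supplies is the entropy criterion of Theorem \ref{criterion} (and the analogous convexity argument in Proposition \ref{prop-Ledrappier}): there the whole work is an explicit disintegration of the two entropies as double integrals, Jensen's inequality for $\phi(a)=-a\log a$, and the equality case of Jensen combined with the generating property of the partition, which is what converts ``no entropy drop'' into invariance (equivalently, measurability) of the conditional measures. If you want to complete your route, an argument of exactly that type --- an explicit entropy or information inequality with an identified equality case --- must replace the black box; as written, the heart of the theorem is missing.
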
 

 However, the above theorem is an special case of a more general result (non-linear invariance principle) by Avila-Viana \cite{AV}. Here we follow this non-linear approach.

For this purpose we consider the projectivization of the cocycle $\mathbb{P}F : M \times \mathbb{P}\mathbb{R}^2 \rightarrow  M \times \mathbb{P}\mathbb{R}^2$ which is defined naturally as $\mathbb{P}F  (x, [v]) = (f(x), [A(x)v]).$ Observe that $M \times \mathbb{P}\mathbb{R}^2 $ is fibered over $M$ by $\pi: M \times \mathbb{P}\mathbb{R}^2 \rightarrow M$ and fibers are compact one dimensional manifold $\mathbb{P}\mathbb{R}^2$. Moreover, $\mathbb{P}F$ sends fibers to fibers. This is a non-linear cocycle as in \cite{AV}.     

By $\mathcal{M}_{m}(\mathbb{P}F)$ we denote the set of probability measures on $M \times \mathbb{P}\mathbb{R}^2$ which are $\mathbb{P}F$ invariant and project on $m$ by $\pi.$
 By Oseledets' theorem for $m-$almost every $x \in M$ and every $v \in \mathbb{R}^2 \setminus \{0\} $, $\lim_{n \rightarrow \infty} \frac{1}{n} \log\|A^{(n)}(x)v\|$ exists. 

As an exercise of calculus one obtains the derivative of the projectivized cocycle along fibers:
$$
D \mathbb{P}F^n_{(x, v)}(w) = \frac{Pr_{(A^n(x)(v))^\perp}(A^n(x)(w))}{\|A^n(x)(v)\|}
$$
where $Pr_{b^{\perp}}(a) = a - \frac{a.b}{\|b\|^2}b$ is the projection of $a$ on the orthogonal of $b.$ 

Indeed, if $A: \mathbb{R}^n \rightarrow \mathbb{R}^n$ and $\mathbb{P}A : \mathbb{P}\mathbb{R}^n \rightarrow  \mathbb{P}\mathbb{R}^n$ its projectivization, then $\mathbb{P}A (v) = \frac{Av}{\|Av\|}$ and so $Av = \|Av\| \mathbb{P}A(v)$. Taking derivative from both sides and applying on $w \in T_v \mathbb{P}\mathbb{R}^n$ we have $Aw = D_v n (w) . \mathbb{P}F(v) + \|Av\| D\mathbb{P}F_v(w)$ where $n(v):= \|A(v)\|.$ We have $n^2(v) = <Av, Av> $ and so $D_{v}n(w) = \frac{<Av, Aw>}{\|Av\|}$ and putting these together we obtain
$$ D_v\mathbb{P}A (w) = \frac{Aw - \frac{<Av, Aw>}{\|Av\|^2 }.Av}{\|Av\|} = \frac{Pr_{(Av)^{\perp}}Aw}{\|Av\|}$$

For the cocycle we apply this argument for $A^n.$ As a corollary we have:
$$
\|D \mathbb{P}F^n_{(x, [v])}(w)\| \leq \frac{\|A^n(x)w\|}{\|A^n(x)v\|}.
$$
Clearly here by $D \mathbb{P}F^n_{(x, [v])}$ we mean the derivative of $\mathbb{P}F^n$ along the smooth direction, which is the fiber.
Repeating the above argument changing $n$ to $-n$ it comes out that if $\lambda^+=\lambda^{-} $ then for $m-$almost every $x$ we have 
$$\lim_{n \rightarrow \infty} \frac{1}{n} \log \|D \mathbb{P}F^n_{(x, v)}(w)\| = 0.$$ In other words for {\it every} measure $\mu \in \mathcal{M}_{m}(\mathbb{P}F)$ the Lyapunov exponent along the fiber is vanishing almost everywhere.

Let $\mathcal{B}_0$ be the $\sigma-$algebra generated by all local stable sets in $SL(2)^{\mathbb{Z}},$ i.e $W^s_{loc}(x) = \{y | y_i = x_i, i \geq 0\}$ and their iterates $f^{-n} (W_{loc}(x)), n\geq 0$. As the coycle is locally constant we conclude that $x \rightarrow A(x)$ is 
$\mathcal{B}_0$ measurable. $\mathcal{B}_0$ is a generating $\sigma-$algebra. As a corollary of Avila-Viana non-linear invariance principle (Theorem B in \cite{AV}), for a $\mathbb{P}F-$ invariant measure $\mu \in \mathcal{M}_{m}(\mathbb{P}F)$ with $\lambda^c$ being the Lyapunov exponent along the fibers we have the following result: Let   $x \rightarrow \mu_x$ be a system of disintegration of $\mu$ along fibers, then:
\begin{itemize}
\item  If $\lambda^c \geq 0$ then $x \rightarrow \mu_x$ is $\mathcal{B}_0-$measurable.
\item If $\lambda^c \leq 0$ then $x \rightarrow \mu_x$  is measurable with respect to unstable subsets. (just considering $F^{-1}$ instead of $F$.)
\item If $\lambda^c = 0$ then $x \rightarrow \mu_x$ is constant $m-$almost everywhere on $M.$
\end{itemize}
Observe that the third conclusion comes from the first two ones and the fact that the intersection of stable and unstable $\sigma-$algebras is the trivial $\sigma-$algebra $\{\emptyset, \Sigma\}$ and measurability with respect to trivial $\sigma-$algebra is exactly the conclusion of the this item. As $m$ has a local product structure, then a  Hopf type argument shows that one can extend continuously $x \rightarrow \mu_x$ for all $x \in M$ as a constant function $\mu_{x_0}$. 

Now, as $\mu$ is $\mathbb{P}F-$invariant and conditional measures are defined uniquely we conclude that $A(x)_* \mu_x = \mu_{f(x)}$ for $m-$almost every $x.$ This yields that $\nu-$ almost every matrix preserves $\mu_{x_0}$ and consequently every matrix in the $\supp(\nu)$ preserves the same measure. The proof of proposition (\ref{furstinv}) is complete.

In the proof of the above results (invariance principle of Ledrappier and Avila-Viana), a notion of entropy called Kullback-Leibel information is hidden. In fact the vanishing exponent implies that some ``entropy" is vanishing. In the general setting of linear cocycles Ledrappier proved an upper bound for the Kullback-Leibel information in terms of Lyapunov exponents along fibers of the projectivized cocycle (Proposition 4 in \cite{L}). A similar bound holds in the non-linear case, as proved in a non-trivial generalization of Ledrappier result by Avila-Viana. See also Crauel \cite{crauel}.

\subsection{A general entropy criterion to obtain invariance principle} \label{general}
 Let $(M, \mathcal{B}, m) $ be Borel standard space and $\sigma$ a measurable partition that satisfies  conditions (b) and (d) in the subsection \ref{entropyalong}.
 Usually, $\sigma$ is  subordinated to some unstable lamination, coming from Pesin theory or uniformly exanding foliations.

Now let $F: \mathcal{E}=M \times N \rightarrow M \times N$ be a measurable fiber bundle map with fibers modeled by $N$  a Borel standard space. 
Observe that when $M$ and $N$ are Borel standard spaces then, $\mathcal{E} = \bigcup_{n \in N} M \times \{n\}$ and $\mathcal{E} = \bigcup_{m \in M} \{m\} \times N$, respectively horizontal and vertical  partitions of $\mathcal{E}$  are measurable.

 We assume that $F$  covers $M$ and the following diagram commutes:
$$\begin{array}{ccc}
 \mathcal{E}& \xrightarrow{F} & \mathcal{E}\\
\downarrow{\pi}  & & \downarrow{\pi} \\
M & \xrightarrow{f} & M
\end{array}$$

Define $\xi^{cu}:= \pi^{-1}(\sigma).$ 
As $F$ is fiber preserving we conclude that $\xi^{cu}$ is a measurable partition of $\mathcal{E}$ with respect to any measure $\mu$ which projects on $m,$ i.e, $\pi_* \mu = m.$ 

 By definition all elements of this partition are saturated by fibers and satisfy $F^{-1}(\xi^{cu}(x)) \subset \xi^{cu}(F^{-1}(x)).$

Now suppose that there exists another increasing measurable partition $\xi^u$ for $\mathcal{E}$ ($F \xi^u < \xi^u$) such that 
\begin{enumerate}
\item $\pi^{-1}(\sigma)= \xi^{cu} < \xi^u$
\item   $\pi(\xi^u(x))) = \sigma(\pi(x))).$
\\
In general $\xi^u$ produces a holonomy map $h^u$ between two fiber in the same atom of $\xi^{cu}.$
For simplicity we assume that:

\item  $h^u$ is injective and surjective between two fibers in the same atom of $\xi^{cu}.$

\end{enumerate} 
In what follows, let $\mu$ be an $F-$invariant measure where $\pi_* \mu = m.$ We fix some notations about the conditional measures of $\mu$ along different measurable partitions introduced above.
\begin{itemize}
\item $\mu^u$ denotes the conditional measure of $\mu$ along the measurable partition $\xi^u.$
\item $\mu^{cu}$ represents conditional measure of $\mu$ along the measurable partition $\xi^{cu}$ and $ ( \mathcal{E}/\xi^{cu}, \tilde{\mu})$ denotes the corresponding quotient space of $\mathcal{E}$ by the atoms of the partition $\xi^{cu}$. So, $\mu = \displaystyle \int \mu^{cu} d \tilde{\mu}$ and by uniqueness of disintegration in Rokhlin theorem,  $\mu^{cu} = \displaystyle \int \mu^u d\mu^{cu}.$ Observe that in the last formula we used the disintegration formula without quotient measure.
\item  $(M/\sigma, \tilde{m}) $ is the quotient space of $M$ into partition $\sigma$ with probability quotient measure $\tilde{m}$, i.e $m= \displaystyle \int m^u d \tilde{m}.$ 
\item Observe that $\pi$ induces a natural isomorphism between $ ( \mathcal{E}/\xi^{cu}, \tilde{\mu})$  and $(M/\sigma, \tilde{m})$. In what follows, in order to reduce notations, we do not distinguish between these two spaces, always having in mind their isomorphism. 
  
\end{itemize}
\subsubsection{Holonomy invariance}

Let $\mu$ be an $F$ invariant measure such that $\pi_*\mu = m$ as in  \ref{general}. We define the following holonomy invariance notions inside each atom of $\xi^{cu}:$
Given two points $x, y \in \mathcal{E}$ in the same atom of $\xi^{cu},$ let $ x = (m_x, n_x), y = (m_y, n_y )$. We define the center holonomy $H^c_{x,y}: \xi^u(x) \rightarrow \xi^u(y)$ by $H^c_{x,y} (m, n_x) := (m, n_y).$
\\
Similarly one defines another holonomy $H_{x,y}^u$ between fibers by $H^u_{x,y} (m_x, n) = (m_y, n).$

\begin{definition}
Let $\mu$ be an $F-$invariant measure, then $\mu$ is called invariant by the holonomy defined by fibers, if there exists a $\mu-$full measurable subset such that for any two points $x, y$ in it belonging to the same element of $\xi^{cu}$ we have $ \mu_y^u = (H_{x,y}^c)_* \mu_x^u.$ This is equivalent to say that for $\mu$ almost every $x,$
$\pi_* \mu_{x}^u = m_{\pi(x)}^u.$ 
\end{definition}
Similarly we define the notion of $u-$invariance:
\begin{definition}
  Let $\mu$ be an $F-$invariant measure, then $\mu$ is called $u-$invariant, if there exists a $\mu-$full measurable subset such that for any two points $x, y$ in it and belonging to the same atom of $\xi^{cu}$ we have $(H^u_{x,y})_* \mu^c_x = \mu^c_y.$ 
\end{definition}  
  
   By Lemma \ref{product}  we  have the equivalence of $u-$invariance and fiber invariance.

\begin{lemma} \label{quotient}
$\pi_* (\mu_{x}^{cu}) = m^u_{\pi(x)}$ for $\mu$ almost every $x \in \mathcal{E}.$
\end{lemma}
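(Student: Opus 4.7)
The plan is to invoke uniqueness of disintegration (Rokhlin's theorem) after showing that the family $\{\pi_*(\mu^{cu}_x)\}_{x \in \mathcal{E}}$ is a system of conditional measures for $m$ with respect to the partition $\sigma$. Since $\xi^{cu} = \pi^{-1}(\sigma)$, the projection $\pi$ induces a measure-theoretic isomorphism between the quotient spaces $(\mathcal{E}/\xi^{cu}, \tilde{\mu})$ and $(M/\sigma, \tilde{m})$, as already noted in the setup; under this identification the two quotient measures coincide, so it is enough to verify the three defining axioms of a conditional disintegration on the $M$ side.

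First I would check that $\pi_*(\mu^{cu}_x)$ is supported on the correct atom. Because $\xi^{cu} = \pi^{-1}(\sigma)$, the atom $\xi^{cu}(x)$ equals $\pi^{-1}(\sigma(\pi(x)))$; since $\mu^{cu}_x$ gives full mass to $\xi^{cu}(x)$, its pushforward gives full mass to $\sigma(\pi(x))$. Second, measurability of $y \mapsto \int \phi \, d(\pi_*\mu^{cu}_{\pi^{-1}(y)})$ for $\phi \in C^0(M)$ follows from the measurability of $x \mapsto \int \phi \circ \pi \, d\mu^{cu}_x$, which is built into the Rokhlin disintegration of $\mu$ along $\xi^{cu}$.

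Third, and most substantively, I would verify the integration identity. For any $\phi \in C^0(M)$,
\begin{equation*}
\int_M \phi \, dm = \int_{\mathcal{E}} \phi \circ \pi \, d\mu = \int_{\mathcal{E}/\xi^{cu}} \left( \int_{\xi^{cu}(x)} \phi \circ \pi \, d\mu^{cu}_x \right) d\tilde{\mu}(x) = \int_{M/\sigma} \left( \int_{\sigma(y)} \phi \, d(\pi_*\mu^{cu}_{\pi^{-1}(y)}) \right) d\tilde{m}(y),
\end{equation*}
using $\pi_*\mu = m$ in the first step, Definition \ref{definition:conditionalmeasure}(iii) applied to $\mu$ and $\xi^{cu}$ in the second, and the isomorphism $\pi_*\tilde{\mu} = \tilde{m}$ together with the change-of-variables for pushforwards in the third. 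This shows $\{\pi_*(\mu^{cu}_x)\}$ is a valid disintegration of $m$ along $\sigma$, and Rokhlin's theorem \ref{teo:rokhlin} then forces $\pi_*(\mu^{cu}_x) = m^u_{\pi(x)}$ for $\mu$-almost every $x$.

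The only delicate point is confirming that the quotient map $\mathcal{E}/\xi^{cu} \to M/\sigma$ induced by $\pi$ really is a measure-space isomorphism carrying $\tilde{\mu}$ to $\tilde{m}$. This is a consequence of the measurability of the partitions and the identity $\xi^{cu} = \pi^{-1}(\sigma)$: two points of $\mathcal{E}$ project to the same $\sigma$-atom iff they lie in the same $\xi^{cu}$-atom, so the induced map is a measurable bijection (mod null sets) and $\pi_*\tilde{\mu} = \widetilde{\pi_*\mu} = \tilde{m}$. Once this identification is in place, the rest of the proof is essentially bookkeeping with Rokhlin disintegration.
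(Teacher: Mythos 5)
Your proposal is correct and follows essentially the same route as the paper: identify the quotient spaces $(\mathcal{E}/\xi^{cu},\tilde\mu)$ and $(M/\sigma,\tilde m)$ via $\pi$, push the disintegration of $\mu$ along $\xi^{cu}$ forward through $\pi_*\mu=m$ to exhibit $\{\pi_*\mu^{cu}_x\}$ as a disintegration of $m$ over $\sigma$, and conclude by essential uniqueness in Rokhlin's theorem. Your version merely spells out the verification of the conditional-measure axioms (support on the atom, measurability, the integral identity) that the paper compresses into a single chain of equalities.
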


\begin{proof}

By definition $\mu = \displaystyle{\int} \mu^{cu} d \tilde{\mu}$ where $\tilde{\mu}$ is the probability on the quotient $\mathcal{E}/ \xi^{cu}$ which is identified with $M/\sigma.$ We also have
$m = \displaystyle{\int} m^u d \tilde{m}$ where the quotient measure $\tilde{m}$ is defined on  $M/\sigma .$  As we wrote before we may consider (by an abuse of notation) $\tilde{m} = \tilde{\mu}.$ Indeed taking any measurable subset $S \subset  \mathcal{E}/\xi^{cu}$ we have:
$$
 \tilde{\mu} (S) = \mu( \bigcup_{t \in S} \xi^{cu}(t))
$$
and
$$
\tilde{m} (S) = m(\bigcup_{t \in S} \sigma(t)) = \mu( \pi^{-1} (\bigcup_{t \in S} \sigma(t)) ) = \mu( \bigcup_{t \in S} \xi^{cu}(t)).
$$

So, by definition
\begin{align*}
\int_{M/\sigma } m^u d \tilde{m} = m =& \pi_*{\mu}
= \pi_* (\displaystyle{\int}_{\mathcal{E}/ \xi^{cu}}\mu^{cu} d \tilde{\mu} ) \\=& \int_{\mathcal{E}/ \xi^{cu}} \pi_{*} \mu^{cu}  d \tilde{\mu} = \int_{M/\sigma } \pi_{*} \mu^{cu}  d \tilde{m}
\end{align*}

By essential uniqueness of disintegration we conclude the proof of lemma.
\end{proof}

\begin{theorem}\label{criterion} \cite{TY} (Entropy criterion)
 $h_{\mu}(F, \xi^u) \leq h_{m}(f, \sigma)$ and the equality holds if and only if the disintegration of $\mu$ along the fibers is invariant under the holonomy defined by $\xi^{u}.$
\end{theorem}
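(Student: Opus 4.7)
My proof plan starts with the partition identity $F^{-1}\xi^u = \xi^u \vee F^{-1}\xi^{cu}$. The inclusion $\geq$ is automatic from $F^{-1}\xi^u \geq \xi^u$ (increasingness) and $F^{-1}\xi^u \geq F^{-1}\xi^{cu}$ (applying $F^{-1}$ to $\xi^u\geq\xi^{cu}$). For the reverse inclusion, I would rewrite increasingness as $F(\xi^u(x)) \supseteq \xi^u(F(x))$, combine this with $F(\xi^u(x)) \subset \mathcal{F}^u(F(x))$ and the bijectivity of $\pi$ on each $\xi^u$-atom onto the corresponding $\sigma$-atom, and conclude that
\[
F(\xi^u(x)) \cap \xi^{cu}(F(x)) = \xi^u(F(x)),
\]
because each side is the unique $\xi^u$-plaque inside $\mathcal{F}^u(F(x))$ whose $\pi$-image is $\sigma(\pi(F(x)))$. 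Pulling back by $F$ gives the identity, whence
\[
h_\mu(F,\xi^u) \;=\; H_\mu(F^{-1}\xi^u \mid \xi^u) \;=\; H_\mu(F^{-1}\xi^{cu} \mid \xi^u).
\]

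The next step is to rewrite both entropies as integrals of local (fiberwise) entropies. Define $\nu_x := (\pi|_{\xi^u(x)})_{*}\,\mu^u_x$, a probability measure on $\sigma(\pi(x))$. Bijectivity of $\pi|_{\xi^u(x)}$ gives $\mu^u_x(F^{-1}\xi^{cu}(x)) = \nu_x(f^{-1}\sigma(\pi(x)))$. Disintegrating $\mu$ through $\xi^{cu}$ and then each $\mu^{cu}_{x_0}$ through $\xi^u$ (with conditionals $\mu^u_\omega$ and quotient measure $d\mu^{cu}_{x_0}(\omega)$), the integrand collapses to the local entropy of $\nu_\omega$:
\[
h_\mu(F,\xi^u) = \int\!\int H_{\nu_\omega}\bigl(f^{-1}\sigma\restriction_{\sigma(\pi(x_0))}\bigr)\, d\mu^{cu}_{x_0}(\omega)\, d\tilde{\mu}(x_0),
\]
while the identity $H_\mu(\pi^{-1}\alpha\mid\pi^{-1}\beta) = H_m(\alpha\mid\beta)$ together with disintegration through $\sigma$ gives
\[
h_m(f,\sigma) = \int H_{m^\sigma_{\pi(x_0)}}\bigl(f^{-1}\sigma\restriction_{\sigma(\pi(x_0))}\bigr)\, d\tilde{\mu}(x_0).
\]

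The inequality then follows from concavity. By Lemma \ref{quotient} combined with the iterated disintegration, $m^\sigma_{\pi(x_0)} = \int \nu_\omega\, d\mu^{cu}_{x_0}(\omega)$; concavity of $\eta \mapsto H_\eta(\alpha)$ for the countable partition $\alpha = f^{-1}\sigma\restriction_{\sigma(\pi(x_0))}$ yields $H_{m^\sigma_{\pi(x_0)}}(\alpha) \ge \int H_{\nu_\omega}(\alpha)\, d\mu^{cu}_{x_0}(\omega)$, and integrating gives $h_\mu(F,\xi^u) \le h_m(f,\sigma)$. For the equality case, strict concavity of $-t\log t$ forces $\nu_\omega(A) = m^\sigma_{\pi(x_0)}(A)$ for every atom $A$ of $f^{-1}\sigma\restriction_{\sigma(\pi(x_0))}$ and $\mu^{cu}_{x_0}$-a.e.\ $\omega$; applying the same argument to $F^n$ and $f^n$ propagates this agreement to every refinement $f^{-n}\sigma\restriction_{\sigma(\pi(x_0))}$, and the generating property of $\sigma$ then forces $\nu_\omega = m^\sigma_{\pi(x_0)}$ almost everywhere. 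By Lemma \ref{product}, the a.e.\ identity $\pi_{*}\mu^u_x = m^\sigma_{\pi(x)}$ is precisely the invariance of the fiber disintegration under the holonomy defined by $\xi^u$. The main delicacy I expect is extracting the partition identity $F^{-1}\xi^u = \xi^u \vee F^{-1}\xi^{cu}$ cleanly from the abstract assumptions, and making sure every partition in play is countable with finite entropy so that strict concavity can be invoked unambiguously.
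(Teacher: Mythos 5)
Your proposal is correct and takes essentially the same route as the paper's own proof: disintegrate over $\xi^{cu}$, use Lemma \ref{quotient} to identify $\pi_*\mu^{cu}$ with $m^u$, compare the fiberwise (local) entropies via Jensen's inequality for the strictly concave function $-t\log t$, and in the equality case iterate the argument with $F^n$, invoke the generating property of the partition, and conclude holonomy invariance via Lemma \ref{product}. The only presentational difference is that you isolate the identity $F^{-1}\xi^u=\xi^u\vee F^{-1}\xi^{cu}$ explicitly, which the paper uses implicitly when it collapses the double integral over the $\xi^u$-conditionals.
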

\begin{figure}
\includegraphics[scale=0.5]{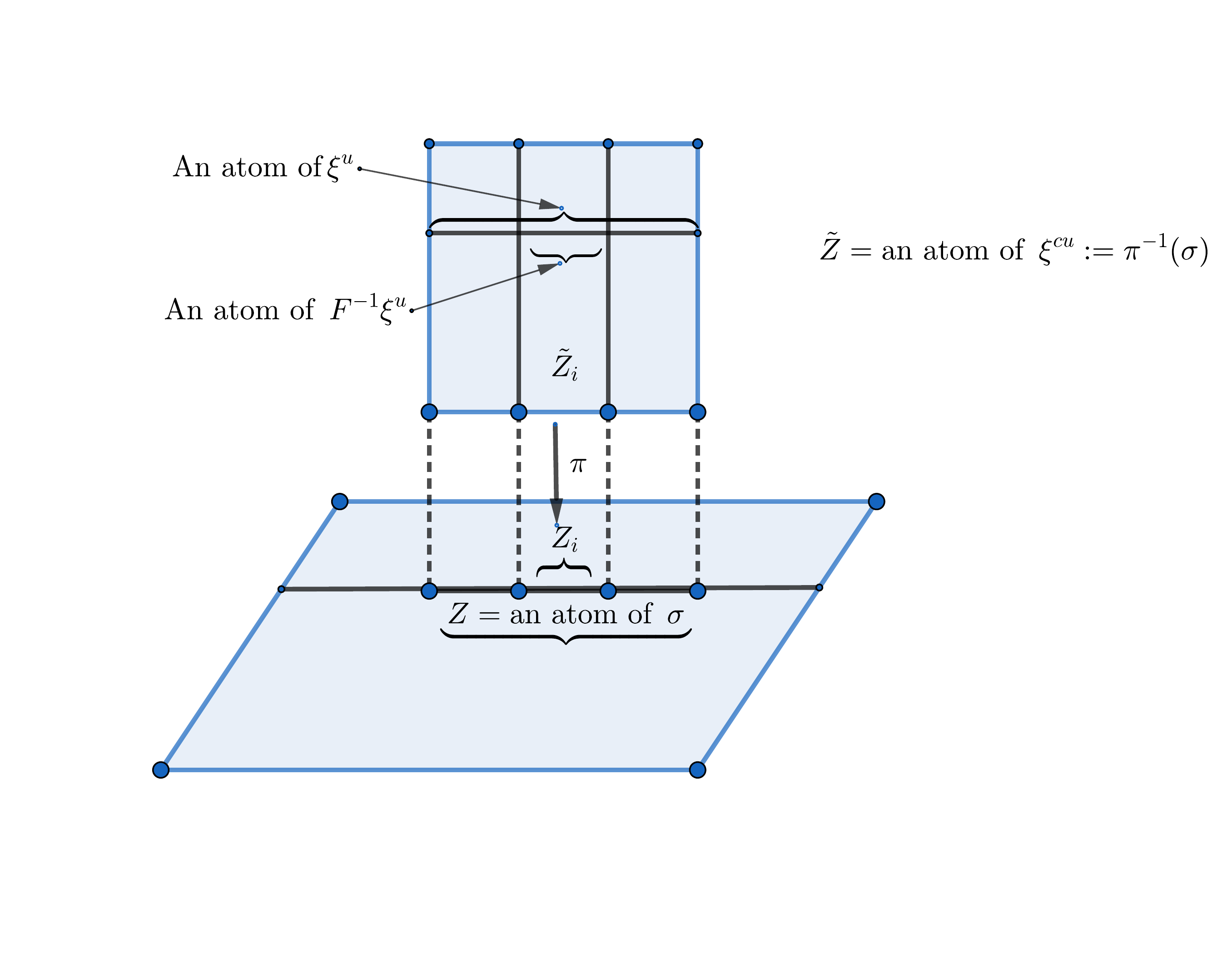}
\centering
\caption{}
\end{figure}

\begin{proof}
By definition 

\begin{equation}
\begin{split}
h_{\mu}(F, \xi^u)  = H_{\mu}(F^{-1}\xi^u | \xi^u) &= \displaystyle \int_\mathcal{E} \log  
\mu^u_z ( (F^{-1}\xi^u)(z)) d \mu(z) \\
&=\displaystyle \int_{\mathcal{E}/\xi^{cu}} \int_{\tilde{Z}} -\log  
\mu^u_t ( F^{-1}\xi^u(t)) d \mu_{\tilde{Z}}^{cu}(t) d\tilde{\mu}(\tilde{Z}) \\
&= \displaystyle \int_{M/\sigma} \int_{\pi^{-1}(Z)} -\log  
\mu^u_t ( F^{-1}\xi^u(t)) d \mu_{Z}^{cu}(t) d\tilde{m}(Z) 
\end{split}
\end{equation}
In the second equation we used the disintegration of $\mu$ into its conditional measures along the elements of the partition $\xi^{cu}$ and
in the last equation we used the identification between $Z \in M /\sigma$ and $\tilde{Z} \in \mathcal{E}/\xi^{cu}.$ 
On the other hand:
$$
h_{m}(f, \sigma) = - \displaystyle \int_{M} \log m_x^u((f^{-1} \sigma) (x)) d m(x) = - \displaystyle \int_{M/\sigma} \int_{Z}\log m_Z^u(f^{-1}\sigma (s))  dm_Z^u(s) d \tilde{m}(Z)
$$

So we have written both $h_{\mu}(F, \xi^u)$ and $h_{m}(f, \xi)$ as double integrals. We compare the inner integrals fixing $Z.$

\begin{lemma} 
For $\tilde{m}-$almost every $Z$ (and $\tilde{\mu}-$almost every $\tilde{Z}, \pi(\tilde{Z})= Z.$), we have 
$$
 \int_{\tilde{Z}} - \log \mu_t^u(F^{-1}\xi^u (t))  d \mu_{\tilde{Z}}^{cu}(t)  \leq  \int_{Z} - \log m_Z^u(f^{-1}\sigma (s))  dm_Z^u(s).
$$ 
\end{lemma}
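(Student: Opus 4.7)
The plan is to recognize both integrals as $\tilde{\mu}_{\tilde{Z}}$-averages of Shannon entropies of matched discrete probability vectors on a common index set, and then conclude via concavity of the Shannon entropy together with Lemma \ref{quotient}.

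First I would further disintegrate $\mu_{\tilde{Z}}^{cu}$ along the refinement $\xi^u$, writing
$$\mu_{\tilde{Z}}^{cu} = \int_{\tilde{Z}/\xi^u} \mu_X^u \, d\tilde{\mu}_{\tilde{Z}}(X),$$
with $X$ ranging over the $\xi^u$-atoms in $\tilde{Z}$. Fix such an $X$: by condition (3) in the setup, $\pi|_X \colon X \to Z$ is a bijection, and combining the commutation $\pi \circ F = f \circ \pi$ with $\pi(\xi^u(y)) = \sigma(\pi(y))$ shows that $\pi|_X$ sends the sub-partition of $X$ induced by $F^{-1}\xi^u$ onto the sub-partition of $Z$ induced by $f^{-1}\sigma$. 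Label the matched sub-atoms by $j$, so that $X = \bigsqcup_j X_j$ and $Z = \bigsqcup_j Z_j$ with $\pi(X_j) = Z_j$, and set
$$p_j(X) := \mu_X^u(X_j) = \pi_*\mu_X^u(Z_j), \qquad q_j := m_Z^u(Z_j).$$

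With this identification the inner integral on the left-hand side becomes
$$\int_X -\log \mu_X^u(F^{-1}\xi^u(t)) \, d\mu_X^u(t) = -\sum_j p_j(X) \log p_j(X) =: H(p(X)),$$
where $H$ is the Shannon entropy of a discrete probability vector. Integrating against $\tilde{\mu}_{\tilde{Z}}$ then expresses the left-hand side as $\int H(p(X)) \, d\tilde{\mu}_{\tilde{Z}}(X)$, while the right-hand side is directly $H(q)$. Lemma \ref{quotient} gives $m_Z^u = \pi_*\mu_{\tilde{Z}}^{cu} = \int \pi_*\mu_X^u \, d\tilde{\mu}_{\tilde{Z}}(X)$, which evaluated on each atom $Z_j$ reads $q_j = \int p_j(X) \, d\tilde{\mu}_{\tilde{Z}}(X)$. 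Thus $q$ is the $\tilde{\mu}_{\tilde{Z}}$-barycenter of $\{p(X)\}_X$ and concavity of the Shannon entropy yields
$$H(q) \geq \int H(p(X)) \, d\tilde{\mu}_{\tilde{Z}}(X),$$
which is the claimed inequality.

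The step that requires the most care, and the one I expect to be the main obstacle, is the bijective matching between the sub-atoms $\{X_j\}$ and $\{Z_j\}$. The inclusion $\pi(X_j) \subseteq Z_j$ is immediate from $\pi \circ F = f \circ \pi$, but for the reverse direction one has to unpack that $F(\xi^u(t_0))$ splits as a disjoint union of $\xi^u$-atoms indexed precisely by the $\sigma$-atoms lying inside $f(\sigma(\pi(t_0)))$, and that this decomposition is compatible with $\pi$ thanks to the fiberwise bijection in condition (3) and the increasing property $F^{-1}\xi^{cu} \geq \xi^{cu}$. Once this bookkeeping is settled, the rest of the argument is a clean application of Jensen's inequality, and the equality case (which will be used to prove the holonomy invariance half of Theorem \ref{criterion}) follows from strict concavity of the Shannon entropy.
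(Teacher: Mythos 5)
Your proof is correct and is essentially the paper's own argument: you match the sub-atoms of $F^{-1}\xi^u$ inside each unstable plaque with the atoms of $f^{-1}\sigma$ inside $Z$ via $\pi$, use Lemma \ref{quotient} to write each $m^u_Z(Z_j)$ as the average over the plaques of $\mu^u_X(X_j)$, and conclude by concavity, exactly as in the text. Phrasing the final step as concavity of the Shannon entropy of matched probability vectors (barycenter inequality) is just a repackaging of the paper's coordinate-wise Jensen inequality for $\phi(a)=-a\log a$, so the two arguments coincide in substance.
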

 Observe that for almost every  element $Z$ of $\sigma$ we have $m_Z^u(A)= \mu^{cu}_{\tilde{Z}} (\pi^{-1}(A)) = \displaystyle \int_{\tilde{Z}} \mu^u (\pi^{-1}(A)) d \mu^{cu}.$ Any element $Z $ is partitioned  $Z = \bigcup Z_i$ where $Z_i$ are atoms of the partition $f^{-1}\sigma$ inside $Z$. Similarly, considering $\tilde{Z}$ as an element of $\mathcal{E}/\xi^{cu}$ we can write $\tilde{Z}= \bigcup \tilde{Z}_i$ where $\tilde{Z}_i$ are atoms of the partition $F^{-1}\xi^{cu}$ inside $\tilde{Z}.$

Let $\phi (a) := - a \log(a)$ which is a  real concave function and remember by Jensen inequality
$$
 \phi (\int g d \alpha) \geq \int (\phi \circ g) d \alpha
$$
for any real valued integrable function $g$ defined on  some measurable space with measure $\alpha.$ The equality holds if and only if $g$ is almost everywhere constant.

 Let $g: \tilde{Z} \rightarrow \mathbb{R}$ be defined as $ g (t) = \mu^u (F^{-1} \xi^u(t))$ and observe that using lemma (\ref{quotient}) and disintegrating $\mu^{cu}$ into $\mu^u$ as conditionals we have:

  $$m^u(Z_i) = \int_{\tilde{Z_i}} \mu^u(F^{-1}\xi^u (t)) d \mu^{cu}_{\tilde{Z}}(t) = \int_{\tilde{Z_i}} g(t) d \mu_{\tilde{Z}}^{cu}(t).$$

  Observe that $$ \int_Z - \log m^u (f^{-1} \sigma (s)) dm^u(s) = \sum_i \int_{Z_i} - \log m^u (f^{-1} \sigma (s)) dm^u(s) $$
$$= - \sum_i m^u(Z_i) \log m^u(Z_i) = \sum_i \phi(m^u(Z_i)) = \sum_i \phi(\int_{\tilde{Z_i}} g d\mu^{cu}_{\tilde{Z}}) $$
On the other hand, 
\begin{equation} 
\begin{split}
\int_{\tilde{Z}} -\log \mu^u_t((F^{-1}\xi^u)(t)) d\mu^{cu}_{\tilde{Z}}(t) & = \sum_i \int_{\tilde{Z}_i}  -\log \mu^u_t((F^{-1}\xi^u)(t)) d\mu^{cu}_{\tilde{Z}}(t) \\
&= \sum_i \int_{\tilde{Z}} \int_{\xi^u(u)} -\chi_{\tilde{Z}_i} \log \mu^u((F^{-1}\xi^u)(t)) d\mu_t^u d\mu_{\tilde{Z}}^{cu} \\
&= \sum_i \int_{\tilde{Z}_i} - \mu^u_t((F^{-1}\xi^u)(t)) \log \mu^u((F^{-1}\xi^u)(t)) d\mu_{\tilde{Z}}^{cu}
\\
&= \sum_i \int_{\tilde{Z}_i} \phi \circ g(t) d\mu_{\tilde{Z}}^{cu}
\end{split}
\end{equation}

So, using Jensen's inequality we obtain the inequality claimed in the theorem.

When $h_{\mu}(F, \xi^u) = h_{m}(f, \sigma),$ we must have equality in Jensen's inequality. Hence, $\pi_* \mu^u_x = m^u_{\pi(x)}$ restricting on the sub-algebra generated by $F^{-1}\xi^u.$ As 
$$
 h_{\mu} (F^n, \xi^u) = n h^u(F, \xi^u) = n h_{m}(f, \sigma) = h_{m}(f^n, \sigma)
$$
applying a similar argument as above, we show that $\pi_*\mu^u_x= m^u_{\pi(x)}$ restricting on the sub-algebra gnerated by $\{F^{-n} \xi^u\}.$ As $\xi^u$ is a generating partition we conclude that $\pi_* \mu^u_x = m^u_{\pi(x)}$. This means that the conditional measures of $\mu$ along the atoms of $\xi^u$ are invariant by the holonomy defined by the fibers and using Lemma \ref{product} the proof of the Theorem (\ref{criterion}) is complete. .
\end{proof}

\subsection{Examples and Application} \label{applicationinv}
As an independent corollary of our criterion for $u-$invariance of disintegration, let us state the following:
Let $M, N$ be compact manifolds, $A : M \rightarrow M$ an Anosov diffeomorphism and $m$ an $A-$invariant probability. Consider the class of partially hyperbolic diffeomorphisms which are skew product over $A:$
$f : M \times N \rightarrow M \times N, (x, \theta) \rightarrow (A(x), f_x(\theta)),$

 Let $f_n$ be skew products as above and $\mu_n$ invariant by $f_n.$ Suppose that $f_n \rightarrow f$ in $C^1-$topology and $\mu_n \rightarrow \mu$ in weak$-*$ topology and $\pi_* \mu_n = m.$ If $\mu_n$ has $u-$invariant disintegration along the fibers $N$, then $\mu$ as well. Indeed, by the theorem \ref{criterion}: 
 $$
h_{\mu_n} (f_n, \mathcal{F}^u(f_n)) = h_m(A).
 $$
 By upper semi-continuity of unstable entropy $$ h_m(A ) =\limsup_{n \rightarrow \infty} h_{\mu_n} (f_n, \mathcal{F}^u(f_n)) \leq h_{\mu}(f, \mathcal{F}^u(f)).$$
 
 As $\pi_* \mu = m$ the above inequality and theorem \ref{criterion} imply $h_{\mu}(f, \mathcal{F}^u(f)) = h_m(A).$ Using again theorem \ref{criterion} we conclude that $\mu$ is $u-$invariant.
\begin{question}
Is it true that the limit of $u-$invariant measures is $u-$invariant in general without assuming that  $\pi_* \mu_n = m$, for a fixed probability $m$?
\end{question}
Now we concentrate on a class of partially hyperbolic dynamics and apply our criterion of invariance principle for measures of maximal  and ``high entropy".
Let $f: M \rightarrow M$ be a partially hyperbolic dynamics satisfying the following conditions:
\begin{itemize}
\item H1. $f$ is dynamically coherent with all center leaves compact,
\item H2. $f$ admits global holonomies, that is, for any $y \in \mathcal{F}^u(x)$ the holonomy map $H^u_{x,y}: \mathcal{F}^c(x) \rightarrow \mathcal{F}^c(y)$ is a homeomorphism. For any $z \in \mathcal{F}^c(x)$, $H^u_{x,y} (z) = \mathcal{F}^u(z) \cap \mathcal{F}^c(y).$
\item H3. $f_c$ is a transitive topological Anosov homeomorphism, where $f_c$ is the induced dynamics satisfying $f_c \circ \pi = \pi \circ f$ and $\pi : M \rightarrow M/\mathcal{F}^c$ is the natural projection to the space of central leaves. In particular there are two foliations $\mathcal{W}^s$ and $\mathcal{W}^u$ which are stable and unstable sets for $f_c.$
\end{itemize}

A large class of partially hyperbolic dynamics denoted by fibered partially hyperbolic systems satisfy (H1) and (H2) and all known examples satisfy (H3). In particular, it is shown in \cite{HP14} that, 
over any 3 dimensional Nil-manifold different from the torus, every partially hyperbolic diffeomorphism satisfies (H1), 
(H2) and (H3).

As a direct consequence of Theorem \ref{criterion} we obtain:

 \begin{proposition} \label{entropyandinvariance}
 Let $f$ be a $C^1$ partially hyperbolic diffeomorphism satisfying H1, H2 and H3 and preserving a probability measure $\mu$ with $\pi_* \mu = m.$ Then $h_{\mu}(f, \mathcal{F}^u) \leq h_m(f_c)$ and equality  holds if and only if $\mu$ admits $u-$ invariant disintegration along the center foliation.
 \end{proposition}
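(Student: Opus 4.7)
The plan is to deduce the proposition as a direct application of the entropy criterion of Theorem \ref{criterion}, under the identifications: the ``base'' is $(M/\mathcal{F}^c, m)$ with dynamics $f_c$; the ``total space'' is $M$ with dynamics $f$; the fibers of $\pi : M \to M/\mathcal{F}^c$ are the center leaves; the base partition $\sigma$ will be a measurable partition subordinated to $\mathcal{W}^u$; and $\xi^u$ will be a measurable partition subordinated to $\mathcal{F}^u$.

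First I would invoke Yang's construction (recalled in Subsection \ref{entropyalong}) to produce, for the expanding foliation $\mathcal{F}^u$ of the $C^1$ diffeomorphism $f$, a measurable partition $\xi^u$ which is increasing ($f\xi^u < \xi^u$), subordinated to $\mathcal{F}^u$, and generating. By H3, $f_c$ is topological Anosov with expanding foliation $\mathcal{W}^u$, so the same construction applied to $(f_c,m)$ yields a measurable partition $\sigma$ of $M/\mathcal{F}^c$ with the analogous properties. Using that $\pi$ carries $\mathcal{F}^u$ into $\mathcal{W}^u$ together with H2 (which identifies each local unstable atom of $f_c$ with a union of images of $\mathcal{F}^u$-atoms under the center holonomy), these two partitions can be adjusted so that $\pi(\xi^u(x)) = \sigma(\pi(x))$ for $\mu$-a.e.\ $x$. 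Setting $\xi^{cu} := \pi^{-1}(\sigma)$ then gives a measurable partition of $M$ saturated by center leaves and refined by $\xi^u$.

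With this choice, the three hypotheses of Theorem \ref{criterion} are met: (1) is the definition $\xi^{cu} := \pi^{-1}(\sigma)$; (2) was arranged above; and (3)—bijectivity of the $\xi^u$-holonomy between any two fibers in the same $\xi^{cu}$-atom—is exactly H2. Applying Theorem \ref{criterion} yields
\[
 h_{\mu}(f,\mathcal{F}^u) = h_{\mu}(f,\xi^u) \;\leq\; h_{m}(f_c,\sigma) = h_m(f_c,\mathcal{W}^u) = h_m(f_c),
\]
the last equality being the classical fact that for a (topological) transitive Anosov the full metric entropy is concentrated along the unstable foliation. Furthermore, Theorem \ref{criterion} gives equality precisely when the disintegration of $\mu$ along the fibers—namely along the center leaves—is invariant under the holonomy defined by $\xi^u$, which is unstable holonomy; by Lemma \ref{product} this is equivalent to $u$-invariance of the disintegration of $\mu$ along $\mathcal{F}^c$.

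The main delicate step is the compatible construction of $\sigma$ and $\xi^u$ with $\pi(\xi^u) = \sigma$, for which H2 is essential: without global unstable holonomies an unstable atom of $f_c$ need not lift to a single atom of $\mathcal{F}^u$ on each center fiber, and hypothesis (3) of Theorem \ref{criterion} could fail. A secondary point is the identification $h_m(f_c,\mathcal{W}^u) = h_m(f_c)$ for the topological Anosov $f_c$, which follows from the standard Anosov stable/unstable product structure. Once these two points are in place, the proposition is a direct translation of Theorem \ref{criterion} to the present fibered setting.
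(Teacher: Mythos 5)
Your proof is correct and follows the paper's own route: the paper states this proposition as a direct consequence of Theorem \ref{criterion} applied to the fibration $\pi : M \rightarrow M/\mathcal{F}^c$ with center leaves as fibers, which is exactly what you do. The details you supply — the compatible increasing generating partitions with $\pi(\xi^u(x)) = \sigma(\pi(x))$ arranged via H2, and the identity $h_m(f_c) = h_m(f_c, \mathcal{W}^u)$ for the topological Anosov quotient dynamics (also invoked by the paper in the proof of Theorem \ref{jadid}) — are precisely the ones the paper leaves implicit.
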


\begin{theorem} \label{jadid}
Let $f$ be a $C^1$ partially hyperbolic diffeomorphism satisfying H1, H2 and H3 with one dimensional center leaves. Let  $\mu$ an $f-$invariant probability with $\pi_* \mu = m.$, then $\mu$  has $u-$invariant disintegration along center foliation if and only if $h_{\mu}(f) = h_{\mu}(\mathcal{F}^u, f).$
\end{theorem}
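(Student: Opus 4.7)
The plan is to reduce Theorem~\ref{jadid} to Proposition~\ref{entropyandinvariance} by establishing the identity $h_\mu(f)=h_m(f_c)$ for every $f$-invariant probability $\mu$ with $\pi_*\mu=m$. Granting this identity, the conclusion is immediate: Proposition~\ref{entropyandinvariance} says $h_\mu(f,\mathcal{F}^u)\le h_m(f_c)$ with equality if and only if $\mu$ has $u$-invariant disintegration along $\mathcal{F}^c$; substituting $h_m(f_c)=h_\mu(f)$ rewrites this as $h_\mu(f,\mathcal{F}^u)=h_\mu(f)$ if and only if $\mu$ has $u$-invariant disintegration, which is exactly the statement of the theorem.

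The heart of the argument is therefore the identity $h_\mu(f)=h_m(f_c)$. I would apply the Abramov--Rokhlin formula to the measurable factor map $\pi\colon(M,\mu,f)\to(M/\mathcal{F}^c,m,f_c)$, obtaining
$$h_\mu(f)=h_m(f_c)+h_\mu(f\mid \pi^{-1}\mathcal{B}),$$
where $\mathcal{B}$ is the Borel $\sigma$-algebra of the quotient and the last term is the fiber (relative) entropy. It remains to show that this fiber entropy vanishes. By H1, every center leaf is a compact one-dimensional manifold, hence a circle, and $f$ sends each leaf onto the next by a diffeomorphism. Since circle homeomorphisms have zero topological entropy, Bowen's relative entropy bound $h_\mu(f\mid \pi^{-1}\mathcal{B})\le h_{\rm top}(f\mid \pi^{-1}\mathcal{B})$ yields $h_\mu(f\mid \pi^{-1}\mathcal{B})=0$. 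Equivalently, one may argue directly: start from a finite partition $\mathcal{Q}$ of $M/\mathcal{F}^c$ generating for $f_c$, pull it back by $\pi$, and refine the pulled-back partition by dividing each circle fiber into finitely many arcs; the arc-refinement adds no entropy because the induced dynamics along circles is uniformly of zero topological entropy.

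Chaining both steps, $h_\mu(f)=h_m(f_c)\ge h_\mu(f,\mathcal{F}^u)$, and the equality case in the second inequality is exactly the $u$-invariance of the disintegration of $\mu$ along $\mathcal{F}^c$, by Proposition~\ref{entropyandinvariance}. This finishes the ``only if'' and ``if'' directions simultaneously.

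The main technical obstacle is the rigorous verification that the fiber entropy vanishes in the present setup. The quotient space $M/\mathcal{F}^c$ is typically not a smooth manifold, so one must check that the Abramov--Rokhlin framework applies, which is guaranteed by H3 (the base $f_c$ is a topological Anosov homeomorphism of a compact metric space) together with the Rokhlin measurability of the center partition on $M$. A secondary subtlety is that $f$ does not stabilize any individual fiber but permutes them: this is handled either by passing to $f^N$ on a periodic fiber and averaging, or by using H2 (global unstable holonomies) to transport arc-partitions continuously from one fiber to another and get a uniform arc-count bound. These measurability and uniformity checks are where the delicate work lies.
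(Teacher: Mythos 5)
Your proposal is correct and follows essentially the paper's own route: the paper likewise reduces to the entropy criterion (Proposition~\ref{entropyandinvariance}, i.e.\ Theorem~\ref{criterion}) after proving $h_\mu(f)=h_m(f_c)$, invoking the Ledrappier--Walters relativized variational principle --- which is exactly your Abramov--Rokhlin-plus-Bowen bound --- and killing the fiber term because the circle fibers have iterates of uniformly bounded length. The only adjustment: since $f$ permutes the fibers rather than preserving them, the justification ``circle homeomorphisms have zero entropy'' (or passing to a periodic fiber) is not the right one; the correct argument is precisely your uniform arc-count/bounded-length observation, which is what the paper uses.
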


\begin{proof} 
 If the central foliation is one dimensional then $h_{\mu}(f) = h_{m}(f_c).$ Indeed, by Ledrappier-Walter's variational principle \cite{LW},
\begin{equation} \label{LW}
 \sup_{\hat{\mu}: \pi_ * \hat{\mu} = m} h_{\hat{\mu}} (f) = h_{m}(f_c) + \int_{M/\mathcal{F}^c} h(f, \pi^{-1} (y)) dm(y).
\end{equation}
Since each $\pi^{-1} (y)$ is a circle and its iterates have bounded length we have that $h(f, \pi^{-1} (y)) =0$ that is, fibers do not contribute to the entropy. Hence, by the above equality and the well-known fact that $h_{\mu} (f) \geq h_{m} (f_c)$ we conclude that $h_{\mu}(f) = h_{m}(f_c) = h_m(\mathcal{W}^u, f_c)$. So by Theorem \ref{criterion} we conclude the proof. 
\end{proof}

Now let us apply Theorem \ref{criterion} as an invariance principle when some information about central Lyapunov exponent is available.
\begin{theorem} 
Let $f$ be a $C^{1+\alpha}$ partially hyperbolic diffeomorphism satisfying H1, H2 and H3 and preserving a probability measure $\mu.$ If all central Lyapunov exponents of $f$ are non-positive, then $\mu$ admits  $u-$invariant disintegration along central foliation.
\end{theorem}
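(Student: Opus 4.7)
The plan is to apply Proposition~\ref{entropyandinvariance}: it will suffice to verify the equality
\[
h_\mu(f, \mathcal{F}^u) = h_m(f_c),
\]
where $m := \pi_*\mu$, since that Proposition already yields the inequality ``$\leq$'' and characterizes equality precisely as $u$-invariance of the disintegration along $\mathcal{F}^c$. Because the factor map $\pi$ gives the universal bound $h_\mu(f) \geq h_m(f_c)$, it is enough to establish the stronger identity $h_\mu(f, \mathcal{F}^u) = h_\mu(f)$.

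I would first treat the ergodic case. Assume $\mu$ is ergodic. Since $E^s$ has uniformly negative Lyapunov exponents, $E^u$ has uniformly positive Lyapunov exponents, and $E^c$ has non-positive exponents by hypothesis, the set of \emph{positive} Lyapunov exponents of $\mu$ (with multiplicities) coincides with the exponents along $E^u$. Consequently, at $\mu$-a.e.\ $x$, the Pesin unstable manifold $W^u_{\mathrm{Pesin}}(x)$ --- the integral submanifold tangent to the direct sum of positive-exponent Oseledets subspaces --- is tangent to $E^u(x)$ and has dimension $\dim E^u$. Unique integrability of the strong unstable bundle then forces $W^u_{\mathrm{Pesin}}(x)$ to coincide with an open neighborhood of $x$ in $\mathcal{F}^u(x)$. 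Measurable partitions increasing and subordinate to $\mathcal{F}^u$ therefore serve as the $\xi_u$ of Theorem~\ref{expdiment}; the Ledrappier--Young identity $h_u = h_\mu(f)$ --- valid in the $C^{1+\alpha}$ category by Brown's extension noted in the excerpt --- yields $h_\mu(f, \mathcal{F}^u) = h_\mu(f)$.

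For a general (non-ergodic) $\mu$, disintegrate $\mu = \int \mu_y\, d\lambda(y)$ into ergodic components. Both $h_\mu(f)$ and $h_\mu(f, \mathcal{F}^u)$ are affine in the measure, and the ergodic case gives $h_{\mu_y}(f) = h_{\mu_y}(f, \mathcal{F}^u)$ for $\lambda$-a.e.\ $y$; integrating recovers the identity for $\mu$. Combining the three bounds,
\[
h_m(f_c) \;\leq\; h_\mu(f) \;=\; h_\mu(f, \mathcal{F}^u) \;\leq\; h_m(f_c),
\]
forces equality throughout, and Proposition~\ref{entropyandinvariance} then delivers the desired $u$-invariance of the disintegration of $\mu$ along $\mathcal{F}^c$.

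I expect the main obstacle to be the identification of the Pesin unstable manifold with the strong unstable leaf: one has to confirm that zero center exponents (if present) contribute to neither the positive-exponent Oseledets flag nor the Pesin unstable manifold, and that the resulting submanifold matches $\mathcal{F}^u(x)$ by unique integrability of $E^u$. A secondary concern is invoking Ledrappier--Young at $C^{1+\alpha}$ regularity rather than $C^2$; here I rely on A.~Brown's announced extension recalled in the remark following Theorem~\ref{expdiment}. Everything else (the factor inequality $h_\mu(f) \geq h_m(f_c)$, the ergodic decomposition of entropies, and Proposition~\ref{entropyandinvariance}) is used as a black box.
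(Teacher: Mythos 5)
Your proof is correct and takes essentially the same route as the paper: both arguments squeeze $h_m(f_c) \leq h_\mu(f) = h_\mu(f,\mathcal{F}^u) \leq h_m(f_c)$, using Ledrappier--Young for the middle equality (non-positive center exponents mean all positive exponents live in $E^u$) and the entropy criterion of Theorem~\ref{criterion} (via Proposition~\ref{entropyandinvariance}) for the last inequality together with its equality case yielding $u$-invariance. Your extra care in identifying the Pesin unstable manifolds with the strong unstable leaves and in handling the non-ergodic case by ergodic decomposition simply fills in details the paper leaves implicit.
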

\begin{proof}
Let $\pi: M \rightarrow M/\mathcal{F}^c$ be the natural quotient map and $m = \pi_* \mu$ the projected invariant measure of $f_c.$ By $u-$invariance of the disintegration along center fibers we mean the following: There is a full $m-$measurable subset of $M/\mathcal{F}^c$ such that for any two points $x, y$ in the same unstable set $y \in \mathcal{W}^u(x)$ we have $\mu_y = (H^u_{x,y})_* \mu_x.$
The proof is a direct application of Theorem \ref{criterion}  and Ledrappier-Young\textquotesingle s  result: 
$$
h_{\mu}(f) = h_{\mu}(f, \mathcal{F}^u) \leq h_{m}(f_c, \mathcal{W}^u) \leq h_{m} (f_c)
$$
As $h_m(f_c) \leq h_{\mu}(f)$ we obtain equality in the above inequality and by theorem \ref{criterion} a we conclude  that the disintegration of $\mu$ along central leaves is $u-$invariant.
\end{proof}

Using the Hopf argument and the above theorem we can conclude the following version of invariance principle, as it is done in Avila-Viana's paper.
\begin{corollary}  \cite[Proposition~4.8]{AV}  \label{continuous}
Let $f$ be as in the above theorem and all central exponents vanish. Moreover, suppose that $m= \pi_*(\mu)$ has local product structure. Then, $x \rightarrow \mu_x$ is a continuous family.
\end{corollary}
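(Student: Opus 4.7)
The plan is to reduce this to the previous theorem applied twice, once to $f$ and once to $f^{-1}$, and then to combine the resulting two holonomy invariances via a Hopf-type argument in the presence of local product structure.

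First, since every central Lyapunov exponent of $(f,\mu)$ vanishes, the hypotheses of the previous theorem hold simultaneously for $f$ and for $f^{-1}$: the diffeomorphism $f^{-1}$ inherits H1--H3 (with stable and unstable foliations swapped and $f_c^{-1}$ again transitive topological Anosov), and the central exponents of $f^{-1}$ are the negatives of those of $f$, hence also non-positive. Applying the previous theorem twice, there is an $m$-conull set $X_0 \subset M/\mathcal{F}^c$ on which the disintegration $\{\mu_x\}$ is simultaneously invariant under $\mathcal{W}^u$- and $\mathcal{W}^s$-holonomies of $f_c$; that is, for $x,y\in X_0$ in the same stable (resp.\ unstable) leaf, $\mu_y=(H^s_{x,y})_*\mu_x$ (resp.\ $\mu_y=(H^u_{x,y})_*\mu_x$).

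Next, I would use the local product structure of $m$ to build a continuous candidate. Fix a base point $x_0\in\operatorname{supp}(m)\cap X_0$ which is a Fubini density point, so that $m$-a.e.\ nearby stable and unstable leaves meet $X_0$ in transversely full measure. On a local product chart $B=[\mathcal{W}^s_{\mathrm{loc}}(x_0),\mathcal{W}^u_{\mathrm{loc}}(x_0)]$, define
$$
\tilde\mu_x \;:=\; (H^u_{a,x})_*\,(H^s_{x_0,a})_*\,\mu_{x_0}, \qquad \{a\}=\mathcal{W}^s_{\mathrm{loc}}(x_0)\cap\mathcal{W}^u_{\mathrm{loc}}(x),
$$
transported to $\mathcal{F}^c(x)$ via the center holonomies provided by H2. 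Since stable and unstable holonomies of a topological Anosov homeomorphism vary continuously with the base point on a compact transversal, $x\mapsto\tilde\mu_x$ is weak-$*$ continuous on $B$. A Fubini argument using both holonomy invariances then shows that for $m$-a.e.\ $x\in B$ one can connect $x$ to $x_0$ by an $su$-rectangle whose four corners lie in $X_0$; following the two legs of such a rectangle and applying the respective invariances yields $\tilde\mu_x=\mu_x$ almost everywhere. Hence $\tilde\mu$ is a continuous representative of the disintegration on $B$. To propagate globally, one uses that $f_c$ is transitive topological Anosov, so that iterating by $f$ (which respects the disintegration via $f_*\mu_x=\mu_{f_c(x)}$) together with further local product charts covers $\operatorname{supp}(m)$; consistency on overlaps follows because both formulas are forced to equal the measurable $\mu_x$ on the $m$-conull intersection.

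The main obstacle, just as in the Avila--Viana continuous version of the invariance principle, is to check that the candidate $\tilde\mu_x$ is \emph{path-independent} inside a local product chart and that the local continuous representatives patch into a globally well-defined continuous family. Both points reduce to the fact that $su$-rectangles close up almost surely thanks to the local product structure of $m$, combined with the simultaneous invariances under $H^s$ and $H^u$ obtained in the first step; the rest of the propagation is the standard Hopf-type argument and requires no ingredients beyond transitivity of $f_c$ and continuity of the holonomies.
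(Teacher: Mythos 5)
Your proposal is correct and follows essentially the same route as the paper, which proves this corollary exactly by combining the previous theorem (applied to both $f$ and $f^{-1}$, since vanishing central exponents give simultaneous $s$- and $u$-invariance of the disintegration) with the Hopf-type argument of Avila--Viana \cite[Proposition~4.8]{AV} under the local product structure of $m$. Your extra details on building the continuous candidate via $su$-holonomies and checking a.e.\ agreement by Fubini are precisely the content of that cited argument, so there is nothing genuinely different to compare.
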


The above version of invariance principle can be applied to characterize all measures of maximal entropy of accessible dynamically coherent partially hyperbolic diffeomorphisms with one dimensional compact center. 

\begin{theorem} \label{dichotomy} \cite{RHRHTU}
Let $f : M \rightarrow M$ be a $C^{1+\alpha}$ partially hyperbolic diffeomorphism of a 3-dimensional closed manifold $M$. Assume that $f$ is dynamically coherent with compact one dimensional central leaves and has the accessibility property. Then $f$ has finitely many ergodic measures of maximal entropy. There are two possibilities:
\begin{enumerate}
\item  (rotation type) $f$ has a unique entropy maximizing measure $\mu$. The central Lyapunov exponent $\lambda_c(\mu)$ vanishes and $(f, \mu)$ is isomorphic to a Bernoulli shift,

\item (generic case) $f$ has more than one ergodic entropy maximizing measure, all of which with non vanishing central Lyapunov exponent. The central Lyapunov exponent $\lambda_c(\mu)$ is nonzero and $(f, \mu)$ is a finite extension of a Bernoulli shift for any such measure $\mu.$ Some of these measures have positive central exponent and some have negative central exponent. \end{enumerate}
Moreover, the diffeomorphisms fulfilling the conditions of the second item form a $C^1-$open and $C^{\infty}-$dense subset of the dynamically coherent partially hyperbolic diffeomorphisms with compact one dimensional central leaves.
\end{theorem}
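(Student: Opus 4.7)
The plan is to combine the entropy-based invariance principle of Theorem~\ref{jadid} with classical Pesin theory and a perturbation argument. First I would project to the base: because each center fiber is a circle whose iterates have bounded length, the Ledrappier--Walters formula \eqref{LW} yields $h_{\mathrm{top}}(f)=h_{\mathrm{top}}(f_c)$. Since $f_c$ is a transitive topologically Anosov homeomorphism on $M/\mathcal{F}^c$, it carries a \emph{unique} measure of maximal entropy $m$, so every ergodic m.m.e. $\mu$ of $f$ satisfies $\pi_*\mu = m$ and $h_\mu(f)=h_m(f_c)$. Applying Theorem~\ref{jadid} in positive and negative time (the latter to obtain $s$-invariance by running the same argument for $f^{-1}$) produces for every ergodic m.m.e. a disintegration along $\mathcal{F}^c$ that is invariant under both stable and unstable holonomies of $f$.

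I would then split on the central Lyapunov exponent. If $\lambda_c(\mu)=0$, Corollary~\ref{continuous} upgrades the disintegration $x\mapsto\mu_x$ to a continuous family (the local product structure of $m$ is automatic from topological Anosov-ness of $f_c$). Because $\mu_x$ is invariant under both $su$-holonomies and continuous, it is constant on accessibility classes; accessibility then forces a single probability on each center circle which, together with the prescribed projection $m$, pins $\mu$ down. This gives at most one zero-exponent ergodic m.m.e., the rotation alternative. The K-property (hence Bernoullicity via Ornstein's theorem for smooth systems) follows because the joint $u$- and $s$-invariance of center conditionals identifies the Pinsker $\sigma$-algebra of $(f,\mu)$ with the full Borel $\sigma$-algebra modulo $\mu$.

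In the hyperbolic case $\lambda_c(\mu)\neq 0$, the measure is Pesin-hyperbolic and the $u$-invariance of center conditionals is exactly the SRB condition along $\mathcal{F}^{cu}$ (respectively along $\mathcal{F}^{cs}$ when the exponent is negative, by the symmetric argument for $f^{-1}$). Finiteness of the set of hyperbolic ergodic m.m.e.'s and the finite-to-one Bernoulli property then follow from the classical ergodic decomposition of hyperbolic SRB states, using absolute continuity of the $su$-holonomies inside the center-stable and center-unstable leaves. To rule out the possibility that every ergodic m.m.e. has, say, $\lambda_c>0$, I would argue by contradiction using compactness and semicontinuity of $\mu\mapsto\lambda_c(\mu)$ on the set of m.m.e.'s: any accumulation inside the (compact) simplex of m.m.e.'s with exponent of a single sign produces an m.m.e.\ with $\lambda_c=0$, which by the uniqueness obtained in the previous paragraph must coincide with the rotation measure, contradicting ergodicity or the prescribed sign.

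Finally, for the genericity statement, given an $f$ lying in the rotation alternative I would perform a Baraviera--Bonatti type $C^{\infty}$ perturbation supported in a small tube around a periodic orbit that breaks the vanishing of $\lambda_c$ on its center circle without destroying dynamical coherence or accessibility, both of which are $C^1$-open in this class. The main obstacle throughout is the hyperbolic case: the finiteness and finite-extension-of-Bernoulli conclusions require a Hopf-type argument adapted to the partially hyperbolic setting together with fine control of the SRB conditionals along the one-dimensional center, which is the technical heart of \cite{RHRHTU}; once that is in place, the dichotomy is driven cleanly by the entropy criterion of Theorem~\ref{jadid} and the continuous-disintegration corollary.
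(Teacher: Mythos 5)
The survey states Theorem \ref{dichotomy} without proof, quoting it from \cite{RHRHTU}; so your proposal has to be judged on its own terms, and its central step is wrong. You claim that applying Theorem \ref{jadid} to $f$ and to $f^{-1}$ gives \emph{every} ergodic m.m.e.\ a center disintegration invariant under both stable and unstable holonomies. But Theorem \ref{jadid} yields $u$-invariance only when $h_{\mu}(f)=h_{\mu}(f,\mathcal{F}^u)$, and for an ergodic m.m.e.\ with $\lambda^c(\mu)>0$ this equality fails: Ledrappier--Young guarantees $h_{\mu}(f)=h_{\mu}(f,\mathcal{F}^u)$ only when $\lambda^c(\mu)\leq 0$, and Subsubsection \ref{mmenotu} of this very survey derives the strict inequality $h_{\mu}(f,\mathcal{F}^u)<h_{top}(f)$ for m.m.e.'s with positive center exponent, precisely because such measures have $s$-invariant but \emph{not} $u$-invariant conditionals. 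If your claim were correct, then Corollary \ref{continuous} plus accessibility would force $\lambda^c(\mu)=0$ and uniqueness for every m.m.e., i.e.\ you would have ``proved'' that the generic case (item 2) of the theorem never occurs. The correct use of the entropy criterion is one-sided: $u$-invariance when $\lambda^c\leq 0$ (via $f$), $s$-invariance when $\lambda^c\geq 0$ (via $f^{-1}$), and both only in the zero-exponent case; it is exactly this asymmetry that produces the dichotomy.

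Your argument that both signs of the center exponent must occur in the hyperbolic case also fails: since $E^c$ is one dimensional, $\mu\mapsto\lambda^c(\mu)=\int\log\|Df|_{E^c}\|\,d\mu$ is affine and weak-$*$ continuous, so limits and convex combinations of m.m.e.'s with positive exponent keep nonnegative (with finitely many ergodic m.m.e.'s, uniformly positive) exponent; no accumulation inside the simplex of m.m.e.'s manufactures a zero-exponent measure. One must instead \emph{construct} an invariant measure projecting to $m$ whose center disintegration is $u$-invariant (by an averaging/fixed-point argument or a limit procedure combined with upper semicontinuity of $u$-entropy, in the spirit of Subsection \ref{applicationinv}); by the equality case of Theorem \ref{criterion} such a measure is an m.m.e., and by the strict inequality of \ref{mmenotu} its ergodic components cannot have positive center exponent, which is what forces the existence of m.m.e.'s of both signs when no zero-exponent m.m.e.\ exists. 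Two further points: $u$-invariance of center conditionals for a hyperbolic m.m.e.\ is not the SRB/Gibbs condition along $\mathcal{F}^{cu}$ --- for $\lambda^c<0$ these conditionals turn out to be finitely many atoms of equal mass, which is exactly where the finite-extension-of-Bernoulli structure and the finiteness of the set of m.m.e.'s come from --- and the K-property requires the Pinsker algebra to be trivial (not full) and does not by itself give Bernoullicity. These structural steps, together with the perturbative density statement, are the technical content of \cite{RHRHTU} and are not recovered by the argument you outline.
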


 \subsubsection{Maximal entropy measures which are not $u-$maximal measure} \label{mmenotu}
In what follows, we give an example of measure of maximal entropy which is not measure of maximal $u-$entropy. Let $f$ satisfy the hypothesis of Theorem \ref{dichotomy}. 
First of all observe that as $\mathcal{F}^c$ is one dimensional then $h_{top}(f) = h_{top}(\mathcal{F}^u).$ (See \cite{HHW} or \cite{WZ}.) 

Let $\mu$ an ergodic measure of maximal entropy such that $\lambda^c(\mu) > 0.$ By Theorem~ \ref{dichotomy} $\mu$ has $s-$invariant center disintegration and   the conditional measures are not $u-$invariant. Indeed, if $\mu$ is maximal entropy and has both $s$ and $u-$invariant conditional measures, then the proof of theorem \ref{dichotomy} shows that $\lambda^c (\mu) =0.$

So by Proposition \ref{jadid}  we conclude the following strict inequality $$h_{\mu}(f, \mathcal{F}^u) < h_m(f_c) \leq h_{top} (f_c) = h_{top}(f) = h_{top}(\mathcal{F}^u).$$
Both equalities above come from one dimensionality and compactness of the center foliation.

So we conclude that $\mu$ is not a $u-$maximal measure.



\subsubsection{High entropy measures in one-dimensional center case} \label{highentropy}

In this section study the Lyapunov exponent of high entropy measures. Let $f$ be a partially hyperbolic diffeomorphism with finitely many ergodic entropy maximizing measures. Suppose that all these measures are hyperbolic. We would like to know whether high entropy measures inherit the hyperbolicity of maximal entropy measures or not.

 Let us pose a similar question: $f$ partially hyperbolic with finitely many measures of maximal entropy, suppose that $\mu_n$ is a sequence of measures with $h_{\mu_n} \rightarrow h_{top}(f)$ and $\lambda^c(\mu_n) =0.$ Is it true that $f$ should admit a non-hyperbolic maximizing measure?

In the following theorem we prove a satisfactory answer in the context of partially hyperbolic diffeomorphisms with compact one dimensional center.

\begin{theorem}
Let $f$ be as in theorem \ref{dichotomy} without ergodic non-hyperbolic measure of maximal entropy (the generic case), then high entropy measures are hyperbolic, that is, there exist $\alpha, \beta > 0$ such that for any ergodic invariant measure $\mu$ if $h_{\mu} \geq h_{top}(f) - \alpha$ then $|\lambda^c(\mu)| \geq \beta.$
\end{theorem}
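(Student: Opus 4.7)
The plan is to argue by contradiction using a weak-$*$ limit. If the statement fails, choosing $\alpha_n=\beta_n=1/n$ produces a sequence of ergodic $f$-invariant probabilities $(\mu_n)$ with $h_{\mu_n}(f)\to h_{top}(f)$ and $\lambda^c(\mu_n)\to 0$. After extracting a subsequence I may assume $\mu_n\to\mu$ weakly. Since the center fibers are compact and one-dimensional, the Ledrappier--Walters formula \eqref{LW} gives $h_\nu(f)=h_{\pi_*\nu}(f_c)$ for every invariant $\nu$, and $f_c$ being topologically Anosov renders $\nu\mapsto h_\nu(f)$ upper-semicontinuous; hence $h_\mu(f)=h_{top}(f)$. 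Continuity of $\nu\mapsto\int\log\|Df|E^c\|\,d\nu$ in the weak-$*$ topology yields $\lambda^c(\mu)=0$.

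The next step is to show that the disintegration of $\mu$ along $\mathcal{F}^c$ is invariant under both $u$- and $s$-holonomies. By the Ledrappier--Young formula (extended to $C^{1+\alpha}$ by Brown, cf.\ the remark after Theorem~\ref{expdiment}), for an ergodic $\nu$ the difference $h_\nu(f)-h^u_\nu(f)$ equals $\lambda^c\gamma_c$ when $\lambda^c>0$ and vanishes otherwise, so
\[
h_\nu(f)-h^u_\nu(f)\;\le\;\max(\lambda^c(\nu),0)\;\le\;|\lambda^c(\nu)|.
\]
Applied to $\mu_n$ this gives $h^u_{\mu_n}(f)\to h_{top}(f)$; upper-semicontinuity of the unstable entropy \cite{Yang,HHW} combined with Proposition~\ref{entropyandinvariance} then forces $h^u_\mu(f)=h_\mu(f)$, and Theorem~\ref{jadid} yields $u$-invariance of the center disintegration of $\mu$. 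Running the identical argument for $f^{-1}$, whose strong unstable foliation is $\mathcal{F}^s(f)$ and whose center exponent $-\lambda^c(\mu_n)$ still tends to zero, provides the $s$-invariance.

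The last step is to derive a contradiction from the existence of a measure of maximal entropy with $\lambda^c=0$ and with $su$-invariant center disintegration. This is exactly the structural feature identified in the proof of Theorem~\ref{dichotomy} in \cite{RHRHTU} as characterizing the rotation-type alternative: combining joint $su$-invariance with accessibility, a Hopf-type argument produces a continuous $su$-holonomy-invariant family of probabilities on the center circles, which is incompatible with the generic-case hypothesis that every ergodic m.m.e.\ is hyperbolic and, by the discussion in \S\ref{mmenotu}, carries only one of the two invariances. This final rigidity step is the main obstacle in the sketch: one cannot simply decompose $\mu$ into ergodic components and invoke Theorem~\ref{dichotomy} on each, because each component is hyperbolic with only $s$- or $u$-invariance; instead the joint invariance of the non-ergodic mixture $\mu$ has to be exploited globally through the accessibility argument of \cite{RHRHTU} to force the rotation-type alternative and thus contradict our standing hypothesis.
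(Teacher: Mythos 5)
Your argument coincides with the paper's own proof up to and including the invariance-principle stage: the contradiction setup, extraction of a weak-$*$ limit $\mu$, the conclusion $h_\mu(f)=h_{top}(f)$ and $\lambda^c(\mu)=0$, the Ledrappier--Young bound $h_{\mu_n}(f)-h_{\mu_n}(f,\mathcal F^u)\le\max(\lambda^c(\mu_n),0)$, upper semicontinuity of unstable entropy, and Theorem~\ref{jadid} giving $u$-invariance of the center disintegration of $\mu$ are exactly the ingredients used there (the paper simply passes first to a subsequence with $\lambda^c(\mu_n)\le 0$, so that $h_{\mu_n}(f)=h_{\mu_n}(f,\mathcal F^u)$, instead of your two-sided estimate). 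Your extra step of repeating the argument for $f^{-1}$ to obtain $s$-invariance of $\mu$ is legitimate, but it is not needed in the paper and it leads you into the part of your proposal that does not close.

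The genuine gap is the final step, which you yourself call the main obstacle: you never actually derive a contradiction from having a non-ergodic m.m.e.\ $\mu$ with bi-invariant center disintegration. The rigidity you invoke from \cite{RHRHTU} (bi-invariance plus accessibility yields a continuous family and forces the rotation-type alternative with $\lambda^c=0$) is established there for ergodic measures, and Corollary~\ref{continuous} is stated under vanishing center exponents, which fails for the ergodic components of your $\mu$; extending this to a mixture whose components are hyperbolic, with some exponents positive and some negative, is precisely what would have to be proved, and it is only gestured at. Moreover, your reason for rejecting the ergodic-decomposition finish is mistaken: that is exactly how the paper concludes. Since $\lambda^c(\mu_n)\le0$ and $|\lambda^c(\mu_n)|\to0$, one has $\lambda^c(\mu)=0$; $\mu$ is an m.m.e., hence a finite convex combination of the ergodic m.m.e.'s, all hyperbolic by hypothesis, so some component $\mu^+$ must have $\lambda^c(\mu^+)>0$. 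Theorem~\ref{dichotomy} gives $s$-invariance of the center disintegration of $\mu^+$, while the $u$-invariance established for $\mu$ is transmitted to the positive-exponent part of the decomposition (the negative-exponent components are themselves $u$-invariant by Ledrappier--Young and Theorem~\ref{jadid}, so their complement in the disintegration is as well -- this is the point you thought blocked the decomposition, when in fact it is the engine of the contradiction). An ergodic m.m.e.\ carrying both invariances has $\lambda^c=0$ by the proof of Theorem~\ref{dichotomy} (see the discussion in~\ref{mmenotu}), i.e.\ $f$ would be of rotation type, contradicting the generic-case hypothesis. This keeps the rigidity input at the level of ergodic measures, where \cite{RHRHTU} actually provides it, and avoids the global accessibility argument for non-ergodic measures that your sketch leaves unproved.
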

\begin{proof}
Suppose by contradiction that there is a sequence of ergodic measures $\mu_n$ such that $h_{\mu_n} \rightarrow h_{top}(f)$ and $|\lambda^c(\mu_n)| \rightarrow 0.$ Without loss of generality we may suppose that $\mu_n \rightarrow \mu$ and $\lambda^c(\mu_n) \leq 0.$ By Ledrappier-Young result we have $h_{\mu_n}(\mathcal{F}^u, f) = h_{\mu_n}(f) \rightarrow h_{top}(f).$ So by upper semi-continuity of unstable entropy we obtain that $h_{\mu}(\mathcal{F}^u, f) = h_{top}(f)$ and consequently $h_{\mu}(\mathcal{F}^u, f) = h_{\mu}(f).$ By theorem \ref{jadid} we conclude that $\mu$ has $u-$invariant disintegration along central foliation.  As $\mu$ is a measure of maximal entropy, it has an ergodic decomposition into finitely many ergodic components among ergodic measures of maximal entropy of $f$. We claim that no ergodic component of $\mu$ can have positive central Lyapunov exponent. Indeed, if by contradiction there exists $\mu^{+}$ with positive central exponent as an ergodic component of $\mu$ then $\mu^+$ is has both $s$ and $u$ invariant disintegration. So, this implies that $f$ is of rotation type which contradicts our hypothesis about $f$.
\end{proof}

\begin{question} \label{q.vp}
In the above theorem, we do not know what is the value of following supremum: $$\sup \{h_{\mu}(f); \lambda^c(\mu)=0\} $$
\end{question}

We observe that in \cite{DGR} the authors prove a variational principle on the level sets of  {\it non vanishing} central Lyapunov exponent. However it is not  clear whether the above supremum is equal to the topological entropy of the set of points with zero central Lyapunov exponent or not.

\subsubsection{Non-invertible Bochi-Ma\~{n}\'{e} result} \label{bochi}

Suppose $f: M \rightarrow M$ is a continuous uniformly expanding map of a compact manifold. That is, there are $\rho > 0, \sigma > 1$ such that for every $x \in M:$
\begin{enumerate}
\item $d(f(x), f(y)) \geq \sigma d(x, y)$ for every $x, y \in B(x, \rho)$ and
\item $f(B(x, \rho))$ contains the closure of $B(f(x), \rho).$
\end{enumerate}
Let $\mu$ be an ergodic $f$-invariant probability with full support.
One can construct the inverse limit space $\hat{M}.$ Recall that $\hat{M}$ is the space of sequences $(x_{-n})_{n}$ such that $f(x_{-n})= x_{-n+1}$ and 
$$
\hat{f}(\cdots, x_{-1}, x_0) = (\cdots, x_0, f(x_0)).
$$ 
Let $\pi: \hat{M} \rightarrow M$ be the projection $\pi((x_{-n})_{n}) = x_0.$ Then $\hat{f}$ is a hyperbolic homeomorphism (topologically Anosov) and satisfies $\pi \circ \hat{f} = f \circ \pi.$ There exists a lift of $\mu$ invariant by $\hat{f}$ which is denoted by $\hat{\mu}$ and one important property of $\hat{\mu}$ is that it has local product structure and consequently we are  in the setting of Corollary \ref{continuous}. 
However, we need to make precise the notion of $u-$invariance and $s-$invariance of measures by defining the holonomy maps.

As before we define the projective cocycle corresponding to  a continuous map $A: M \rightarrow SL(2)$ and  denote it by $F_A: M \times \mathbb{P}\mathbb{R}^2 \rightarrow M \times \mathbb{P}\mathbb{R}^2.$ 

Let $\hat{A} : \hat{M} \rightarrow SL(2)$ be defined by $\hat{A} = A \circ \pi.$ In this way we define a map $\hat{F}_{\hat{A}}\ : \hat{M} \times \mathbb{P}\mathbb{R}^2 \rightarrow \hat{M} \times \mathbb{P}\mathbb{R}^2.$    As $\hat{A}$ is constant on local stable sets, $\hat{F}_{\hat{A}}$ admits trivial local stable holonomy. That is 
$$  H^s_{\hat{x}, \hat{y}} = id  \quad \text{for any} \quad \hat{y} \in W^s_{loc}(\hat{x}), \hat{x}, \hat{y} \in \hat{M}.$$
If $A$ satisfies the so called $u-$bunched condition, then one can define local unstable holonomies between fibers too. We say that $A$ is $u-$bunched if there is $\theta > 0$ such that $A$ is $\theta-$H\"{o}lder and 
$$
 \|A(x)\| \|A(x)^{-1}\| \sigma^{-\theta} < 1.
$$

The main point of $u-$invariance principle is that if $A$ is $u-$bunched and $\hat{F}_{\hat{A}}$ has no $u-$invariant measure then $\lambda(A) > 0.$ Indeed, if not then $\lambda(A) = 0$ (recall that $\lambda(A) \geq 0$) and any $\hat{F}_{\hat{A}}-$invariant measure $\hat{m}$ which projects down on $\hat{\mu}$ is $u-$invariant. In fact any such measure is also $s-$invariant and then using Hopf type argument (Corollary \ref{continuous}) its disintegration along fibers is defined continuously for all point $\hat{x} \in \hat{M}$ and is both $s$ and $u-$invariant. An invariant measure for $\hat{F}_{\hat{A}}$ which is both $s$ and $u-$invariant is called $su-$state and in what follows, we write $A$ has $su-$state.

Besides the  above conclusion, the authors in \cite{VY} prove continuity of Lyapunov exponent. More precisely:

\begin{theorem} \cite{VY} \label{cont}
If $A$ is $u-$bunched and has no $su-$state then $\lambda(A)> 0 $ and $A$ is a continuity point for the function $B \rightarrow \lambda(B)$ in the space of continuous maps $B: M \rightarrow SL(2)$ equipped with the $C^0-$topology. In particular, the Lyapunov exponent $\lambda(.)$ is bounded away from zero in a $C^0-$neighborhood of $A.$
 \end{theorem}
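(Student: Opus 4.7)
The proof naturally splits into establishing (i) positivity $\lambda(A) > 0$ and (ii) $C^0$-continuity at $A$, after which the last clause is automatic. Both parts are driven by the invariance principle contained in Theorem~\ref{criterion} and Corollary~\ref{continuous}, applied to the projectivized cocycle $\hat{F}_{\hat{A}} \colon \hat{M} \times \mathbb{P}\mathbb{R}^2 \to \hat{M} \times \mathbb{P}\mathbb{R}^2$. The two structural ingredients are that $\hat{A} = A \circ \pi$ depends only on $x_0$ (so $\hat{A}$ is constant on local stable sets of $\hat{f}$, giving trivial stable holonomies in the projective bundle) and that $u$-bunching provides Hölder unstable holonomies between fibers.

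For the positivity, I would argue by contradiction. If $\lambda(A) = 0$, then combining the derivative bound $\|D\mathbb{P}F^n_{(\hat{x},[v])}(w)\| \le \|\hat{A}^{(n)}(\hat{x})w\|/\|\hat{A}^{(n)}(\hat{x})v\|$ displayed in the excerpt with its time-reversed version shows that both extremal fiber Lyapunov exponents vanish for any $\hat{F}_{\hat{A}}$-invariant probability $\hat{m}$ projecting to $\hat{\mu}$. Applying the invariance principle, first to $\hat{F}_{\hat{A}}$ with $\sigma$ the local-stable partition of $\hat{f}$ (trivial $s$-holonomy), and then to $\hat{F}_{\hat{A}}^{-1}$ with the local-unstable partition (Hölder $u$-holonomy via $u$-bunching), forces the disintegration of $\hat{m}$ along fibers to be simultaneously $s$- and $u$-invariant. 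Hence $\hat{m}$ is an $su$-state, contradicting the hypothesis.

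For continuity at $A$, the easier half, upper semi-continuity of $B \mapsto \lambda(B)$ in the $C^0$-topology, is immediate from the subadditive formula $\lambda(B) = \inf_{n \ge 1} \tfrac{1}{n} \int \log\|B^{(n)}\|\,d\mu$, an infimum of $C^0$-continuous functionals. Lower semi-continuity is the substantive step. Assume toward contradiction that $B_k \to A$ uniformly with $\lambda(B_k) \to \ell < \lambda(A)$. For each $k$ select an $\hat{F}_{\hat{B}_k}$-invariant probability $\hat{m}_k$ on $\hat{M} \times \mathbb{P}\mathbb{R}^2$ projecting to $\hat{\mu}$ and concentrated on the stable Oseledets section of $\hat{B}_k$, so that its fiber Lyapunov exponent equals $-2\lambda(B_k)$. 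Since $\hat{B}_k \to \hat{A}$ uniformly, any weak-$*$ accumulation point $\hat{m}$ is $\hat{F}_{\hat{A}}$-invariant, projects to $\hat{\mu}$, and satisfies $\int \log \|D\mathbb{P}F_{\hat{A}}\|\,d\hat{m} = -2\ell$ by uniform convergence of the integrand.

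The main obstacle, and the heart of the argument, is to conclude from this intermediate fiber exponent that an $su$-state exists for $\hat{A}$. Since Oseledets for $\hat{F}_{\hat{A}}$ allows only the fiber exponents $\{0, -2\lambda(A)\}$ and $-2\ell$ lies strictly between them, $\hat{m}$ must charge both Oseledets sections; in its ergodic decomposition there is a component of positive mass with vanishing fiber exponent. Applying the invariance principle to this component exactly as in the positivity step produces a bona fide $su$-state for $\hat{A}$, the desired contradiction. The ``in particular'' clause is then immediate: $\lambda(A) > 0$ combined with continuity at $A$ yields a $C^0$-neighborhood of $A$ on which $\lambda(\cdot) \ge \lambda(A)/2$. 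The delicate technical point I would have to handle carefully is the weak-$*$ compactness argument — the Oseledets sections are only measurable a priori, so extracting and identifying the accumulation measure $\hat{m}$ requires using precisely the continuity of (un)stable holonomies furnished by $u$-bunching.
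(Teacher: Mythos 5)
Your positivity argument is fine and coincides with the route the survey itself sketches (and attributes to \cite{VY}): vanishing exponent forces every $\hat F_{\hat A}$-invariant lift of $\hat\mu$ to have zero fiber exponents, the invariance principle gives $s$- and $u$-invariant disintegrations (trivial stable holonomies since $\hat A=A\circ\pi$ is constant on local stable sets, unstable holonomies from $u$-bunching), and the Hopf-type argument upgrades this to an $su$-state, contradicting the hypothesis. The upper semicontinuity of $B\mapsto\lambda(B)$ via the subadditive formula is also correct.

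The lower semicontinuity step, however, has a genuine gap. First, the list of possible fiber exponents is wrong: for $\lambda(A)>0$, the ergodic $\hat F_{\hat A}$-invariant measures projecting to $\hat\mu$ live on the two Oseledets graphs and their projective fiber exponents are $\lambda^{\mp}-\lambda^{\pm}=\mp 2\lambda(A)$, i.e.\ $\{+2\lambda(A),-2\lambda(A)\}$, not $\{0,-2\lambda(A)\}$ (also note the measure on the graph of the \emph{stable} linear direction has projective exponent $+2\lambda$, not $-2\lambda$). Consequently an intermediate value $-2\ell$ only tells you that the limit measure charges \emph{both} graph measures; there is no ergodic component with vanishing fiber exponent, so the invariance principle cannot be applied to any component and your final contradiction evaporates. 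Indeed, for \emph{any} cocycle with positive exponent the convex combinations $\alpha\hat m^{+}+(1-\alpha)\hat m^{-}$ realize every intermediate integral, so ``charging both sections'' is not by itself contradictory. The missing idea — and the heart of the Viana--Yang proof — is to make the limit measure special by choosing each $\hat m_k$ to be a $u$-state for $\hat B_k$ (disintegrations invariant under the unstable holonomies, which exist for all $B$ in a $C^0$-neighborhood of $A$ by uniform $u$-bunching and depend continuously on the cocycle), so that the weak-$*$ limit $\hat m$ is a $u$-state for $\hat A$ with $\int\log|D\mathbb{P}F_{\hat A}|\,d\hat m$ intermediate. This forces $\hat m$ to give positive weight to the graph measure over the future-determined Oseledets section, which is automatically an $s$-state because $\hat A$ is constant on local stable sets; $u$-invariance of the disintegration of $\hat m$ then makes that component an $su$-state, the desired contradiction. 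Without propagating holonomy invariance to the limit (which is exactly where $u$-bunching, uniformly near $A$, is used), the weak-$*$ compactness argument you outline proves nothing about $A$.
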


Now let $M = \mathbb{R}/\mathbb{Z}$, $f: M \rightarrow M, f(x) = kx$ mod $\mathbb{Z}$ and $\mu$ the Lebesgue measure. The inverse limit of $f$ is the well-known solenoid ($k-$folded) and we denote by $\hat{f}$. Let $A(x) = A_0R_x$  where $A_0$ is a hyperbolic matrix in the special group $SL(2)$ and $R_x$ the rotation by angle $2 \pi x.$ For large $k$ the cocycle defined by $f$ and $A$ is $u-$fiber bunched. 

One can prove (using an originally Herman's idea, see \cite{VY}) that for large enough $k,$ $A$ has no $su-$state  and consequently $\lambda(A) > 0.$ Using continuity in $C^0$ topology (Theorem \ref{cont}) we conclude that $ \lambda(B) > 0$ for any $B : M \rightarrow SL(2)$ near to $A.$ 

As the cocyle defined by $A$ is not uniformly hyperbolic (using the same kind of idea from Herman) Viana and Yang got a counter-example to Bochi-Mañé type result when the base dynamics is non-invertible. Bochi \cite{Bochi} proved that every continuous $SL(2)-$cocycle over an aperiodic invertible system can be approximated in $C^{0}-$topology by cocycles whose Lyapunov exponents vanish, unless it is uniformly hyperbolic. These results was extended by Bochi and Viana \cite{BoV} to higher dimension. 
It is interesting to understand the similar results or counter-examples as above in higher dimensional case.

  The paper of Viana-Yang deals with non $u-$bunched cocycles and using a similar argument to the entropy criterion (Theorem \ref{criterion}) they obtain the following theorem (Theorem \ref{nobunch}).  

Let $f$ be a $C^{1+\epsilon}$ expanding map on a compact manifold $M$ and $A: M \rightarrow SL(2)$ be a $C^{1+\epsilon}$ function. All the objects $\mu, F, \pi, \hat{M}, \hat{f}, \hat{\mu}, \hat{A}$ and $\hat{F}$ are as above. An invariant section is a continuous map $\hat{\xi} : \hat{M} \rightarrow \mathbb{P}\mathbb{R}^2$ or $\hat{\xi}: \hat{M} \rightarrow \mathbb{P}\mathbb{R}^{2,2}$ (the space of pair of points in $\mathbb{P}\mathbb{R}^2$) such that 
$$ \hat{A} (\hat{x}) \hat{\xi} (\hat{x}) = \hat{\xi}(\hat{f}(\hat{x})) \quad \text{for every} \quad \hat{x} \in \hat{M}.$$ 

\begin{theorem} \label{nobunch} \cite{VY}
If $A$ admits no invariant section in $\mathbb{P}\mathbb{R}^2$ or $\mathbb{P}\mathbb{R}^{2,2}$ then it is a continuity point of the map $B \rightarrow \lambda(B)$ in the space of continuous maps $B: M \rightarrow SL(2)$ equipped with the $C^0$ topology. Moreover, $\lambda(A) >0$ if and only if there exists some periodic point $p \in M$ such that $A^{per(p)}(p)$ is a hyperbolic matrix. In that case, $\lambda(.)$ is bounded from zero for all continuous cocycles in a $C^0-$neighborhood of $A.$
\end{theorem}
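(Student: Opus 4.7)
The strategy is to apply the entropy criterion of Theorem~\ref{criterion} to the projective cocycle $\hat F_{\hat A}$ acting on $\hat M\times\mathbb P\mathbb R^2$, together with its analogue on the pair bundle $\hat M\times\mathbb P\mathbb R^{2,2}$. The decisive structural fact is that $\hat A=A\circ\pi$ depends only on the zeroth coordinate, hence is constant on local stable sets of $\hat f$; consequently any $\hat F_{\hat A}$-invariant probability $\hat m$ projecting to $\hat\mu$ has $s$-holonomy-invariant fiber disintegration $\{\hat m_{\hat x}\}$ automatically, and the whole game reduces to controlling the $u$-side via entropy. Concretely, if the top fiber Lyapunov exponent $\lambda^c(\hat m)$ is non-positive, then fibers contribute no unstable entropy, so $h_{\hat m}(\hat F_{\hat A},\xi^u)=h_{\hat\mu}(\hat f,\sigma)$; Theorem~\ref{criterion} then upgrades this equality to $u$-invariance of $\{\hat m_{\hat x}\}$. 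Combined with the free $s$-invariance and the local product structure of $\hat\mu$, a Hopf-type argument as in Corollary~\ref{continuous} makes $\hat x\mapsto\hat m_{\hat x}$ continuous on all of $\hat M$, and inspection of $\supp(\hat m_{\hat x})$ then extracts a continuous invariant section into $\mathbb P\mathbb R^2$ (if the support is $\hat\mu$-a.e.\ a single point) or into $\mathbb P\mathbb R^{2,2}$ (if it is $\hat\mu$-a.e.\ a pair), contradicting the hypothesis.

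For the periodic characterization, the implication $\lambda(A)>0\Rightarrow$ existence of periodic $p$ with $A^{\operatorname{per}(p)}(p)$ hyperbolic is standard: density of periodic orbits for the uniformly expanding $f$ together with bounded distortion for the $C^{1+\epsilon}$ cocycle lets one pick $p$ with $\tfrac{1}{\operatorname{per}(p)}\log\|A^{\operatorname{per}(p)}(p)\|$ arbitrarily close to $\lambda(A)$, forcing $A^{\operatorname{per}(p)}(p)$ to be hyperbolic in $SL(2)$. Conversely, if $\lambda(A)=0$ then every $\hat F_{\hat A}$-invariant measure $\hat m$ projecting to $\hat\mu$ satisfies $\lambda^c(\hat m)=0$, and the previous paragraph produces a forbidden continuous invariant section; hence under the hypothesis ``no invariant section'' a hyperbolic periodic return forces $\lambda(A)>0$, and the equivalence follows.

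Continuity at $A$ splits into upper and lower semicontinuity. Upper semicontinuity of $B\mapsto\lambda(B)$ in $C^0$ is automatic from subadditivity. For lower semicontinuity, suppose $A_n\to A$ in $C^0$ with $\lambda(A_n)\to\ell<\lambda(A)$; pick $\hat F_{\hat A_n}$-invariant probabilities $\hat m_n$ projecting to $\hat\mu$ whose top fiber exponents tend to $\ell$, and let $\hat m$ be a weak-$*$ accumulation point. Then $\hat m$ is $\hat F_{\hat A}$-invariant with $\lambda^c(\hat m)\le\ell<\lambda(A)$. Restricting $\hat m$ to the top Oseledets direction of $A$ reduces to a non-positive fiber-exponent situation, and the first paragraph again produces a continuous invariant section for $A$, the desired contradiction. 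The openness statement (that $\lambda$ is bounded away from $0$ on a $C^0$-neighborhood of $A$ when $\lambda(A)>0$) follows identically, applied to sequences with $\lambda(A_n)\to 0$.

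The main obstacle is extracting an honest continuous invariant section into $\mathbb P\mathbb R^2$ or $\mathbb P\mathbb R^{2,2}$ from the measure-valued invariant family $\{\hat m_{\hat x}\}$ supplied by Theorem~\ref{criterion}. Without $u$-bunching there are no continuous unstable holonomies on fibers, so one cannot adapt the Avila--Viana or Viana--Yang bunched reduction directly; instead one must exploit the density of hyperbolic periodic returns to force $\hat m_{\hat x}$ to be atomic with support of cardinality at most two at such orbits, and then propagate this atomic structure throughout $\hat M$ using continuity of the disintegration together with $\hat f$-invariance.
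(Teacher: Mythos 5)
Your outline does not close the theorem; the survey itself only quotes Theorem~\ref{nobunch} from \cite{VY} (remarking that the proof is ``similar'' in spirit to Theorem~\ref{criterion}), and your attempt leaves the genuinely hard steps unproved. The central gap is the very first move: Theorem~\ref{criterion} can only be applied once one has an $\hat F_{\hat A}$-invariant increasing partition $\xi^u$ of $\hat M\times\mathbb{P}\mathbb{R}^2$ whose atoms project bijectively onto unstable plaques of the base, i.e.\ exactly a system of unstable holonomies between fibers. Without $u$-bunching such a structure is not available (this is precisely what separates Theorem~\ref{nobunch} from Theorem~\ref{cont}), and your closing paragraph concedes that this is ``the main obstacle'' while only gesturing at ``density of hyperbolic periodic returns'' and ``propagation'' without an argument; so the core of the proof is absent, and the Hopf-type step (Corollary~\ref{continuous}) and your lower-semicontinuity argument both rest on it. Moreover, even granting a continuous, fully invariant family $\{\hat m_{\hat x}\}$, nothing forces $\supp(\hat m_{\hat x})$ to be a single point or a pair: for instance for a rotation-valued cocycle the fiber conditionals can be non-atomic, so no invariant section in $\mathbb{P}\mathbb{R}^2$ or $\mathbb{P}\mathbb{R}^{2,2}$ is produced and no contradiction with the hypothesis is obtained. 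This also shows that your treatment of the converse in the periodic characterization proves too much: it would give that the no-section hypothesis alone forces $\lambda(A)>0$, which is false (a constant irrational rotation cocycle over the expanding base has $\lambda=0$ and admits no invariant section); the theorem explicitly allows $\lambda(A)=0$ under its hypothesis, asserting continuity and the absence of hyperbolic periodic returns in that case.

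The forward direction of the periodic characterization is also not ``standard'' in the way you describe: knowing that $\frac{1}{\operatorname{per}(p)}\log\|A^{\operatorname{per}(p)}(p)\|$ is close to $\lambda(A)>0$ does not make $A^{\operatorname{per}(p)}(p)$ hyperbolic, since an $SL(2)$ matrix of arbitrarily large norm may be elliptic or parabolic (e.g.\ a large shear composed with a quarter turn has trace zero). What is actually needed is a periodic approximation of the Lyapunov exponent by \emph{eigenvalue} data at periodic orbits (a Kalinin-type closing argument for H\"older cocycles over expanding maps), which is a substantive theorem and not a consequence of density of periodic points plus bounded distortion. In summary: the reduction to the entropy criterion is not licit without constructing a substitute for the missing unstable holonomies, the extraction of a continuous invariant section from measure-valued data is unjustified (and false in general), and the periodic-orbit equivalence requires tools you have not supplied; as written the proposal is a plausible plan of attack for the $u$-bunched Theorem~\ref{cont}, but not a proof of Theorem~\ref{nobunch}.
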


\subsection{Invariance principle for partially hyperbolic diffeomorphisms without compact center leaves} \label{cropol}

In this subsection we announce an invariance principle which deals with partially hyperbolic diffeomorphisms acting quasi-isometrically on center leaves. An advantage of this invariance principle is its applicability for the partially hyperbolic dynamics without compact center leaves (compared with Tahzibi-Yang \cite{TY} result) and non-smooth invariant measures (compared with Avila-Santamaria-Viana-Wilkinson \cite{ASVW} result).  An important class of partially hyperbolic diffeomorphisms namely Anosov discretized flows (See \cite{BFFP} for definition.) are included in this class. 

\begin{definition}
A partially hyperbolic and dynamically coherent diffeomorphism $f$ is quasi-isometric in the center if there exists $K_0 \geq 1$ and $c_0 > 0$ such that for every $x, y \in M$ satisfying $\mathcal{F}^c(x)= \mathcal{F}^c(y)$ and every $n \in \mathbb{Z}:$
$$
K_0^{-1} d^c(x, y) - c_0 \leq d^c(f^n(x), f^n(y)) \leq K_0 d^c(x, y) + c_0.
$$
\end{definition}

\begin{proposition} \label{holocp}
Let $f$ be a partially hyperbolic, dynamically coherent quasi isometric in the center, $C^1$-diffeomorphism and $\mu$ be an invariant measure. Then there exists a full set $X \subset M$ such that for any $x, y \in X$ with $\mathcal{F}^{cu}(x)= \mathcal{F}^{cu}(y)$ the following holds:
For any $z \in \mathcal{F}^c(x),$ the leaves $\mathcal{F}^u(z), \mathcal{F}^c(y)$ intersect at a unique point, denoted $H^u_{x,y}(z).$ The global holonomy map $H^u_{x,y}: \mathcal{F}^c(x) \rightarrow \mathcal{F}^c(y)$ is a homeomorphism. 
\end{proposition}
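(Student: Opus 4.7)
My plan is to construct $H^u_{x,y}$ by a backward-iterate-and-return procedure, using quasi-isometry of $f$ along $\mathcal{F}^c$ to keep center distances under control and the contraction of $f^{-1}$ along $\mathcal{F}^u$ to bring arbitrarily separated configurations into a single foliated chart. First I would fix $X \subset M$ to be a $\mu$-full $f$-invariant set on which Poincaré recurrence and Birkhoff averages are well behaved; the key feature needed is that orbits in $X$ remain in a compact region where the dynamical coherence constants (plaque size, product-structure constant) are uniform. Dynamical coherence then supplies, for each point $p \in M$, a foliated chart for $\mathcal{F}^{cu}$ of uniform size in which $\mathcal{F}^c$ and $\mathcal{F}^u$ form a local topological product, whence a local unstable holonomy $H^u_{\mathrm{loc}}$ between sufficiently close center plaques inside any given $cu$-leaf.

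Fix $x, y \in X$ with $\mathcal{F}^{cu}(x) = \mathcal{F}^{cu}(y)$ and $z \in \mathcal{F}^c(x)$ arbitrary. I would consider the iterated triple $(f^{-n}(x), f^{-n}(y), f^{-n}(z))$. By the center quasi-isometry hypothesis, $d^c(f^{-n}(z), f^{-n}(x)) \leq K_0 d^c(z, x) + c_0$ uniformly in $n$, so $f^{-n}(z)$ stays in a bounded center neighborhood of $f^{-n}(x)$. A path joining $x$ to $y$ inside $\mathcal{F}^{cu}(x)$ may be decomposed into alternating $c$-arcs and $u$-arcs; under iteration by $f^{-n}$ the $u$-arcs shrink exponentially while the $c$-arcs remain of bounded length, again by quasi-isometry. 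For $n$ large enough the resulting iterated path fits inside a bounded union of uniform foliated $cu$-charts, and in that region the local holonomy $H^u_{\mathrm{loc}}$ produces a unique point $w_n \in \mathcal{F}^u(f^{-n}(z)) \cap \mathcal{F}^c(f^{-n}(y))$. I would then set $H^u_{x,y}(z) := f^n(w_n)$, which lies in $\mathcal{F}^u(z) \cap \mathcal{F}^c(y)$; independence from the choice of $n$ follows because the local holonomies commute with $f$ on their common domain.

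The remaining tasks are uniqueness, continuity, and the homeomorphism property. For uniqueness of $\mathcal{F}^u(z) \cap \mathcal{F}^c(y)$, suppose $w_1 \neq w_2$ are two intersection points; then $d^u(f^{-n}(w_1), f^{-n}(w_2)) \to 0$ by unstable contraction, so for large $n$ the two iterated points lie in a single small foliated $cu$-chart, and inside this chart the local product structure of $(\mathcal{F}^u, \mathcal{F}^c)$ forces the iterates to coincide, contradicting $w_1 \neq w_2$. Continuity of $z \mapsto H^u_{x,y}(z)$ is inherited from that of the local holonomies together with the uniform control supplied by quasi-isometry; bijectivity and continuity of the inverse come from running the same construction with the roles of $x$ and $y$ swapped. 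The main obstacle that I expect is making the path-shrinking argument in the second paragraph fully rigorous, namely quantifying how many iterations $n$ are needed for an initial path of arbitrary length in $\mathcal{F}^{cu}(x)$ to be squeezed into a neighborhood of uniformly bounded $cu$-geometry: this is precisely where center quasi-isometry and exponential unstable contraction must be combined carefully, and it is the heart of the argument.
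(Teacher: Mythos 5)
You are working against a statement the survey itself does not prove (it is part of the announced Crovisier--Poletti work), so I can only judge your argument on its own terms, and it has a genuine gap at the uniqueness step, which then contaminates the well-definedness of $w_n$, injectivity, and the homeomorphism claim. From $d^u(f^{-n}(w_1),f^{-n}(w_2))\to 0$ you conclude that for large $n$ the two backward iterates lie in one small foliated $cu$-chart and that the local product structure of $(\mathcal{F}^u,\mathcal{F}^c)$ ``forces the iterates to coincide.'' But being in one chart while lying on the same \emph{global} center leaf and the same \emph{global} unstable leaf does not place the two points on the same local center plaque: the quasi-isometry hypothesis only yields the upper bound $d^c(f^{-n}(w_1),f^{-n}(w_2))\le K_0\, d^c(w_1,w_2)+c_0$, i.e. a bounded --- not small --- center distance, and the center leaf may re-enter the chart in many distinct plaques. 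So the local product structure produces no contradiction, and nothing in your argument rules out distinct intersection points.

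This is not a repairable technicality, because the purely topological statement your argument would prove (for \emph{all} $x,y$ in a common $cu$-leaf) is false precisely for the motivating class of this subsection, discretized Anosov flows. For the time-one map of the geodesic flow of a hyperbolic surface, take $z=x$ on a periodic center leaf and $y$ on an orbit of the same weak-unstable ($=cu$) leaf spiraling onto it; lifting to the universal cover, the strong unstable horocycle through $z$ meets each deck-translate $\gamma^k$ of the lifted orbit of $y$ once, and these project to infinitely many distinct points of $\mathcal{F}^u(z)\cap\mathcal{F}^c(y)$ accumulating on $z$. This is exactly why the proposition is stated only on a $\mu$-full set $X$: the invariant measure must be used to discard configurations in which a center leaf accumulates on another (or on itself) inside its $cu$-leaf, whereas in your proposal $\mu$ and $X$ play no genuine role --- the ``uniformity of constants'' you extract from $X$ is automatic since $M$ is compact. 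A secondary issue: in the existence step, ``fits inside a bounded union of charts'' does not by itself yield $w_n$; you need a holonomy-continuation argument along center segments of bounded (not small) intrinsic length, with quantitative control of the unstable displacement so the continuation survives the whole segment. That part of your plan can be made rigorous, but the uniqueness part cannot be, without bringing the measure into the argument in an essential way.
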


We say that the center disintegration $\{\mu^c_x\}_{x \in M}$ of $\mu$ is invariant under
unstable holonomies  if for $\mu$-almost every $x, y$ satisfying
$\mathcal{F}^{cu}(x) = \mathcal{F}^{cu}(y)$ there exists $K_{x,y} > 0$ such that $(H^u_{x,y})_* \mu^c_x = K_{x,y} \mu^c_y$
where the holonomy map $h^u_{x,y}$
 is uniquely defined by proposition \ref{holocp}.

Now we are ready to announce the main invariance principle result due to Crovisier-Poletti. The first theorem is related to the first part of invariance principle results ($u-$invariance) and the second one deals with continuity of disintegration.

\begin{theorem}
Let $f$ be a partially hyperbolic, dynamically coherent, quasi-isometric in the center, $C^1$-diffeomorphism and let $\mu$ be an ergodic measure. If $h(f,\mu) = h_{\mu}(f,\mathcal{F}^u)$, then the center disintegration $\{\mu^c_x\}_{x \in M}$
 is $u$-invariant.
\end{theorem}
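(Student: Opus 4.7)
The proof would adapt the strategy of Theorem~\ref{jadid} to the non-compact center setting, using the quasi-isometric hypothesis (via Proposition~\ref{holocp}) in place of the compact one-dimensional center fibers that were available there. The core mechanism remains the entropy criterion of Theorem~\ref{criterion}, now applied to the local product structure on $\mathcal{F}^{cu}$-leaves rather than to a global fiber bundle, since the leaf space $M/\mathcal{F}^c$ need not be standard Borel and there is no global quotient dynamics $f_c$.

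First I would fix compatible measurable partitions: an increasing measurable partition $\xi^u$ subordinate to $\mathcal{F}^u$ as in Section~\ref{entropyalong}, and a coarser increasing measurable partition $\xi^{cu}$ subordinate to $\mathcal{F}^{cu}$ with $\xi^{cu}\leq\xi^u$, constructed so that the atoms of $\xi^u$ refine those of $\xi^{cu}$ by further cutting along the unstable direction. The chain of inequalities $h_\mu(f,\mathcal{F}^u)\leq h_\mu(f,\mathcal{F}^{cu})\leq h_\mu(f)$ (monotonicity of foliated entropy along $\mathcal{F}^u\subseteq\mathcal{F}^{cu}$, of the same flavour as the chain in Theorem~\ref{expdiment}) collapses under the hypothesis into a single value, so $h_\mu(f,\xi^{cu})=h_\mu(f,\xi^u)$.

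Next I would carry out the Jensen-inequality computation at the heart of Theorem~\ref{criterion}, but leaf-by-leaf. By Proposition~\ref{holocp}, on a full $\mu$-measure set each $\mathcal{F}^{cu}$-leaf carries a continuous product structure $\mathcal{F}^u\times\mathcal{F}^c$ via the global unstable holonomies $H^u_{x,y}$, with respect to which $f$ acts as a skew product. Disintegrate $\mu^{cu}_x$ further into $\mu^u_x$ along unstable plaques and $\mu^c_x$ along center plaques; then write both $h_\mu(f,\xi^u)$ and $h_\mu(f,\xi^{cu})$ as double integrals (over the quotient by $\xi^{cu}$ and over an atom), exactly as in the proof of Theorem~\ref{criterion}. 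Concavity of $\phi(a)=-a\log a$ gives the inequality $h_\mu(f,\xi^u)\leq h_\mu(f,\xi^{cu})$, and the equality forced by the previous step, iterated over powers $f^n$ using that $\xi^u$ is generating, forces the equality case of Jensen's inequality. This yields $(H^u_{x,y})_*\mu^c_x=\mu^c_y$ $\mu$-almost surely, which is precisely the $u$-invariance of $\{\mu^c_x\}_{x\in M}$ asserted in the theorem.

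The main obstacle is carrying out this plan without the global fiber-bundle structure assumed in Theorem~\ref{criterion}. One must work entirely with the partitions $\xi^u$ and $\xi^{cu}$ on $M$: the ``base entropy'' $h_m(f,\sigma)$ of Theorem~\ref{criterion} has to be replaced by the intrinsic quantity $h_\mu(f,\xi^{cu})$, and the analogue of Lemma~\ref{quotient} becomes a leaf-by-leaf identification of the $\mathcal{F}^c$-quotient of $\mu^{cu}_x$ with the $\mathcal{F}^u$-factor of the local product. The quasi-isometric hypothesis is essential precisely here: Proposition~\ref{holocp} ensures that the unstable holonomies $H^u_{x,y}$ are globally defined homeomorphisms between center leaves inside a common $\mathcal{F}^{cu}$-leaf, so that ``$u$-invariance'' of the disintegration is meaningful and the Jensen equality case has a genuinely holonomy-invariant interpretation rather than a purely intra-leaf one.
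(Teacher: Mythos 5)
The first thing to note is that the paper does not prove this statement at all: Subsection~\ref{cropol} only \emph{announces} the result, attributing it to Crovisier--Poletti, so there is no proof in the text to compare yours with. Your proposal must therefore be judged on its own, and while the overall strategy (run the entropy criterion of Theorem~\ref{criterion} intrinsically, with a center-saturated partition $\xi^{cu}$ playing the role of the base partition $\sigma$, and close the chain $h_{\mu}(f,\xi^u)\leq H_{\mu}(f^{-1}\xi^{cu}\mid\xi^{cu})\leq h_{\mu}(f)=h_{\mu}(f,\mathcal{F}^u)$ to force the Jensen equality case) is the natural one, the steps you ``fix'' at the outset are exactly where the difficulty of the non-compact center case lives, and they are not supplied by anything you cite.

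Concretely: (i) you posit an increasing measurable partition $\xi^{cu}\leq\xi^u$ whose atoms are saturated by center leaves, but $\mathcal{F}^{cu}$ is not expanding, so Yang's construction does not apply, and the partition of $M$ into full (non-compact) center leaves is in general not even a measurable partition in the sense of Definition~\ref{def:mensuravel} (compare the irrational-flow example in Section~\ref{measurabletoolbox}); consequently the ``center disintegration'' $\{\mu^c_x\}$ is not a Rokhlin disintegration but a leafwise family of Radon measures defined only up to scaling, which is precisely why the paper's notion of $u$-invariance carries the constant $K_{x,y}$ in $(H^u_{x,y})_*\mu^c_x=K_{x,y}\mu^c_y$ --- your conclusion as stated, and the analogue of Lemma~\ref{quotient} you need, silently assume a normalization that does not exist. (ii) The bound $H_{\mu}(f^{-1}\xi^{cu}\mid\xi^{cu})\leq h_{\mu}(f)$ (your ``$h_{\mu}(f,\mathcal{F}^{cu})\leq h_{\mu}(f)$'') is not a known monotonicity for a non-expanding foliation and requires proof; in the compact-center Theorem~\ref{jadid} this role is played by the Ledrappier--Walters formula for the genuine quotient dynamics $f_c$, which is unavailable here. (iii) Quasi-isometry of the center action is invoked by you only through Proposition~\ref{holocp} (existence of global unstable holonomies), but its real job is in (i) and (ii): controlling how center plaques inside atoms behave under iteration so that an increasing, dynamically compatible $\xi^{cu}$ exists, the intra-atom product structure is measurable, and the transverse entropy comparison can be carried out. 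Without these three ingredients the Jensen computation has no object to apply to, so the proposal is a plausible outline rather than a proof.
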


\begin{theorem}
Let $f$ be a partially hyperbolic, dynamically coherent, quasi-
isometric in the center, $C^1-$diffeomorphism and let $\mu$ be an ergodic measure. If $h_{\mu}(f^{-1}, \mathcal{F}^s) = h_{\mu}(f, \mathcal{F}^u) = h_{\mu}(f)$ and if $\mu$ has local $cs \times u$-product structure,
then there exists a family of local center measures $\{\nu^c_x\}_{x \in \supp(\mu)}$ on the sup-
port of $\mu$ which is continuous, $f$-invariant, $s$-invariant, $u$-invariant and
extends the center disintegration of $\mu$.
\end{theorem}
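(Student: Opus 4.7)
The plan is to combine the preceding entropy criterion for invariance principles, applied to both $f$ and $f^{-1}$, with a Hopf-style argument exploiting the local $cs\times u$-product structure of $\mu$ to upgrade a measurable, holonomy-invariant disintegration into a continuous family on $\supp(\mu)$.

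\textbf{Step 1 (simultaneous $s$- and $u$-invariance).} The equality $h_{\mu}(f,\mathcal{F}^u)=h_{\mu}(f)$ feeds directly into the preceding theorem and yields that the center disintegration $\{\mu^c_x\}$ is $u$-invariant. Since $h_{\mu}(f^{-1})=h_{\mu}(f)$ and the strong unstable foliation of $f^{-1}$ coincides with $\mathcal{F}^s$, the hypothesis $h_{\mu}(f^{-1},\mathcal{F}^s)=h_{\mu}(f)$ makes the same theorem, now applied to $f^{-1}$, produce $s$-invariance of the same family $\{\mu^c_x\}$ (the center foliation is unchanged under inversion).

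\textbf{Step 2 (continuous family on a product box).} Next I would construct a continuous candidate family on a local $cs\times u$-product chart $B$. Fix a reference point $x_0\in B\cap\supp(\mu)$ at which the disintegration is defined and at which both holonomy-invariance identities hold, and in which $B$ realizes the product structure of $\mu$. For any $y\in B\cap\supp(\mu)$, write $y$ as the unique intersection $\mathcal{F}^u_{\mathrm{loc}}(y')\cap\mathcal{F}^c_{\mathrm{loc}}(y)$ for some $y'\in\mathcal{F}^{cs}(x_0)$, and define $\nu^c_y$ as the image of $\mu^c_{x_0}$ under the stable holonomy $H^s_{x_0,y'}$ followed by the unstable holonomy $H^u_{y',y}$, keeping track of the scaling cocycles that appear in the $s$- and $u$-invariance identities. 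The global holonomies furnished by the preceding proposition are continuous in their endpoints, so $y\mapsto\nu^c_y$ is continuous on $B\cap\supp(\mu)$.

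\textbf{Step 3 (gluing and invariance).} Finally I would glue the local families and check the required invariance properties. Essential uniqueness of Rokhlin's disintegration together with Step~1 implies that $\nu^c_y$ coincides with $\mu^c_y$ on a $\mu$-full measure subset of $B$, so any two overlapping charts produce continuous families that agree on a dense subset of the overlap with $\supp(\mu)$, hence agree everywhere there. Gluing yields the desired continuous family $\{\nu^c_x\}_{x\in\supp(\mu)}$ extending the measurable disintegration. The $s$- and $u$-invariance pass from the $\mu$-full measure set to all of $\supp(\mu)$ by continuity and density, and $f$-invariance follows from the observation that $\{f_*\nu^c_x\}$ is another continuous family whose values coincide $\mu$-a.e.\ with $\{\nu^c_{f(x)}\}$.

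\textbf{Main obstacle.} The delicate step is Step~2. Because $f$ is only quasi-isometric, not isometric, along the non-compact center leaves, the conditional measures $\mu^c_x$ are defined only up to multiplication by positive constants and the holonomy-invariance identities involve genuine scaling cocycles $K^s_{\cdot,\cdot}, K^u_{\cdot,\cdot}$. Continuity of the candidate family on $\supp(\mu)$ hinges on showing that these cocycles admit continuous extensions from the a.e.\ defined set to the whole support, and this is where the existence and continuity of global holonomies (the preceding proposition) must be combined with the $cs\times u$-product structure of $\mu$. The coherent continuous extension across overlapping product boxes is the technical heart of the argument.
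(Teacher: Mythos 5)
A preliminary remark: the paper itself does not prove this statement; it is only announced as a result of Crovisier--Poletti, so there is no in-paper argument to compare yours with, and your proposal has to stand on its own. Your Step~1 is sound: quasi-isometry in the center is invariant under passing to $f^{-1}$, $\mathcal{F}^u(f^{-1})=\mathcal{F}^s(f)$, and $h_{\mu}(f^{-1})=h_{\mu}(f)$, so the first announced theorem applied to $f$ and to $f^{-1}$ does give $u$-invariance and $s$-invariance of the center disintegration, each only up to the multiplicative constants $K_{x,y}$ and only for $\mu$-almost every pair.

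The genuine gap is in Steps~2--3, which is exactly the part that constitutes the content of this second theorem beyond the first one. You transport $\mu^c_{x_0}$ by an $s$-holonomy followed by a $u$-holonomy and assert continuity because ``the global holonomies furnished by the preceding proposition are continuous in their endpoints.'' Proposition~\ref{holocp} asserts no such thing: it produces the global holonomy $H^u_{x,y}$ only for pairs $x,y$ in a full-measure set, since for a non-compact center leaf of a merely quasi-isometric system the intersection $\mathcal{F}^u(z)\cap\mathcal{F}^c(y)$ need not exist, or be unique, for arbitrary pairs; nothing gives you holonomies defined, let alone varying continuously, at every point of $\supp(\mu)$. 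Likewise the invariance identities from Step~1 hold only for a.e.\ pair, so defining $\nu^c_y$ by transport from one reference point $x_0$ requires showing independence of the choices (the $s$-then-$u$ versus $u$-then-$s$ path, the chart, the reference point), and this is precisely where the local $cs\times u$-product structure must be exploited quantitatively in a Hopf/julienne-type density argument rather than merely invoked; your Step~3 gluing via ``agreement on a dense set'' presupposes the continuity it is meant to deliver. Finally, since the conditionals are defined only up to scale, the family you build carries the cocycles $K^s_{\cdot,\cdot},K^u_{\cdot,\cdot}$, and you yourself flag their continuous, coherent extension to $\supp(\mu)$ as unresolved --- but that extension \emph{is} the theorem, not a side obstacle. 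As written, the proposal correctly reduces the statement to its hardest step and leaves that step unproved.
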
 

\section{Margulis family and $u-$maximizing measures } \label{s.margulis}

It is known that a mixing uniformly hyperbolic diffeomorphism $f: X \rightarrow X$  admits a unique measure of maximal entropy (Bowen-Margulis measure) $\mu$. Following the construction of $\mu$ by Margulis, we have that conditional measures of $\mu$ along $\cF^s$ and $\cF^u$ constitute of two ``Margulis families" $\{m^s_x\}, \{m^u_x\}, x \in X$ which are respectively contracted and expanded by the dynamics. In some sense Margulis family of measures on global leaves are ``making the action of $f$ linear." 
\begin{definition}\label{def-sys-meas}
Given a foliation $\cF$ of some connected manifold $M$  a system of measures on leaves  is a family $\{m_x\}_{x\in M}$ such that:
 \begin{enumerate}
  \item for all $x\in M$, $m_x$ is a non-trivial  Radon measure on $\cF(x)$;
  \item for all $x,y\in M$, $m_x=m_y$ if $\cF(x)=\cF(y)$;
 
\end{enumerate}
We say the system is continuous (measurable) if
 \item $M$ is covered by foliation charts $B$ such that $x\mapsto m_x(\phi|\cF_B(x))$ is continuous (measurable), i.e.  for any $\phi\in C_c(B)$, where $\mathcal{F}_B(x)$ is the connected component of $\mathcal{F}(x) \cap B$ containing $x$.
\end{definition}

The Radon property (i) means that each $m_x$ is a Borel measure and is finite on compact subsets of the leaf $\cF(x)$ (here, and elsewhere, we consider the intrinsic topology {on each leaf}).

\begin{definition} \label{def.margulisfamily}
Assume that $\cF$ is a foliation which is invariant under some diffeomorphism $f:M\to M$, i.e., for all $x\in M$, $f(\cF(x))=\cF(f(x))$. 
A  system of measures $\{m_x\}_{x\in M}$  is called a Margulis family if there is a  constant  $D>0$ such that for all {$x\in M$}:
  \begin{equation}\label{eq-dilation}
 { f_*m_x=D^{-1}m_{f (x).} }
  \end{equation}
 $D$ is called the \new{dilation factor}. We call the family $\{m_x\}_{x\in M}$ a \new{Margulis system} on $\cF$ with dilation factor $D$ and the measures $m_x$ the \new{Margulis $\cF$-conditionals}. 
\end{definition} 

We say a Margulis family is continuous (measurable) if the system is continuous (measurable).
The continuity property seems a strong condition. However, the many examples of Margulis measures are  invariant by the holonomy of a transverse foliation and consequently they are continuous.

Let $\mathcal{F}$ and $\mathcal{G}$ be two transverse  invariant foliations of a compact manifold $M$. Let $\{m_x\}$ be a Margulis family defined on the leaves of $\mathcal{F}$. For any two points $x, y$ such that $y \in \mathcal{G}(x)$, by transversality of $\mathcal{F}$ and $\mathcal{G}$ we may define a (local) holonomy map $H_{x,y}$  from a ball $B_{\mathcal{F}}(x)$ to $B_{\mathcal{F}}(y)$ which is a homeomorphism to its image. We say $\{m_x\}$ is holonomy invariant if $m_y (H_{x,y}(Z)) = m_x(Z) $ for any $Z \subset B_{\mathcal{F}}(x).$

For a general invariant foliation, it is not clear whether $D$ is unique or not. That is, there may exist two different Margulis families with different dilation constants. However, see proposition \ref{jinquestion} where it is shown that in the case of unstable foliation of a partially hyperbolic diffeomorphism, the dilation factor should be equal to the unstable topological entropy.

\begin{proposition}\label{prop-Ledrappier}   
Let $f\in\Diff^2(M)$ be a partially hyperbolic diffeomorphism with a measurable Margulis $\mathcal{F}^u$-system $\{m^u_x\}_{x\in M}$ ($\mathcal{F}^u$ stands for the unstable foliation) with dilation factor $D_u>0$. Suppose that $m^u_x$ is fully supported on $\mathcal{F}^u (x).$ Then for any invariant measure $\mu,$
$$h_{\mu}(f, \mathcal{F}^u)\leq\log D_u.$$
Moreover, the equality holds if and only if the disintegration of $\mu$ along $\Fu$ is given by $m^u_x$ (after normalization by a constant), $\mu$-a.e.

\end{proposition}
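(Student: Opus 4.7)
The plan is to express $h_\mu(f, \mathcal{F}^u)$ through a subordinated partition, compute what the Margulis dilation property forces on the same partition, and then close the loop with Gibbs' inequality.

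First I would fix an increasing measurable partition $\xi$ that is subordinated to $\mathcal{F}^u$ and generating, as produced by Ledrappier--Young / Yang's construction (using $C^{1+}$ regularity, with $\xi(x)$ relatively compact in $\mathcal{F}^u(x)$ and containing a neighbourhood of $x$). By Section~\ref{entropyalong} one has
\[
 h_\mu(f,\mathcal{F}^u)=H_\mu(f^{-1}\xi\mid\xi)=\int_M -\log\mu^u_x\bigl((f^{-1}\xi)(x)\bigr)\,d\mu(x),
\]
where $\{\mu^u_x\}$ is the conditional probability disintegration of $\mu$ along the atoms of $\xi$. Since $\xi(x)$ is precompact and $m^u_x$ is a fully supported Radon measure, $0<m^u_x(\xi(x))<\infty$, so I can also normalize $m^u_x$ on $\xi(x)$ to obtain a probability measure $\hat m^u_x:=m^u_x(\xi(x))^{-1}\,m^u_x|_{\xi(x)}$.

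Next I would exploit the Margulis identity $f_*m^u_x=D_u^{-1}m^u_{f(x)}$. Since $(f^{-1}\xi)(x)=f^{-1}(\xi(f(x)))$, this identity gives $m^u_x((f^{-1}\xi)(x))=D_u^{-1}\,m^u_{f(x)}(\xi(f(x)))$, hence
\[
 -\log\hat m^u_x\bigl((f^{-1}\xi)(x)\bigr)=\log D_u+\log m^u_x(\xi(x))-\log m^u_{f(x)}(\xi(f(x))).
\]
Integrating against $\mu$ and using $f$-invariance (the last two terms form a coboundary, and one checks that $\log m^u_x(\xi(x))$ is $\mu$-integrable because $\xi(x)$ is relatively compact), the coboundary integrates to zero, yielding the key identity
\[
 \int_M -\log\hat m^u_x\bigl((f^{-1}\xi)(x)\bigr)\,d\mu(x)=\log D_u.
\]
Now the partition $f^{-1}\xi$ refines $\xi$ into a countable family of sub-atoms $\{C_i(x)\}_{i}$ of the fixed atom $\xi(x)$, and both $\mu^u_x$ and $\hat m^u_x$ are probability measures on $\xi(x)$. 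The pointwise Gibbs inequality
\[
 -\sum_i \mu^u_x(C_i)\log\mu^u_x(C_i)\ \leq\ -\sum_i \mu^u_x(C_i)\log\hat m^u_x(C_i)
\]
(with equality iff $\mu^u_x=\hat m^u_x$ on the $\sigma$-algebra generated by $\{C_i\}$) then yields, after integrating against the quotient measure, exactly $h_\mu(f,\mathcal{F}^u)\le\log D_u$.

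For the equality case, if $h_\mu(f,\mathcal{F}^u)=\log D_u$ then the Gibbs inequality is an equality $\mu$-a.e., so $\mu^u_x$ and $\hat m^u_x$ agree on $f^{-1}\xi$-atoms. Applying the identical argument with $f$ replaced by $f^n$ (note $\{m^u_x\}$ is a Margulis family for $f^n$ with dilation $D_u^n$ and $h_\mu(f^n,\mathcal{F}^u)=n\log D_u$) forces $\mu^u_x=\hat m^u_x$ on the $\sigma$-algebra generated by $f^{-n}\xi$ for every $n\ge 0$. Since $\xi$ is generating, $\bigvee_{n\ge 0}f^{-n}\xi$ separates points inside each $\xi(x)$, so $\mu^u_x=\hat m^u_x$ $\mu$-a.e., i.e.\ the unstable disintegration of $\mu$ coincides with the normalized Margulis conditionals. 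The converse (equality implies $h_\mu(f,\mathcal{F}^u)=\log D_u$) is immediate by running the computation backwards. The main delicate point I anticipate is the $\mu$-integrability of $\log m^u_x(\xi(x))$ needed to justify cancelling the coboundary: this should follow from the precompactness of the atoms and standard control of $m^u_x(\xi(x))$ away from $0$ and $\infty$ on a set of full $\mu$-measure, but it is the step where the choice of $\xi$ matters.
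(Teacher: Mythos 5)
Your proposal follows essentially the same route as the paper: fix an increasing, generating partition $\xi$ subordinated to $\mathcal{F}^u$, normalize the Margulis conditionals on its atoms, feed the dilation identity into the conditional-entropy formula, compare with the conditional measures of $\mu$ by Jensen's (Gibbs') inequality, and iterate the equality case (your version via $f^n$ and the Margulis family with dilation $D_u^n$ is equivalent to the paper's telescoping with $f^{-n}\xi$; both close correctly using that $\xi$ is generating and increasing).

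The genuine gap is the step where you integrate $-\log\hat m^u_x\bigl((f^{-1}\xi)(x)\bigr)=\log D_u+\log m^u_x(\xi(x))-\log m^u_{f(x)}(\xi(f(x)))$ and cancel the coboundary. This needs $h(x):=\log m^u_x(\xi(x))\in L^1(\mu)$, and the justification you offer (precompactness of atoms, control of $m^u_x(\xi(x))$ away from $0$ and $\infty$ on a full-measure set) only yields that $h$ is finite $\mu$-a.e., not integrable: the atoms of a subordinated partition have no uniform lower bound on their inner radius, the family $x\mapsto m^u_x$ is only assumed measurable, and there is no a priori estimate of the form $m^u_x(B^u(x,r))\geq c(r)$, so $\log m^u_x(\xi(x))$ may a priori fail to lie in $L^1$ (or proving it does would require an extra construction you have not supplied). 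The paper sidesteps this with Ledrappier's trick: the function $g(x):=-\log\hat m^u_x\bigl((f^{-1}\xi)(x)\bigr)$ is nonnegative and equals $\log D_u + h - h\circ f$ with $h$ merely measurable and a.e.\ finite; hence the Birkhoff averages $\frac1n S_n g$ converge $\mu$-a.e.\ (possibly to $+\infty$) because $g\geq 0$, while they converge in probability to the constant $\log D_u$ because $\frac1n\bigl(h-h\circ f^n\bigr)\to 0$ in measure; the two limits coincide, so $g$ is automatically integrable with $\int g\,d\mu=\log D_u$. Replacing your integrability claim by this argument (or by an explicit proof that $\xi$ can be chosen with $\log m^u_x(\xi(x))$ integrable) closes the gap; the rest of your proposal matches the paper's proof.
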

Using  Ledrappier-Young result (Theorem \ref{expdiment}),
\begin{corollary}
 In the above proposition,
if $\mu\in \mathcal{M}_e(f)$ (an ergodic invariant measure) with all  central Lyapunov exponents non-positive, then $$h(f,\mu)\leq\log D_u.$$ Moreover, the equality holds if and only if the disintegration of $\mu$ along $\Fu$ is given by $m^u_x$ (after normalization by a constant), $\mu$-a.e.
\end{corollary}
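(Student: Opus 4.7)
The plan is to combine Proposition~\ref{prop-Ledrappier} with the Ledrappier--Young entropy formula (Theorem~\ref{expdiment}), exploiting the hypothesis on the central exponents to identify the total metric entropy with the unstable entropy along $\mathcal{F}^u$.

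First I would unpack the spectral picture of $\mu$. Since $f$ is partially hyperbolic, along the strong unstable bundle $E^u$ every Lyapunov exponent is uniformly bounded below by $-\log\|Df^{-1}|_{E^u}\|>0$. Hence the positive Lyapunov exponents of $\mu$ split into those coming from $E^u$ (all strictly positive, regardless of $\mu$) and those coming from $E^c$ (which by hypothesis are all non-positive, so contribute none). Consequently the positive Oseledets subspace of $\mu$ is exactly $E^u$, and the top Pesin unstable lamination $\mathcal{F}_u$ appearing in Theorem~\ref{expdiment} coincides $\mu$-almost everywhere with the strong unstable foliation $\mathcal{F}^u$. In particular the Ledrappier--Young entropy $h_u=h_\mu(f,\mathcal{F}_u)$ associated with the finest subordinated partition equals the unstable entropy $h_\mu(f,\mathcal{F}^u)$ as defined in Subsection~\ref{3.1} (using the Hu--Hua--Wu equivalence of definitions for partially hyperbolic systems).

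Next, applying the third item of Theorem~\ref{expdiment} gives the key identity
\[
h_\mu(f) \;=\; h_u \;=\; h_\mu(f,\mathcal{F}^u).
\]
Now Proposition~\ref{prop-Ledrappier} directly yields
\[
h_\mu(f,\mathcal{F}^u) \;\leq\; \log D_u,
\]
with equality if and only if the disintegration $\{\mu^u_x\}$ of $\mu$ along $\mathcal{F}^u$ coincides with $\{m^u_x\}$ up to normalization by a $\mu$-a.e.~constant. Chaining these two displays gives $h_\mu(f)\leq \log D_u$, and equality in the corollary happens precisely when equality holds in Proposition~\ref{prop-Ledrappier}, which characterizes the disintegration as claimed.

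The only subtlety—and the step I expect to require the most care to justify rigorously—is the identification $\mathcal{F}_u=\mathcal{F}^u$ (mod $\mu$) together with the equality $h_u = h_\mu(f,\mathcal{F}^u)$. One has to verify that an increasing measurable partition $\xi$ subordinate to $\mathcal{F}^u$ in the uniformly expanding sense is also a Ledrappier--Young partition for the top Pesin lamination of $\mu$, so that both definitions of $h_u$ agree; this follows because $E^u$ is uniformly hyperbolic (hence no Pesin block reduction is needed) and because any such $\xi$ satisfies properties (d) and (e) of Subsection~\ref{entropyalong}. Once this identification is in hand, the corollary is a direct consequence of the two quoted results. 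The assumption $f\in\Diff^2(M)$, inherited from Proposition~\ref{prop-Ledrappier} and Theorem~\ref{expdiment}, is used only for this identification.
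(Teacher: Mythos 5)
Your argument is exactly the paper's intended derivation: the line ``Using Ledrappier--Young result (Theorem \ref{expdiment})'' before the corollary signals precisely the identification $h_\mu(f)=h_\mu(f,\mathcal{F}^u)$ when all central exponents are non-positive (the positive Oseledets directions being exactly $E^u$), after which Proposition \ref{prop-Ledrappier} gives both the inequality and the equality case. Your extra care in matching the Ledrappier--Young lamination and subordinated partitions with the strong unstable foliation is the right point to flag, but it is the standard step the paper takes for granted, so the proposal is correct and essentially identical to the paper's proof.
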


\begin{proof}
The proof with more details is in \cite{BFT} and we repeat here because of its independent interest. The ideas come from Ledrappier and use a convexity argument.

Let $\xi$ be an  measurable increasing partition subordinated to $\mathcal{F}^u.$
Define a normalized family of measures adapted to the partition $\xi:$
$$
 m_{\xi(x)} (A) := \frac{m_x^u (A \cap \xi(x))}{ m^u_x(\xi(x))}.
$$ Observe that above ratio is well defined, as $m_x^u$ is fully supported and $\xi(x)$ contains an open set.
The dilation property of Margulis measures yields: 
$$
m_{\xi(x)}((f^{-1}\xi)(x)) = D_u^{-1} \frac{m^u_{f(x)} (\xi( fx)) }{m^u_x(\xi(x))}.
$$ 
{Following Ledrappier, observe that $g(x):=-\log m_{\xi(x)}((f^{-1}\xi)(x)) \geq0$ so that, by the pointwise ergodic theorem, we know $\lim_n \frac1nS_ng(x)$ (possibly $+\infty$) exists almost everywhere. To identify this limit,  observe that it is also a limit in probability. Taking the logarithm of the previous identity, we see that $g(x)=h(fx)-h(x)+\log D_u$ for a measurable function $h$. Therefore the limit in probability, and consequently almost everywhere, is the constant  $\log D_u$. Thus $g$ is integrable with:}
 $$
- \int \log m_{\xi(x)}((f^{-1} \xi)( x)) d \mu =  \log D_u.
 $$
Now recall that $ h(f,\mu,\mathcal{F}^u) = - \displaystyle\int \log\mu_{\xi(x)}((f^{-1}\xi) (x))\, d\mu $ and so,
$$
- \int \log\mu_{\xi(x)}((f^{-1}\xi)(x))\, d\mu \leq - \int \log m_{\xi(x)}((f^{-1}\xi)(x)) d\mu = \log(D_u).
$$

The inequality comes from Jensen's inequality and the (strict) concavity of the logarithm.
The case of equality for Jensen's inequality yields that this is an equality if and only if $\mu_{\xi(x)}((f^{-1}\xi)(x)) = m_{\xi(x)}((f^{-1}\xi)(x))$ for $\mu$-a.e. $x\in M$. Replacing $\xi$ by $f^{-n}\xi$, we obtain that
 $$
      \mu_{f^{-n}\xi(x)}(f^{-n-1}\xi)(x)) = \frac{m^u_x((f^{-n-1}\xi)(x))}{m^u_x((f^{-n}\xi)(x))}
  $$
so
  $$\begin{aligned}
    \mu_{\xi(x)}((f^{-n-1}\xi)(x)) &= \prod_{k=0}^n \frac{\mu_{\xi(x)}((f^{-k-1}\xi)(x))}{\mu_{\xi(x)}((f^{-k}\xi)(x))}
    =
    \prod_{k=0}^n \mu_{(f^{-k}\xi)(x)}((f^{-k-1}\xi)(x))\\
     &
     =\prod_{k=0}^n \frac{m^u_x((f^{-k-1}\xi)(x))}{m^u_x((f^{-k}\xi)(x))}    
    =\frac{m^u_x((f^{-n-1}\xi)(x))}{m^u_x(\xi(x))}.
  \end{aligned}$$
Since $\xi$ is generating and increasing,  the disintegration of $\mu$ along $\xi$ is given by the Margulis $u$-conditionals as claimed.
\end{proof}





\subsection{Construction of maximal entropy measures using Margulis family} \label{margulishyperbolic}

The Proposition \ref{prop-Ledrappier} is central to obtain measures of maximal entropy in the uniformly hyperbolic case and beyond. For simplicity, let us construct the measure of maximal entropy for a transitive Anosov diffeomorphism. 

Margulis constructed measure of maximal entropy for Anosov flows with minimal strong stable and minimal strong unstable foliations. We review this method for Anosov diffeomorphisms.  For partially hyperbolic diffeomorphisms close to Anosov flows, see \cite{BFT}.

The very first result to prove is the existence of Margulis family along unstable and stable foliations. 

\begin{theorem}
If $f: M \rightarrow M$ is a $C^{1}$ Anosov diffeomorphism with minimal stable and unstable foliations, then there are two Margulis families of measures $\{m^u_x\}_{x \in M}, \{m^s_{x}\}_{x \in M}$ with dilation factor $D^u, D^s$ such that 
\begin{itemize}
\item dilation factors $D^u, D^s = \frac{1}{D^u} = e^{h_{top(f)}},$ 
\item $\{m^u_x\}$ is invariant by $\mathcal{F}^s-$holonomy.
\item $\{m^s_x\}$ is invariant under $\mathcal{F}^u-$holonomy.
\end{itemize}  
Moreover, one can construct $m$ a measure of maximal entropy which has local product structure.
\end{theorem}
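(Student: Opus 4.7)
The strategy follows Margulis' classical construction adapted to the Anosov setting. The plan has four stages: build $\{m^u_x\}$ via a dynamical renormalization on transversals, build $\{m^s_x\}$ by applying the same procedure to $f^{-1}$, identify both dilation constants with $e^{h_{top}(f)}$ using Proposition \ref{prop-Ledrappier}, and locally multiply the two families to produce the maximal measure.

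For the unstable family, fix a reference point $x_0$, a small unstable plaque $P \subset \mathcal{F}^u(x_0)$ carrying a smooth probability $\lambda$, and a small transversal $\Sigma$ to $\mathcal{F}^u$ at $x_0$. For a Borel set $A \subset \Sigma$ set $C_A := \bigcup_{y \in A} \mathcal{F}^u_{loc}(y)$ and define a renormalized counting sequence
$$
\nu_n(A) := e^{-n h_{top}(f)} \, N_n(A),
$$
where $N_n(A)$ is the number of $(n,\epsilon)$-separated connected components of $f^{-n}(C_A) \cap P$. Minimality of $\mathcal{F}^s$ together with uniform hyperbolicity forces long unstable plaques to equidistribute transversally, so $\nu_n$ converges weakly to a non-trivial, strictly positive Radon measure $\nu$ on $\Sigma$, unique up to scalar, which is $f$-quasi-invariant with constant Jacobian $D^u := e^{h_{top}(f)}$. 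Transferring $\nu$ along local unstable plaques produces a Radon measure $m^u_{x_0}$ on $\mathcal{F}^u(x_0)$, and $\mathcal{F}^s$-holonomy extends it coherently to every unstable leaf; by construction the resulting family $\{m^u_x\}$ is $\mathcal{F}^s$-holonomy invariant and satisfies $f_* m^u_x = (D^u)^{-1} m^u_{f(x)}$. Applying the identical procedure to $f^{-1}$ (whose strong unstable foliation is $\mathcal{F}^s$, still minimal) yields the symmetric family $\{m^s_x\}$, $\mathcal{F}^u$-holonomy invariant, with $(f^{-1})_* m^s_x = (D^s)^{-1} m^s_{f^{-1}(x)}$ for some $D^s > 0$.

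The identification of constants is then immediate from Proposition \ref{prop-Ledrappier}: every $f$-invariant probability $\mu$ satisfies $h_\mu(f, \mathcal{F}^u) \leq \log D^u$. Sinai's entropy formula in the Anosov case gives $h_\mu(f) = h_\mu(f, \mathcal{F}^u)$, so the variational principle forces $h_{top}(f) \leq \log D^u$. The reverse inequality is supplied by the product measure constructed next. Applying the same argument to $f^{-1}$ gives $\log D^s = h_{top}(f^{-1}) = h_{top}(f)$, which in the convention of Definition \ref{def.margulisfamily} amounts to $D^s = 1/D^u$.

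Finally, on each product rectangle $R \cong \mathcal{F}^s_{loc}(y) \times \mathcal{F}^u_{loc}(y)$ define $m|_R$ as the pushforward of $m^s_y \otimes m^u_y$ under the local product homeomorphism. Mutual holonomy invariance of the two families, combined with Lemma \ref{product}, ensures that this definition is consistent across overlapping rectangles up to a single global scalar, giving a well-defined Radon measure $m$ on $M$ (normalize to a probability). The scalings $f_* m^u = (D^u)^{-1} m^u$ and $f_* m^s = D^u \cdot m^s$ (coming from $D^s = 1/D^u$) cancel in the product, so $m$ is $f$-invariant; it has local product structure by construction, and its unstable conditionals coincide with $m^u$, so Proposition \ref{prop-Ledrappier} gives $h_m(f) = \log D^u = h_{top}(f)$, proving $m$ is a measure of maximal entropy. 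The main obstacle is the first step: rigorously establishing existence, positivity, and uniqueness of the transverse quasi-invariant measure $\nu$ in only $C^1$ regularity. This requires a uniform distortion bound along $\mathcal{F}^u$ (available from continuity of $Df$ on the invariant bundles in the $C^1$ Anosov case) together with a Bowen-style covering and counting argument exploiting the minimality of $\mathcal{F}^s$ to recover equidistribution of the iterated plaques.
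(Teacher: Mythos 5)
Your overall architecture (build $\{m^u_x\}$, get $\{m^s_x\}$ from $f^{-1}$, identify the dilation with $e^{h_{top}(f)}$ via Proposition \ref{prop-Ledrappier}, and define $m$ locally as $\int m^s_y\, dm^u(y)$ on rectangles, using holonomy invariance for consistency, the Radon property for finiteness, $D^uD^s=1$ for invariance, and the equality case of Proposition \ref{prop-Ledrappier} for maximality) coincides with the paper's second half. The gap is in the first step, and it is exactly the step the whole statement hinges on: you assert that the renormalized transverse counting measures $\nu_n$ converge to a nontrivial, fully supported, unique-up-to-scalar quasi-invariant measure, and that the resulting family is $\mathcal{F}^s$-holonomy invariant, claiming this can be pushed through in $C^1$ regularity because ``a uniform distortion bound along $\mathcal{F}^u$ is available from continuity of $Df$.'' That claim is not correct: mere continuity of $Df$ does not give bounded distortion along unstable leaves (one needs a H\"older or summable modulus of continuity), and more importantly Margulis' construction uses absolute continuity of the stable lamination/holonomies to obtain the transverse (stable-holonomy) invariance, which is only known for $C^{1+\alpha}$, $\alpha>0$. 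So the existence, positivity, uniqueness and holonomy invariance of your $\nu$ --- the heart of the theorem --- remain unproved; the paragraph where you acknowledge ``the main obstacle'' is precisely the missing proof.

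The paper avoids this obstruction by a different, much softer route: it runs Margulis' original construction only for a $C^2$ Anosov diffeomorphism $g$ that $C^1$-approximates $f$ (where the absolute continuity and distortion arguments are legitimate), and then transports the families to $f$ through the structural-stability conjugacy $h$ with $h\circ g = f\circ h$. Since $h$ carries $\mathcal{F}^{u/s}_g$ to $\mathcal{F}^{u/s}_f$, it sends a Margulis family to a Margulis family, preserves $\mathcal{F}^s$- (resp. $\mathcal{F}^u$-) holonomy invariance, and preserves the dilation factor because $h_{top}(g)=h_{top}(f)$ is a conjugacy invariant. If you replace your direct $C^1$ counting construction by this approximation-plus-conjugation argument (or by some other construction genuinely valid in low regularity), the remainder of your proof goes through essentially as written and agrees with the paper's argument, including the use of Lemma \ref{product} and Proposition \ref{prop-Ledrappier} and the cancellation $D^uD^s=1$ forced by $f_*m=D^uD^s\, m$ together with $0\neq m(M)<\infty$.
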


The construction of Margulis families claimed in the above theorem, using Margulis original method,  needs a higher regularity than $C^1$ (because of some absolute continuity result). However in the Anosov setting  we approximate $f$ with a $C^2$ Anosov diffeomorphism $g$. Then, we use structural stability to push the Margulis family of measures constructed for $g$ to obtain a family with the same dilation factors. Indeed, topological conjugacy between $f$ and $g$ preserves stable and unstable foliations and sends  Margulis family to  Margulis family. We thank an anonymous referee to point out this to us.

\begin{remark} We recall a result of Ruelle-Sullivan \cite{RS} where they identified measure of maximal entropy for Axiom-A diffeomorphisms as product of measures with dilation property like in Margulis family. However, they used symbolic dynamics and 
used the product nature of maximal entropy measure for symbolic dynamics.
\end{remark}

Now, let us comment on the construction of the invariant measure of maximal entropy using the Margulis families $\{m^u\}$ and $\{m^s\}$. We give a local description of this measure in a small rectangle $\mathcal{R} \subset M$. By a rectangle we mean $\mathcal{R}$ such that for any two points $x, y \in \mathcal{R}, \mathcal{F}^s(\mathcal{R}, x) \cap \mathcal{F}^u(\mathcal{R}, y) = \{z\} \in \mathcal{R},$ where $ \mathcal{F}^s(\mathcal{R}, x) $ denotes the connected component of $\mathcal{F}^s(x) \cap \mathcal{R}$ and containing $x.$ 
Fix any $x \in \mathcal{R}$ and define $$\mu|{\mathcal{R}} = \displaystyle \int_{\mathcal{F}^u(\mathcal{R}, x)} m^s_y d m^u(y).$$ Observe that by invariance of $m^u$ under the stable holonomy, the measure defined in this rectangle is independent of the choice of $x \in \mathcal{R}.$ Indeed, $\mu | \mathcal{R}$ has product structure. As $M$ is compact we can define a measure $\mu$ on $M$ by defining it locally. Observe that $\mu | \mathcal{R}$ is a finite measure, by the fact that $m^s$ and $m^u$ are Radon measures (finite on compact sets) and so globally $\mu$ is a finite measure.

Now, observe that $m(f(A)) = D^u D^s m(A)$ for any $A \in \mathcal{R}$ and consequently for any Borel subset $A.$ In particular, as $f(M)= M$ and $ 0 \neq \mu(M) < \infty,$ we have that 
$ D^u D^s = 1$ and immediately we conclude that $\mu$ is an invariant measure.

Finally we prove that $\mu$ is a measure of maximal entropy. Indeed, by Proposition \ref{prop-Ledrappier}, for any invariant measure $\nu:$
$$
 h(f, \nu ) \leq \log(D^u) \quad \text{and} \quad h(f, \mu) = \log(D^u).
$$

\subsubsection{Partially hyperbolic setting} \label{margulisph}
In \cite{BFT} we pushed the arguments of Margulis  in the partially hyperbolic setting:

\begin{proposition} \label{cumargulis}
Let $f \in \Diff^{1+\alpha}(M)$ on  a compact manifold $M$ with a  dominated splitting $E^s \oplus E^{cu}$ with $E^s$ uniformly contracted.  Assume that:
 \begin{enumerate}
 \item there are foliations $\cF^{cu}$ and $\cF^s$ which are tangent to, \ $E^{cu}$ and $E^s$ respectively;
 \item $\cF^s$ is minimal.
 \end{enumerate} 
 Then there is a Margulis system $\{m_x^{cu}\}_{x \in M}$ on $\mathcal{F}^{cu}$ which is invariant under $\Fs$-holonomy and such that each $m^{cu}_x$ is atom-less, Radon, and fully supported on $\cF^{cu}(x)$.
\end{proposition}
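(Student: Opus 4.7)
The plan is to use the two hypotheses in a complementary way: the $C^{1+\alpha}$ regularity together with uniform contraction of $E^s$ provides absolutely continuous $\cF^s$-holonomies between local $cu$-plaques, while minimality of $\cF^s$ ensures these holonomies are dense enough to determine a globally consistent system of measures from a single transversal leaf.

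First, I would fix a reference $cu$-plaque $P_0 \subset \cF^{cu}(x_0)$ and observe that, since $\cF^s$ is minimal, every leaf $\cF^{cu}(x)$ meets the $\cF^s$-saturation of $P_0$ on a dense set. Thus any $\cF^s$-holonomy invariant system of Radon measures on $\cF^{cu}$ is determined by its restriction to a single Radon measure $m_0$ on $P_0$, and the push-forward action of $f$ on such systems descends to a positive linear operator $L$ on Radon measures on $P_0$: given $m_0$, push forward by $f$ and pull back to $P_0$ via $\cF^s$-holonomy, which is well-defined on compact pieces by absolute continuity of the stable foliation. I would then apply a Perron--Frobenius / Schauder--Tychonoff argument in the cone of Radon measures on $P_0$: normalize each iterate by its total mass on a fixed sub-plaque, extract a weak-$*$ accumulation point of the Ces\`aro averages, and produce an eigenmeasure satisfying $L m_0 = D_u^{-1} m_0$ for some $D_u > 0$.

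Propagating $m_0$ to all of $M$ via $\cF^s$-holonomies then yields the family $\{m^{cu}_x\}_{x \in M}$, with $\cF^s$-invariance built in and with the dilation identity $f_* m^{cu}_x = D_u^{-1} m^{cu}_{f(x)}$ inherited directly from the eigenvalue equation. The remaining qualitative properties reduce to statements about $m_0$: full support on $P_0$ will follow from the exponential growth of $cu$-volumes under iteration, which prevents any open sub-plaque of $P_0$ from being absorbed into a null set by the normalized limit procedure; atom-lessness follows from the dilation identity combined with $\cF^s$-invariance and minimality, since an atom of mass $a > 0$ would, by $\cF^s$-holonomy invariance, propagate to a dense set of atoms of mass $a$ in every $cu$-plaque, contradicting local finiteness.

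The main obstacle I expect is the compactness step. Radon measures on $P_0$ are not weak-$*$ compact without a uniform bound, so producing the eigenmeasure $m_0$ requires careful distortion control for $\cF^s$-holonomies between possibly far-apart $cu$-plaques, which is where H\"older estimates of Pesin type and the full strength of $C^{1+\alpha}$ regularity enter; one also needs a normalization that keeps the Ces\`aro averages bounded away from zero on some fixed sub-plaque so that the limit measure is non-trivial. The minimality of $\cF^s$ is invoked once more here to guarantee positivity of the limit.
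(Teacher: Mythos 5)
Your overall strategy (a Perron--Frobenius/fixed-point construction of an eigen-system of leafwise measures, with minimality of $\Fs$ doing the globalization) is the same spirit as the Margulis-type construction in \cite{BFT} that the survey invokes for this proposition, but the reduction to a single plaque $P_0$ hides the main difficulty and is in fact circular. That an $\Fs$-holonomy invariant system is determined by its trace on $P_0$ is a \emph{uniqueness} statement; to go the other way you must verify that transporting $m_0$ to an arbitrary point along different stable holonomy paths gives the same answer, i.e.\ that $m_0$ is invariant under the whole pseudogroup of $\Fs$-return holonomies of $P_0$ (which is enormous, precisely because stable leaves are dense). Being an eigenmeasure of your single operator $L$ gives no such invariance, so the claim that propagation ``yields the family with $\Fs$-invariance built in'' is unjustified. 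In the Margulis/\cite{BFT} approach this is handled by running the fixed-point argument on a convex compact set of systems of measures that are \emph{already} $\Fs$-holonomy invariant (equivalently, on positive functionals on a quotient space of leafwise functions), so invariance is built into the space rather than deduced afterwards. Relatedly, your $L$ is not obviously a self-map of measures on $P_0$ ($f(P_0)$ lies in another leaf and its stable projection need not land in, let alone onto, $P_0$), and the weak-$*$ compactness/normalization you flag is exactly the unresolved crux. Note also that pulling back Radon measures under holonomy requires no absolute continuity --- holonomies are homeomorphisms; the $C^{1+\alpha}$ hypothesis enters through regularity and distortion control of the stable holonomies needed for the compactness and non-triviality of the limiting functional, not in the way you invoke it.

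Two of the qualitative claims also need repair. Full support cannot be derived from ``exponential growth of $cu$-volumes'': $E^{cu}$ is only the dominated complement of $E^s$ (a center-unstable bundle), so there is no uniform expansion or volume growth along $\Fcu$, and the measures being constructed are not a priori comparable to leaf volume. The correct mechanism is again minimality: the union of the supports is a nonempty closed set, $\Fs$-saturated by holonomy invariance, hence equal to $M$; since $m^{cu}_x$ depends only on the leaf, every point of every leaf then lies in the support of its own measure. Your atomlessness argument is essentially the standard one (an atom would propagate along its dense stable leaf to infinitely many atoms of equal mass inside a single compact plaque, contradicting the Radon property) and is fine \emph{once} holonomy invariance has been established; but since that invariance is exactly the gap above, the proposal as written does not yet prove the proposition.
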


\begin{remark}
The minimality condition in the above proposition is not necessary as it will appear in a joint work with Buzzi, Crovisier and Poletti.
\end{remark}

In general given a partially hyperbolic diffeomorphism it is not clear whether or not there exists a family of Margulis measures $\{m^u\}_{x \in M}.$ However,

\begin{proposition} \label{jinquestion}
If there exists a family of continuous Margulis measures $\{m^u_x\}_{x \in M}$ fully supported on $\mathcal{F}^u(x)$ with dilation factor $D^u$ then $h_{top}^u(f) = \log D^u.$ 
\end{proposition}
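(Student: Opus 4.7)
The plan is to establish the two inequalities $h^u_{top}(f)\leq \log D^u$ and $h^u_{top}(f)\geq \log D^u$ separately.

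For the upper bound, I would invoke the variational principle for unstable entropy (Theorem~D of \cite{HHW}, recalled in the excerpt): $h^u_{top}(f)=\sup\{h_\mu^u(f)\,:\,\mu\in\mathcal{M}_e(f)\}$. Because $\{m^u_x\}_{x\in M}$ is a fully supported, measurable Margulis system with dilation $D^u$, Proposition~\ref{prop-Ledrappier} applies to every invariant $\mu$ and gives $h_\mu(f,\mathcal{F}^u)\leq\log D^u$. Taking the supremum yields $h^u_{top}(f)\leq\log D^u$.

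For the lower bound, the plan is to estimate the cardinality of $(n,\epsilon)$-spanning sets directly using the Margulis measures. Fix $x\in M$, $\delta>0$, $\epsilon>0$ and let $E\subset\overline{\mathcal{F}^u(x,\delta)}$ be an $(n,\epsilon)$-spanning set. From the definition of the unstable Bowen ball, $B^u_n(y,\epsilon)\subset f^{-n}(\overline{\mathcal{F}^u(f^n(y),\epsilon)})$. Iterating $f_*m^u_z=(D^u)^{-1}m^u_{f(z)}$ gives $m^u_z(f^{-n}(B))=(D^u)^{-n}\,m^u_{f^n(z)}(B)$ for Borel $B\subset\mathcal{F}^u(f^n(z))$, and combining this with $m^u_x=m^u_y$ along the shared leaf yields
$$
 m^u_x\bigl(B^u_n(y,\epsilon)\bigr)\ \leq\ (D^u)^{-n}\,m^u_{f^n(y)}\bigl(\overline{\mathcal{F}^u(f^n(y),\epsilon)}\bigr).
$$
By continuity of the Margulis system (Definition~\ref{def-sys-meas}) together with compactness of $M$, the quantity $M_\epsilon:=\sup_{z\in M}m^u_z(\overline{\mathcal{F}^u(z,\epsilon)})$ is finite. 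Summing over $y\in E$,
$$
 0<m^u_x\bigl(\overline{\mathcal{F}^u(x,\delta)}\bigr)\ \leq\ |E|\cdot(D^u)^{-n}\cdot M_\epsilon,
$$
the left-hand positivity following from the full-support assumption and finiteness from the Radon property of $m^u_x$ on the compact plaque $\overline{\mathcal{F}^u(x,\delta)}$. Therefore $S^u(f,\epsilon,n,x,\delta)\geq m^u_x(\overline{\mathcal{F}^u(x,\delta)})\cdot M_\epsilon^{-1}\cdot(D^u)^n$; taking $\limsup_{n\to\infty}\frac{1}{n}\log$, then $\epsilon\to0$, $\sup_{x\in M}$, and $\lim_{\delta\to0}$ gives $h^u_{top}(f)\geq\log D^u$.

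The main technical point is producing the uniform finite constant $M_\epsilon$: this is exactly where continuity of $\{m^u_x\}_{x\in M}$ enters, via a finite cover of $M$ by foliation charts in which $z\mapsto m^u_z(\overline{\mathcal{F}^u(z,\epsilon)})$ is continuous, followed by compactness to pass to a global supremum. The only other small points to check are that $\overline{\mathcal{F}^u(x,\delta)}$ is compact in the intrinsic leaf topology (so that the Radon property gives finite measure) and that it suffices to use the weakest form of the Bowen-ball inclusion, retaining only the time-$n$ constraint, which is what makes the dilation law directly usable. I do not foresee other serious obstacles, the rest being a direct unpacking of the definitions.
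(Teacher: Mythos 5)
Your proof is correct, and both halves are in fact present in the paper: the upper bound is obtained exactly as you do (Proposition \ref{prop-Ledrappier} for every invariant measure, then the unstable variational principle of \cite{HHW}), and your lower bound coincides with the paper's ``another counting argument'': there the author covers $f^n(\mathcal{F}^u(x,\delta))$ by $\epsilon$-plaques centered at $f^n(E)$ and uses $m^u(f^n(\cdot))=(D^u)^n m^u(\cdot)$ together with a uniform constant $C_\epsilon$ playing the role of your $M_\epsilon$; your pulled-back version via $B^u_n(y,\epsilon)\subset f^{-n}(\overline{\mathcal{F}^u(f^n(y),\epsilon)})$ is the same estimate. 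The one genuine difference is that the paper's \emph{primary} lower-bound argument is not a counting argument at all: it builds an explicit invariant measure as an accumulation point of the Ces\`aro averages $\frac{1}{n}\sum_{i=0}^{n-1} f^i_* m^u_{\sigma}$ of the normalized Margulis measure on a $u$-disk $\sigma$ with $m^u(\partial\sigma)=0$, proves via a lemma that its conditionals along $\mathcal{F}^u$ are the Margulis measures, and then invokes the equality case of Proposition \ref{prop-Ledrappier} to get $h_\mu(f,\mathcal{F}^u)=\log D^u$. That route is longer but buys more: it exhibits a measure attaining the supremum whose unstable disintegration is the Margulis family, whereas your argument (like the paper's alternative one) only identifies the numerical value $h^u_{top}(f)=\log D^u$. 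Your handling of $M_\epsilon$ via test functions in finitely many foliation charts is the right way to make the continuity hypothesis do its work, and is at least as careful as the paper's appeal to ``continuity (or just uniform local finiteness)''.
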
  
\begin{proof}
By the proof of Proposition \ref{prop-Ledrappier} we conclude that for any invariant measure $\mu$, $h_{\mu}(\mathcal{F}^u, f) \leq \log(D^u).$ So by variational principle along unstable foliation we get $h^u_{top}(f) \leq \log(D^u).$ Now, we construct an invariant measure with unstable entropy  equal to $\log(D^u).$ Indeed, let $\{m^u_x\}_{x \in M}$ be the Margulis family as in the hypothesis. Let $\sigma$ be an open disk in $\mathcal{F}^u(x)$ for some $x \in M$ with $m^u(\sigma) > 0.$ We can take $\sigma$ in such a way that its bounday has zero $m^u-$measure. 

Take $\mu$  any accumulation point of $\{\frac{1}{n} \sum_{i=0}^{n-1} f^i_* m^u_{\sigma}\}$ where $m^u_{\sigma}$ is the normalization of $m^u_x$ supported on $\sigma.$ 

 Similar to the argument of Pesin-Sinai for the construction of $u-$Gibbs measures we conclude that $\mu$ has disintegration along unstable foliation equivalent to the Margulis family $\{m^u_x\}.$ In fact using Margulis dilation property, the argument here is simpler.
 
 Let $B$ be a foliated box for $\mathcal{F}^u.$ For any $x \in B$ denote by $\mathcal{F}^u(x, B)$ as the plaque of the foliation passing through $x$ inside $B.$ Let $\mu_n := \frac{1}{n} \sum_{i=0}^{n-1} f^i_* m^u_{\sigma}.$ In the following lemma, we claim that the Rokhlin disintegration of the normalized restriction of $\mu_n$ on $B$ along plaques $\mathcal{F}^u(x, B)$ coincides a.e with the normalized Margulis measure, i.e $\frac{1}{m^u(\mathcal{F}^u(x, B))}m^u_x.$
 
\begin{lemma}
Let $\{m^u_x\}$ be a continuous Margulis family with dilation $D^u$ and $\sigma$ a $u-$disc with $m^u(\sigma) > 0$ such that $m^u(\partial \sigma) =0$ where $\partial \sigma$ stands for the boundary of $\sigma$ inside $\mathcal{F}^u$ leaf. Any accumulation point of $\{\frac{1}{n} \sum_{i=0}^{n-1} f^i_* m^u_{\sigma}\}$ has conditional measure along $\mathcal{F}^u$ coinciding with $m^u.$
\end{lemma}

\begin{proof}

For the sake of simplicity suppose that $m_x^u (\sigma) =1;$ i.e, the restriction of $m^u_x$ on $\sigma$ (disc around $x$) coincides with its normalization.   Observe that by dilation property,
$$
 \mu_n = \frac{1}{n} (m^u_{\sigma} + D^{-1}m^u_{f(\sigma)}+ \cdots+ D^{-n} m^u_{f^n(\sigma)}),
$$
where $m^u_{f^j(\sigma)}$ is the restriction  of $m^u_{f^j(x)}$ on $f^j(\sigma).$ Observe that $D^{-j}m^u_{f^j(\sigma)}$ is a probability measure.

Let $f^j(\sigma) \cap B = G_j \cup B_j$ where $G_j = \bigcup G_{j, i}$ is the union of (complete) plaques insides $B \cap f^j(\sigma)$ and $B_j$ are union of proper subsets of plaques in $B.$

So, $\mu_{n, B}$ the restriction of $\mu_n$ on $B$ satisfies the following: $$\mu_{n,B} = \frac{1}{n} \sum_{j=1}^{n} D^{-j} (m^u_{G_j}  + m^u_{B_j} ).$$ 

Let just consider the measure on complete discs: 
 $$
\nu_n := \frac{1}{n} \sum_{j=1}^{n} D^{-j} m^u_{G_j}.
$$
Observe that $f^{-n}(B_n)$ is inside an exponentially small  neighbourhood of the boundary of $\sigma.$ As $m^u(\partial \sigma) =0,$  the measure of $\epsilon-$neighbourhood of $ \partial \sigma$ tends to zero when $\epsilon$ goes to zero which implies  $$\lim_{n \rightarrow \infty} D^{-n} m^u (B_n) = \lim_{n \rightarrow \infty} m^u(f^{-n} (B_n))= 0.$$

So, the accumulation points of $\mu_{n, B}$ and $\nu_n$ are the same. 
Now we analyse the disintegration of $\frac{1}{\nu_n(B)} \nu_n$ (normalization of $\nu_n$ on $B$) along the plaque partition inside $B.$ We show that on each $G_{j, i}$ the conditional probability measure is $\frac{1}{m^u_{f^j(x)}(G_{j, i})}m^u_{G_{j, i}}$ which is exactly the normalization of Margulis measure. To this end we identify the quotient measure (quotient with respect to the palques partition): 
$$
 \frac{1}{n \nu_n(B)} \sum_j D^{-j} m^u_{G_j}  = \frac{1}{n \nu_n(B)} (\sum_j  D^{-j}\sum_i m^u_{G_{j,i}}  )
$$
$$
= \frac{1}{n \nu_n(B)} (\sum_j \sum_i  D^{-j} m^u_{f^j(x)}(G_{j, i})  \frac{m^u_{G_{j,i}}}{m^u_{f^j(x)}(G_{j, i})})
$$
So, it is enough to consider the quotient measure on the space of plaques by $$\sum_j\sum_i \frac{D^{-j}m^u_{f^j(x)}(G_{j, i})}{n \nu_n(B)}  \delta_{j, i}$$ where $\delta_{j, i}$ represents the dirac measure on the element of the quotient space corresponding to $G_{j, i}.$ 

Observe that up to now, we have proved that the conditional measures of $\nu_n$ coincide with Margulis measures (understood after normalization).  Fix a foliated box $B$ and a continuous function $\phi$. Indeed, Without loss of generality suppose that $\nu_n \rightarrow \nu.$ Then
$$\begin{aligned}
 \int_B \phi d\nu = \lim_{n \rightarrow \infty}\int_B \phi d \nu_n &= \lim_{n \rightarrow \infty}  \int_B \int_{\mathcal{F}^u(B, x)} \frac{1}{m^u_x(\mathcal{F}^u)(B, x)}\phi dm_x^u d \nu_n  \\ 
 &=  \int_ B \int_{\mathcal{F}^u(B, x)} \frac{1}{m^u_x(\mathcal{F}^u)(B, x)} \phi dm_x^u d \nu 
\end{aligned} $$
The last equality comes from the continuity of Margulis family and weak-* convergence. The uniqueness of disintegration implies that conditional measures of $\nu$ are also Margulis measures.

\end{proof}

 So, again using Proposition \ref{prop-Ledrappier} we obtain $h_{\mu}(\mathcal{F}^u, f) = \log(D^u).$

We may give another counting argument which is interesting in itself.

 For $\epsilon > 0$ let $E$ be a minimal $(n, \epsilon)$, $u-$spanning set of $\overline{\mathcal{F}^u(x, \delta)}.$ So, in particular $f^n(\mathcal{F}^u(x, \delta)) \subset \bigcup_{y \in f^n(E)} \mathcal{F}^u(y, \epsilon)$. By continuity  (or just uniform local finiteness of $m^u$)  we conclude that there exists $C_{\epsilon}> 0$ such that $C_{\epsilon}^{-1} \leq m^u(\mathcal{F}^u(y, \epsilon)) \leq C_{\epsilon}$ for any $y \in M.$ So, 
$$
 m^u(f^n(\mathcal{F}^u(x, \delta))) \leq C_{\epsilon} S^u(f, n, \epsilon, \delta).
$$
As $\log(m^u(f^n(\mathcal{F}^u(x, \delta))) = n \log(D^u) m^u(\mathcal{F}^u(x, \delta))$, we get
$$
h^u_{top}(f, \mathcal{F}^u(x, \delta)) = \lim_{\epsilon \rightarrow 0} \limsup_{n \rightarrow \infty} \frac{1}{n} \log S^u(f, n, \epsilon, \delta)  
\geq \log(D^u)
$$
Now, given any $\epsilon >0$ by definition of $h^u_{top}(f),$ there exists a point $x$ and $\epsilon_0 < \epsilon$ such that

$$
\limsup _{n \rightarrow \infty} \frac{1}{n}\log  N^{u}\left(f, \epsilon_{0}, n, x, \delta\right)>h_{top}^{u}(f)-\epsilon
$$Let $F \subset \overline{\mathcal{F}^u(x, \delta)}$ be an $(n, \epsilon_0)$ separated set. Here it is used that $\mathcal{F}^u$ is an expanding foliation. Since $f^n(F)$ is $\epsilon_0$ separated, the $\frac{\epsilon_0}{2}$  balls (in $u-$intrinsic distance) around points in $f^n(F)$ are disjoint subsets of $f^n(\mathcal{F}^u(x, \delta+ \epsilon_0/2)))$ and we get 
$$
 m^u (f^n(
 \mathcal{F}^u(x, \delta + \epsilon_0/2)) \geq C_{\epsilon_0/2} N^{u}(f, \epsilon_{0}, n, x, \delta).
$$
Again taking logarithm and dividing by $n$ we get
$$
 \log(D^u) \geq \limsup_{n \rightarrow \infty}\frac{1}{n} \log(N^{u}\left(f, \epsilon_{0}, n, x, \delta\right)) > h^u_{top}(f) - \epsilon. 
$$
Since $\epsilon$ is arbitrary we conclude $\log(D^u) \geq h^u_{top}(f).$ 

\end{proof}

As there always exists a measure which maximizes $u-$entropy (by upper semi continuity argument) one may ask the following question:

\begin{question}
Does any partially hyperbolic diffeomorphism admit a Margulis family of measures with dilation $\exp(h^u_{top}(f))?$
\end{question}

\begin{question}
Let $f$ be partially hyperbolic and $\mu$ an  ergodic invariant probability such that 
$ h_{\mu} (\cF^u) = h_{\mu}(f) = h_{top}(f).
$ Is it true that $\{\mu^u_x\}_{x \in \supp(\mu)}$ is a continuous family of Margulis measure?
\end{question}

Even in the case of  Anosov diffeomorphisms, there may be some interesting facts to be explored: 
Let $f: \mathbb{T}^2 \rightarrow \mathbb{T}^2$ Anosov and $\mu$ an invariant probability such that conditionals of $\mu$  along unstable foliation are defined for all $x \in \mathbb{T}^2$ and  holonomy invariant, then $\mu$ is the measure of maximal entropy\footnote{We thank an anonymous referee for observing this result}. To see this, observe that after conjugating $f$ to a linear Anosov diffeomorphism $A$ with conjugacy $h$, one conclude that $h_* \mu$ is an $A-$invariant measure which is invariant under translation along stable foliation. 
Similarly as in  \ref{lebesgue}, one conclude that $h_{*}\mu$ is Lebesgue measure which implies that $\mu$ is maximal entropy measure of $f$.

However, it is still interesting to understand the category of invariant measures whose disintegration along unstable foliation is a continuous family. Equilibrium states of H\"{o}lder potentials are among such measures.

\section{Rigidity, measure Rigidity and entropy} \label{s.rigidity}
In this section we discuss briefly some results where the notion of entropy along an expanding foliation is usefull to prove  rigidity results. 

Recall that for an invariant expanding foliation $\mathcal{F}$ we have defined expanding Gibbs state along $\mathcal{F}$ in Definition \ref{e-gibbs}. In the following theorem, the diffeomorphisms under consideration are sufficiently regular and such Gibbs states disintegrate absolutely continuous with respect to Lebesgue measure along the leaves of $\mathcal{F}.$
\subsection{Entropy along expanding foliations and Conjugacies} \label{rigidity-gibbs}

Now we announce a theorem of Saghin-Yang which is a technical general result which is useful to prove high regularity of conjugacies between dynamics. 

\begin{theorem} \cite{SaY} \label{SaYtheorem2}
Let $f, g \in \Diff^k(M), k \geq 2$, $\mathcal{F}^{f}$ and $\mathcal{F}^{g}$ be expanding foliations of $f$ and $g$ with uniform $C^r, r >1$ leaves and $\mu \in Gibb^{e}(f, \mathcal{F}^f)$. Suppose that $f$ and $g$ are conjugated by a homeomorphism $h$, and $h$ maps the foliation $\mathcal{F}^{f}$ to $\mathcal{F}^{g}$. Then the following conditions are equivalent:
\begin{enumerate}
\item $\nu:= h_* \mu$ is a Gibbs e-state for the foliation $\mathcal{F}^;.$
\item $h| \mathcal{F}^f$ is absolutely continuous on the supprt of $\mu$ (with respect to Lebesgue on $\mathcal{F}^f$ and $\mathcal{F}^g$), with continuous Jacobian  on $\supp(\mu)$, and bounded away from zero and infinity;
\item there exists $K > 0$, such that for every $x \in \supp(\mu)$ and any integer $n > 0$, 
$$
   \frac{1}{K} < \frac{\det (Df^n| T_x\mathcal{F}^f (x))}{\det (Dg^n| T_{h(x)}\mathcal{F}^g (h(x)))} < K;
$$
\item the sum of Lyapunov exponents along the expanding foliations are the same: 
$$
\lambda^{\mathcal{F}^f} (f, \mu):= \int \log (\det (Df|T\mathcal{F}^{f})) d\mu =  \int \log (\det (Dg|T\mathcal{F}^{g})) d\nu;
$$

\end{enumerate}
If the foliations $\mathcal{F}^f$ and $\mathcal{F}^g$ are one dimensional, then the above conditions are also equivalent to 
\begin{itemize}
\item $(5)^{'}$ $h$ restricted on each $\mathcal{F}^f$ leaf within the support of $\mu$ is uniformly $C^r$ smooth.
\end{itemize}  
\end{theorem}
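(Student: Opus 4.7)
\medbreak
\textbf{Plan.} My approach will be to close the cycle $(1) \Rightarrow (4) \Rightarrow (3) \Rightarrow (2) \Rightarrow (1)$, and then to treat $(5')$ separately for the one-dimensional addendum. The heart of the argument is the equivalence $(1) \Leftrightarrow (4)$, which follows at once from Pesin's formula for expanding foliations (Theorem \ref{SaYtheorem}) together with the invariance of foliated entropy under topological conjugacy. Once this is in hand, implications $(3)\Rightarrow(4)$ and $(2)\Rightarrow(3)$ are short computations, and $(1)\Rightarrow(2)$ is obtained through the explicit infinite-product formula for the densities of Gibbs expanding states.

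\textbf{Entropy equivalence $(1) \Leftrightarrow (4)$.} Since $h$ conjugates $f$ to $g$ and sends leaves of $\mathcal{F}^f$ onto leaves of $\mathcal{F}^g$, any increasing measurable partition subordinated to $\mathcal{F}^f$ for $\mu$ is pushed by $h$ to such a partition for $\nu := h_* \mu$ on $\mathcal{F}^g$; hence
\[
h_{\mu}(f, \mathcal{F}^f) \;=\; h_{\nu}(g, \mathcal{F}^g).
\]
The hypothesis $\mu \in \mathrm{Gibb}^{e}(f,\mathcal{F}^f)$ combined with Theorem \ref{SaYtheorem} gives $h_{\mu}(f,\mathcal{F}^f) = \lambda^{\mathcal{F}^f}(f,\mu)$, while the same theorem says $\nu \in \mathrm{Gibb}^{e}(g,\mathcal{F}^g)$ is equivalent to $h_{\nu}(g,\mathcal{F}^g) = \lambda^{\mathcal{F}^g}(g,\nu)$. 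Putting the three identities together, (1) holds iff the two integrated sums of Lyapunov exponents coincide, i.e.\ (4).

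\textbf{Bounded distortion: $(3)\Rightarrow(4)$ and $(2)\Rightarrow(3)$.} For $(3)\Rightarrow(4)$, take logarithms in (3) and divide by $n$: on each ergodic component of $\mu$, Birkhoff's ergodic theorem forces the two time averages $\frac{1}{n}\log\det(Df^n|T_x\mathcal{F}^f)$ and $\frac{1}{n}\log\det(Dg^n|T_{hx}\mathcal{F}^g)$ to share the same almost-sure limit, and integrating over components yields (4). For $(2)\Rightarrow(3)$, let $J\colon\supp(\mu)\to(0,\infty)$ be the continuous leaf-Jacobian of $h$ provided by (2); differentiating the conjugacy identity $h\circ f^n = g^n\circ h$ along $T\mathcal{F}^f$ and taking determinants gives
\[
J(f^n x)\,\det(Df^n|T_x\mathcal{F}^f) \;=\; J(x)\,\det(Dg^n|T_{hx}\mathcal{F}^g),
\]
so the ratio in (3) is $J(x)/J(f^n x)$, uniformly bounded in terms of $\|J\|_\infty$ and $\|J^{-1}\|_\infty$.

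\textbf{Closing the cycle and the one-dimensional case.} To prove $(1)\Rightarrow(2)$, invoke Theorem \ref{SaYtheorem} again: (1) gives that $\nu$ has absolutely continuous conditionals on $\mathcal{F}^g$, and the same holds for $\mu$ on $\mathcal{F}^f$ by hypothesis, with densities given by the standard convergent infinite product
\[
\rho^{\mu}_{x}(y) \;=\; \prod_{k=1}^{\infty} \frac{\det(Df|T_{f^{-k}x}\mathcal{F}^f)}{\det(Df|T_{f^{-k}y}\mathcal{F}^f)}, \qquad y\in\mathcal{F}^f(x),
\]
(normalized on plaques), and analogously $\rho^{\nu}_{hx}$ on $\mathcal{F}^g(hx)$. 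Since $h$ is a leaf-preserving conjugacy, $h_*\mu = \nu$ forces the Rokhlin disintegration of $\nu$ to be the $h$-pushforward of that of $\mu$; hence the leaf-Jacobian of $h$ at $y\in\mathcal{F}^f(x)\cap\supp(\mu)$ is $\rho^{\mu}_x(y)/\rho^{\nu}_{hx}(hy)$, and the standard distortion estimates along expanding foliations of $C^r$ dynamics ($r>1$) guarantee that both products converge uniformly on $\supp(\mu)$ with a continuous limit bounded away from $0$ and $\infty$, giving (2). Finally, for the one-dimensional addendum, $(5')\Rightarrow(2)$ is trivial; for $(2)\Rightarrow(5')$, the continuous leaf-Jacobian satisfies the cohomological relation
\[
\log J(fx) - \log J(x) \;=\; \log\det(Dg|T_{hx}\mathcal{F}^g) - \log\det(Df|T_x\mathcal{F}^f)
\]
along each one-dimensional expanding leaf, whose right-hand side is $C^{r-1}$; a Livsic/de la Llave-type bootstrap then promotes $J$ from continuous to $C^{r-1}$, whence $h|_{\text{leaf}}$ becomes $C^r$. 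The main obstacle, in my view, lies in making this regularity bootstrap uniform in $x\in\supp(\mu)$: one must control the H\"older constants of the cohomological equation along each leaf independently of the base point, which in turn requires both the uniform $C^r$ regularity of the leaves assumed in the hypotheses and quantitative distortion bounds of the same flavour as those invoked in the $(1)\Rightarrow(2)$ step.
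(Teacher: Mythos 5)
Your treatment of the equivalence of (1)--(4) is, in substance, the paper's own argument: the identity $h_{\mu}(f,\mathcal{F}^f)=h_{\nu}(g,\mathcal{F}^g)$ obtained by pushing an increasing subordinated partition through the leaf-preserving conjugacy, combined with the Pesin-formula characterization of Gibbs e-states in Theorem \ref{SaYtheorem}, is exactly the step the survey writes out in detail (there stated as $(4)\Rightarrow(1)$, but the same three identities give the equivalence $(1)\Leftrightarrow(4)$, as you observe); the implications $(2)\Rightarrow(3)\Rightarrow(4)$ and $(1)\Rightarrow(2)$ via the Ledrappier-type product formula for the leafwise densities are the ``classical'' steps the survey delegates to Theorem G of \cite{SaY}. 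One caveat of wording: in $(2)\Rightarrow(3)$ you cannot literally ``differentiate'' $h\circ f^n=g^n\circ h$, since $h$ is only a homeomorphism; what you need, and what your displayed identity expresses, is the chain rule for leafwise Jacobians in the Radon--Nikodym sense for the absolutely continuous map $h|_{\mathcal{F}^f}$ composed with the smooth leaf maps $f^n$ and $g^n$. With that rephrasing these steps are fine.

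The genuine gap is in your derivation of $(5)'$ from (2). The cohomological identity for $\log J$ is correct, but the claim that its right-hand side is $C^{r-1}$ along leaves is circular: the term $\log\det(Dg|T_{h(x)}\mathcal{F}^g)$ is the composition of a function which is $C^{r-1}$ along $\mathcal{F}^g$-leaves with $h$ restricted to the leaf, and the leafwise regularity of $h$ is precisely what is to be proved. A priori this term is only continuous; in dimension one, (2) does give $h|_{\mathrm{leaf}}\in C^1$ with derivative $J$, but then the right-hand side is only $C^{\min\{1,\,r-1\}}$, and improving beyond that would require a delicate induction in which the cohomological equation is re-solved along non-compact leaves with data whose regularity depends on the previous stage, with uniform control in the base point. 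The paper (following \cite{SaY} and the classical argument of \cite{Llave}) avoids this entirely: once (1) holds, the Ledrappier-type formula applied separately to $f$ and to $g$ shows that the leafwise densities of $\mu$ and of $\nu$ are uniformly $C^{r-1}$ and bounded away from $0$ and $\infty$, and then the one-dimensional fact that a homeomorphism between $C^r$ curves transporting one measure with positive $C^{r-1}$ density to another such measure is a $C^r$ diffeomorphism (compose the two $C^r$ distribution functions, whose derivatives do not vanish) yields $(5)'$ directly and uniformly on $\supp(\mu)$. Relatedly, $(5)'\Rightarrow(2)$ is not quite trivial: you must also rule out degeneration of the leafwise derivative of $h$, i.e. obtain a positive lower bound for the Jacobian on $\supp(\mu)$, which again comes most naturally from the density picture rather than from leafwise smoothness alone.
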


\begin{proof}

In the above theorem, the fact that (4) implies (1) is proved  using the entropy along expanding foliations in a clear way. Here we write in more details this part of the proof.
For a complete proof reader should look at Theorem G in \cite{SaY}. 

By definition if $\mu$ is a Gibbs e-state along $\mathcal{F}^f$, then its conditional measures along $\mathcal{F}^f$ are absolutely continuous with respect to Lebesgue. However a similar to Ledrappier argument shows that in fact for each foliated chart $B,$ for any $\mu|B$ (normalized restriction of $\mu$ on $B$) almost every $x \in B,$ the disintegration of $\mu$ along $\mathcal{F}^f$ restricted to $B(x),$ equals $\rho(z) d vol_x$, where $\rho$ is continuous on $B$ and uniformly $C^{r-1}$ along the plaques of $\mathcal{F}^f| B.$ By $d vol_x$ we mean the normalized restriction of volume on the plaque of $\mathcal{F}^f$ passing through $x$ inside $B(x).$ 

The above fact about density of $\mu$ and $\nu$ along the corresponding expanding foliations is a fundamental step to  prove that (1) implies (2). 
The facts that (2) implies (3) and (3) implies (4) are relatively classical arguments. 
Let us prove that  (4) implies (1):

Let $\xi^f$ be a measurable partition subordinated to $\mathcal{F}^f$. As $h_* \mu = \nu$ and $h(\mathcal{F}^f) = \mathcal{F}^g$ we obtain that $h(\xi^f)$ is a measurable partition subsordinated  to $\mathcal{F}^g.$ As $h$ is a conjugact, $$ h_{\mu}(f, \mathcal{F}^f) = h_{\nu} (g, \mathcal{F}^g).  $$

As $\mu \in Gibb^e(f, \mathcal{F}^f),$ by Theorem \ref{SaYtheorem} we get $h_{\mu} (f, \mathcal{F}^f) = \lambda^{\mathcal{F}^f} (f, \mu).$ By hypothesis (4) we have
$\lambda^{\mathcal{F}^f} (f, \mu)= \lambda^{\mathcal{F}^g} (g, \nu)$  and consequently:
$$
 h_{\nu}(g, \mathcal{F}^g) = \lambda^{\mathcal{F}^g} (g, \nu).
$$
By theorem \ref{SaYtheorem}, $\nu \in Gibb^e(g , \mathcal{F}^g).$  

 Item $(5)^{'}$ for one dimensional foliations is the key result to prove rigidity of Anosov diffeomorphisms in $\mathbb{T}^3.$ (Theorems \ref{SaY}, \ref{MiTa}) It is a classical argument (See \cite{Llave}.) in smooth ergodic theory that if $h$ is a one dimensional homeomorphism between the $C^r-$curves $I_1$ and $I_2$, and there exist probability measures $\mu_i$ on $I_i$, absolutely continuous with respect to the Lebesgue measure on $I_i$, with $C^{r-1}-$densities bounded away from zero and infinity and such that $h_*\mu_1 = \mu_2,$ then $h$ is a $C^r$ diffeomorphism between $I_1$ and $I_2.$ 
\end{proof}

\subsubsection{Rigidity of Anosov diffeomorphisms} \label{rigidityanosov}

R. Saghin and J. Yang \cite{SaY} proved the following rigidity result showing that just Lyapunov exponents data is sufficient for smooth conjugacy.

\begin{theorem} \cite{SaY} \label{SaY}
Let $A$ be a three dimensional Anosov automorphism on $\mathbb{T}^3$  with simple real eigenvalues with distinct absolute values. Suppose $f$ is $C^{\infty}$ volume preserving Anosov diffeomorphism which is isotopic to $A$ and is partially hyperbolic. If $f$ has the same Lyapunov exponents as $A$, then $f$ is $C^{\infty}$ conjugate to $A.$
\end{theorem}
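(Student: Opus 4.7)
The plan is to apply Theorem~\ref{SaYtheorem2} to each of the three one-dimensional invariant foliations of $f$ and then assemble the resulting leafwise smoothness of the conjugacy via Journ\'{e}'s lemma. Let $|\lambda_1| > |\lambda_2| > 1 > |\lambda_3|$ denote the eigenvalues of $A$, so that $f$ carries a splitting $E^s \oplus E^{wu} \oplus E^{su}$ with corresponding invariant foliations $\mathcal{F}^s$, $\mathcal{F}^{wu}$, $\mathcal{F}^{su}$, each of dimension one. Since $f$ is Anosov on $\mathbb{T}^3$ and isotopic to the linear Anosov $A$, by the Franks--Manning theorem there is a bi-H\"{o}lder conjugacy $h \colon \mathbb{T}^3 \to \mathbb{T}^3$ with $h \circ f = A \circ h$, and $h$ automatically sends $\mathcal{F}^s_f$ to $\mathcal{F}^s_A$ and $\mathcal{F}^u_f$ to $\mathcal{F}^u_A$. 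Using the partial hyperbolicity of $f$ and the characterization of the strong unstable leaf as the set of points with fastest exponential approach in backward time, one further checks that $h$ carries $\mathcal{F}^{su}_f$ to $\mathcal{F}^{su}_A$ and hence $\mathcal{F}^{wu}_f$ to $\mathcal{F}^{wu}_A$.

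Next, set $\mu$ equal to Lebesgue on $\mathbb{T}^3$, which is $f$-invariant by hypothesis. I would verify that $\mu$ is a Gibbs e-state of $f$ along each of the three one-dimensional invariant foliations. Along $\mathcal{F}^{su}_f$ this follows from Ledrappier--Young (the strong unstable is the first step of the Pesin filtration, and for a smooth invariant measure one has $h_1 = \lambda_1 \delta_1 = \log|\lambda_1|$, matching the sum of exponents along $\mathcal{F}^{su}_f$). Along $\mathcal{F}^s_f$ the same argument applied to $f^{-1}$ works. Along $\mathcal{F}^{wu}_f$ one first disintegrates $\mu$ along the full unstable foliation $\mathcal{F}^u_f$, which is absolutely continuous by Pesin's entropy formula for volume-preserving Anosov maps, and then further disintegrates these $2$-dimensional absolutely continuous conditionals along the subfoliation $\mathcal{F}^{wu}_f \subset \mathcal{F}^u_f$; Fubini inside each unstable leaf yields absolute continuity of the leafwise conditionals, hence $\mu \in Gibbs^e(f, \mathcal{F}^{wu}_f)$ by Theorem~\ref{SaYtheorem}.

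Condition $(4)$ of Theorem~\ref{SaYtheorem2} now holds along each of the three foliations: the sum of Lyapunov exponents of $f$ along $\mathcal{F}^{*}_f$ with respect to $\mu$ equals $\log|\lambda_i|$ for the appropriate $i$ by the hypothesis that $f$ and $A$ share Lyapunov data, and since $A$ is linear, the corresponding sum for $A$ along $\mathcal{F}^{*}_A$ is the constant $\log|\lambda_i|$ for \emph{every} $A$-invariant probability, including $\nu := h_*\mu$. Applying item $(5)'$ of Theorem~\ref{SaYtheorem2} in each of the three one-dimensional cases yields that $h$ restricts to a uniformly $C^\infty$ diffeomorphism on every leaf of $\mathcal{F}^{s}_f$, $\mathcal{F}^{wu}_f$, and $\mathcal{F}^{su}_f$. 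A double application of Journ\'{e}'s lemma then promotes leafwise smoothness to global smoothness: combining $\mathcal{F}^s_f$ and $\mathcal{F}^{wu}_f$ first gives $h$ is $C^\infty$ along the two-dimensional center-stable foliation $\mathcal{F}^{cs}_f$, and a second application to the transverse pair $\{\mathcal{F}^{cs}_f, \mathcal{F}^{su}_f\}$ gives $h \in C^\infty(\mathbb{T}^3)$. The symmetric argument for $h^{-1}$ shows $h$ is a $C^\infty$ diffeomorphism.

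The main obstacle is showing that the a priori only H\"{o}lder conjugacy $h$ respects the full partially hyperbolic substructure, i.e.\ that it sends the strong unstable of $f$ to the strong unstable of $A$ and the weak unstable to the weak unstable; without this matching one cannot feed Theorem~\ref{SaYtheorem2} two leafwise corresponding foliations on the two sides. A secondary but technical point is the Gibbs e-state verification along $\mathcal{F}^{wu}_f$, since $\mathcal{F}^{wu}$ does not appear as a term in the Ledrappier--Young increasing filtration and the required absolute continuity must be obtained indirectly via a two-stage disintegration inside the full unstable leaves, using the volume-preserving hypothesis.
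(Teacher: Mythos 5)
Your overall strategy (feed Theorem \ref{SaYtheorem2} the three one-dimensional invariant foliations and then glue with Journ\'{e}) is the right one, but two of your intermediate claims are genuine gaps, and they are exactly the points the actual proof is organized around. First, the foliation matching: you assert that $h$ carries $\mathcal{F}^{su}_f$ to $\mathcal{F}^{su}_A$ because strong unstable leaves are ``the points with fastest exponential approach in backward time,'' and you then deduce the weak matching from it. This characterization is metric, not topological: a Franks--Manning conjugacy is only bi-H\"{o}lder, so it distorts exponential rates and does \emph{not} a priori send strong unstable leaves to strong unstable leaves; this is precisely ``the subtle problem'' of the matching of strong foliations, and in general it fails for merely topological conjugacies. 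What is known a priori is the opposite of what you use: $h_f(\mathcal{F}^i_f)=\mathcal{F}^i_A$ for $i=s,wu$, while the strong unstable matching is obtained only \emph{after} one proves that $h_f$ is uniformly smooth along $\mathcal{F}^{wu}_f$, via Gogolev's theorem \cite{Gogu}. So the order of the argument cannot be ``match all three foliations, then apply the theorem three times''; one must first gain regularity along $\mathcal{F}^s$ and $\mathcal{F}^{wu}$ and only then is the application along $\mathcal{F}^{su}$ legitimate.

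Second, your verification that volume is a Gibbs e-state along $\mathcal{F}^{wu}_f$ is circular. The two-stage disintegration inside unstable leaves needs leafwise Fubini for the subfoliation $\mathcal{F}^{wu}_f\subset\mathcal{F}^u_f$, i.e.\ absolute continuity of the weak unstable foliation, which is not known a priori (compare Theorem \ref{MiTa}, where absolute continuity of $\mathcal{F}^{wu}_f$ is an explicit \emph{hypothesis}); intermediate foliations of this kind are exactly the ones subject to ``Fubini nightmare'' phenomena, and here their absolute continuity is essentially part of the conclusion. The proof in the paper sidesteps this by an asymmetric use of Theorem \ref{SaYtheorem2}: one first works along the stable direction (for $f^{-1},A^{-1}$), where absolute continuity of $\mathcal{F}^s$ is classical, to conclude both that $(h_f)_*\mathrm{vol}=\mathrm{vol}$ and that $h_f$ is smooth along $\mathcal{F}^s_f$; then one applies the theorem to $h_f^{-1}$ in the direction from $A$ to $f$, using that Lebesgue is \emph{trivially} a Gibbs e-state along the linear foliation $\mathcal{F}^{wu}_A$, to get regularity of $h_f^{-1}$ along $\mathcal{F}^{wu}_A$ --- no a priori Gibbs property on the $f$-side is needed. (By contrast, your use of Ledrappier--Young along $\mathcal{F}^{su}_f$ can be justified, since Pesin's formula forces $\delta_1=1$ there.) Finally, a smaller but real overclaim: item $(5)'$ along the weak direction only yields uniform $C^{1+\epsilon}$ regularity, because $\mathcal{F}^{wu}_f$ has only uniformly $C^{1+\epsilon}$ leaves; after Journ\'{e} one therefore gets $h_f\in C^{1+\epsilon}$, and the promotion to $C^{\infty}$ requires the bootstrapping argument of \cite{gogboot}, which is missing from your proposal.
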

  Micena-Tahzibi \cite{MiTa} also proved a slightly differently announced result (it implies the above theorem) and the ideas for the proof are similar.
  \begin{theorem} \cite{MiTa} \label{MiTa} Let $f$  be  a  $C^r, r \geq 2,$  volume preserving Anosov diffeomorphism such that $T\mathbb{T}^3 = E^s_f \oplus E^{wu}_f \oplus E^{su}_f.$ (stable, weak unstable and strong unstable bundles) Then $f$ is $C^1$ conjugated with its linear part $A$ if and only if the center foliation $\mathcal{F}^{wu}_f$ is absolutely continuous and the equality $\lambda^{wu}_f(x) = \lambda^{wu}_A,$ between center Lyapunov exponents of $f$ and $A,$ holds for $m$ a.e. $x \in \mathbb{T}^3.$
\end{theorem}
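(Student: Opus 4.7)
The statement is a biconditional, and only the sufficiency direction carries content. For necessity, if $f$ is $C^1$-conjugate to $A$ via $h$, then $h$ carries the smooth linear foliation $\mathcal{F}^{wu}_A$ onto $\mathcal{F}^{wu}_f$; since $h$ is $C^1$ the image foliation has $C^1$ holonomies, hence is absolutely continuous, and $C^1$ conjugacy preserves Lyapunov exponents along matched one-dimensional invariant bundles, giving $\lambda^{wu}_f(x) = \lambda^{wu}_A$ for $m$-a.e.\ $x$.

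For sufficiency, the plan is to invoke Theorem \ref{SaYtheorem2} applied to the canonical topological conjugacy $h$ between $f$ and $A$, taking $\mu = m$ (volume), $\mathcal{F}^f = \mathcal{F}^{wu}_f$, and $\mathcal{F}^g = \mathcal{F}^{wu}_A$. The conjugacy $h$ exists and matches these two foliations because they are the unique $f$- (resp.\ $A$-) invariant one-dimensional foliations tangent to the intermediate expansion direction. Absolute continuity of $\mathcal{F}^{wu}_f$ combined with $f \in \Diff^r(\mathbb{T}^3)$, $r\geq 2$, volume-preserving, puts $m$ in $Gibb^e(f, \mathcal{F}^{wu}_f)$ by Theorem \ref{SaYtheorem}. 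Condition (4) of Theorem \ref{SaYtheorem2} reduces to the identity $\int \log|\det(Df|E^{wu}_f)|\,dm = \int \log|\det(DA|E^{wu}_A)|\,d(h_*m)$; by hypothesis the left-hand side equals $\lambda^{wu}_A$, and the right-hand side equals $\lambda^{wu}_A$ trivially because $A$ is linear and $h_*m$ is a probability. Since both foliations are one-dimensional in $\mathbb{T}^3$, item $(5)'$ of Theorem \ref{SaYtheorem2} then yields that $h$ is uniformly $C^r$ along each leaf of $\mathcal{F}^{wu}_f$.

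To promote this leafwise smoothness to global $C^1$ regularity, the plan is to establish analogous leafwise $C^r$ smoothness of $h$ along $\mathcal{F}^{su}_f$ and $\mathcal{F}^s_f$ and then apply Journé's lemma (first within the full unstable $\mathcal{F}^u_f = \mathcal{F}^{wu}_f \oplus \mathcal{F}^{su}_f$, then transversely against $\mathcal{F}^s_f$). Absolute continuity of the strong foliations is automatic for $C^{1+}$ Anosov, so $m$ is a Gibbs e-state along both, and the task reduces to matching the individual integrated exponents $\lambda^{su}_f = \lambda^{su}_A$ and $\lambda^s_f = \lambda^s_A$. Volume preservation supplies the relation $\lambda^s_f + \lambda^{wu}_f + \lambda^{su}_f = 0$ and the analogous one for $A$; together with the weak-unstable matching this reduces the problem to pinning down a single strong exponent. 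Pesin's formula $h_m(f) = \lambda^{wu}_f + \lambda^{su}_f$ combined with the topological-entropy invariance $h_{top}(f) = h_{top}(A) = h_m(A)$ (Lebesgue is MME for the linear $A$) and Bowen's inequality $h_m(f) \leq h_{top}(f)$ yields one direction; the reverse inequality comes from replaying the same Gibbs-state/entropy argument through $h^{-1}$ and $A^{-1}$, now exploiting that $h_*m$ already has smooth disintegration along $\mathcal{F}^{wu}_A$ by the previous paragraph.

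The main obstacle is precisely this last step: propagating the weak-unstable exponent matching to the strong directions without assuming the $C^1$ conjugacy we are trying to prove (so Livšic-type matching of periodic eigenvalues is unavailable a priori). Once that loop is closed, three applications of Theorem \ref{SaYtheorem2} $(5)'$ give leafwise $C^r$ smoothness of $h$ along each of $\mathcal{F}^s_f$, $\mathcal{F}^{wu}_f$, $\mathcal{F}^{su}_f$, and Journé's lemma assembles these into the desired global $C^1$ (in fact $C^r$) conjugacy.
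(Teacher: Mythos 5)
The necessity direction and the first step of sufficiency are fine and follow the route the survey intends: with $\mu=m$, absolute continuity of $\mathcal{F}^{wu}_f$ gives $m\in Gibb^e(f,\mathcal{F}^{wu}_f)$ (Theorem \ref{SaYtheorem}), the exponent hypothesis is exactly condition (4) of Theorem \ref{SaYtheorem2}, and item $(5)'$ gives uniform leafwise regularity of $h$ along $\mathcal{F}^{wu}_f$ (only $C^{1+\epsilon}$, by the way, since the weak unstable leaves are only uniformly $C^{1+\epsilon}$ — enough for a $C^1$ statement). The genuine gap is the step you yourself flag: you need $\lambda^{su}_f=\lambda^{su}_A$ and $\lambda^{s}_f=\lambda^{s}_A$ in order to apply Theorem \ref{SaYtheorem2} along $\mathcal{F}^{su}_f$ and $\mathcal{F}^{s}_f$, and the entropy argument you sketch cannot produce these equalities. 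Pesin's formula plus $h_m(f)\leq h_{top}(f)=h_{top}(A)=\lambda^{wu}_A+\lambda^{su}_A$ only yields $\lambda^{su}_f\leq\lambda^{su}_A$; applying the same reasoning to $f^{-1}$ yields $|\lambda^{s}_f|\leq|\lambda^{s}_A|$, and under the zero-sum constraints ($f$ volume preserving, $|\det A|=1$, $\lambda^{wu}_f=\lambda^{wu}_A$) these two one-sided inequalities are equivalent to each other, so no equality is forced. Equality in the first inequality would say that volume is a measure of maximal entropy for $f$, which is not among the hypotheses and is essentially the rigidity one is trying to prove. Nor does ``replaying the argument through $h^{-1}$ using that $h_*m$ has smooth disintegration along $\mathcal{F}^{wu}_A$'' help: $h_*m$ is an $A$-invariant measure whose exponents are those of $A$ in any case, and no information about the Jacobians of $f$ along $E^{su}_f$ or $E^{s}_f$, or about $h_m(f^{-1},\mathcal{F}^{s}_f)$, is produced. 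So the loop is not closed, and with it the two remaining applications of Theorem \ref{SaYtheorem2} and the Journ\'{e} step collapse; as written, your argument reduces Theorem \ref{MiTa} to the hypotheses of Theorem \ref{SaY} (all exponents matching) but does not derive them, which is backwards: the survey points out that \cite{MiTa} is the stronger statement, implying Theorem \ref{SaY}.

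A secondary omission: even granting $\lambda^{su}_f=\lambda^{su}_A$, Theorem \ref{SaYtheorem2} along the strong unstable direction requires $h(\mathcal{F}^{su}_f)=\mathcal{F}^{su}_A$, which is not automatic; the survey stresses that this matching is subtle and is obtained from Gogolev's criterion only after the uniform leafwise smoothness of $h$ along $\mathcal{F}^{wu}_f$ has been established. Your plan should at least invoke this, and more importantly it needs a genuinely different mechanism (this is where the actual proof in \cite{MiTa} departs from the soft inequalities above) to pass from center-leafwise regularity of $h$ to regularity in the strong directions without assuming the remaining exponent equalities.
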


Let us give a sketch of the proof of Theorem \ref{SaY} using Theorem \ref{SaYtheorem2}: Let $h_f$ be topological conjugacy between $f$ and $A.$ 
$f$ admits three invariant foliations $\mathcal{F}^s_f, \mathcal{F}^{wu}_f, \mathcal{F}^{su}_f$ which are tangent to $E^s$ (stable), $E^{wu}$ (weak unstable) and $E^{su}$ (strong unstable). It is known that 
$$
 h_{f}(\mathcal{F}^i_f) = \mathcal{F}^i_A, \quad \text{for} \quad i=s, wu.
$$
The matching between strong unstable foliation of $f$ and $A$ is a subtle problem. However, by a result due to Gogolev \cite{Gogu}, if $h_f$ is uniformly smooth along $\mathcal{F}^{wu}_f$ then $$h_f (\mathcal{F}^{su}_f)= \mathcal{F}^{su}_A.$$

Now, we see how Theorem \ref{SaYtheorem2} is used in the proof of this rigidity result. Indeed, as the stable Lyapunov exponents of $f$ and $A$ coincide and stable foliation is absolutely continuous (so volume has absolutely continuous disintegration along stable foliation), applying Theorem \ref{SaYtheorem2} for $f^{-1}$ and $A^{-1}$ we conclude that $(h_f)_* vol$ has absolutely continuous disintegration along $\mathcal{F}^s_A$ which implies that $(h_f)_* vol= vol$ and $h_f$ is uniformly $C^r$ along $\mathcal{F}^s_f.$ To obtain $C^r$ regularity we use one dimensionality of the stable foliation.

The next step is to prove that $h_f^{-1}$ along $\mathcal{F}^{wu}_A$ is uniformly $C^{1+ \epsilon}, \epsilon > 0.$ Indeed $\mathcal{F}^{wu}_f$ has uniformly $C^{1+\epsilon} $ leaves (by H\"{o}lder continuity of weak unstable bundle). Observe that volume is Gibbs e-state along the foliation $\mathcal{F}^{wu}_A$ , $h_f^{-1}(\mathcal{F}^{wu}_A)= \mathcal{F}^{wu}_f$ and $(h_f)^{-1}_* vol= vol$. So again theorem \ref{SaYtheorem2} implies that $h_f^{-1}$ is uniformly $C^{1+\epsilon}$ along $\mathcal{F}^{wu}_A.$

Finally as the (strong)unstable exponents of $f$ and $A$ coincide, using theorem \ref{SaYtheorem2} we conclude the $C^r$ regularity $h_f^{-1}$ along $\mathcal{F}^{su}_A.$

The last trick is to use Journ\'{e} result \cite{journe}  to conclude that $h_f^{-1}$ (and consequently $h_f$) is uniformly $C^{1+\epsilon}$. Then, by the bootstrapping argument of Gogolev \cite{gogboot}, the conjugacy $h_f$ is $C^{\infty}.$

\subsection{Measure Rigidity} \label{measurerigidity}
A celebrated conjecture of Furstenberg states that the only ergodic, Borel probability measure on $\mathbb{S}^1$ that is invariant under both 
$$ x \rightarrow 2x (mod-1) , \quad x \rightarrow 3x (mod-1)$$
is either supported on a finite set (of rational numbers) or is the Lebesgue measure on $\mathbb{S}^1.$


The conjecture remains open but Rudolph obtained an optimal partial result.

\begin{theorem} \cite{Rud}
The only ergodic Borel probability measure $\mu$ on $\mathbb{S}^1$ that is invariant under both $E_2: x \rightarrow 2x $(mod-1) and $E_3: x \rightarrow 3x$(mod-1) and satisfies 
$$ h_{\mu} (E_2) > 0 \quad \text{or} \quad h_{\mu} (E_3) >0$$
is the Lebesgue measure on $\mathbb{S}^1.$
\end{theorem}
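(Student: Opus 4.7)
The strategy is to pass to an inverse-limit extension turning $E_2$ into an invertible system with an unstable-type lamination, use the entropy hypothesis to force the conditional measures along that lamination to be non-atomic, and finally to exploit the commuting $E_3$-action in order to rigidify these conditionals into Lebesgue measure. First, form the natural extension $\pi:\hat X\to \mathbb{S}^1$ of $E_2$, lifting $\mu$ to a $\hat E_2$-invariant probability $\hat\mu$. Since $E_2\circ E_3 = E_3\circ E_2$, the map $E_3$ lifts to an invertible $\hat E_3$ commuting with $\hat E_2$ and preserving $\hat\mu$. The past fibers
\[
 \mathcal{F}^u(\hat x):=\{\hat y\in\hat X:\ \pi(\hat E_2^{-n}\hat y)=\pi(\hat E_2^{-n}\hat x)\text{ for all } n\ge 0\}
\]
form a natural lamination, symbolically expanded by $\hat E_2$, which plays the role of an unstable foliation and admits measurable partitions increasing and subordinated to it in the sense of Subsection~\ref{entropyalong}. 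Disintegrating $\hat\mu$ along such a partition produces conditional measures $\{\hat\mu^u_{\hat x}\}$, and Rokhlin's formula yields $h_{\hat\mu}(\hat E_2,\mathcal{F}^u)=h_\mu(E_2)>0$; consequently these conditionals are non-atomic $\hat\mu$-almost everywhere.

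Second, because $\hat E_3$ commutes with $\hat E_2$, it maps unstable leaves to unstable leaves and acts on the family of conditional measures. The key algebraic input is that on the base circle, a word $\hat E_2^{-n}\hat E_3^{m}$ induces multiplication by $3^m/2^n \pmod 1$, and the set $\{3^m 2^{-n}:n,m\in\mathbb{Z}\}$ is dense in $\mathbb{R}_{>0}$. The decisive and most delicate step, which I expect to be the main obstacle, is to combine this denseness with the Shannon--McMillan--Breiman theorem for $\hat E_2$ in order to show that the conditionals $\hat\mu^u_{\hat x}$ are quasi-invariant, with Radon--Nikodym derivatives tending to $1$, under an appropriately chosen sequence of compositions $\hat E_2^{-n_k}\hat E_3^{m_k}$ whose induced base-translations become dense modulo $1$. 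This is the point where positivity of entropy is genuinely used: it prevents the conditionals from concentrating on sets too thin to support the translation action, and it provides the uniform exponential measure estimates needed for the Radon--Nikodym control.

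Once these approximate translation-invariance properties are established, a classical argument shows that any non-atomic Borel probability on $\mathbb{S}^1$ that is quasi-invariant with bounded derivative under a dense subgroup of rotations must coincide with Lebesgue measure. Hence each conditional $\hat\mu^u_{\hat x}$ is proportional to Lebesgue measure on its leaf. Projecting back via $\pi$ and using ergodicity of $\mu$ under the joint $\mathbb{N}^2$-action generated by $E_2$ and $E_3$ (which follows from ergodicity under $E_2$ or $E_3$ alone, since they commute), one concludes that $\mu$ itself is absolutely continuous with constant density, so $\mu=\mathrm{Leb}$.
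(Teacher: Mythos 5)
First, note that the survey does not actually prove this statement: it is quoted from Rudolph \cite{Rud}, and the text only sketches the analogous result for two commuting hyperbolic automorphisms of $\mathbb{T}^3$ (Theorem \ref{rigidAB}), where the entropy input enters through the identity $h_{\mu}(\alpha(\overrightarrow{n}))=\lambda^i(\overrightarrow{n})$, Proposition \ref{prop-Ledrappier} (the Margulis-family/Jensen argument identifying the unstable conditionals with Lebesgue), and Lemma \ref{lebesgue}; even there the hard step (absolute continuity of the conditionals, obtained from the higher-rank suspension) is only referenced, not proved. So your proposal cannot be matched against a proof in the paper; it has to stand on its own, and as written it does not.

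There are two problems. A minor one: your unstable leaves are defined incorrectly. In the natural extension a point is exactly its full past, so the condition $\pi(\hat E_2^{-n}\hat y)=\pi(\hat E_2^{-n}\hat x)$ for all $n\ge 0$ forces $\hat y=\hat x$; the intended leaves are the sets of points with \emph{asymptotic} pasts, which locally project homeomorphically to arcs of $\mathbb{S}^1$ (while the fibers of $\pi$ are the stable sets). This is fixable, and the deduction that positive entropy makes the leaf conditionals non-atomic is fine. The serious problem is that the step you yourself flag as ``the decisive and most delicate'' is precisely the entire content of Rudolph's theorem, and your sketch does not supply it and, as formulated, could not: the compositions $\hat E_2^{-n}\hat E_3^{m}$ act along an unstable leaf (identified with $\mathbb{R}$ or with arcs of the circle) as \emph{dilations} by $3^m2^{-n}$, not as translations, and $x\mapsto (3^m/2^n)x \pmod 1$ is not even a well-defined circle map for $n>0$. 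Converting the dense multiplicative set $\{3^m2^{-n}\}$ into a dense family of translations that quasi-preserve the conditionals requires a genuine drift/recurrence argument combined with a precise use of positive entropy (this is Rudolph's conditional-entropy symmetry argument, or, in later treatments, a Host-type equidistribution or an Einsiedler--Katok--Lindenstrauss product-structure-of-entropy argument); invoking Shannon--McMillan--Breiman and ``Radon--Nikodym derivatives tending to $1$'' names the desired conclusion rather than proving it, and nothing in the sketch isolates where the argument would break for, say, a zero-entropy non-atomic joint-invariant measure. Until that step is carried out, the final appeal to ``non-atomic $+$ quasi-invariance under a dense group of rotations $\Rightarrow$ Lebesgue'' has nothing to act on, so the proposal is a plausible strategy outline but not a proof.
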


In invertible setting, 
A. Katok proposed studying a related action on a familiar spaces: the action of two commuting hyperbolic automorphism of $\mathbb{T}^3.$

Katok-Spatzier (\cite{KS1}, \cite{KS2}) obtained a generalization of Rudolph theorem for these actions.  We follow the following example which contains many ideas of their result:

Let $A = \begin{bmatrix}
3& 2 & 1\\
2 & 2 & 1\\
1 & 1 & 1
\end{bmatrix}$ and $B = \begin{bmatrix}
2& 1 & 1\\
1 & 2 & 0\\
1 & 0 & 1
\end{bmatrix}$ and $L_A$ and $L_B$ the corresponding $\mathbb{T}^3$ hyperbolic automorphisms. Observe that $\det(A)=\det(B) =1$ and 
\begin{itemize}
\item $A$ has 3 distinct eigenvalues, $\chi_A^1 > 1 > \chi_A^2 > \chi_A^3 >0$
\item $B$ has 3 distinct eigenvalues, $\chi_B^1 > \chi_B^2 > 1 > \chi_B^3 > 0$
\item $AB= BA$
\item $A^k B^l = Id$ if and only if $k=l=0.$
\end{itemize}
By the above comments $L_A$ and $L_B$ generate a genuine action of $\mathbb{Z}^2$ on $\mathbb{T}^3.$

\begin{theorem} \label{rigidAB}
Let $L_A, L_B: \mathbb{T}^3 \rightarrow \mathbb{T}^3$ be as above. Then , the only ergodic, Borel probability measure $\mu$ on $\mathbb{T}^3$ that is invariant under both $L_A$ and $L_B$ and satisfies 
$$ h_{\mu} (L_A) > 0  \quad \text{or} \quad h_{\mu} (L_B) >0$$
is the Lebesgue measure on $\mathbb{T}^3.$ 
\end{theorem}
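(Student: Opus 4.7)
The plan is as follows. Since $A$ and $B$ are symmetric commuting matrices with simple spectra, they admit a common orthonormal basis of eigenvectors $v_1, v_2, v_3$ with $Av_i = \alpha_i v_i$ and $Bv_i = \beta_i v_i$. Each line $\mathbb{R} v_i$ descends to a one-dimensional linear foliation $\mathcal{F}_i$ of $\mathbb{T}^3$, invariant under both $L_A$ and $L_B$ and on which each acts by dilation (by $\alpha_i$ and $\beta_i$ respectively). Index so that $\alpha_1 > 1 > \alpha_2 > \alpha_3 > 0$; then $\mathcal{F}_1$ is the one-dimensional unstable foliation of $L_A$, and its stable foliation is two-dimensional, tangent to $\mathcal{F}_2 \oplus \mathcal{F}_3$. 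Because $B$ has two-dimensional unstable, at least one of $\beta_2, \beta_3$ exceeds $1$, so the two actions ``disagree'' on the stable directions of $L_A$.

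Assuming $h_\mu(L_A) > 0$ (the other case is symmetric), I would apply the Ledrappier--Young formula (Theorem~\ref{expdiment}) to $L_A$: because $L_A$ has a single positive Lyapunov exponent $\log \alpha_1$ with one-dimensional unstable foliation $\mathcal{F}_1$, the formula reduces to $h_\mu(L_A) = \delta_1 \log \alpha_1$, where $\delta_1 \in [0,1]$ is the essential pointwise dimension of the conditional measures $\{\mu^{\mathcal{F}_1}_x\}$ along $\mathcal{F}_1$. Positivity of the entropy gives $\delta_1 > 0$, so the conditionals are non-atomic. The central and most delicate step is to upgrade \emph{non-atomic} to \emph{Lebesgue on leaves}. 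Here the commuting action of $L_B$ enters: uniqueness of the Rokhlin disintegration together with $L_B \circ L_A = L_A \circ L_B$ forces the scaling relation $(L_B|_{\mathcal{F}_1})_* \mu^{\mathcal{F}_1}_x \propto \mu^{\mathcal{F}_1}_{L_B x}$, so after identifying each leaf with $\mathbb{R}$ via $v_1$, the conditionals $\mu^{\mathcal{F}_1}_x$ are scaling-equivariant under the subgroup of $\mathbb{R}_+$ generated by $\{\alpha_1^k \beta_1^\ell : (k,\ell) \in \mathbb{Z}^2\}$. Using the faithfulness hypothesis ($A^k B^\ell = \mathrm{Id}$ only for $k=\ell=0$) and the algebraic nature of the spectra, this subgroup is dense in $\mathbb{R}_+$, and a Rudolph--Host--Feldman-type scaling rigidity argument forces $\mu^{\mathcal{F}_1}_x$ to be Lebesgue on each leaf. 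This scaling rigidity step is the main obstacle; it is essentially the heart of the Katok--Spatzier theorem and is the analog of Rudolph's original $\times 2, \times 3$ theorem on $\mathbb{S}^1$.

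Once $\mu^{\mathcal{F}_1}_x$ is Lebesgue on leaves, $\mu$ achieves equality in the Margulis--Ruelle inequality for $L_A$ along its unstable foliation, i.e. satisfies Pesin's entropy formula, so $\mu$ is an SRB measure for $L_A$. Because $L_A$ is a volume-preserving transitive Anosov diffeomorphism of $\mathbb{T}^3$, its unique SRB measure is Lebesgue, so $\mu = \mathrm{Leb}_{\mathbb{T}^3}$. If needed, one may alternatively conclude directly: the absolute continuity of the $\mathcal{F}_1$-conditionals can be bootstrapped, using the same scaling argument applied to the other Lyapunov foliations $\mathcal{F}_2, \mathcal{F}_3$ (now viewing $L_B$ or an appropriate $L_A^k L_B^\ell$ as hyperbolic with unstable $\mathcal{F}_2$ or $\mathcal{F}_3$), to absolute continuity along each $\mathcal{F}_i$; then a Hopf-type local product argument upgrades this to absolute continuity of $\mu$ on $\mathbb{T}^3$, forcing $\mu = \mathrm{Leb}$ by ergodicity.
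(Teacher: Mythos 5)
Your skeleton (reduce to the generator $L_A$ with one--dimensional unstable foliation $\mathcal{F}_1$, use Ledrappier--Young (Theorem \ref{expdiment}) to see that positive entropy makes the conditionals $\mu^{\mathcal{F}_1}_x$ non-atomic, then show these conditionals are Lebesgue/absolutely continuous, and finish either by SRB uniqueness for the linear Anosov map or by translation invariance) has the same shape as the outline in the text. The genuine gap is the central step, which you yourself flag as ``the main obstacle'': the mechanism you propose for upgrading non-atomic to Lebesgue does not work. The commutation relation does give the equivariance $(L_B)_*\mu^{\mathcal{F}_1}_x \propto \mu^{\mathcal{F}_1}_{L_Bx}$, but this relates conditionals at \emph{different} base points; to get a constraint at a single point you need a recurrence/Lusin argument, and even then all you obtain is that $\mu^{\mathcal{F}_1}_x$ is proportional to its own images under dilations with factors in the group generated by $\alpha_1,\beta_1$. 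Decisively, proportionality under dilations --- even under \emph{all} of $\mathbb{R}_+$ --- does not characterize Lebesgue among non-atomic measures: identifying a leaf with $\mathbb{R}$, every measure $|t|^{s-1}\,dt$ with $0<s\leq 1$ is dilation-self-similar up to a scalar, and exact-dimensional leafwise measures of dimension $\delta_1=h_\mu(L_A)/\log\alpha_1<1$ exhibit exactly this behaviour, consistently with Ledrappier--Young. In effect your argument uses $L_B$ only through a dilation factor, so the same reasoning would ``prove'' the statement for a single hyperbolic automorphism, where it is false. There is no ``scaling rigidity'' theorem of Rudolph--Host type that can be invoked at this point; Rudolph's and Katok--Spatzier's proofs run through a different mechanism. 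A minor additional point: the case $h_\mu(L_B)>0$ is not literally symmetric, since $L_B$ has two-dimensional unstable bundle; one should pass to $L_B^{-1}$ (or a suitable $L_A^kL_B^\ell$) whose unstable is one-dimensional. Also, multiplicative independence of $\alpha_1$ and $\beta_1$ does not follow from faithfulness of the $\mathbb{Z}^2$-action alone, though this is moot since density of the dilation group would not suffice anyway.

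What the proof actually requires --- and what the survey deliberately does not reproduce, referring to \cite{Aaron} and \cite{KS1} --- is to exploit the higher-rank structure itself: pass to the suspension $\mathbb{R}^2$-action and show that the leafwise measures along the coarse Lyapunov line $E^i$ are invariant under \emph{translations} of the leaf. This uses that the one-parameter subgroup $\ker\lambda^i$ of the suspension acts isometrically along $E^i$, together with a Hopf/$\pi$-partition type argument; this ``long part'' is precisely where positive entropy is converted into absolute continuity of the conditionals, and it has no analogue in your dilation argument. Once that is done, the route in the text is: a Pesin-type entropy formula gives $h_\mu(\alpha(\vec n))=\lambda^i(\vec n)$ for an element $\alpha(\vec n)$ with unstable bundle $E^i$; the convexity argument of Proposition \ref{prop-Ledrappier} (Lebesgue on $E^i$-leaves is a Margulis family with dilation $e^{\lambda^i(\vec n)}$) then forces the conditionals to be exactly Lebesgue; translation invariance of $\mu$ along $E^i$ follows, and Lemma \ref{lebesgue} (unique ergodicity of the irrational linear flow) concludes. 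Your final paragraph (absolutely continuous unstable conditionals imply $\mu$ is the unique SRB measure of the volume-preserving linear Anosov map $L_A$, hence Lebesgue) is a legitimate alternative endgame, but it cannot rescue the proposal without the missing translation-invariance step.
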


Here we will not give a proof of the above theorem. Instead we will just outline the role of unstable entropy in the proof. For a proof see \cite{Aaron}.

As $A$ and $B$ commute and are diagonalizable, we denote by $E^j$ the $j$th joint eigenspace of $A$ and $B.$ Let us denote by $\lambda_A^j = \log(\chi_A^j)$ and $\lambda_B^j = \log(\chi_B^j).$ 
For each $(n_1, n_2) \in \mathbb{Z}^2 ,$ the subspace $E^j$ is an eigenspace for $A^{n_1}B^{n_2}$  with eigenvalue $\chi^j(n_1, n_2)$ where
$$
 \log(\chi^j(n_1, n_2)) = n_1 \lambda_A^j + n_2 \lambda_B^j.
$$
In particular $\lambda^j : \mathbb{Z}^2 \rightarrow \mathbb{R}$ extends to a linear functional $\lambda^j : \mathbb{R}^2 \rightarrow \mathbb{R}$ which is called $j$th Lyapunov exponent functional for the action induced by $L_A$ and $L_B$.

The following lemma is basic and has crucial role in the proof of theorem. Its proof comes from the fact that all $E^j$ are totally irrational and the flow induced by them on $\mathbb{T}^3$ is uniquely ergodic.

\begin{lemma} \label{lebesgue}
A Borel probability measure $\mu$ on $\mathbb{T}^3$ is the Lebesgue (Haar) measure if and only if there exists $1 \leq j \leq 3$ such that the measure $\mu$ is invariant under the 1-parameter group of translations generated by $E^j.$
\end{lemma}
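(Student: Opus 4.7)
The forward implication is trivial, since Haar measure on $\mathbb{T}^3$ is invariant under every translation subgroup. For the converse, let $v^j$ span $E^j$ and denote the induced translation flow by $T^j_t(x) = x + t v^j \pmod{\mathbb{Z}^3}$. My plan is to show that $T^j$ is uniquely ergodic with Haar measure as its unique invariant Borel probability; then any $\mu$ invariant under $T^j$ is forced to be Lebesgue.

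By Kronecker's theorem (equivalently, a direct Fourier-coefficient computation), a 1-parameter translation flow on $\mathbb{T}^3$ along a direction $v \in \mathbb{R}^3$ is uniquely ergodic with respect to Haar measure if and only if $v \cdot n \neq 0$ for every $n \in \mathbb{Z}^3 \setminus \{0\}$, i.e., $v$ is \emph{totally irrational}. The whole task therefore reduces to verifying total irrationality of each $v^j$. I will exploit two algebraic features of $A$: (i) $A$ is symmetric (checked directly from the given entries), so the eigenspaces $E^1, E^2, E^3$ are mutually orthogonal and $(E^j)^\perp = \bigoplus_{i \neq j} E^i$ is an $A$-invariant plane; (ii) the characteristic polynomial $p_A(\lambda) = \lambda^3 - 6\lambda^2 + 5\lambda - 1$ of $A$ is irreducible over $\mathbb{Q}$, since by the rational root theorem the only candidate rational roots are $\pm 1$ and $p_A(1) = -1$, $p_A(-1) = -13$. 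Consequently $A$ has no proper invariant subspaces defined over $\mathbb{Q}$.

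Now suppose for contradiction that some $n \in \mathbb{Z}^3 \setminus \{0\}$ satisfies $v^j \cdot n = 0$. Then $n \in (E^j)^\perp$, which by (i) is $A$-invariant. The $\mathbb{Q}$-span of $\{A^k n : k \geq 0\}$ is a nonzero $A$-invariant subspace of $\mathbb{Q}^3$ contained in $(E^j)^\perp$, hence of dimension at most $2$, contradicting the irreducibility of $p_A$ established in (ii). Therefore $v^j$ is totally irrational, the flow $T^j$ is uniquely ergodic, and any $T^j$-invariant Borel probability on $\mathbb{T}^3$ coincides with Haar measure.

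The main obstacle is the linear-algebraic step tying orthogonality of an integer vector to $E^j$ back to a proper rational $A$-invariant subspace; this is what genuinely uses both the symmetry of $A$ and the irreducibility of its characteristic polynomial. Everything else—the irreducibility check by the rational root theorem, and the passage from total irrationality to unique ergodicity of an irrational line flow on the torus—is standard and essentially automatic.
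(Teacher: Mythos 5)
Your proof is correct and follows exactly the route the paper indicates: the paper merely asserts that each $E^j$ is totally irrational and that the induced translation flow is uniquely ergodic, which is precisely your Kronecker/Fourier step. The only material you add is the explicit verification of total irrationality (symmetry of $A$ giving the $A$-invariance of $(E^j)^\perp$, plus irreducibility of $\lambda^3-6\lambda^2+5\lambda-1$ over $\mathbb{Q}$), which correctly fills in the detail the paper leaves to the reader.
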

So to prove the Theorem \ref{rigidAB}, it is enough to verify that any ergodic measure satisfying the hypothesis of the theorem is invariant under translation for some $E^j.$

To this end, firstly we take $i \in \{1, 2, 3\}$ and $\overrightarrow{n}$ such that $\alpha(\overrightarrow{n})$ has one dimensional unstable bundle which coincides with $E^i.$
 Working with higher rank action (using suspension of action $\alpha$ to an $\mathbb{R}^2$ action) one proves that $\mu$ has absolutely continuous disintegration along unstable foliation of $\alpha(\overrightarrow{n}).$ This part of proof is long and we will not repeat it here.
 Then an argument like Pesin's entropy formula shows that in fact $h_{\mu} (\alpha(\overrightarrow{n})) = \lambda^i(\overrightarrow{n}).$

Now, we can use the following proposition to show that the conditional measures of $\mu$ along  unstable foliation are indeed the (one dimensional) Lebesgue measure.

\begin{proposition}
For any $i \in \{1,2,3\},$ fix $\overrightarrow{n} \in \mathbb{Z}^2$ such that 
\begin{itemize}
\item $\lambda^i (\overrightarrow{n}) > 0$
\item $\lambda^j(\overrightarrow{n}) < 0$ for $j \neq i.$
\end{itemize} then, if $h_{\mu}(\alpha(\overrightarrow{n})) = \lambda^i (\overrightarrow{n})$ then for $\mu-$a.e. $x,$ we have that $\mu_x^i = m^i_x$ where $\mu^i_x$ is the conditional measure of $\mu$ along the unstable foliation of $\alpha(\overrightarrow{n})$ and $m^i_x$ is the corresponding Lebesgue measure. Observe that the unstable foliation coincides with the orbit of the action generated by translation of $E^i.$
\end{proposition}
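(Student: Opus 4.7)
\medskip
\noindent\textbf{Proof proposal.}
The plan is to apply Proposition~\ref{prop-Ledrappier} to the diffeomorphism $\alpha(\overrightarrow{n})$, with an explicit Margulis family built from Lebesgue measure on its one-dimensional unstable leaves.

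First I would construct the relevant Margulis family. Since $\alpha(\overrightarrow{n})=L_A^{n_1}L_B^{n_2}$ is a linear toral automorphism whose unstable bundle for $\overrightarrow{n}$ is the one-dimensional joint eigenspace $E^i$, every unstable leaf $\mathcal{F}^u(x)$ is an affine translate of $E^i$, and $D\alpha(\overrightarrow{n})|_{E^i}$ is multiplication by the constant $e^{\lambda^i(\overrightarrow{n})}$. Letting $m_x^i$ denote the one-dimensional Haar (Lebesgue) measure on $\mathcal{F}^u(x)$, one gets
\[
\alpha(\overrightarrow{n})_*\, m_x^i \;=\; e^{-\lambda^i(\overrightarrow{n})}\, m^i_{\alpha(\overrightarrow{n})(x)},
\]
so $\{m_x^i\}_{x\in \mathbb{T}^3}$ is a continuous, fully supported Margulis family on $\mathcal{F}^u$ with dilation factor $D_u=e^{\lambda^i(\overrightarrow{n})}$ in the sense of Definition~\ref{def.margulisfamily}.

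Next I would translate the hypothesis into an unstable-entropy equality. Because $\overrightarrow{n}$ is chosen so that $\lambda^i(\overrightarrow{n})>0$ and $\lambda^j(\overrightarrow{n})<0$ for $j\neq i$, the unique positive Oseledets exponent of $\mu$ under $\alpha(\overrightarrow{n})$ is $\lambda^i(\overrightarrow{n})$, carried by the one-dimensional bundle $E^i$ (this is automatic from the fact that the derivative is a fixed linear map, so the algebraic splitting is the Oseledets splitting a.e.). The Margulis--Ruelle inequality together with the Ledrappier--Young formula (Theorem~\ref{expdiment}), applied to $\alpha(\overrightarrow{n})$ on its $\alpha(\overrightarrow{n})$-ergodic components if necessary, yields
\[
h_\mu\bigl(\alpha(\overrightarrow{n})\bigr) \;=\; h_\mu\bigl(\alpha(\overrightarrow{n}), \mathcal{F}^u\bigr) \;\le\; \lambda^i(\overrightarrow{n}),
\]
with equality on the right forcing $\mu$ to be SRB for $\alpha(\overrightarrow{n})$ and, under the hypothesis $h_\mu(\alpha(\overrightarrow{n}))=\lambda^i(\overrightarrow{n})$, giving $h_\mu(\alpha(\overrightarrow{n}),\mathcal{F}^u)=\log D_u$.

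Finally, I would invoke the equality clause of Proposition~\ref{prop-Ledrappier}: because $\{m_x^i\}$ is a continuous, fully supported Margulis family on $\mathcal{F}^u$ with dilation factor $D_u$, and because $h_\mu(\alpha(\overrightarrow{n}),\mathcal{F}^u)=\log D_u$, the disintegration of $\mu$ along $\mathcal{F}^u$ must coincide, after normalization along a subordinated measurable partition, with $\{m_x^i\}$. Hence for $\mu$-a.e.\ $x$ one has $\mu_x^i=m_x^i$, as desired. The main obstacle is the second step: the hypothesis controls the total metric entropy of a single element $\alpha(\overrightarrow{n})$, while Proposition~\ref{prop-Ledrappier} requires the unstable entropy. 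Although $\mu$ is ergodic for the $\mathbb{Z}^2$-action, it need not be $\alpha(\overrightarrow{n})$-ergodic, so one must pass to $\alpha(\overrightarrow{n})$-ergodic components, use that each inherits the algebraic Lyapunov spectrum, and propagate the entropy equality componentwise via Margulis--Ruelle and Ledrappier--Young; modulo this bookkeeping, the rest is a direct application of the Jensen/convexity argument underlying Proposition~\ref{prop-Ledrappier}.
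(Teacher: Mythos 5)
Your argument is correct and is essentially the paper's own proof: the paper simply observes that Lebesgue measure on the $E^i$-leaves is a Margulis family with dilation $\exp(\lambda^i(\overrightarrow{n}))$ and then runs the equality case of Proposition~\ref{prop-Ledrappier} (via Ledrappier--Young, as in its corollary), which is exactly your route. Your extra bookkeeping about passing to $\alpha(\overrightarrow{n})$-ergodic components, legitimate because the exponents are constant for a linear map and entropy averages over components, is a harmless refinement the paper leaves implicit.
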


\begin{proof}The proof of the above proposition can be obtained along the lines of the proof of proposition \ref{prop-Ledrappier}. In fact the Lebesgue measure on the $E^j$ is a Margulis family with dilation $exp(\lambda^i(\overrightarrow{n})).$
\end{proof}

 So, we have proved that the conditional measures of $\mu$ along the orbits of action generated by translation of $E^i$ is Lebesgue and then it is not difficult to see that $\mu$ is invariant under these translations.
 Now Lemma \ref{lebesgue} ends the proof of Theorem \ref{rigidAB}.

\bibliography{ourbib-BFT2016}{}
\bibliographystyle{plain}
\end{document}